\numberwithin{equation}{section}
\normalfont\fontsize{13}{14}\selectfont\itshape}{\thesubsubsection}{1em}{}
\newcommand{\nn}{|{\mskip-2mu}|{\mskip-2mu}|}
\newcommand{\R}{\mathbb{R}}
\newcommand{\Z}{\mathbb{Z}}
\newcommand{\N}{\mathbb{N}}
\newcommand{\ee}{\mathrm{e}}
\newcommand{\ii}{\mathrm{i}}
\newcommand{\dk}{\, \mathrm{d}k}
\newcommand{\ds}{\, \mathrm{d}s}
\newcommand{\dx}{\, \mathrm{d}x}
\newcommand{\dy}{\, \mathrm{d}y}
\newcommand{\dz}{\, \mathrm{d}z}
\renewcommand{\AA}{{\mathcal A}}
\newcommand{\BB}{{\mathcal B}}
\newcommand{\EE}{{\mathcal E}}
\newcommand{\FF}{{\mathcal F}}
\newcommand{\GG}{{\mathcal G}}
\newcommand{\HH}{{\mathcal H}}
\newcommand{\II}{{\mathcal I}}
\newcommand{\JJ}{{\mathcal J}}
\newcommand{\KK}{{\mathcal K}}
\newcommand{\LL}{{\mathcal L}}
\newcommand{\NN}{{\mathcal N}}
\newcommand{\OO}{{\mathcal O}}
\newcommand{\QQ}{{\mathcal Q}}
\newcommand{\RR}{{\mathcal R}}
\newcommand{\TT}{{\mathcal T}}
\newcommand{\XX}{{\mathcal X}}
\newcommand{\YY}{{\mathcal Y}}
\newcommand{\ZZ}{{\mathcal Z}}
\DeclareMathOperator{\supp}{supp}
\newtheorem{theorem}{Theorem}
\newtheorem{lemma}{Lemma}
\newtheorem{proposition}{Proposition}
\newtheorem{corollary}{Corollary}
\theoremstyle{definition}
\newtheorem{remark}{Remark}
\newtheorem*{note}{Note}
\newcounter{count}
\title{A variational reduction and the existence of a fully localised solitary wave for the three-dimensional water-wave problem with weak surface tension}
\author{
B. Buffoni\thanks{Section de math\'ematiques, Station 8, Ecole Polytechnique
F\'ed\'erale de Lausanne, 1015 Lausanne, Switzerland
} 
\and M. D. Groves\thanks{Fachrichtung Mathematik, Universit\"{a}t des Saarlandes,
Postfach 151150, 66041 Saarbr\"{u}cken, Germany; 
Department of Mathematical Sciences, Loughborough
University, Loughborough, LE11 3TU, UK
} 
\and E. Wahl\'en\footnote{Centre for Mathematical Sciences, Lund University, P.O. Box 118, 22100 Lund, Sweden}
}
\date{}
\begin{document}

\maketitle

\begin{abstract}
\emph{Fully localised solitary waves} are travelling-wave solutions
of the three-dimensional gravity-capillary water wave problem which
decay to zero in every horizontal spatial direction. Their existence
has been predicted on the basis of numerical simulations and
model equations (in which context they are usually referred to
as `lumps'), and a mathematically rigorous existence theory for
strong surface tension (Bond number $\beta$ greater than $\frac{1}{3}$)
has recently been given. In this article we present an existence theory
for the physically more realistic case $0<\beta<\frac{1}{3}$. A classical variational
principle for fully localised solitary waves
is reduced to a locally equivalent variational principle featuring
a perturbation of the functional associated with the Davey-Stewartson
equation. A nontrivial critical point of the reduced functional is found
by minimising it over its natural constraint set.
\end{abstract}

\section{Introduction}

\subsection{The hydrodynamic problem}

The classical \emph{three-dimensional gravity-capillary water wave
problem} concerns the irrotational flow of a perfect fluid of unit
density subject to the forces of gravity and surface tension. The
fluid motion is described by the Euler equations in a domain bounded
below by a rigid horizontal bottom $\{y=0\}$ and above by a free
surface $\{y=1+\eta(x,z,t)\}$, where the
function $\eta$ depends upon the two horizontal spatial directions
$x$, $z$ and time $t$.
In terms of an Eulerian velocity potential $\varphi$,
the mathematical problem is to solve Laplace's equation
\begin{equation}
\varphi_{xx} + \varphi_{yy} + \varphi_{zz} = 0, \qquad\qquad 0<y<1+\eta, \label{SWW 1}
\end{equation}
with boundary conditions
\begin{eqnarray}
\varphi_{y} & = & \parbox{100mm}{$0$,} y=0, \label{SWW 2} \\
\eta_t & = & \parbox{100mm}{$\varphi_{y} - \eta_x\varphi_x - \eta_z\varphi_z$,}  y=1+\eta, \label{SWW 3}
\\
\varphi_t & = & -\frac{1}{2}(\varphi_x^2+\varphi_{y}^2+\varphi_z^2) -\eta \nonumber \\
& &
\parbox{100mm}{$\displaystyle\qquad\mbox{} + \beta\left[\frac{\eta_x}{\sqrt{1+\eta_x^2+\eta_z^2}}\right]_x
+ \beta\left[\frac{\eta_z}{\sqrt{1+\eta_x^2+\eta_z^2}}\right]_z$,} y=1+\eta. \label{SWW 4}
\end{eqnarray}
Note that we use dimensionless variables, choosing $h$ as length scale,
$(h/g)^\frac{1}{2}$ as time scale and introducing the Bond number $\beta=\sigma/gh^2$,
where $h$ is the depth of the water in its undisturbed state, $g$ is the acceleration due to
gravity and $\sigma>0$ is the coefficient of surface tension.
In this article we consider \emph{fully localised solitary waves}, that is travelling-wave solutions to
\eqref{SWW 1}--\eqref{SWW 4} of the form
$\eta(x,z,t)=\eta(x+c t,z)$, $\varphi(x,y,z,t)=\varphi(x+c t,y,z)$
(so that the waves move with unchanging shape and constant speed $c$ from right to left)
with $\eta(x+c t,z) \rightarrow 0$ as $|(x+c t,z)| \rightarrow \infty$ (so that the waves
decay in every horizontal direction). We always take $\beta$ in the interval $(0,\frac{1}{3})$
(`weak surface tension').

To formulate our main result, let us first note that the function $s \mapsto c(s)$, $s \geq 0$ given by
$c(s)=\sqrt{(1+\beta s^2)/(s \coth s)}$ (the linear dispersion relation for a two-dimensional travelling wave train
with wave number $s \geq 0$ and speed $c>0$ -- see Figure \ref{Dispersion relation} below) has a unique global minimum at $s=\omega>0$;
denote the minimum value of $c^2$ by $\Lambda$.

\begin{theorem} \label{Main theorem}
Suppose that $0<\beta<\frac{1}{3}$ and $c^2=(1-\varepsilon^2)\Lambda$. There exists a fully localised solitary-wave
solution \eqref{SWW 1}--\eqref{SWW 4} for each sufficiently small value of $\varepsilon>0$.
\end{theorem}

This result confirms the prediction made on the basis of model equations, in particular the 
Davey-Stewartson equation (see Djordjevic \& Redekopp \cite{DjordjevicRedekopp77}, Ablowitz \& Segur
\cite{AblowitzSegur79} and Cipolatti \cite{Cipolatti92}), and numerical computations by
Parau, Vanden-Broeck \& Cooker \cite{ParauVandenBroeckCooker05a}
(see Figure \ref{Fully localised solitary wave} for a sketch of a typical free surface in their simulations).
It also complements recent existence theories for $\beta>\frac{1}{3}$ (`strong surface tension')
by Groves \& Sun \cite{GrovesSun08} and Buffoni \emph{et al.} \cite{BuffoniGrovesSunWahlen13}
(which also confirm prediction made by model equations, in particular the KP-I equation -- see Kadomtsev \& Petviashvili
\cite{KadomtsevPetviashvili70} and Ablowitz \& Segur \cite{AblowitzSegur79}).

\begin{figure}[h]
\centering

\includegraphics[width=8cm]{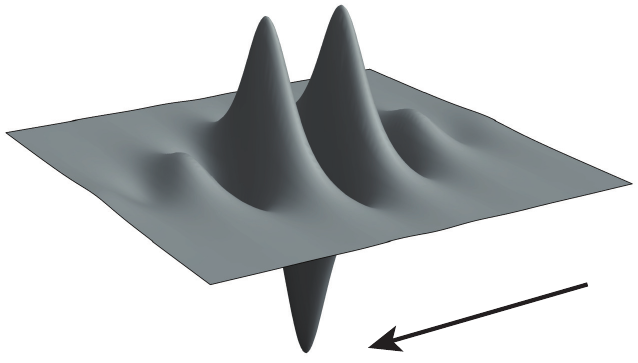}
{\it
\caption{Sketch of a fully localised solitary wave with weak surface tension; the
arrow shows the direction of wave propagation. \label{Fully localised solitary wave}}}
\end{figure}

\subsection{A variational principle}

The proof of Theorem \ref{Main theorem} is variational in nature.
Fully localised solitary waves are characterised as critical points of the wave energy
$$\EE(\eta,\varphi) = \int_{\R^2}\left\{ \frac{1}{2}\int_0^{1+\eta} (\varphi_x^2+\varphi_y^2+\varphi_z^2) \dy
+ \frac{1}{2}\eta^2 + \beta[\sqrt{1+\eta_x^2+\eta_z^2}-1]\right\}\dx\dz$$
subject to the constraint that the momentum
$$\II(\eta,\varphi) = \int_{\R^2} \eta_x \varphi|_{y=1+\eta} \dx\dz$$
in the $x$-direction is fixed (both are conserved quantities of \eqref{SWW 1}--\eqref{SWW 4} -- see articles by Zakharov \& Kuznetsov \cite{Za68,ZaKu74,ZaKu84,ZaKu97} and
Benjamin \& Olver \cite{BenjaminOlver82}); the wave speed $c$ is the Lagrange multiplier in the variational principle
$\delta(\EE-c\II)=0$. More
satisfactory representations of these functionals are obtained by means of the Dirichlet-Neumann operator
$G(\eta)$ introduced by Craig \cite{Craig91} and defined as follows.
For fixed $\Phi=\Phi(x,z)$ solve the boundary-value problem
\begin{eqnarray*}
& & \parbox{6cm}{$\varphi_{xx}+\varphi_{yy}+\varphi_{zz}=0,$}0<y<1+\eta, \\
& & \parbox{6cm}{$\varphi = \Phi,$}y=1+\eta, \\
& & \parbox{6cm}{$\varphi_y =0,$}y=0
\end{eqnarray*}
and define
\begin{align*}
G(\eta)\Phi & = \sqrt{1+\eta_x^2+\eta_z^2}\left.\frac{\partial \varphi}{\partial n}\right|_{y=1+\eta} \\
& = \varphi_y - \eta_x\varphi_x - \eta_z\varphi_z\Big|_{y=1+\eta}.
\end{align*}
Working with the variables $\eta=\eta(x,z)$ and $\Phi=\Phi(x,z)$, one finds that
$$
\EE(\eta,\Phi) = \int_{\R^2}\left\{ \frac{1}{2}\Phi G(\eta)\Phi
+ \frac{1}{2}\eta^2 + \beta[\sqrt{1+\eta_x^2+\eta_z^2}-1]\right\}\dx\dz,
$$
$$
\II(\eta,\Phi) = \int_{\R^2} \eta_x \Phi \dx\dz. 
$$

We find nontrivial critical points of $\EE-c\II$ in two steps.
(i) For given $\eta \neq 0$, we observe that
$\EE(\eta,\cdot)-c\II(\eta,\cdot)$ has a unique critical
point $\Phi_\eta$ which is the 
unique global minimiser $\Phi_\eta$ of $\EE(\eta,\cdot)-c\II(\eta,\cdot)$
and satisfies  $G(\eta)\Phi_\eta = c \eta_x$.
(ii) We seek nontrivial critical points of the functional
\begin{align}
\JJ(\eta)&:=\EE(\eta,\Phi_\eta)-c\II(\eta,\Phi_\eta) \nonumber \\
&=\EE(\eta,c G(\eta)^{-1}\eta_x)-c\II(\eta,cG(\eta)^{-1}\eta_x) \nonumber \\
&=\KK(\eta)-c^2 \LL(\eta), \label{Defn of J}
\end{align}
where
$$
\KK(\eta)=\int_{\R^2} \left(\frac12 \eta^2 +\beta\sqrt{1+\eta_x^2+\eta_z^2}-\beta\right)\dx\dz,
\qquad
\LL(\eta)=\frac{1}{2}\int_{\R^2} \eta\, K(\eta) \eta \dx\dz
$$
and $K(\eta)=-\partial_x G(\eta)^{-1}\partial_x$. The following theorem is a reformulation of our main result
in terms of critical points of $\JJ$.

\begin{theorem} \label{Variational main theorem}
Suppose that $0<\beta<\frac{1}{3}$ and $c^2=(1-\varepsilon^2)\Lambda$. The formula
\eqref{Defn of J} defines a smooth functional
$\JJ_\varepsilon: U \rightarrow \R$, where $U$ is a suitably chosen open neighbourhood of the origin in $H^3(\R^2)$,
which has a nontrivial critical point for each sufficiently
small value of $\varepsilon>0$.
\end{theorem}

\subsection{Variational reduction}

The existence of fully localised solitary waves with weak surface tension has been predicted by approximating
the hydrodynamic equations \eqref{SWW 1}--\eqref{SWW 4} by simpler model equations, in particular
the Davey-Stewartson equation (see Djordjevic \& Redekopp \cite{DjordjevicRedekopp77} and Ablowitz \& Segur
\cite{AblowitzSegur79}). Fully localised solitary wave solutions to the Davey-Stewartson equation have a variational
characterisation, and the direct methods of the calculus of variations have been used to show that it indeed has
such a solution
(see Cipolatti \cite{Cipolatti92} and Papanicolaou \emph{et al.} \cite[\S5]{PapanicolaouSulemSulemWang94}).
In this paper we seek critical points of the functional $\EE-c\II$. A direct application of well-developed standard variational methods, which are optimised for semilinear partial differential equations, is not possible due to the quasilinear nature of the hydrodynamic problem (see the discussion by Groves \& Sun \cite{GrovesSun08} and Buffoni et al. \cite{BuffoniGrovesSunWahlen13}). Instead we proceed by performing a rigorous local variational reduction (akin to the variational Lyapunov-Schmidt reduction)
which converts it to a perturbation of the Davey-Stewartson variational functional
(Section \ref{Reduction}). Critical points of the reduced functional are then found by applying
the direct methods of the calculus of variations in a perturbative fashion (Section \ref{sec:Existence theory}).

It is instructive to review the formal derivation of the Davey-Stewartson equation for travelling waves.
We begin with the linear dispersion relation for a two-dimensional sinusoidal
travelling wave train with wave number $s\geq0$ and speed $c>0$, namely
$$c^2 =\frac{1+\beta s^2}{f(s)}, \qquad f(s)=s \coth s$$
(see Figure \ref{Dispersion relation}).
For each fixed $\beta \in (0,\frac{1}{3})$ the function $s \mapsto c(s)$,
$s \geq 0$ has a unique global minimum at $s=\omega>0$ (the formula $\beta=f^\prime(\omega)/(2\omega f(\omega)-\omega^2f^\prime(\omega))$ defines a bijection
between the values of $\beta \in (0,\frac{1}{3})$ and $\omega \in (0,\infty)$);
we denote the minimum value of $c^2$ by $\Lambda$ (so that
$\Lambda=2\omega/(2\omega f(\omega)-\omega^2f^\prime(\omega))$).
Note for later use that
\begin{equation}
g(s):=1+\beta s^2 - \Lambda f(s) \geq 0, \qquad s \in \R, \label{Defn of g}
\end{equation}
with equality precisely when $s=\pm \omega$. 
Bifurcations of nonlinear solitary waves are expected whenever the
linear group and phase speeds are equal, so that $c^\prime(s)=0$ (see Dias \& Kharif \cite[\S 3]{DiasKharif99}).
We therefore expect the existence of small-amplitude solitary waves with speed near $\Lambda^{1/2}$;
the waves bifurcate from a linear periodic wave train with wavenumber $\omega$.

\begin{figure}[h]
\centering

\includegraphics[scale=0.67]{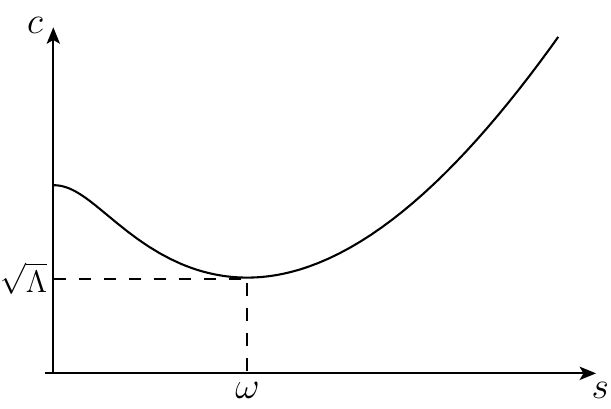}
{\it
\caption{Dispersion relation for a two-dimensional travelling wave train with wave number $s \geq 0$ and speed $c>0$.\label{Dispersion relation}}}
\end{figure}

Making the travelling-wave \emph{Ansatz} $\eta(x,z,t)=\tilde{\eta}(x+ct,z)$ and substituting
$c=\sqrt{(1-\varepsilon^2)\Lambda}$,
\begin{equation}
\tilde{\eta}(x,z) = \frac{1}{2}\varepsilon\zeta(\varepsilon x, \varepsilon z) \mathrm{e}^{\mathrm{i} \omega x}+\frac{1}{2}\varepsilon
\overline{\zeta(\varepsilon x, \varepsilon z)} \mathrm{e}^{-\mathrm{i} \omega x} \label{DS Ansatz}
\end{equation}
into equations \eqref{SWW 1}--\eqref{SWW 4}, one finds that
to leading order $\zeta$ satisfies the Davey-Stewartson equation
$$
- a_1 \zeta_{XX}-a_2 \zeta_{ZZ} + a_3 \zeta - 2C_1 \FF^{-1}\left[\frac{k_1^2}{(1-\Lambda)k_1^2+k_2^2} \FF[|\zeta|^2]\right]\zeta
-2C_2|\zeta|^2\zeta =0,
$$
where $X=\varepsilon x$, $Z=\varepsilon z$,
$$a_1=\frac{1}{8}\partial_{k_1}^2\tilde{g}(\omega,0), \qquad a_2=\frac{1}{8}\partial_{k_2}^2\tilde{g}(\omega,0), \qquad a_3=\frac{1}{4}\Lambda f(\omega),$$
$$\tilde{g}(k)=g(|k|) +\Lambda \frac{k_2^2}{|k|^2}f(|k|)$$
and formulae for the positive coefficients $C_1$, $C_2$
are given in Theorem \ref{thm:red func} (see Djordjevic \& Redekopp \cite{DjordjevicRedekopp77} and
Ablowitz \& Segur \cite{AblowitzSegur79}, noting the misprint in equation
(2.24d)). The Davey-Stewartson equation is the Euler-Lagrange equation for the functional
$\TT_0: H^1(\R^2) \rightarrow \R$ given by the formula
\begin{eqnarray*}
\lefteqn{\TT_0(\zeta) = \int_{\R^2}\big(a_1|\zeta_x|^2+a_2|\zeta_z|^2+a_3|\zeta|^2\big)\dx\dz} \qquad\qquad\\
& & \mbox{}-C_1\int_{\R^2}\frac{k_1^2}{(1-\Lambda)k_1^2+k_2^2}
|\FF[|\zeta|^2]|^2\dk_1\dk_2
-C_2\int_{\R^2}|\zeta|^4\dx\dz,
\end{eqnarray*}
where $\FF$ and $\FF^{-1}$ denote the Fourier and inverse Fourier transforms and we have replaced $(X,Z)$ with $(x,z)$.
This functional has a nontrivial critical point (Cipolatti \cite{Cipolatti92}, Papanicolaou \emph{et al.} \cite[\S5]{PapanicolaouSulemSulemWang94}), which of course corresponds to a
fully localised solitary-wave solution of the Davey-Stewartson equation (often called a `lump' solution).

Let us now return to the water-wave problem and in particular the task of finding a nontrivial critical point of
the functional
$$\JJ_\varepsilon(\eta)=\KK(\eta) - \Lambda(1-\varepsilon^2)\LL(\eta);$$
we study $\JJ_\varepsilon$ in a suitably chosen
neighbourhood $U$ of the origin in its function space $H^3(\R^2)$.
The Ansatz \eqref{DS Ansatz} suggests that the spectrum of a fully localised solitary wave
is concentrated near the points $(\omega,0)$ and $(-\omega,0)$.
We therefore decompose $\eta$ into the sum of functions $\eta_1$ and $\eta_2$ whose Fourier transforms
$\hat{\eta}_1$ and $\hat{\eta}_2$ are supported in
the region $S=B_\delta(\omega,0) \cup B_\delta(-\omega,0)$ (with $\delta \in (0,\frac{\omega}{3})$)
and its complement (see Figure \ref{Splitting}), so that $\eta_1 = \chi(D)\eta$,
$\eta_2 = (1-\chi(D))\eta$, where $\chi$ is the characteristic function of the set $S$ and $\chi(D)$ denotes the Fourier-multiplier
operator with symbol $\chi$.

\begin{figure}[h]
\centering

\includegraphics[scale=0.8]{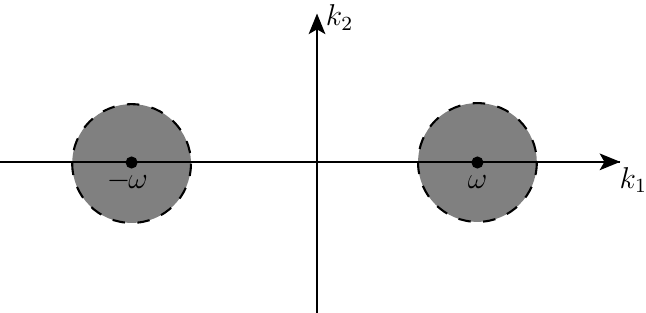}
{\it
\caption{The support of $\hat{\eta}_1$ is contained in the set $S=B_\delta(\omega,0) \cup B_\delta(-\omega,0)$. \label{Splitting}}}
\end{figure}

Observe that
$\eta \in U$ is a critical point of $\JJ_\varepsilon$, that is
$$
\mathrm{d}\JJ_\varepsilon[\eta](v)=0
$$
for all $v \in \XX:=H^3(\R^2)$, if and only if
$$\mathrm{d}\JJ_\varepsilon[\eta_1+\eta_2](v_1)=0, \qquad \mathrm{d}\JJ_\varepsilon[\eta_1+\eta_2](v_2)=0,$$
for all $v_1 \in \XX_1:=\chi(D)\XX$ and $v_2 \in \XX_2:=(1-\chi(D))\XX$.
For sufficiently small values of $\varepsilon>0$ the second of these equations can be solved for $\eta_2$ as
a function of $\eta_1$, and we thus obtain the reduced functional
$$\tilde{\JJ_\varepsilon}(\eta_1)=\JJ_\varepsilon(\eta_1+\eta_2(\eta_1)).$$
Applying the Davey-Stewartson scaling \eqref{DS Ansatz} to $\eta_1$, one obtains a reduced equation for $\zeta$
which is the Euler-Lagrange equation for the functional $\TT_\varepsilon: U_\varepsilon \rightarrow \R$
given by
$$
\TT_\varepsilon(\zeta)=\TT_0(\zeta)+O(\varepsilon^{1/2}\|\zeta\|_1^2)
$$
(with corresponding estimates for the derivatives of the remainder term);
each critical point $\zeta_\infty$ of $\TT_\varepsilon$ with $\varepsilon>0$
corresponds to a critical point $\eta_\infty$ of
$\widetilde\JJ_\varepsilon$, which in turn defines a critical point $\eta_1+\eta_2(\eta_1)$ of $\JJ_\varepsilon$.
Here $U_\varepsilon:=B_R(0)\subset H_\varepsilon^1(\R^2):=\chi(\varepsilon D)H^1(\R^2)$, where $R$
is independent of $\varepsilon$ and can be chosen arbitrarily large.

All estimates are given in Section \ref{Reduction}
are uniform over values of $\varepsilon$
in an interval $(0,\varepsilon_0)$ and in general we
replace $\varepsilon_0$ with a smaller number if necessary for the validity of our results (note in
particular that $\varepsilon_0 \rightarrow 0$ as $R \rightarrow \infty$). We consistently abbreviate
inequalities of the form $g_1(s) \leq k g_2(s)$, where $k$ is a
generic constant which does not depend upon $\varepsilon$, to $g_1(s) \lesssim g_2(s)$.

\begin{remark}
The dispersion relation for surface waves on water of infinite depth
also exhibits the features shown in Figure \ref{Dispersion relation}, and the corresponding
travelling-wave Ansatz leads to the two-dimensional nonlinear Schr\"{o}dinger equation.
The dispersion relation for strong surface tension ($\beta>\frac{1}{3}$) is however qualitatively different,
having a unique global minimum at $s=0$ (with $\Lambda=1$); in this case the
Ansatz $\tilde{\eta}(x,z)=\varepsilon^2 \zeta(\varepsilon x, \varepsilon^2 z)$ leads to the KP-I equation.
The two-dimensional nonlinear Schr\"{o}dinger and KP-I equations
have variational characterisations and `lump' solutions, and the
variational reduction of the water-wave problem to a perturbation of one of these equations will be
discussed elsewhere.
\end{remark}

\subsection{Critical points of the reduced functional}

\begin{figure}[h]
\centering
\includegraphics[width=4cm]{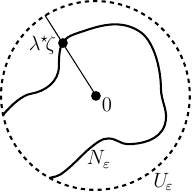}
\hspace{2cm}
\includegraphics[width=5cm]{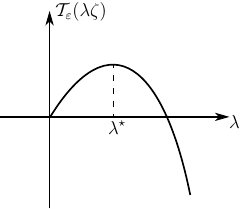}

{\it
\caption{Any ray intersects the natural constraint manifold $N_\varepsilon$
in at most one point and the value of $\TT_\varepsilon$ along such a ray attains a strict maximum at this point.}
\label{ncs geometry}}
\end{figure}

In Section \ref{sec:Existence theory} we seek critical points of $\TT_\varepsilon$ by minimising it on its
\emph{natural constraint set}
$$N_\varepsilon=\{\zeta\in U_\varepsilon:\zeta\neq 0,\mathrm{d}\TT_\varepsilon[\zeta](\zeta)=0\},$$
our motivation being the observation that any \emph{ground state}, that is a (necessarily nontrivial) minimiser
of $\TT_\varepsilon$ over $N_\varepsilon$, is a critical point of $\TT_\varepsilon$
(see Remark \ref{rem:ground states} and Willem \cite[\S 4]{Willem} for a general discussion of natural
constraint sets). The natural constraint set has a geometrical interpretation (see Figure \ref{ncs geometry}),
namely that
any ray in $B_R(0)\subset H_\varepsilon^1(\R^2)$ intersects the natural constraint manifold $N_\varepsilon$ in at most one point and the value of $\TT_\varepsilon$ along such a ray attains a strict maximum at this point.
This fact is readily established by a direct calculation for $\varepsilon=0$ and deduced by a perturbation argument for $\varepsilon>0$, and similar perturbative methods yield the existence of a 
a minimising sequence $\{\zeta_n\} \subset B_{R-1}(0)$
with
$$\TT_\varepsilon|_{N_\varepsilon} \rightarrow \inf \TT_\varepsilon|_{N_\varepsilon}>0, \qquad
\mathrm{d}\TT_\varepsilon[\zeta_n]\rightarrow 0$$ as $n \rightarrow \infty$.

We study minimising sequences of the above kind in Section \ref{Existence 1}, where the following
theorem is established by a weak continuity argument.

\begin{theorem} \label{Variational 1}
Let $\{\zeta_n\} \subset B_{R-1}(0)$ be a minimising sequence for $\TT_\varepsilon|_{N_\varepsilon}$
with
$\mathrm{d}\TT_\varepsilon[\zeta_n]\rightarrow 0$ as $n \rightarrow \infty$.
There exists a sequence $\{w_n\} \subset \Z^2$ with the property that
$\{\zeta_n(\cdot+w_n)\}$
converges weakly to a nontrivial critical point $\zeta_\infty$ of
$\TT_\varepsilon$.
\end{theorem}

The short proof of Theorem \ref{Variational 1} does not show that the critical point $\eta_\infty$ is a ground state. This deficiency is removed in Section \ref{Existence 2} with the help of an abstract version of the concentration-compactness principle (Lions \cite{Lions84a,Lions84b}) which is given in the Appendix.

\begin{theorem} \label{Variational 2}
Let $\{\zeta_n\} \subset B_{R-1}(0)$ be a minimising sequence for $\TT_\varepsilon|_{N_\varepsilon}$
with $\mathrm{d}\TT_\varepsilon[\zeta_n]\rightarrow 0$ as $n \rightarrow \infty$.
There exists a sequence $\{w_n\} \subset \Z^2$ with the property that
$\{\zeta_n(\cdot+w_n)\}$
converges weakly to a ground state $\zeta_\infty$.
\end{theorem}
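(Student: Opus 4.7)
The plan is to apply the abstract concentration-compactness principle from the Appendix to the sequence of $L^1$ densities naturally associated with the $H^1_\varepsilon$-norms of $\zeta_n$. The masses $\|\zeta_n\|_1^2$ are uniformly bounded above since $\{\zeta_n\} \subset B_{R-1}(0)$, and uniformly bounded below by combining the identity $\mathrm{d}\TT_\varepsilon[\zeta_n](\zeta_n) = 0$ (that is, $\zeta_n \in N_\varepsilon$) with the uniform-in-$\varepsilon$ coercivity of the quadratic part of $\TT_\varepsilon$ and the strictly positive infimum $I_\varepsilon := \inf_{N_\varepsilon}\TT_\varepsilon$; both of the latter properties are inherited from the Davey-Stewartson functional $\TT_0$ (for which they follow from Cipolatti \cite{Cipolatti92}) via the approximation $\TT_\varepsilon = \TT_0 + O(\varepsilon^{1/2}\|\zeta\|_1^2)$. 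The principle leaves three alternatives --- vanishing, dichotomy or concentration up to translations --- and the proof reduces to excluding the first two and extracting a ground state from the third.

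\emph{Vanishing} is excluded by showing that if $\sup_{w \in \R^2}\int_{w+[0,1]^2}|\zeta_n|^2\dx\dz \to 0$, then a Lions-type lemma adapted to the frequency-localised space $H^1_\varepsilon(\R^2)$ forces $\zeta_n \to 0$ strongly in $L^4(\R^2)$. Both the quartic local and the nonlocal cubic contributions to $\TT_\varepsilon$ are controlled by $\|\zeta_n\|_{L^4}^4$ (for the nonlocal term, via $L^2$-boundedness of the zeroth-order Fourier multiplier $k_1^2/((1-\Lambda)k_1^2+k_2^2)$ combined with Plancherel), so the identity $\mathrm{d}\TT_\varepsilon[\zeta_n](\zeta_n) = 0$ and coercivity force $\|\zeta_n\|_1 \to 0$, hence $\TT_\varepsilon(\zeta_n) \to 0$, contradicting $I_\varepsilon > 0$. \emph{Dichotomy} is excluded by decomposing $\zeta_n = \zeta_n^{(1)} + \zeta_n^{(2)} + r_n$ with $\mathrm{supp}\,\zeta_n^{(j)}$ at mutual distance tending to $\infty$ and $\|r_n\|_1 \to 0$. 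Disjointness of supports kills the quartic cross terms modulo $o(1)$, and a bilinear estimate for the nonlocal form applied to $(|\zeta_n^{(1)}|^2,|\zeta_n^{(2)}|^2)$ gives the analogous splitting for the nonlocal term. Rescaling each nontrivial piece back onto $N_\varepsilon$ at cost $o(1)$ leads to $I_\varepsilon \ge \TT_\varepsilon(\zeta_n^{(1)}) + \TT_\varepsilon(\zeta_n^{(2)}) + o(1) \ge 2I_\varepsilon + o(1)$, another contradiction.

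Thus \emph{concentration} must hold: there exist $w_n \in \Z^2$ such that $\{\zeta_n(\cdot+w_n)\}$ is tight in $H^1_\varepsilon(\R^2)$. Passing to a subsequence, $\tilde\zeta_n := \zeta_n(\cdot + w_n) \rightharpoonup \zeta_\infty$ weakly in $H^1_\varepsilon(\R^2)$; tightness plus local Rellich compactness upgrades this to strong convergence in $L^4(\R^2)$, and nontriviality of $\zeta_\infty$ follows from the uniform lower bound on $\|\tilde\zeta_n\|_{L^4}$ derived as in the vanishing step. Translation-invariance of $\TT_\varepsilon$ yields $\mathrm{d}\TT_\varepsilon[\tilde\zeta_n] \to 0$ in $(H^1_\varepsilon)^\prime$, and passing to the limit against any fixed $v \in C_c^\infty(\R^2)$ --- using weak convergence for the quadratic part and $L^4$-convergence for the nonlinear parts --- gives $\mathrm{d}\TT_\varepsilon[\zeta_\infty] = 0$, so in particular $\zeta_\infty \in N_\varepsilon$. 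Weak lower semicontinuity of the coercive quadratic part together with $L^4$-strong convergence of the nonlinearities then gives $\TT_\varepsilon(\zeta_\infty) \le \liminf_n \TT_\varepsilon(\tilde\zeta_n) = I_\varepsilon$, identifying $\zeta_\infty$ as a ground state.

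The principal obstacle is the dichotomy step: the nonlocal cubic term is defined by a zeroth-order Fourier multiplier whose convolution kernel does not decay pointwise, so decoupling of well-separated pieces is not immediate and requires a genuine bilinear oscillatory-integral estimate rather than a crude kernel truncation. A secondary difficulty is that the whole argument takes place in the $\varepsilon$-dependent space $H^1_\varepsilon(\R^2) = \chi(\varepsilon D)H^1(\R^2)$, so the Lions vanishing lemma, the embedding $H^1_\varepsilon \hookrightarrow L^4$ and the local Rellich compactness must all be verified with constants uniform in small $\varepsilon$ --- possible because $\chi(\varepsilon D)$ is a projection onto a narrow band of frequencies centred on $\pm(\omega,0)$.
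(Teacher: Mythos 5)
Your overall strategy (Lions trichotomy: exclude vanishing, exclude dichotomy by subadditivity, then upgrade concentration) is not the paper's route, and as sketched it has two genuine gaps. The first is the dichotomy step. Your chain $c_\varepsilon\ge \TT_\varepsilon(\zeta_n^{(1)})+\TT_\varepsilon(\zeta_n^{(2)})+o(1)\ge 2c_\varepsilon+o(1)$ rests on rescaling each piece onto $N_\varepsilon$ ``at cost $o(1)$'', which is unjustified: the pieces need not nearly satisfy the Nehari identity, so their rescaling factors need not tend to $1$ (indeed a piece with small mass has a large rescaling factor and may even leave $U_\varepsilon$). Moreover the functional is $\QQ-\SS+\varepsilon^{1/2}\RR_\varepsilon$, and while $\QQ$ and $\SS$ can be shown to decouple across well-separated pieces, nothing of the sort is known for $\RR_\varepsilon$, which after unwinding the reduction contains all higher-order nonlocal water-wave terms evaluated at $\eta_1(\zeta)+\eta_2(\eta_1(\zeta))$; since $\RR_\varepsilon$ is only of size $O(\varepsilon^{1/2}\|\zeta\|_1^2)$ but not $o(1)$ in $n$, a split-off piece of mass comparable to $\varepsilon^{1/2}$ cannot be excluded by subadditivity alone. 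Notice also that your dichotomy argument never uses the hypothesis $\mathrm{d}\TT_\varepsilon[\zeta_n]\rightarrow 0$. That hypothesis is exactly what the paper exploits: via the weak continuity of $\JJ_\varepsilon^\prime$ (a nontrivial property of the operator $K(\eta)$) every translated weak limit in the (Benci--Cerami type) profile decomposition of the Appendix is itself a critical point, hence lies on $N_\varepsilon$ with $\|\cdot\|_1\ge D_2$ and energy $\ge c_\varepsilon$; combined with the splitting of $\SS$ (Riemann--Lebesgue) and the identity $\TT_\varepsilon=\SS+O(\varepsilon^{1/2})$ on $N_\varepsilon$, this gives $c_\varepsilon\ge mc_\varepsilon-O(\varepsilon^{1/2})$ and forces a single profile --- a counting argument that is robust to the $O(\varepsilon^{1/2})$ errors where generic subadditivity is not.

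The second gap is the identification of the limit as a ground state. You claim $\TT_\varepsilon(\zeta_\infty)\le\liminf_n\TT_\varepsilon(\tilde\zeta_n)$ from weak lower semicontinuity of the quadratic part plus strong $L^4$ convergence of the nonlinearities, but this ignores $\varepsilon^{1/2}\RR_\varepsilon$: that remainder is neither weakly lower semicontinuous nor controlled by $L^4$ convergence, and it is not a small-in-$n$ error, so the inequality does not follow. The paper instead proves the value convergence $\TT_\varepsilon(\zeta_n)\rightarrow\TT_\varepsilon(\zeta_\infty)$ by returning to $\JJ_\varepsilon$: the uniform concentration estimate from the profile decomposition yields strong $W^{1,\infty}(\R^2)$ convergence of $\eta_n$ to $\eta_\infty$, and then the identity for $2\JJ_\varepsilon(\eta)-\mathrm{d}\JJ_\varepsilon[\eta](\eta)$ together with the mapping and weak-continuity properties of $K(\eta)$ (Proposition \ref{prop:Properties of K}) gives the conclusion. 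Some version of this water-wave-specific argument (or at least a proof that $\RR_\varepsilon$ converges along the concentrated sequence) is needed to close your final step; likewise, your appeal to translation invariance of $\TT_\varepsilon$ and the decoupling of the nonlocal quartic form are points you flag but do not prove, whereas the paper's $\SS$-splitting is carried out explicitly.
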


We prove Theorems \ref{Variational 1} and \ref{Variational 2} for $\varepsilon>0$, taking advantage of the relationship between
the functionals $\JJ_\varepsilon$ and $\TT_\varepsilon$ and the fact that
the spaces $\chi(\varepsilon D)H^s(\R^2)$, $s \geq 0$ are all topologically equivalent;
the function $\eta_\infty = \eta_1(\zeta_\infty) + \eta_2(\eta_1(\zeta_\infty))$ given by these theorems
is then a nonzero critical point of $\JJ_\varepsilon$, which concludes the proof of Theorem \ref{Main theorem}. 

\begin{note}
The main results of this paper have been announced by Buffoni \cite{Buffoni2015}.
\end{note}

\section{Variational reduction} \label{Reduction}

\subsection{The variational functional} \label{VF}
In this section we discuss functional-analytic aspects of the functional
$$
\JJ_\varepsilon(\eta)=\KK(\eta) - \Lambda(1-\varepsilon^2)\LL(\eta),
$$
in which
\begin{equation}
\KK(\eta)=\int_{\R^2} \left(\frac12 \eta^2 +\beta\sqrt{1+\eta_x^2+\eta_z^2}-\beta\right)\dx\dz, \qquad
\LL(\eta)=\frac{1}{2}\int_{\R^2} \eta\, K(\eta) \eta \dx\dz
\label{Defn of KK,LL}
\end{equation}
and $K(\eta)\xi=-(\varphi|_{y=1+\eta})_x$, where $\varphi$ is the solution of the boundary-value problem
\begin{align}
&\Delta \varphi=0, && 0< y < 1+\eta(x,z), \label{Kproblem 1} \\
&\varphi_y=0, && y=0, \label{Kproblem 2} \\
&\varphi_y-\eta_x \varphi_x-\eta_z \varphi_z=\xi_x, && y=1+\eta(x,z) \label{Kproblem 3}
\end{align} 
(which is unique up to an additive constant). 
We examine the boundary-value problem \eqref{Kproblem 1}--\eqref{Kproblem 3} below and show in particular that the mapping
$K(\cdot)\colon \ZZ \to \LL(H^{5/2}(\R^2), H^{3/2}(\R^2))$ is analytic at the origin (Corollary \ref{cor:K analytic}), where
$$
\ZZ=\{\eta \in \mathcal{S}^\prime(\R^2): \|\eta\|_\ZZ := \|\hat{\eta}_1\|_{L^1(\R^2)} + \|\eta_2\|_3 < \infty\},
$$
$$\eta_1=\chi(D) \eta, \qquad \eta_2=(1-\chi(D)) \eta,$$
and $\chi$ is the characteristic function of the set $S=B_\delta(\omega,0) \cup B_\delta(-\omega,0)$
(with $0<\delta<\frac{\omega}{3}$).
In view of this result we choose $M$ sufficiently small and study $\JJ_\varepsilon$ in
the set
$$U=\{\eta \in H^3(\R^2): \|\eta\|_\ZZ < M\},$$
noting that $H^3(\R^2)$ is continuously embedded in $\ZZ$ and that $U$ is an open neighbourhood of the origin in $H^3(\R^2)$.
(Here, and in the remainder of the paper, we denote the usual norm for $H^r(\R^2)$ by $\|\cdot\|_r$ and for $W^{m,\infty}(\R^2)$
by $\|\cdot\|_{m,\infty}$.)

\subsubsection*{The boundary-value problem (2.2)--(2.4)}

This boundary-value problem is studied using the change of variable
\begin{equation}
\label{flattening}
y^\prime=\frac{y}{1+\eta},\qquad u(x,y^\prime,z)=\varphi(x,y,z),
\end{equation}
which maps $\Sigma_\eta=\{(x,y,z)\colon x,z\in \R, 0<y<1+\eta(x,z)\}$ to the `slab'
$\Sigma=\{(x,y^\prime,z)\colon x,z\in \R, y^\prime\in (0,1)\}$.
Dropping the primes, we find that the boundary-value problem is transformed into
\begin{align}
&\Delta u=\partial_x F_1(\eta,u)+\partial_y F_3(\eta,u)+\partial_z F_2(\eta,u), && \quad 0 < y <1, \label{flatKproblem 1} \\
&u_y=0,&& \quad y=0, \label{flatKproblem 2} \\
&u_y=F_3(\eta,u)+\xi_x, && \quad y=1, \label{flatKproblem 3}
\end{align}
where
\[
F_1(\eta,u)=-\eta u_x+y\eta_x u_y, \quad
F_2(\eta,u)=-\eta u_z+y\eta_z u_y,\]
\[
F_3(\eta,u)=\frac{\eta u_y}{1+\eta}+y\eta_x u_x+y\eta_z u_z-\frac{y^2}{1+\eta}(\eta_x^2+\eta_z^2) u_y;
\]
equations \eqref{flatKproblem 1}--\eqref{flatKproblem 3} are studied using the following proposition, whose
proof is given by Buffoni \emph{et al.} \cite[Propositions 2.20 and 2.21]{BuffoniGrovesSunWahlen13}.

\begin{proposition} \label{prop:Defn of Gamma}
Suppose that $r \geq 1$. For each $F_1$, $F_2$, $F_3 \in H^r(\Sigma)$ and $\xi \in H^{r+1/2}(\R^2)$
the boundary-value problem
\begin{align*}
&\Delta u=\partial_x F_1+\partial_y F_3+\partial_z F_2, && \quad 0 < y <1, \\
&u_y=0,&& \quad y=0, \\
&u_y=F_3+\xi_x, && \quad y=1
\end{align*}
admits a solution $u$ which is unique up to an additive constant. Furthermore, the mapping
$(F_1,F_2,F_3,\xi) \mapsto u$ defines a bounded linear operator $\Gamma: H^r(\Sigma)^3 \times H^{r+1/2}(\R^2)
\rightarrow H^{r+1}_\star(\Sigma)$, where
$H_\star^{r+1}(\Sigma)$ is the completion of
$$\mathcal{S}(\Sigma,{\mathbb R})=\{u \in C^\infty(\bar{\Sigma}):
|(x,z)|^m|\partial_x^{\alpha_1}\partial_y^{\alpha_2}\partial_z^{\alpha_3}u|\mbox{ is bounded for all }m,\alpha_1,\alpha_2, \alpha_3 \in {\mathbb N}_0 \}$$
with respect to the norm
$$ \|u\|_{H^{r+1}_\star(\Sigma)}:=\|u_x\|_{H^r(\Sigma)} + \|u_y\|_{H^r(\Sigma)} + \|u_z\|_{H^r(\Sigma)}.$$
\end{proposition}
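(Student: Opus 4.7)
The plan is to reduce the three-dimensional boundary-value problem to a one-parameter family of two-point ODEs in $y$ by applying the Fourier transform $\FF_{(x,z)\to k}$ in the horizontal variables. Writing $\hat u(k,y) = \FF[u](k,y)$ and analogously for the right-hand sides, the system becomes
\begin{align*}
&\hat u_{yy}(k,y) - |k|^2 \hat u(k,y) = ik_1\hat F_1(k,y) + ik_2\hat F_2(k,y) + \partial_y \hat F_3(k,y),\\
&\hat u_y(k,0) = 0,\qquad \hat u_y(k,1) = \hat F_3(k,1) + ik_1\hat\xi(k).
\end{align*}
For $k\neq 0$ the operator $\partial_y^2-|k|^2$ with Neumann conditions on $[0,1]$ is boundedly invertible; its Green's function can be written explicitly in terms of $\cosh(|k|y)$ and $\sinh(|k|(1-y))$, and the inhomogeneous Neumann datum at $y=1$ is absorbed by adding a suitable multiple of $\cosh(|k|y)$. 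The zero-frequency mode corresponds to the constant ambiguity in $u$, and accounts for the non-uniqueness up to an additive constant asserted in the proposition.

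To prove the mapping estimate I would first establish the base case. Multiplying the ODE by $\overline{\hat u}$, integrating over $y\in(0,1)$, using the boundary conditions and applying Cauchy--Schwarz with a small parameter yields the pointwise-in-$k$ energy bound
\[
\int_0^1 \bigl(|\hat u_y|^2 + |k|^2 |\hat u|^2\bigr)\dy
\lesssim \sum_{j=1}^{3}\int_0^1 |\hat F_j|^2 \dy + |k||\hat\xi(k)|^2.
\]
Integrating over $k\in\R^2$ and invoking Plancherel together with $\int_{\R^2}|k||\hat\xi(k)|^2\dk \lesssim \|\xi\|_{1/2}^2$ gives the bound
\[
\|u_x\|_0 + \|u_y\|_0 + \|u_z\|_0 \lesssim \|F_1\|_0 + \|F_2\|_0 + \|F_3\|_0 + \|\xi\|_{1/2}.
\]
The general case $r\geq 1$ follows by induction on the order of horizontal derivatives: each $\partial_x^\alpha\partial_z^\beta u$ satisfies a problem of precisely the same form with $F_j$ and $\xi$ replaced by their $(\alpha,\beta)$-derivatives, so horizontal Sobolev norms propagate from the base estimate. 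Vertical regularity is then recovered by reading $u_{yy} = -u_{xx}-u_{zz}+\partial_x F_1+\partial_y F_3+\partial_z F_2$ directly off the equation.

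The main obstacle is obtaining these bounds uniformly across the whole range of $k$. The Green's function carries a prefactor $(|k|\sinh|k|)^{-1}$ that is singular as $|k|\to 0$; this is compatible with the norm of $H^{r+1}_\star(\Sigma)$ precisely because that norm controls $u_x$, $u_y$, $u_z$ rather than $u$ itself, so that $\FF[u_x]=ik_1\hat u$ and $\FF[u_z]=ik_2\hat u$ carry the extra $k$-factors that cancel the singularity --- equivalently, the zero-frequency mode is harmless once $u$ is viewed modulo constants. At the opposite extreme $|k|\to\infty$ the exponential growth of $\sinh|k|$ gives sharp decay, so the estimates pass through without further difficulty.
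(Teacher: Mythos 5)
The paper itself offers no proof of this proposition --- it is quoted from Buffoni \emph{et al.}\ \cite[Propositions 2.20 and 2.21]{BuffoniGrovesSunWahlen13} --- so your strategy (Fourier transform in $(x,z)$, a family of Neumann two-point problems in $y$, an energy estimate, and a tangential-derivative induction with vertical regularity read off from the equation) is the natural one and close in spirit to the cited source. There is, however, a genuine gap exactly at the point you yourself call ``the main obstacle'', and the pointwise-in-$k$ energy bound you assert is not correct as stated. When you multiply by $\overline{\hat u}$ and integrate the term $\partial_y\hat F_3$ by parts, the boundary contribution at $y=1$ cancels against the Neumann datum (that is the purpose of the divergence structure), but at $y=0$, where the stated condition is $\hat u_y=0$ rather than $\hat u_y=\hat F_3$, you are left with the term $\hat F_3(k,0)\,\overline{\hat u(k,0)}$, which is missing from your inequality and cannot be absorbed at low frequencies. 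Indeed, integrating the ODE over $y\in(0,1)$ gives $|k|^2\int_0^1\hat u\,\mathrm{d}y=\hat F_3(k,0)+\mathrm{i}k_1\hat\xi(k)-\mathrm{i}k_1\int_0^1\hat F_1\,\mathrm{d}y-\mathrm{i}k_2\int_0^1\hat F_2\,\mathrm{d}y$, so the constant-in-$y$ mode contributes $|k|\,|\hat u|\sim|\hat F_3(k,0)|/|k|$ as $k\to0$, which is not square-integrable over $k\in\R^2$ for general $F_3\in H^r(\Sigma)$ (try $F_3=\phi(x,z)\psi(y)$ with $\psi(0)=1$, $\psi(1)=0$, $\hat\phi(0)\neq0$, $F_1=F_2=0$, $\xi=0$). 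Your closing claim that the factor of $k$ carried by $u_x$, $u_z$ cancels the $(|k|\sinh|k|)^{-1}\sim|k|^{-2}$ singularity is therefore too quick: one factor of $|k|$ leaves $|k|^{-1}$, and square-integrability near $k=0$ hinges on the transformed data vanishing there, which holds for the $F_1$, $F_2$, $\xi$ contributions (explicit factors $\mathrm{i}k_1$, $\mathrm{i}k_2$) but for the $F_3$ contribution only if $F_3|_{y=0}=0$ --- as is the case for the specific $F_3(\eta,u)$ in \eqref{flatKproblem 1}--\eqref{flatKproblem 3}, since $u_y$ vanishes on $y=0$ --- or if the bottom condition is read as the conormal one $u_y=F_3$.

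To close the gap you should make this hypothesis (or interpretation of the bottom condition) explicit and then actually carry out the low-frequency estimate using these cancellations, for instance by splitting $\hat u(k,\cdot)$ into its $y$-mean, which is computed from the identity above, and a mean-zero part, which is controlled uniformly in $k$ by a Poincar\'{e} inequality in $y$; alternatively one can work directly from the weak formulation modulo constants. Two smaller points: the weight $|k||\hat\xi(k)|^2$ in your claimed bound should be $(1+|k|)|\hat\xi(k)|^2$ (the trace term at $y=1$ cannot be absorbed with weight $|k|$ when $|k|\le 1$), which is harmless since $\int_{\R^2}(1+|k|)|\hat\xi|^2\dk_1\dk_2\lesssim\|\xi\|_{1/2}^2$; and for $r\ge 2$ the recovery of vertical regularity must be iterated by differentiating the equation in $y$, which is routine but should be said.
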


The following result is obtained by the method used by Buffoni \emph{et al.} \cite[Corollary 2.23 and Proposition 2.29]{BuffoniGrovesSunWahlen13}
(who work in the standard Sobolev space $H^3(\R^2))$,
where the `elementary inequality' quoted on page 1031 there is replaced by
\begin{align*}
\|\eta w\|_{H^2(\Sigma)} & \leq  \|\eta_1 w\|_{H^2(\Sigma)} + \|\eta_2 w\|_{H^2(\Sigma)}\\
& \lesssim \|\eta_1\|_{2,\infty} \|w\|_{H^2(\Sigma)} + \|\eta_2\|_2 \|w\|_{H^2(\Sigma)} \\
& \lesssim (\|\hat{\eta}_1\|_{L^1(\R^2)} + \|\eta_2\|_3) \|w\|_{H^2(\Sigma)} \\
&= \|\eta\|_\ZZ \|w\|_{H^2(\Sigma)}
\end{align*}
(which also holds for $\|\eta_x w\|_2$ and $\|\eta_z w\|_2$ since $\hat{\eta}_1$ has compact support).

\begin{lemma} \label{lem:u analytic}
For each $\xi \in H^{5/2}(\R^2)$ and sufficiently small $\eta \in \ZZ$
the boundary-value problem \eqref{flatKproblem 1}--\eqref{flatKproblem 3} admits a solution $u$ which is unique
up to an additive constant and satisfies $u \in H^3_\star(\Sigma)$.
Furthermore, the mapping $\ZZ\to \mathcal L(H^{5/2}(\R^2), H^3_\star(\Sigma))$ given by 
$\eta \mapsto (\xi \mapsto u)$ is analytic at the origin.
\end{lemma}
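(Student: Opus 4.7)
The strategy is to reformulate \eqref{flatKproblem 1}--\eqref{flatKproblem 3} as an operator equation by means of the linear solution operator $\Gamma$ from Proposition \ref{prop:Defn of Gamma}. Working modulo an additive constant, one has that $u \in H^3_\star(\Sigma)$ solves \eqref{flatKproblem 1}--\eqref{flatKproblem 3} if and only if
$$u = \Gamma\bigl(F_1(\eta,u),\,F_2(\eta,u),\,F_3(\eta,u),\,\xi\bigr).$$
Since each $F_i(\eta,u)$ is linear in $u$ for fixed $\eta$, this takes the form $u=A(\eta)u+B\xi$, where $A(\eta)u:=\Gamma(F_1(\eta,u),F_2(\eta,u),F_3(\eta,u),0)$ and $B\xi:=\Gamma(0,0,0,\xi)$. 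My plan would then rest on two points: (a)~$A\colon \ZZ \to \LL(H^3_\star(\Sigma),H^3_\star(\Sigma))$ is analytic at the origin with $A(0)=0$; (b)~the Neumann series $(\mathrm{id}-A(\eta))^{-1}=\sum_{n\ge 0}A(\eta)^n$ converges in operator norm for sufficiently small $\|\eta\|_\ZZ$, yielding the unique solution $u=(\mathrm{id}-A(\eta))^{-1}B\xi$.

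For (a), by Proposition \ref{prop:Defn of Gamma} with $r=2$ it suffices to prove that $(\eta,u)\mapsto F_i(\eta,u)$ is analytic $\ZZ\times H^3_\star(\Sigma)\to H^2(\Sigma)$. The terms in $F_1$, $F_2$ and the pieces $y\eta_x u_x$, $y\eta_z u_z$ of $F_3$ are handled by applying the multiplicative estimate
$$\|\eta w\|_2 + \|\eta_x w\|_2 + \|\eta_z w\|_2 \lesssim \|\eta\|_\ZZ\,\|w\|_2$$
quoted immediately before the lemma, slicewise in $y$ and then integrated; the extra $y$-derivatives cost nothing since $\eta$ is independent of $y$. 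The remaining terms in $F_3$, which carry the factor $(1+\eta)^{-1}$, are treated by expanding $(1+\eta)^{-1}$ as a Neumann series $\sum_{n\ge 0}(-\eta)^n$ and interpreting it, via iterated application of the same multiplicative inequality, as an element of $\LL(H^2(\R^2))$ depending analytically on $\eta\in\ZZ$ near $0$. Multiplying by $\eta u_y$ or $(\eta_x^2+\eta_z^2)u_y$ then produces a convergent power-series representation of $F_3(\eta,u)$ in $\eta$ with values in $H^2(\Sigma)$, and hence of $A(\eta)$, together with the bound $\|A(\eta)\|_{\LL(H^3_\star(\Sigma))}=O(\|\eta\|_\ZZ)$.

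Item (b) is then immediate: for $\|\eta\|_\ZZ$ small enough, $(\mathrm{id}-A(\eta))^{-1}$ exists and is analytic in $\eta$ into $\LL(H^3_\star(\Sigma))$, so that $\eta\mapsto(\xi\mapsto u)=(\mathrm{id}-A(\eta))^{-1}B$ is analytic $\ZZ\to\LL(H^{5/2}(\R^2),H^3_\star(\Sigma))$ with $u$ linear in $\xi$; uniqueness modulo constants follows from invertibility of $\mathrm{id}-A(\eta)$.

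The main technical obstacle is carrying out the multiplicative and Neumann-series estimates in the mixed norm $\|\cdot\|_\ZZ$ rather than the $H^3$-norm used in \cite{BuffoniGrovesSunWahlen13}. The underlying mechanism is that $\eta_1$ has Fourier transform supported in a fixed compact set, so $\eta_1\in W^{k,\infty}(\R^2)$ for every $k$ with norm controlled by $\|\hat\eta_1\|_{L^1(\R^2)}$, whereas $\eta_2$ is handled in the standard way by its $H^3$-norm; the highlighted inequality cleanly separates these two regimes, after which the remainder of the proof is a direct adaptation of the arguments in \cite[Corollary 2.23 and Proposition 2.29]{BuffoniGrovesSunWahlen13}.
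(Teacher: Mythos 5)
Your proposal is correct and follows essentially the same route as the paper, which simply invokes the method of Buffoni \emph{et al.}\ \cite[Corollary 2.23 and Proposition 2.29]{BuffoniGrovesSunWahlen13}: the fixed-point formulation $u=\Gamma(F_1(\eta,u),F_2(\eta,u),F_3(\eta,u),\xi)$, the Neumann-series inversion of $\mathrm{id}-A(\eta)$ (an operator the paper itself uses later in the proof of Proposition \ref{prop:Properties of K}), and analyticity of $\eta\mapsto A(\eta)$ obtained from the displayed multiplicative inequality $\|\eta w\|_2\lesssim\|\eta\|_\ZZ\|w\|_2$ together with the expansion of $(1+\eta)^{-1}$. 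You have also correctly identified the only point where the argument differs from the cited reference, namely replacing the $H^3$-based elementary inequality by its $\ZZ$-norm analogue using the compact Fourier support of $\eta_1$.
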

\begin{corollary} \label{cor:K analytic}
The mapping $K(\cdot)\colon \ZZ \to \LL(H^{5/2}(\R^2), H^{3/2}(\R^2))$ is analytic at the origin.
\end{corollary}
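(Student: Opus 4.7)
The plan is to realise $K(\cdot)$ as the composition of the analytic solution map provided by Lemma \ref{lem:u analytic} with a fixed bounded linear operator, so that analyticity is preserved automatically.

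First, by Lemma \ref{lem:u analytic}, the mapping
$$\mathcal{S}\colon \ZZ \to \LL(H^{5/2}(\R^2), H^3_\star(\Sigma)), \qquad \eta \mapsto (\xi \mapsto u_\eta(\xi))$$
is analytic at the origin, where $u_\eta(\xi)$ is the solution to \eqref{flatKproblem 1}--\eqref{flatKproblem 3}. Next, I would introduce the linear operator
$$T\colon H^3_\star(\Sigma) \to H^{3/2}(\R^2), \qquad u \mapsto -u_x(\cdot,1,\cdot).$$
This operator is bounded: the definition of $H^3_\star(\Sigma)$ ensures $u_x \in H^2(\Sigma)$ with $\|u_x\|_2 \leq \|u\|_{H^3_\star(\Sigma)}$, and the standard trace theorem then yields $u_x(\cdot,1,\cdot) \in H^{3/2}(\R^2)$ with the appropriate estimate.

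Finally, post-composition with $T$ defines a bounded linear operator
$$T_*\colon \LL(H^{5/2}(\R^2), H^3_\star(\Sigma)) \to \LL(H^{5/2}(\R^2), H^{3/2}(\R^2)), \qquad A \mapsto T\circ A,$$
and the identification $K(\eta)\xi = -u_x(\cdot,1,\cdot) = T(u_\eta(\xi))$ shows that $K = T_* \circ \mathcal{S}$. Since the composition of a bounded linear map with an analytic map is analytic, the claim follows. There is no real obstacle here: Lemma \ref{lem:u analytic} contains all the substance, and the corollary is merely a matter of transferring the target space via a continuous trace-and-differentiation operator.
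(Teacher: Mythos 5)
Your proposal is correct and follows the paper's intended route: the corollary is an immediate consequence of Lemma \ref{lem:u analytic}, obtained exactly as you describe by composing the analytic solution map $\eta \mapsto (\xi \mapsto u)$ with the bounded linear trace-and-differentiation operator $u \mapsto -u_x(\cdot,1,\cdot)$ from $H^3_\star(\Sigma)$ to $H^{3/2}(\R^2)$. Nothing further is needed.
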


\subsubsection*{Analyticity of the functionals and their gradients in $L^2(\R^2)$}

\begin{corollary}
The formulae \eqref{Defn of KK,LL} define analytic functionals $\KK, \LL: U\rightarrow \R$.
\end{corollary}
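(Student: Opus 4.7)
The plan is to handle $\KK$ and $\LL$ separately, reducing each to previously established analyticity facts. Decompose $\KK = \KK_1 + \KK_2$ with
\[
\KK_1(\eta) = \tfrac12 \int_{\R^2} \eta^2 \dx\dz, \qquad \KK_2(\eta) = \beta \int_{\R^2}\bigl(\sqrt{1+\eta_x^2+\eta_z^2}-1\bigr)\dx\dz.
\]
The quadratic piece $\KK_1$ is a continuous quadratic form on $L^2(\R^2)$ and is therefore analytic on $U \subset H^3(\R^2) \hookrightarrow L^2(\R^2)$ without any further work.

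For $\KK_2$ I would insert the binomial expansion $\sqrt{1+t}-1 = \sum_{n\ge 1} a_n t^n$, valid for $|t| < 1$, with $|a_n| = \bigl|\binom{1/2}{n}\bigr| \lesssim n^{-3/2}$, to obtain the formal series
\[
\KK_2(\eta) = \sum_{n=1}^\infty P_n(\eta), \qquad P_n(\eta) := \beta a_n \int_{\R^2}(\eta_x^2+\eta_z^2)^n \dx\dz,
\]
where each $P_n$ is a continuous symmetric $2n$-homogeneous polynomial on $H^3(\R^2)$. The essential step is to bound these polynomials geometrically in $n$: the two-dimensional Sobolev embedding $H^2(\R^2) \hookrightarrow L^\infty(\R^2)$ gives $\|\eta_x^2+\eta_z^2\|_{L^\infty} \lesssim \|\eta\|_3^2$, while plainly $\|\eta_x^2+\eta_z^2\|_{L^1} \lesssim \|\eta\|_3^2$, so interpolating one $L^1$ factor against $n-1$ copies of the $L^\infty$ factor yields
\[
|P_n(\eta)| \le \beta |a_n| \,\|\eta_x^2+\eta_z^2\|_{L^\infty}^{n-1}\|\eta_x^2+\eta_z^2\|_{L^1} \lesssim |a_n|\, c^n \|\eta\|_3^{2n}.
\]
Polarisation in the $2n$ variables yields the same bound for the norms of the associated multilinear forms; the resulting majorant $\sum |a_n| c^n r^{2n}$ has positive radius of convergence, so the series converges in the space of continuous polynomials on $H^3(\R^2)$ and $\KK_2$ is analytic at the origin. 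Shrinking $M$ if necessary confines $U$ to the ball of convergence, so $\KK$ is analytic on $U$.

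For $\LL$ I would argue by composition. The inclusion $H^3(\R^2) \hookrightarrow \ZZ$ is continuous: $\|\eta_2\|_3 \le \|\eta\|_3$ trivially, and since $\hat\eta_1 = \chi\hat\eta$ is supported in the fixed compact set $\mathrm{supp}\,\chi$, Cauchy--Schwarz gives $\|\hat\eta_1\|_{L^1(\R^2)} \lesssim |\mathrm{supp}\,\chi|^{1/2}\|\hat\eta\|_{L^2(\R^2)} \lesssim \|\eta\|_3$. Combined with Corollary \ref{cor:K analytic}, this shows that $\eta \mapsto K(\eta)$ is analytic at the origin as a map $H^3(\R^2) \to \LL(H^{5/2}(\R^2), H^{3/2}(\R^2))$. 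Since the trilinear evaluation
\[
(\eta, \zeta, T) \longmapsto \int_{\R^2} \eta\,(T\zeta)\dx\dz
\]
is continuous on $H^3(\R^2) \times H^{5/2}(\R^2) \times \LL(H^{5/2}(\R^2), H^{3/2}(\R^2))$, via the $L^2$-pairing between $H^3 \hookrightarrow L^2$ and $H^{3/2} \hookrightarrow L^2$, substituting $(\eta, \eta, K(\eta))$ exhibits $\LL(\eta) = \frac12 \int_{\R^2} \eta\, K(\eta)\eta \dx\dz$ as the composition of analytic and continuous multilinear maps, and so $\LL$ is analytic on $U$.

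The main obstacle is controlling the binomial series for $\KK_2$: one must ensure that the norms of the multilinear forms $P_n$ decay fast enough that $|a_n| c^n$ remains summable against some $r^{2n}$. The $L^1$--$L^\infty$ interpolation above, crucially using the two-dimensional embedding $H^2(\R^2) \hookrightarrow L^\infty(\R^2)$, produces precisely the right geometric rate; the corresponding statement for $\LL$ is then essentially automatic once the analyticity of $K$ is in hand.
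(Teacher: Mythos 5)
Your treatment of $\LL$ is essentially the intended one: $\LL$ is a direct corollary of Corollary \ref{cor:K analytic} (analyticity of $K(\cdot)$ on a $\ZZ$-ball) together with the continuous embedding $H^3(\R^2)\hookrightarrow\ZZ$, composed with the continuous trilinear evaluation map; that part is fine.

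The $\KK$ half, however, contains a genuine gap in the last step. Your majorant for the series $\sum_n P_n$ is controlled by $\|\eta\|_3$, so it only yields analyticity of $\KK_2$ on a small ball of $H^3(\R^2)$, and you then assert that ``shrinking $M$ if necessary confines $U$ to the ball of convergence''. This is false: $U=\{\eta\in H^3(\R^2):\|\eta\|_\ZZ<M\}$ is not contained in any $H^3$-ball, however small $M$ is, because $\|\eta\|_\ZZ$ does not control $\|\eta\|_3$. Indeed, taking $\hat\eta=a\,\mathbf{1}_{B_\rho(\omega,0)}$ with $a=M/(2\pi\rho^2)$ gives $\|\hat\eta_1\|_{L^1(\R^2)}=M/2$ while $\|\eta\|_3\sim M/\rho\to\infty$ as $\rho\to 0$; this is precisely the point of working with $\ZZ$ rather than $H^3$ (cf.\ \eqref{Triple norm large} and the fact that $R_1$ may be taken arbitrarily large), so the regime excluded by your argument is the one the paper actually needs. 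The repair is to measure the supremum factor in the $\ZZ$-norm rather than the $H^3$-norm: since $\hat\eta_1$ has compact support, $\|\eta_1\|_{1,\infty}\lesssim\|\hat\eta_1\|_{L^1(\R^2)}$, and $\|\eta_2\|_{1,\infty}\lesssim\|\eta_2\|_3$ by Sobolev embedding, whence $\|\eta_x^2+\eta_z^2\|_{L^\infty}\lesssim\|\eta\|_\ZZ^2<M^2$ on $U$. Estimating the $2n$-linear forms accordingly, i.e.
\[
|P_n(h_1,\dots,h_{2n})|\lesssim c^n\,\|h_1\|_3\,\|h_2\|_3\prod_{j=3}^{2n}\|h_j\|_\ZZ ,
\]
gives a majorant $\sum_n |a_n|(cM^2)^{n-1}\,c\,\|\eta\|_3^2$ which converges at \emph{every} point of $U$ once $M$ is small (and the same mixed-norm bounds control the expansion about an arbitrary $\eta_0\in U$, not just about the origin). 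With this modification your argument for $\KK$ goes through and yields analyticity on all of $U$, as required by the statement.
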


Our next result is proved by combining Lemma \ref{lem:u analytic} with the calculation given in the proof
of Lemma 2.27 in Buffoni \emph{et al.} \cite{BuffoniGrovesSunWahlen13}.

\begin{lemma} \label{lem:gradients}
The gradients $\KK^\prime(\eta)$ and $\LL^\prime(\eta)$ in $L^2(\R^2)$
exist for each $\eta \in U$ and are given by the formulae
\begin{align*}
\KK^\prime(\eta) & = -\beta\left[\frac{\eta_x}{\sqrt{1+\eta_x^2+\eta_z^2}}\right]_{\!x}
-\beta\left[\frac{\eta_x}{\sqrt{1+\eta_x^2+\eta_z^2}}\right]_{\!z}+\eta, \\
\LL^\prime(\eta) & = -\frac{1}{2}(u_x^2+u_z^2)+\frac{u_y^2}{2(1+\eta)^2}(\eta_x^2+\eta_z^2) + \frac{u_y^2}{2(1+\eta)^2}-u_x\Bigg|_{y=1},
\end{align*}
which define analytic functions $\KK^\prime$, $\LL^\prime \colon U \to H^{1}(\R^2)$.
\end{lemma}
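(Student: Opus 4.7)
The plan is to compute both gradients directly as Fr\'echet derivatives on $H^3(\R^2)$, integrate by parts to identify them with an $L^2$-pairing, and then derive analyticity into $H^1(\R^2)$ from the analyticity results already established.

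For $\KK'$ the calculation is essentially elementary. Differentiating $\KK$ in a direction $v \in H^3(\R^2)$ gives
\[
\mathrm d\KK[\eta](v)=\int_{\R^2}\!\left(\eta v+\beta\,\frac{\eta_x v_x+\eta_z v_z}{\sqrt{1+\eta_x^2+\eta_z^2}}\right)\!dx\,dz,
\]
and integration by parts in $v$ (legitimate by density, since the integrand on the right is a bounded linear functional of $v\in L^2$) puts this in the form $\int \KK'(\eta)v\,dx\,dz$ with $\KK'(\eta)$ as claimed. Analyticity $U\to H^1$ then follows from three ingredients: $H^2(\R^2)$ is a Banach algebra; the map $p\mapsto p/\sqrt{1+|p|^2}$ is analytic at the origin of $H^2(\R^2;\R^2)$ (expand the square root as a convergent power series in the algebra $H^2$); and one more $\partial_x$ or $\partial_z$ sends $H^2$ to $H^1$.

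The heart of the lemma is $\LL'$. Using the flattening~\eqref{flattening}, I would first rewrite
\[
\LL(\eta)=-\tfrac12\int_{\R^2}\eta\,u_x(\cdot,1,\cdot)\,dx\,dz=\tfrac12\int_\Sigma A(\eta)(\nabla u,\nabla u)\,J(\eta)\,dx\,dy\,dz,
\]
where $A(\eta)$ and $J(\eta)$ are the pulled-back metric and Jacobian attached to \eqref{flattening} and the second equality is obtained by applying Green's identity to the boundary-value problem \eqref{flatKproblem 1}--\eqref{flatKproblem 3} with $\xi=\eta$. In this form $u=u(\eta)$ appears as the critical point of a functional which is quadratic in $u$ with natural boundary data $\eta_x$, so a Hadamard/envelope-type argument allows one to differentiate in $\eta$ while holding $u$ fixed, provided the (linear) dependence of the boundary data on $\eta$ is tracked separately. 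Carrying out this differentiation and converting the resulting slab integral back to a boundary integral at $y=1$ by one further integration by parts against the BVP produces exactly the Bernoulli-type trace displayed in the statement.

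For analyticity $U\to H^1(\R^2)$ of $\LL'$, Lemma~\ref{lem:u analytic} provides that $\eta\mapsto u$ is analytic $\ZZ\to H^3_\star(\Sigma)$, so each of the three traces $u_x|_{y=1}$, $u_y|_{y=1}$, $u_z|_{y=1}$ depends analytically on $\eta\in U$ with values in $H^{3/2}(\R^2)$. Since $H^{3/2}(\R^2)$ is a Banach algebra (in two space dimensions $\tfrac32>1$) and $\eta\mapsto(1+\eta)^{-1}$ is analytic near the origin in this algebra, the products and quotients that appear in the formula for $\LL'$ combine into an analytic map $U\to H^{3/2}(\R^2)\hookrightarrow H^1(\R^2)$. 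The main obstacle is the envelope computation for $\LL'$: one must verify that the contributions coming from the explicit $\eta$-dependence of the integrand and from the natural boundary data combine \emph{exactly} into the displayed Bernoulli-type trace, and that the resulting expression is genuinely the $L^2$-gradient rather than merely the Fr\'echet derivative on $H^3$. Once the formula is in hand, the analyticity step is comparatively routine.
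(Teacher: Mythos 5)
The paper itself does not prove this lemma; it is quoted from Buffoni \emph{et al.}\ \cite[Lemma 2.27]{BuffoniGrovesSunWahlen13}, where the formula for $\LL^\prime$ is obtained by differentiating $\eta\mapsto u$ (via the analyticity in Lemma~\ref{lem:u analytic}) and eliminating the derivative of $u$ through Green-type identities. Your route is a legitimate reorganisation of that same computation: since $-\LL(\eta)$ is the minimum over $u$ of the quadratic functional $u\mapsto\tfrac12\int_\Sigma A(\eta)(\nabla u,\nabla u)J(\eta)-\int_{\R^2}u|_{y=1}\,\eta_x\,\dx\dz$, the envelope theorem (rigorous here because $\eta\mapsto u(\eta)$ is analytic and $\mathrm{d}_u E_\eta[u(\eta)]=0$) lets you differentiate only the explicit $\eta$-dependence. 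This does land exactly on the stated trace: the boundary-data term contributes $-u_x|_{y=1}$ after an integration by parts in $x$, while the coefficient term is the classical shape derivative of the Dirichlet energy at fixed trace $\Phi=u|_{y=1}$, and the identity $G(\eta)\Phi+\nabla\eta\cdot\nabla\Phi=\varphi_y(1+\eta_x^2+\eta_z^2)$ together with $\varphi_y|_{y=1+\eta}=u_y|_{y=1}/(1+\eta)$ turns it into $-\tfrac12(u_x^2+u_z^2)+\tfrac{u_y^2}{2(1+\eta)^2}(1+\eta_x^2+\eta_z^2)$ at $y=1$, which is the displayed expression. What the envelope formulation buys is that you never have to differentiate $u$ (equivalently $K(\eta)$) in $\eta$ explicitly; what it costs is that the differentiation of $A(\eta)J(\eta)$ and the conversion of the resulting slab integral to a boundary integral against \eqref{flatKproblem 1}--\eqref{flatKproblem 3} still has to be written out, and you have only asserted its outcome (you yourself flag it as the main obstacle). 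That computation is where the content of the lemma lies, so for a complete proof it must be carried out; your worry about ``$L^2$-gradient versus Fr\'echet derivative'', by contrast, is immediate: once $\mathrm{d}\LL[\eta](v)=\langle w,v\rangle_0$ for all $v\in H^3(\R^2)$ with $w\in H^1(\R^2)$, density of $H^3(\R^2)$ in $L^2(\R^2)$ identifies $w$ as the $L^2$-gradient.

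One technical point needs repair in both analyticity arguments. You expand ``at the origin'' in the algebras $H^2(\R^2)$ and $H^{3/2}(\R^2)$, but $U$ is small only in the $\ZZ$-norm; $\|\eta\|_3$ (hence $\|\nabla\eta\|_{H^2}$ and the $H^{3/2}$-norms of the traces) may be arbitrarily large on $U$, so those power series need not converge there. Since analyticity is a local property, the fix is to expand about an arbitrary $\eta_0\in U$, using $\|\eta\|_\infty+\|\eta_x\|_\infty+\|\eta_z\|_\infty\lesssim\|\eta\|_\ZZ<M$ to keep $1+\eta$ bounded away from zero and the arguments of $p\mapsto p/\sqrt{1+|p|^2}$ inside its domain of analyticity (or to invoke the corresponding composition results as in the cited reference); with that adjustment, your trace-plus-algebra argument does give analytic maps $U\to H^{3/2}(\R^2)\hookrightarrow H^1(\R^2)$, as required. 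The $\KK^\prime$ computation is otherwise routine, as you say (note in passing that the second bracket in the statement should contain $\eta_z$, as your formula for $\mathrm{d}\KK[\eta](v)$ implicitly reflects).
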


Writing
$$\KK(\eta) = \sum_{n=1}^\infty \KK_{2n}(\eta),
\qquad \eta \in U,$$
where $\KK_n(\eta)=\frac{1}{n!}\mathrm{d}^n\KK[0](\{\eta\}^n)$ (note that $\KK_{2n+1}(\eta)=0$ for each $n \in \N$),
we obtain the explicit formulae
$$
\KK_2(\eta)=\frac12 \int_{\R^2} (\eta^2+\beta \eta_x^2+\beta \eta_z^2)\dx\dz, \qquad
\KK_4(\eta)=-\frac{\beta}{8}\int_{\R^2} \left(\eta_x^2+\eta_z^2\right)^2\dx\dz
$$
and
$$
\KK_2^\prime(\eta)=\eta-\beta \eta_{xx} - \beta \eta_{zz}, \qquad
\KK_4^\prime(\eta)=\frac{\beta}{2} ((\eta_x^2+\eta_z^2)\eta_x)_x + \frac{\beta}{2}(\eta_x^2+\eta_z^2)\eta_z)_z.
$$
Semi-explicit formulae are also available for
the leading-order terms in the corresponding series representation
$$\LL(\eta) = \sum_{n=2}^\infty \LL_n(\eta)= \frac{1}{2}\sum_{n=2}^\infty \int_{\R^2} \eta K_n(\eta) \eta \dx \dz,\qquad \eta \in U,$$
where $K_n(\eta)=\frac{1}{n!}\mathrm{d}^n K[0](\{\eta\}^n)$, $\LL_n(\eta)=\frac{1}{n!}\mathrm{d}^n\LL[0](\{\eta\}^n)$
(see Buffoni \emph{et al.} \cite[Lemma 2.30 and Corollary 2.31]{BuffoniGrovesSunWahlen13}).

\begin{lemma} \label{lem:L2 to L4}
The functions $\LL_2$, $\LL_3$, $\LL_4: U \rightarrow \R$ and $\LL_2^\prime$, $\LL_3^\prime$, $\LL_4^\prime: U \rightarrow H^1(\R^2)$ are
given by the formulae
\begin{align*}
\LL_2(\eta)&=\frac{1}{2} \int_{\R^2} \eta K_0\eta \dx \dz, \\
\LL_3(\eta)&=
\frac{1}{2} \int_{\R^2} ( \eta_x^2 \eta 
-\eta (K_0 \eta)^2-\eta (L_0 \eta)^2)\dx \dz, \\
\LL_4(\eta)
&=\frac{1}{2}\int_{\R^2} \left(
K_0 (\eta K_0\eta)\, \eta K_0\eta +2 L_0(\eta L_0 \eta) \, \eta K_0\eta+\eta L_0 \eta \, H_0 (\eta L_0\eta) \right)\dx \dz\\
&\qquad\mbox{}+\frac{1}{2}\int_{\R^2}\eta^2 \left((K_0\eta)\eta_{xx}+(L_0\eta)\eta_{xz}\right) \dx\dz
\end{align*}
and
\begin{align*}
\LL_2^\prime(\eta)&=K_0 \eta, \\
\LL_3^\prime(\eta)&=\frac{1}{2}\left(\eta_x^2 - (K_0 \eta)^2 -(L_0 \eta)^2
-2(\eta_x\eta)_x-2K_0(\eta K_0\eta)-2L_0(\eta L_0\eta) \right), \\
\mathcal L_4^\prime(\eta)&=K_0\eta\, K_0(\eta K_0\eta) +K_0\eta \,L_0 (\eta L_0 \eta)
+L_0\eta \,L_0(\eta K_0\eta)+L_0\eta \, H_0(\eta L_0 \eta)\\
&\qquad\mbox{}+\eta ((K_0 \eta)\eta_{xx}+(L_0\eta)\eta_{xz}) +K_2(\eta)\eta,
\end{align*}
where
\[
\FF[K_0 \xi]=\frac{k_1^2}{|k|^2}f(|k|) \hat \xi, \quad \FF[L_0 \xi]=\frac{k_1 k_2}{|k|^2}f(|k|) \hat \xi, \quad \FF[H_0 \xi]=\frac{k_2^2}{|k|^2}f(|k|) \hat \xi.
\]
\end{lemma}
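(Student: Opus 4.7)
The strategy is to Taylor-expand the operator $K(\eta)$ (whose analyticity at $\eta=0$ is provided by Corollary \ref{cor:K analytic}) as $K(\eta)=K_0+K_1(\eta)+K_2(\eta)+\ldots$ with $K_n(\eta)=\tfrac{1}{n!}\mathrm{d}^n K[0](\{\eta\}^n)$, and then read off $\LL_{n+2}(\eta)=\tfrac{1}{2}\int_{\R^2}\eta K_n(\eta)\eta\dx\dz$, together with the corresponding $L^2$-gradients. Since each $K_n(\eta)$ is determined by the $n$th-order term in the expansion of the solution $u$ of the flattened problem \eqref{flatKproblem 1}--\eqref{flatKproblem 3}, the whole calculation is reduced to Taylor-expanding $u$ in $\eta$ and evaluating the boundary quantity $-u_x|_{y=1}$.

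For the leading order, setting $\eta=0$ yields a constant-coefficient problem on the flat slab $\Sigma=\R^2\times(0,1)$. A horizontal Fourier transform gives the explicit representation
$$
\hat{u}_0(k,y)=\frac{\mathrm{i}k_1\cosh(|k|y)}{|k|\sinh|k|}\,\hat\xi(k),
$$
so that $K_0\xi=-u_{0,x}|_{y=1}$ has Fourier symbol $k_1^2 f(|k|)/|k|^2$. The operators $L_0$ and $H_0$ appear simultaneously when the same representation is used to compute $-u_{0,z}|_{y=1}$ and the leading-order contribution to $u_y^2/(1+\eta)^2$ entering the gradient formula of Lemma \ref{lem:gradients}. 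Each of $K_0, L_0, H_0$ has a real, even symbol and is therefore self-adjoint in $L^2(\R^2)$, which together with the identity $\LL_2(\eta)=\tfrac{1}{2}\int \eta K_0\eta\dx\dz$ immediately gives $\LL_2$ and $\LL_2'(\eta)=K_0\eta$.

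The higher-order coefficients are obtained by writing $u=\sum_{n\geq 0}u_n$ with $u_n$ homogeneous of degree $n$ in $\eta$, substituting into \eqref{flatKproblem 1}--\eqref{flatKproblem 3}, expanding the nonlinear parts of $F_3$ via $(1+\eta)^{-1}=\sum_k(-1)^k\eta^k$, and matching orders. Each $u_n$ then solves a linear boundary-value problem of the type treated by Proposition \ref{prop:Defn of Gamma} with data built from $u_0,\ldots,u_{n-1}$. Setting $\xi=\eta$ in $K_n(\eta)\xi=-u_{n,x}|_{y=1}$ and integrating by parts in $y$ (to convert boundary values of $u_n$ into volume integrals over $\Sigma$ involving only $u_0, u_1$ and $\eta$) produces the cubic and quartic expressions for $\LL_3$ and $\LL_4$. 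For the gradients, the efficient route is to substitute the same expansion of $u$ into the closed-form identity for $\LL'$ in Lemma \ref{lem:gradients} and to retain the homogeneous components of degrees $1$, $2$ and $3$ in $\eta$; the remaining $K_2(\eta)\eta$ term in $\LL_4'$ is left implicit because it is precisely the contribution arising from the second-order corrector $u_2$.

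The main technical obstacle is the bookkeeping in the quartic calculation. Many cubic expressions in $\eta$ arise simultaneously from the geometric expansion of $F_3$ and from the mixed boundary contributions $u_{0,y}u_{1,y}$, $u_{0,x}u_{1,x}$ at $y=1$; consolidating them into the compact form displayed requires systematic use of integration by parts in $y$, the self-adjointness relations $\int v\,K_0w\dx\dz=\int (K_0 v)w\dx\dz$ (and analogously for $L_0,H_0$), and the identity $K_0+H_0=f(|D|)$ that follows from the explicit symbols. This is the same algebraic reduction carried out in Buffoni \emph{et al.} \cite[Lemma 2.30 \& Corollary 2.31]{BuffoniGrovesSunWahlen13}, and since none of the manipulations involves the spectral splitting into $\eta_1,\eta_2$ introduced here, the argument can be taken over without change.
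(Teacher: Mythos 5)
Your proposal is correct and follows essentially the same route as the paper, which gives no independent proof of this lemma but defers entirely to Buffoni \emph{et al.} \cite[Lemma 2.30 and Corollary 2.31]{BuffoniGrovesSunWahlen13}: expanding $u$ (hence $K(\eta)$) in powers of $\eta$, identifying the symbol of $K_0$ from the flat-slab problem, and substituting into the gradient formula of Lemma \ref{lem:gradients} is exactly the computation carried out there, and you correctly note that it is unaffected by the spectral splitting introduced in the present paper.
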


\subsubsection*{Weak continuity of the gradients}

\begin{lemma}
The function $\KK^\prime: U \rightarrow H^1(\R^2)$ is weakly continuous.
\end{lemma}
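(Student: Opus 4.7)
The plan is to combine uniform boundedness of $\{\KK'(\eta_n)\}$ in $H^1(\R^2)$ with distributional convergence to identify the weak $H^1$ limit. Fix a sequence $\{\eta_n\}\subset U$ with $\eta_n\rightharpoonup\eta$ in $H^3(\R^2)$; the sequence is bounded in $H^3$, and by the analyticity of $\KK'\colon U\to H^1(\R^2)$ stated in Lemma \ref{lem:gradients}, the image sequence $\{\KK'(\eta_n)\}$ is bounded in $H^1(\R^2)$.

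Next I would establish distributional convergence $\KK'(\eta_n)\to\KK'(\eta)$ in $\mathcal{D}'(\R^2)$. In two space dimensions the Sobolev embedding $H^s\hookrightarrow C^1$ holds for $s>2$; combined with Rellich--Kondrachov this gives a compact embedding $H^3(\Omega)\hookrightarrow C^1(\bar\Omega)$ for every bounded $\Omega\subset\R^2$, and hence $\eta_n\to\eta$ in $C^1_{\mathrm{loc}}(\R^2)$. For any test function $\phi\in C_c^\infty(\R^2)$, rewriting $\KK'(\eta)=\eta-\beta\,\nabla\!\cdot\!\bigl(\nabla\eta/\sqrt{1+|\nabla\eta|^2}\bigr)$ in divergence form and integrating by parts yields
$$
\int_{\R^2}\KK'(\eta_n)\phi\dx\dz
=\int_{\R^2}\eta_n\phi\dx\dz
+\beta\int_{\R^2}\frac{(\eta_n)_x\phi_x+(\eta_n)_z\phi_z}{\sqrt{1+(\eta_n)_x^2+(\eta_n)_z^2}}\dx\dz.
$$
The first summand converges by the $L^2_{\mathrm{loc}}$ convergence $\eta_n\to\eta$; for the second, the uniform convergence of $(\eta_n)_x,(\eta_n)_z$ on $\supp\phi$ together with the continuity of $p\mapsto p/\sqrt{1+|p|^2}$ and the dominated convergence theorem justify passage to the limit inside the integral, giving the same quantity with $\eta$ in place of $\eta_n$.

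Finally I would conclude by a subsequence argument. Any subsequence of $\{\KK'(\eta_n)\}$, being bounded in the Hilbert space $H^1(\R^2)$, admits a further subsequence converging weakly in $H^1$; its weak limit must coincide with the already-identified distributional limit $\KK'(\eta)$, so by uniqueness the full sequence satisfies $\KK'(\eta_n)\rightharpoonup\KK'(\eta)$ in $H^1(\R^2)$. I do not expect any serious obstacle here: the argument relies only on the divergence structure of the mean-curvature--type nonlinearity in $\KK'$ and on standard compactness arguments, and the sole point requiring mild care is that $\phi$ have compact support so that the strong local convergence of $\nabla\eta_n$ can be exploited within the nonlinear factor.
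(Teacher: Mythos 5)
Your argument is correct and essentially the paper's own: the paper likewise tests $\KK^\prime(\eta_n)$ against $C_0^\infty(\R^2)$ functions, integrates by parts to exploit the divergence structure of the mean-curvature term, and passes to the limit using the strong $C^1_{\mathrm{loc}}$ convergence that follows from weak $H^3$ convergence. You merely make explicit the boundedness-plus-density step that the paper leaves implicit; note only that the uniform $H^1$ bound on $\KK^\prime(\eta_n)$ is most safely justified from the explicit formula in Lemma \ref{lem:gradients} (analyticity alone does not in general give boundedness on bounded sets in infinite dimensions), which is immediate since $\nabla\eta_n$ is bounded in $H^2(\R^2)\hookrightarrow L^\infty(\R^2)$.
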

\begin{proof}
Because of the calculation
$$\langle \KK^\prime(\eta),\tilde{\eta} \rangle_1 = 
\langle \KK^\prime(\eta),\tilde{\eta} \rangle_0 - \langle \KK^\prime(\eta),\tilde{\eta}_{xx} \rangle_0 - \langle \KK^\prime(\eta),\tilde{\eta}_{zz} \rangle_0$$
for $\eta \in U$, $\tilde{\eta} \in C_0^\infty(\R^2)$  it suffices to show that $\KK^\prime$ is a weakly
continuous function $U \rightarrow L^2(\R^2)$.

Suppose that $\{\eta_n\} \subset U$ converges weakly in $H^3(\R^2)$ to $\eta_\infty \in U$, so that
$\{\eta_n\}$, $\{\eta_{nx}\}$, $\{\eta_{nz}\}$ converge strongly
in $C_\mathrm{loc}(\R^2)$ to $\eta_\infty$, $\eta_{\infty x}$, $\eta_{\infty z}$. Using the formula
$$\langle \KK^\prime(\eta),\tilde{\eta} \rangle _0 =\int_{\R^2}\left(
\frac{\beta\eta_x\tilde{\eta}_x}{\sqrt{1+\eta_x^2+\eta_z^2}}
+\frac{\beta\eta_z\tilde{\eta}_z}{\sqrt{1+\eta_x^2+\eta_z^2}}+\eta\tilde{\eta}\right)\dx\dz,$$
we conclude that $\langle \KK^\prime(\eta_n),\tilde{\eta} \rangle_0 \rightarrow \langle \KK^\prime(\eta_\infty),\tilde{\eta} \rangle_0$
for each $\tilde{\eta} \in C_0^\infty(\R^2)$.
\end{proof}

To obtain the corresponding result for $\LL^\prime$ we first establish some 
further mapping properties of the operator
$K(\cdot)\colon U \to \LL(H^{5/2}(\R^2), H^{3/2}(\R^2))$.
For this purpose we note that the solution of the boundary-value problem
\eqref{flatKproblem 1}--\eqref{flatKproblem 3} (with $\eta \in U$, $\xi \in H^{5/2}(\R^2)$)
can be characterised as the unique solution of the equation
$$u=\Gamma(F_1(\eta,u),F_2(\eta,u),F_3(\eta,u),\xi)$$
(see Proposition \ref{prop:Defn of Gamma}).

\begin{proposition} \label{prop:Properties of K}
Suppose that $\{\eta_n\} \subset U$ converges weakly in $H^3(\R^2)$
to $\eta_\infty \in U$ and $\{\xi_n\} \subset H^{5/2}(\R^2)$ converges weakly
in $H^{5/2}(\R^2)$ to $\xi_\infty$.
\begin{itemize}
\item[(i)]
The sequence $\{K(\eta_{n})\xi_n\}$ converges weakly in
$H^{3/2}(\R^2)$ to $K(\eta_{\infty})\xi_\infty$.
\item[(ii)]
The sequence
$\{(K(\eta_{\infty})-K(0))\xi_n\}$ converges strongly in
$H^{1/2}(\R^2)$ to
$(K(\eta_{\infty})-K(0))\xi_\infty$.
\item[(iii)]
For each $\rho \in H^3(\R^2)$ the sequence
$\left\{\big(\mathrm{d}K[\eta_n](\rho)\big)\xi_n\right\}$
converges strongly in
$H^{1/2}(\R^2)$ to $\big(\mathrm{d}K[\eta_\infty](\rho)\big)\xi_\infty$.
\end{itemize}
\end{proposition}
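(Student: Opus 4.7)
The unifying principle is that the solution $u$ to \eqref{flatKproblem 1}--\eqref{flatKproblem 3} is the unique fixed point of the map
\[
u \mapsto \Gamma\bigl(F_1(\eta,u),F_2(\eta,u),F_3(\eta,u),\xi\bigr),
\]
so convergence of $K(\eta_n)\xi_n = -u_{nx}|_{y=1}$ can be analysed by passing to the limit in this identity and invoking Proposition~\ref{prop:Defn of Gamma}.

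For part~(i), the uniform bounds on $\{\eta_n\}$ in $\ZZ$ and on $\{\xi_n\}$ in $H^{5/2}(\R^2)$ together with Lemma~\ref{lem:u analytic} produce a uniform bound on the corresponding solutions $u_n$ in $H^3_\star(\Sigma)$; extract a subsequence with $u_n \rightharpoonup u_\star$ in $H^3_\star(\Sigma)$. Since $\Gamma$ is bounded and linear, hence weakly continuous, it is enough to check that $F_i(\eta_n,u_n) \rightharpoonup F_i(\eta_\infty,u_\star)$ in $H^2(\Sigma)$. The Rellich--Kondrachov theorem furnishes strong local convergence of $\eta_n$, $\nabla\eta_n$, $u_n$ and all first- and second-order derivatives of $u_n$ in $L^p_\mathrm{loc}$ for every $p<\infty$, so the weak limit of the bilinear and analytic-in-$\eta$ expressions $F_i$ is identified by testing against functions in $C_c^\infty(\overline{\Sigma})$. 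Uniqueness in Lemma~\ref{lem:u analytic} then forces $u_\star = u_\infty$, the whole sequence converges, and weak continuity of the trace map $H^3_\star(\Sigma) \to H^{5/2}(\R^2)$ yields the claim.

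For part~(ii), fix $\eta_\infty$ and let $u_n$, $w_n$ denote the solutions of \eqref{flatKproblem 1}--\eqref{flatKproblem 3} corresponding to $(\eta_\infty,\xi_n)$ and $(0,\xi_n)$; part~(i) gives $u_n \rightharpoonup u_\infty$ and $w_n \rightharpoonup w_\infty$ in $H^3_\star(\Sigma)$, and subtracting the two fixed-point identities produces
\[
v_n := u_n - w_n = \Gamma\bigl(F_1(\eta_\infty,u_n),F_2(\eta_\infty,u_n),F_3(\eta_\infty,u_n),0\bigr).
\]
Proposition~\ref{prop:Defn of Gamma} with $r=1$, combined with the trace theorem, reduces the desired strong $H^{1/2}(\R^2)$-convergence of $\partial_x v_n|_{y=1}$ to strong $H^1(\Sigma)$-convergence of each $F_i(\eta_\infty,u_n)$ to $F_i(\eta_\infty,u_\infty)$. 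Since $F_i(\eta_\infty,\cdot)$ is a linear operator on $u$ whose coefficients are algebraic-analytic functions of $\eta_\infty$ and its first derivatives, everything reduces to showing $\|\alpha(\eta_\infty)(w_n - w_\infty)\|_{H^1(\Sigma)} \to 0$, where $w_n$ is $u_n$ or a first derivative of $u_n$. Given $\varepsilon'>0$, split $\eta_\infty = \tilde\eta + r$ with $\tilde\eta \in C_c^\infty(\R^2)$ and $\|r\|_{H^3} < \varepsilon'$: the contribution of $r$ is $O(\varepsilon')$ uniformly in $n$ by the Sobolev multiplication estimate $\|\alpha w\|_{H^1} \lesssim \|\alpha\|_{H^3}\|w\|_{H^1}$ (using $H^3(\R^2) \hookrightarrow W^{1,\infty}(\R^2)$), while the $\tilde\eta$-contribution is supported in a fixed cylinder $[-R,R]^2 \times [0,1]$ on which Rellich converts weak $H^3_\star$-convergence of $u_n$ into strong local $H^2$-convergence and hence strong $H^1$-convergence of the product.

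Part~(iii) proceeds along the same template: implicit differentiation of $u=\Gamma(F_1(\eta,u),\ldots,\xi)$ in $\eta$ in the direction $\rho$ expresses $\bigl(\mathrm{d}K[\eta](\rho)\bigr)\xi$ as $-v_x|_{y=1}$ where $v$ solves a linear fixed-point equation whose source terms are sums of products of $\rho,\rho_x,\rho_z$ with $\eta,u,v$ and their first derivatives. Since $\rho \in H^3(\R^2)$ is fixed, the cutoff-plus-Rellich argument of part~(ii) applies with $\rho$ playing the role of $\eta_\infty$ and delivers the required strong convergence. The principal obstacle, common to (ii) and (iii), is precisely this compactness step: weak convergence of $\xi_n$ in $H^{5/2}$ is by itself too weak to upgrade the convergence of the trace, and one must exploit the decay -- encoded in compact $H^3$-approximability -- of the fixed function ($\eta_\infty$ in (ii), $\rho$ in (iii)) to supply the compact multiplier, while tracking carefully the bilinear structure of $F_1$, $F_2$ and the analytic nonlinearity in $F_3$.
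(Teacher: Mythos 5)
Parts (i) and (ii) of your proposal are essentially the paper's own argument: in (i) uniform bounds, identification of the weak limit of $F_j(\eta_n,u_n)$ via local compactness and uniqueness of the limiting solution; in (ii) the reduction through $\Gamma$ (with $r=1$) to strong $H^1(\Sigma)$-convergence of $F_j(\eta_\infty,u_n)$, obtained by playing the decay of the \emph{fixed} function $\eta_\infty$ against Rellich on a bounded region -- your splitting $\eta_\infty=\tilde\eta+r$ with $\tilde\eta\in C_c^\infty$ is interchangeable with the paper's splitting of the integral over $|(x,z)|<R$ and $|(x,z)|>R$.

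Part (iii), however, has a genuine gap. Differentiating $u=\Gamma(F_1(\eta,u),F_2(\eta,u),F_3(\eta,u),\xi)$ in $\eta$ along $\rho$ gives, for $w_n=\mathrm{d}_1u_n[\eta_n,\xi_n](\rho)$,
\begin{align*}
w_n&=\Gamma\big(F_1(\eta_n,w_n),F_2(\eta_n,w_n),F_3(\eta_n,w_n),0\big)\\
&\qquad\mbox{}+\Gamma\big(\mathrm{d}_1F_1[\eta_n,u_n](\rho),\mathrm{d}_1F_2[\eta_n,u_n](\rho),\mathrm{d}_1F_3[\eta_n,u_n](\rho),0\big),
\end{align*}
so only the second group of terms consists of products of $\rho$ with $\eta_n$, $u_n$ and their derivatives; the first group contains the \emph{unknown} $w_n$ multiplied by the merely weakly convergent $\eta_n$, with no factor of $\rho$ at all. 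Your cutoff-plus-Rellich argument (with $\rho$ in the role of $\eta_\infty$) does give strong convergence of the inhomogeneous part $c_n:=\Gamma(\mathrm{d}_1F_j[\eta_n,u_n](\rho),0)$, but strong convergence of $c_n$ together with weak convergence of $w_n$ does not allow you to pass to the limit in $w_n=A(\eta_n)w_n+c_n$, where $A(\eta):=\Gamma(F_1(\eta,\cdot),F_2(\eta,\cdot),F_3(\eta,\cdot),0)$: here neither factor in $A(\eta_n)w_n$ is a fixed decaying function, so neither the tail nor the local contribution is controlled by the part (ii) mechanism. The missing step is an operator-level argument: choose $M$ small so that $\sup_{\eta\in U}\|A(\eta)\|_{\LL(H^2_\star(\Sigma))}<1$, observe that $A(\eta_n)b_n$ converges strongly whenever $b_n$ does, and use a Neumann-series/contraction estimate to conclude that $\{w_n\}$ is Cauchy in $H^2_\star(\Sigma)$ and therefore converges strongly to its weak limit; only then does taking the trace of $w_{nx}$ at $y=1$ give the asserted strong convergence in $H^{1/2}(\R^2)$. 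This is precisely the additional argument in the paper's proof of (iii), and without it the sentence ``the cutoff-plus-Rellich argument of part (ii) applies with $\rho$ playing the role of $\eta_\infty$'' does not deliver the conclusion.
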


\begin{proof}
(i) Let $u_n$ and $u_\infty$ be the solutions to \eqref{flatKproblem 1}--\eqref{flatKproblem 3} with $\eta$, $\xi$
replaced by respectively $\eta_n$, $\xi_n$ and $\eta_\infty$, $\xi_\infty$, so that
$$u_n=\Gamma(F_1(\eta_n,u_n),F_2(\eta_n,u_n),F_3(\eta_n,u_n),\xi_n)$$
and
$$u_\infty=\Gamma(F_1(\eta_\infty,u_\infty),F_2(\eta_\infty,u_\infty),F_3(\eta_\infty,u_\infty),\xi_\infty).$$
Since $\{\eta_n\}$ and $\{\xi_n\}$ are bounded in
respectively $H^3(\R^2)$ and $H^{5/2}(\R^2)$, it follows from Lemma \ref{lem:u analytic} that $\{u_n\}$ is bounded
in $H^3_\star(\Sigma)$. The following argument shows that any weakly convergent subsequence of $\{u_n\}$ has weak limit $u_\infty$,
so that $\{u_n\}$ itself converges weakly to $u_\infty$;
in particular $K(\eta_{n})\xi_n=-u_{nx}(x,1,z) \rightharpoonup -u_{\infty x}(x,1,z)=K(\eta_{\infty})\xi_\infty$ in $H^{3/2}(\R^2)$.

Suppose that (a subsequence of) $\{u_n\}$ converges weakly to $u_0$ in $H^3_\star(\Sigma)$.
Observing that $\{\eta_n\}$ converges strongly  in $H^2_\mathrm{loc}(\R^2)$ to $\eta_\infty$,
$\{u_n\}$ converges strongly in $H^2_{\star,\mathrm{loc}}(\Sigma)$ to $u_0$ and hence
$\{F_j(\eta_n,u_n)\}$ converges strongly in $H^1_\mathrm{loc}(\Sigma)$ to
$F_j(\eta_\infty,u_0)$, $j=1,2,3$, we find that
$u_0$ is the solution to \eqref{flatKproblem 1}--\eqref{flatKproblem 3} with $\eta$ and $\xi$
replaced by respectively $\eta_\infty$ and $\xi_\infty$, so that $u_0=u_\infty$.

(ii) Define
$$v_n = \Gamma(F_1(\eta_\infty,u_n),F_2(\eta_\infty,u_n),F_3(\eta_\infty,u_n),0)$$
and repeat the argument used in part (i): the sequence $\{v_n\}$ converges weakly in $H^3_\star(\Sigma)$
to $v_\infty$, and
$$v_\infty = \Gamma(F_1(\eta_\infty,u_\infty),F_2(\eta_\infty,u_\infty),F_3(\eta_\infty,u_\infty),0).$$
Furthermore $\{F_j(\eta_\infty,u_n)\}$ converges strongly to $F_j(\eta_\infty,u_\infty)$ in $H^1(\Sigma)$
since
\begin{align*}
\|F_j(\eta_\infty,u_n-u_\infty)\|_1 & \leq \|F_j(\eta_\infty,u_n-u_\infty)\|_{H^1(|(x,z)|<R)} + \|F_j(\eta_\infty,u_n-u_\infty)\|_{H^1(|(x,z)|>R)} \\
& \lesssim \|F_j(\eta_\infty,u_n-u_\infty)\|_{H^1(|(x,z)|<R)} + \|\eta_\infty\|_{H^3(|(x,z)|>R)} \\
& \rightarrow 0
\end{align*}
as $n \rightarrow \infty$ (note that $\|\eta_\infty\|_{H^3(|(x,z)|>R)} \rightarrow 0$ as $R \rightarrow \infty$ and
$\{F_j(\eta_\infty,u_n)\}$ converges strongly in $H^1(|(x,z)|<R)$ to $F_j(\eta_\infty,u_\infty)$). It follows that
$\{v_n\}$ converges strongly in $H_\star^2(\Sigma)$ to $v_\infty$ and $(K(\eta_\infty)-K(0))\xi_n=-v_{nx}(x,1,z)
\rightarrow -v_{\infty x}(x,1,z) = (K(\eta_\infty)-K(0))\xi_\infty$ in $H^{1/2}(\R^2)$.

(iii) Let $w_n=\mathrm{d}_1u_n[\eta_n,\xi_n](\rho)$, so that
\begin{align}
w_n&=\Gamma(F_1(\eta_n,w_n),F_2(\eta_n,w_n),F_3(\eta_n,w_n),0) \nonumber \\
& \qquad\mbox{}
+\Gamma(
\mathrm{d}_1F_1[\eta_n,u_n](\rho),
\mathrm{d}_1F_2[\eta_n,u_n](\rho),
\mathrm{d}_1F_3[\eta_n,u_n](\rho),0), \label{Abstract eqn}
\end{align}
where
\begin{align*}
\mathrm{d}_1F_1[\eta,u](\rho)&=-\rho u_x+y\rho_x u_y, \\
\mathrm{d}_1F_2[\eta,u](\rho)&=-\rho u_z+y\rho_z u_y, \\
\mathrm{d}_1F_3[\eta,u](\rho)&=
\frac{\rho u_y}{1+\eta}
-\frac{\eta u_y\rho}{(1+\eta)^2}
+y\rho_x u_x+
y\rho_z u_z \\
& \qquad\mbox{}
+\frac{y^2\rho}{(1+\eta)^2}
(\eta_x^2+\eta_z^2) u_y
-\frac{2y^2}{1+\eta}
(\eta_x\rho_x+\eta_z\rho_z) u_y;
\end{align*}
the usual argument shows that $\{w_n\}$ converges weakly in $H^3_\star(\Sigma)$ to $w_\infty$ and
that\linebreak $w_\infty=\mathrm{d}_1u_\infty[\eta_\infty,\xi_\infty](\rho)$.

The argument used in part (ii) shows that
$\{\mathrm{d}_1F_j[\eta_n,u_n](\rho)\}$ converges strongly
in $H^1_\star(\Sigma)$ to $\mathrm{d}_1F_j[\eta_\infty,u_\infty](\rho)$,
so that $\{c_n\}$ with $c_n = \Gamma(
\mathrm{d}_1F_1[\eta_n,u_n](\rho),
\mathrm{d}_1F_2[\eta_n,u_n](\rho),
\mathrm{d}_1F_3[\eta_n,u_n](\rho),0)$
is strongly convergent in $H_\star^2(\Sigma)$. Define
$A(\eta) \in \LL(H_\star^2(\Sigma))$ by the formula
$A(\eta) =\linebreak \Gamma(F_1(\eta,\cdot),F_2(\eta,\cdot),F_3(\eta,\cdot),0)$,
choosing $M$ small enough so that $\sup_{\eta \in U}\|A(\eta)\| < 1$,
and observe that $A_n=A(\eta_n)$
has the property that $\{A_n b_n\}$ is strongly convergent
whenever $\{b_n\}$ is strongly convergent.
Writing \eqref{Abstract eqn} in the abstract form
$$w_n = A_n w_n + c_n,$$
we find from a familiar Neumann-series argument that $\{w_n\}$ is a Cauchy sequence
in $H^2_\star(\Sigma)$ which therefore converges strongly to its weak limit $w_\infty$.
It follows that $\big(\mathrm{d}K[\eta_n](\rho)\big)\xi_n=-w_{n x}(x,1,z)
\rightarrow -w_{\infty x}(x,1,z) = \big(\mathrm{d}K[\eta_\infty](\rho)\big)\xi_\infty$
in $H^{1/2}(\R^2)$.
\end{proof}
\begin{lemma}
The function $\LL^\prime: U \rightarrow H^1(\R^2)$ is weakly continuous.
\end{lemma}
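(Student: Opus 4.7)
The plan is to mimic the structure of the proof of the preceding lemma on the weak continuity of $\KK^\prime$.

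First I would reduce the claim to weak continuity of $\LL^\prime$ as a map $U \to L^2(\R^2)$. Indeed, the identity
$$\langle \LL^\prime(\eta),\tilde{\eta}\rangle_1 = \langle \LL^\prime(\eta), \tilde{\eta}-\Delta \tilde{\eta}\rangle_0,\qquad \tilde{\eta}\in C_0^\infty(\R^2),$$
combined with the density of $C_0^\infty(\R^2)$ in $H^1(\R^2)$ and the uniform $H^1(\R^2)$-bound on $\{\LL^\prime(\eta_n)\}$ (provided by the analyticity of $\LL^\prime$ on $U$ together with the $H^3$-boundedness of $\{\eta_n\}$), will reduce matters to showing that $\langle \LL^\prime(\eta_n),\tilde{\eta}\rangle_0 \to \langle \LL^\prime(\eta_\infty),\tilde{\eta}\rangle_0$ for every fixed $\tilde{\eta}\in C_0^\infty(\R^2)$.

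Given a sequence $\{\eta_n\}\subset U$ with $\eta_n\rightharpoonup \eta_\infty$ in $H^3(\R^2)$, I would let $u_n$, $u_\infty$ be the solutions of \eqref{flatKproblem 1}--\eqref{flatKproblem 3} with $\eta=\xi=\eta_n$ and $\eta=\xi=\eta_\infty$ respectively. The proof of Proposition \ref{prop:Properties of K}(i), applied with $\xi_n=\eta_n\rightharpoonup \eta_\infty$ in $H^{5/2}(\R^2)$, gives $u_n\rightharpoonup u_\infty$ in $H^3_\star(\Sigma)$. Rellich's theorem combined with the trace theorem then promotes this to strong convergence of $u_{n,x}|_{y=1}$, $u_{n,y}|_{y=1}$, $u_{n,z}|_{y=1}$ in $H^{3/2-\epsilon}_\mathrm{loc}(\R^2)$, and hence to uniform convergence on compact subsets of $\R^2$ via the embedding $H^s(\R^2)\hookrightarrow C^0(\R^2)$ for $s>1$. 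Likewise $\eta_n$, $\eta_{n,x}$, $\eta_{n,z}$ converge uniformly to their limits on compact sets, and so do $(1+\eta_n)^{-2}$ and $\eta_{n,x}^2+\eta_{n,z}^2$.

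With these ingredients, passing to the limit in $\langle \LL^\prime(\eta_n),\tilde{\eta}\rangle_0$ for $\tilde{\eta}\in C_0^\infty(\R^2)$ becomes a routine application of dominated convergence to each term in the explicit formula for $\LL^\prime$ from Lemma \ref{lem:gradients}: on the compact support of $\tilde{\eta}$ every factor is uniformly bounded, and the factors carrying the $n$-dependence converge uniformly there. Accordingly, the main obstacle is not in these product estimates but rather in establishing the weak convergence $u_n\rightharpoonup u_\infty$ in $H_\star^3(\Sigma)$ when both the coefficient $\eta$ and the datum $\xi$ vary simultaneously; this, however, is precisely what Proposition \ref{prop:Properties of K}(i) delivers, so the proof then reduces to bookkeeping in the formula for $\LL^\prime$.
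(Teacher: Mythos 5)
Your argument is correct, but it follows a genuinely different route from the paper's. The paper does not return to the explicit trace formula of Lemma \ref{lem:gradients}; instead it tests $\LL^\prime$ weakly, writing
$\langle \LL^\prime(\eta),\rho \rangle_0 = \langle K(\eta)\eta, \rho \rangle_0
+ \tfrac{1}{2}\int_{\R^2} \eta \big(\mathrm{d}K[\eta](\rho)\big)\eta \dx \dz$
for $\rho \in C_0^\infty(\R^2)$, and then invokes Proposition \ref{prop:Properties of K} directly: part (i) with $\xi_n=\eta_n$ handles the first term, and part (iii) (strong $H^{1/2}$-convergence of $(\mathrm{d}K[\eta_n](\rho))\eta_n$) handles the second. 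You instead pass to the limit termwise in the explicit formula for $\LL^\prime(\eta)$, using only the weak convergence $u_n \rightharpoonup u_\infty$ in $H^3_\star(\Sigma)$ established inside the proof of Proposition \ref{prop:Properties of K}(i) (note that the statement of (i) alone gives only $K(\eta_n)\eta_n \rightharpoonup K(\eta_\infty)\eta_\infty$, so you are right to cite its proof), upgraded by Rellich and the trace theorem to locally uniform convergence of $u_{nx},u_{ny},u_{nz}$ on $\{y=1\}$. What your route buys is that the derivative $\mathrm{d}K[\eta]$ and hence part (iii) of the proposition are not needed at all for this lemma; what the paper's route buys is brevity and the avoidance of any local-compactness/trace bookkeeping, since everything is packaged in the already-proved properties of $K$. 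Both arguments share the same reduction to $L^2$-weak continuity via testing against $C_0^\infty(\R^2)$ together with the uniform $H^1$-bound, and both ultimately rest on the weak continuity of the solution map for the flattened problem \eqref{flatKproblem 1}--\eqref{flatKproblem 3} when $\eta$ and $\xi=\eta$ vary simultaneously, so your proposal is a sound, slightly more hands-on alternative.
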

\begin{proof}
It suffices to show that $\LL^\prime$ is a weakly continuous function $U \rightarrow L^2(\R^2)$.
Suppose that $\{\eta_n\} \subset U$ converges weakly in $H^3(\R^2)$ to $\eta_\infty \in U$. Using the formula
$$\langle \LL^\prime(\eta),\rho \rangle_0 = \langle K(\eta)\eta, \rho \rangle_0
+ \frac{1}{2}\int_{\R^2} \eta \big(\mathrm{d}K[\eta](\rho)\big)\eta \dx \dz$$
and Proposition \ref{prop:Properties of K}, we find that
$\langle \LL^\prime(\eta_n),\rho \rangle_0 \rightarrow \langle \LL^\prime(\eta_\infty),\rho \rangle_0$
for each $\rho \in C_0^\infty(\R^2)$.
\end{proof}

\begin{corollary} \label{cor:J' is wc}
The function $\JJ_\varepsilon^\prime: U \rightarrow H^1(\R^2)$ is weakly continuous.
\end{corollary}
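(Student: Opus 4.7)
The corollary is essentially immediate from the two preceding lemmas together with the definition of $\JJ_\varepsilon$, so my plan is very short. Since
$$\JJ_\varepsilon(\eta) = \KK(\eta) - \Lambda(1-\varepsilon^2)\LL(\eta),$$
we have the corresponding decomposition of gradients in $H^1(\R^2)$,
$$\JJ_\varepsilon^\prime(\eta) = \KK^\prime(\eta) - \Lambda(1-\varepsilon^2)\LL^\prime(\eta),$$
valid at every $\eta \in U$ (the gradients in $L^2(\R^2)$ from Lemma \ref{lem:gradients} lift to $H^1(\R^2)$, as exploited in the proofs of the two preceding weak-continuity lemmas).

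The plan is then to invoke the two previous lemmas, which respectively assert that $\KK^\prime: U \rightarrow H^1(\R^2)$ and $\LL^\prime: U \rightarrow H^1(\R^2)$ are weakly continuous. Suppose $\{\eta_n\} \subset U$ converges weakly in $H^3(\R^2)$ to $\eta_\infty \in U$; then for every $\tilde\eta \in H^1(\R^2)$ we have
$$\langle \KK^\prime(\eta_n),\tilde\eta\rangle_1 \rightarrow \langle \KK^\prime(\eta_\infty),\tilde\eta\rangle_1, \qquad \langle \LL^\prime(\eta_n),\tilde\eta\rangle_1 \rightarrow \langle \LL^\prime(\eta_\infty),\tilde\eta\rangle_1,$$
and hence, by linearity of the pairing,
$$\langle \JJ_\varepsilon^\prime(\eta_n),\tilde\eta\rangle_1 \rightarrow \langle \JJ_\varepsilon^\prime(\eta_\infty),\tilde\eta\rangle_1.$$
This is the required weak continuity.

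There is no real obstacle here: all the analytical work has been carried out in the two preceding lemmas (and, behind them, in Proposition \ref{prop:Properties of K}, which provides the weak-to-strong compactness results needed to pass to the limit in the nonlinear terms appearing in $\LL^\prime$). The corollary itself is just the observation that the finite linear combination $\KK^\prime - \Lambda(1-\varepsilon^2) \LL^\prime$ inherits weak continuity from its two summands.
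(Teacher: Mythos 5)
Your argument is correct and is exactly the reasoning the paper intends (it gives no separate proof precisely because the corollary follows from $\JJ_\varepsilon^\prime=\KK^\prime-\Lambda(1-\varepsilon^2)\LL^\prime$ and the weak continuity of the two summands established in the preceding lemmas). Nothing further is needed.
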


\begin{remark} \label{rem:Gradients wc}
The functions $\LL_3^\prime$, $\LL_4^\prime$ and $\KK_4^\prime: U \rightarrow H^1(\R^2)$ are also weakly continuous.

Suppose that $\{\eta_n\} \subset U$ converges weakly in $H^3(\R^2)$ to $\eta_\infty \in U$.
It follows that $\{K_0\eta_n\}$, $\{L_0\eta_n\}$ converge strongly in $L^4_\mathrm{loc}(\R^2)$ to $K_0\eta_\infty$, $L_0\eta_\infty$
and $\{\eta_n K_0\eta_n\}$, $\{\eta_n L_0\eta_n\}$,
$\{(K_0 \eta_n)^2\}$, $\{(L_0 \eta_n)^2\}$, $\{\eta_{nx}^2\}$ converge strongly  in $L^2_\mathrm{loc}(\R^2)$ to
$\eta_\infty K_0\eta_\infty$, $\eta_\infty L_0\eta_\infty$, $(K_0\eta_\infty)^2$, $(L_0\eta_\infty)^2$, $\eta_{\infty x}^2$. Examining the expression for $\LL_3^\prime(\eta)$ given in Lemma
\ref{lem:L2 to L4}, we conclude that $\langle \LL_3^\prime(\eta_n), \tilde{\eta} \rangle_0$ converges to
$\langle \LL_3^\prime(\eta_\infty), \tilde{\eta} \rangle_0$ for each $\tilde{\eta} \in C_0^\infty(\R^2)$.

Similar arguments show that $\langle \KK_4^\prime(\eta_n), \tilde{\eta} \rangle_0 \rightarrow \langle \KK_4^\prime(\eta),\tilde{\eta} \rangle_0$
and $\langle \LL_4^\prime(\eta_n), \tilde{\eta} \rangle_0 \rightarrow \langle \LL_4^\prime(\eta),\tilde{\eta} \rangle_0$ as $n \rightarrow \infty$
for each $\tilde{\eta}\in C_0^\infty(\R^2)$.
\end{remark}

\subsubsection*{Further notation}

Finally, we denote the superquadratic part of $\JJ_\varepsilon(\eta)$ by $\NN(\eta)$, that is write
\begin{align*}
\NN(\eta):=&\,\JJ_\varepsilon(\eta)-(\KK_2(\eta)-\Lambda(1-\varepsilon^2)\LL_2(\eta))\\
=&\, \KK_\mathrm{nl}(\eta) - \Lambda(1-\varepsilon^2)\LL_\mathrm{nl}(\eta),
\end{align*}
where
$$\KK_\mathrm{nl}(\eta)= \sum_{n=2}^\infty \KK_{2n}(\eta), \qquad \LL_\mathrm{nl}(\eta)=\sum_{n=3}^\infty \LL_n(\eta);$$
in view of the above calculations we also use the notation
$$\KK_\mathrm{r}(\eta):= \KK_\mathrm{nl}(\eta)-\KK_4(\eta), \qquad
\LL_\mathrm{r}(\eta):= \LL_\mathrm{nl}(\eta)-\LL_3(\eta)-\LL_4(\eta).$$
Note that $\NN$, $\KK_\mathrm{nl}$, $\LL_\mathrm{nl}$, $\KK_\mathrm{r}$ and $\LL_\mathrm{r}:U \rightarrow H^1(\R^2)$
are also weakly continuous.

\subsection{The reduction procedure}
\label{section: Reduction}

The next step is to decompose $\XX=H^3(\R^2)$ into the direct sum of the weakly closed subspaces
$\XX_1 = \chi(D)\XX$ and $\XX_2 = (1-\chi(D))\XX$. Observe that
$\eta \in U$ is a critical point of $\JJ_\varepsilon$, that is
$$
\mathrm{d}\JJ_\varepsilon[\eta](\rho)=0
$$
for all $\rho \in \XX$, if and only if
$$\mathrm{d}\JJ_\varepsilon[\eta_1+\eta_2](\rho_1)=0, \qquad \mathrm{d}\JJ_\varepsilon[\eta_1+\eta_2](\rho_2)=0,$$
or equivalently
\begin{equation}
\label{Two grad eqns}
\langle \JJ_\varepsilon^\prime(\eta_1+\eta_2), \rho_1\rangle_0=0, \qquad \langle \JJ_\varepsilon^\prime(\eta_1+\eta_2), \rho_2\rangle_0=0,
\end{equation}
for all $\rho_1 \in \XX_1$ and $\rho_2 \in \XX_2$. Equations \eqref{Two grad eqns} are given explicitly by
\begin{align}
\eta_1-\beta\eta_{1xx}-\beta\eta_{1zz}-\Lambda K_0\eta_1 +\varepsilon^2\Lambda K_0\eta_1 \hspace{1.8in}\nonumber\\
\mbox{}+\chi(D)\left(\NN^\prime(\eta_1+\eta_2)+\Lambda(1-\varepsilon^2)\LL_3^\prime(\eta_1)\right)&=0, \label{Euler-Lagrange X_1 component}  \\
\underbrace{\eta_2-\beta\eta_{2xx}-\beta\eta_{2zz}-\Lambda K_0\eta_2 }_{\displaystyle = \tilde{g}(D)\eta_2}+\varepsilon^2\Lambda K_0\eta_2+(1-\chi(D))\NN^\prime(\eta_1+\eta_2)&=0,\label{Euler-Lagrange X_2 component}
\end{align}
where $$\tilde{g}(k):=g(|k|) +\Lambda \frac{k_2^2}{|k|^2}f(|k|) \geq 0$$
with equality if and only if $k=\pm (\omega,0)$ (see the comments to equation \eqref{Defn of g})
and we have used the fact that $\chi(D)\LL_3^\prime(\eta_1)$ vanishes (so that the nonlinear term in
\eqref{Euler-Lagrange X_1 component} is at leading order cubic in $\eta_1$). We accordingly write
$$\eta_2 = F(\eta_1)+\eta_3, \qquad F(\eta_1):=
\Lambda(1-\varepsilon^2)\FF^{-1}\left[\frac{1-\chi(k)}{\tilde g(k)}\FF[\LL_3^\prime(\eta_1)]\right]$$
and \eqref{Euler-Lagrange X_2 component}
in the form
\begin{equation}
\eta_3 = -\FF^{-1}\left[\frac{1-\chi(k)}{\tilde g(k)}\FF\left[\Lambda(1-\varepsilon^2)\LL_3^\prime(\eta_1)+\NN^\prime(\eta_1+F(\eta_1)+\eta_3)+\Lambda\varepsilon^2 K_0(F(\eta_1)+\eta_3)\right]\right] \label{eta3 eqn}
\end{equation}
(with the requirement that $\eta_1+F(\eta_1)+\eta_3 \in U$).

\begin{proposition}
\label{prop:bounded mapping}
The mapping $$f\mapsto \FF^{-1}\left[\frac{1-\chi(k)}{\tilde g(k)}\hat f\right]$$ defines a bounded linear operator $H^1(\R^2)\to H^3(\R^2)$.
\end{proposition}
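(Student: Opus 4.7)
The plan is to verify that the operator is the Fourier multiplier with symbol $m(k):=(1-\chi(k))/\tilde g(k)$, and then show by Plancherel that boundedness $H^1(\R^2) \to H^3(\R^2)$ reduces to the pointwise estimate
\[
|m(k)| \lesssim (1+|k|^2)^{-1}, \qquad k \in \R^2.
\]
Indeed,
\[
\Big\|\FF^{-1}\Big[m\hat f\,\Big]\Big\|_{H^3}^2 = \int_{\R^2}(1+|k|^2)^3 |m(k)|^2 |\hat f(k)|^2 \dk_1\dk_2,
\]
and the desired bound by $\|f\|_{H^1}^2$ follows at once from the pointwise estimate.

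The whole task is therefore to prove the lower bound
\[
\tilde g(k) \gtrsim 1+|k|^2 \qquad \text{on } \supp(1-\chi),
\]
which I would do by splitting into two regimes. For $|k|\le R$ with $R$ a sufficiently large fixed constant, the set $\supp(1-\chi)\cap\{|k|\le R\}$ is compact and, by the construction of $\chi$, bounded away from the zero set $\{\pm(\omega,0)\}$ of $\tilde g$. Since $\tilde g$ is continuous and strictly positive off its zero set (here one uses $g\ge 0$ with equality only at $|k|=\omega$, and that $\Lambda k_2^2 f(|k|)/|k|^2 > 0$ whenever $k_2 \neq 0$), it has a positive minimum on this compact set, giving the claimed bound there.

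For $|k|\ge R$, I would use the asymptotics $f(s)=s\coth s \le 2s$ and $\coth s\to 1$ as $s\to\infty$: the cross term satisfies
\[
\Lambda\frac{k_2^2}{|k|^2}f(|k|)\le 2\Lambda |k|,
\]
while $g(|k|)=1+\beta|k|^2-\Lambda|k|\coth|k|=\beta|k|^2+O(|k|)$. Hence $\tilde g(k)/(1+|k|^2)\to\beta > 0$ as $|k|\to\infty$, so choosing $R$ large enough gives $\tilde g(k) \ge \tfrac{\beta}{2}(1+|k|^2)$ on $\{|k|\ge R\}$. Combining the two regimes yields the uniform lower bound, and hence $|m(k)|\lesssim (1+|k|^2)^{-1}$, completing the argument.

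The only mildly delicate point — not really an \emph{obstacle}, more a bookkeeping step — is ensuring that the lower bound on $\tilde g$ is uniform across the transition between the two regimes; this is handled cleanly by choosing $R$ first from the asymptotic analysis and then using compactness on $\{|k|\le R\}\cap\supp(1-\chi)$. Everything else is a routine Plancherel computation.
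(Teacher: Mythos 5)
Your proof is correct and is exactly the (omitted) argument intended by the paper, which states Proposition \ref{prop:bounded mapping} without proof: Plancherel reduces the claim to the symbol estimate $\tilde g(k)\gtrsim 1+|k|^2$ on $\supp(1-\chi)$, which follows from the strict positivity of $\tilde g$ off $\{\pm(\omega,0)\}$ together with the growth $\tilde g(k)=\beta|k|^2+O(|k|)$ as $|k|\to\infty$, just as you argue. The only small slip is the assertion that $\tilde g$ is continuous on the compact region: it has no limit at $k=0$ because of the factor $k_2^2/|k|^2$, but there one simply uses $\tilde g(k)\ge g(|k|)$ together with $g(0)=1-\Lambda>0$ (recall $\Lambda<1$) to get the required positive lower bound near the origin, so your compactness step goes through unchanged.
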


We proceed by solving \eqref{eta3 eqn} for $\eta_3$ as a function of $\eta_1$ using the following fixed-point theorem, which is a straightforward extension of a standard result in nonlinear analysis.

\begin{theorem}
\label{thm:fixed-point}
Let $\YY_1$, $\YY_2$ be Banach spaces, $Y_1$, $Y_2$ be closed sets in, respectively, $\YY_1$, $\YY_2$ containing the origin and $G\colon Y_1\times Y_2 \to \YY_2$ be a smooth function. Suppose that there exists a 
continuous function $r\colon Y_1\to [0,\infty)$ such that
$$
\|G(y_1,0)\|\le \frac{r}{2}, \quad \|\mathrm{d}_2 G[y_1,y_2]\|\le \frac13
$$ 
for each $y_2\in \bar B_r(0)\subset Y_2$ and each $y_1\in Y_1$.

Under these hypotheses there exists for each $y_1\in Y_1$ a unique solution $y_2=y_2(y_1)$ of the fixed-point equation
$$
y_2=G(y_1,y_2)
$$
satisfying $y_2(y_1)\in \bar B_r(0)$. Moreover $y_2(y_1)$ is a smooth function of $y_1\in Y_1$ and in particular satisfies the estimate
$$
\|\mathrm{d} y_2[y_1]\|\le 2\|\mathrm{d}_1 G[y_1, y_2(y_1)]\|
$$
for its first derivative
and the estimate
\begin{align*}
\|\mathrm{d}^2 y_2[y_1]& \|\le 2\|\mathrm{d}_1^2G[y_1,y_2(y_1)]\| \\
& \qquad\mbox{}+8\|\mathrm{d}_1\mathrm{d}_2 G[y_1,y_2(y_1)]\| \|\mathrm{d}_1 G[y_1, y_2(y_1)]\|+8\|\mathrm{d}_2^2 G[y_1,y_2(y_1)]\|
\|\mathrm{d}_1 G[y_1, y_2(y_1)]\|^2
\end{align*}
for its second derivative.
\end{theorem}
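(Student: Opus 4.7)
The plan is to prove the theorem in three steps: first establish existence and uniqueness of the fixed point for each $y_1$ by the Banach contraction principle, then deduce smoothness via the implicit function theorem, and finally obtain the two derivative bounds by differentiating the identity $y_2(y_1)=G(y_1,y_2(y_1))$.

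For existence and uniqueness, fix $y_1\in Y_1$, set $r=r(y_1)$, and define $\Phi_{y_1}(y_2):=G(y_1,y_2)$ on $\bar B_r(0)\subset Y_2$. The mean-value inequality together with the hypothesis $\|\mathrm{d}_2 G[y_1,y_2]\|\le 1/3$ yields the Lipschitz estimate
$$\|\Phi_{y_1}(y_2)-\Phi_{y_1}(y_2')\|\le \tfrac{1}{3}\|y_2-y_2'\|,\qquad y_2,y_2'\in\bar B_r(0),$$
which, combined with $\|\Phi_{y_1}(0)\|=\|G(y_1,0)\|\le r/2$, gives $\|\Phi_{y_1}(y_2)\|\le r/2+r/3\le r$. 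Thus $\Phi_{y_1}$ is a contraction of $\bar B_r(0)$ into itself, and the Banach fixed-point theorem produces a unique $y_2(y_1)\in \bar B_r(0)$ satisfying $y_2(y_1)=G(y_1,y_2(y_1))$.

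For smoothness, put $F(y_1,y_2):=y_2-G(y_1,y_2)$, so that the fixed-point equation reads $F(y_1,y_2(y_1))=0$. The Neumann series applied to $\mathrm{d}_2 F[y_1,y_2]=I-\mathrm{d}_2 G[y_1,y_2]$ (using $\|\mathrm{d}_2 G\|\le 1/3$) shows that this operator is invertible with $\|(\mathrm{d}_2 F[y_1,y_2])^{-1}\|\le 3/2$. The implicit function theorem then yields that $y_2$ is as smooth as $G$ on $Y_1$.

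For the derivative estimates, differentiating $y_2(y_1)=G(y_1,y_2(y_1))$ once gives $\mathrm{d}y_2=\mathrm{d}_1 G+\mathrm{d}_2 G\cdot \mathrm{d}y_2$, whence $\mathrm{d}y_2=(I-\mathrm{d}_2 G)^{-1}\mathrm{d}_1 G$ and
$$\|\mathrm{d}y_2[y_1]\|\le \tfrac{3}{2}\|\mathrm{d}_1 G[y_1,y_2(y_1)]\|\le 2\|\mathrm{d}_1 G[y_1,y_2(y_1)]\|.$$
Differentiating a second time and using the symmetry of mixed second derivatives leads to
$$(I-\mathrm{d}_2 G)\,\mathrm{d}^2 y_2=\mathrm{d}_1^2 G+2\,\mathrm{d}_1\mathrm{d}_2 G(\cdot,\mathrm{d}y_2)+\mathrm{d}_2^2 G(\mathrm{d}y_2,\mathrm{d}y_2);$$
inserting $\|(I-\mathrm{d}_2 G)^{-1}\|\le 3/2$ and $\|\mathrm{d}y_2\|\le 2\|\mathrm{d}_1 G\|$, and using $3/2\le 2$ and $6\le 8$, produces the stated second-derivative bound. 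There is no genuine obstacle; the only care needed is that the radius $r(y_1)$ varies with $y_1$, so the fiber-wise fixed-point argument and the implicit function theorem are applied locally in $y_1$, and constants in the second-derivative computation must be tracked to match the precise form quoted in the statement.
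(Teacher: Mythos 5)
Your proof is correct and is essentially the argument the paper has in mind: the paper omits a proof, describing the result as a straightforward extension of a standard fixed-point/uniform-contraction theorem, and your combination of the Banach contraction principle on $\bar B_{r(y_1)}(0)$, the invertibility of $I-\mathrm{d}_2G$ via a Neumann series, and differentiation of the identity $y_2(y_1)=G(y_1,y_2(y_1))$ is precisely that standard route, with the constants ($\tfrac32\le 2$, $6\le 8$) checking out as you indicate.
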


We apply Theorem \ref{thm:fixed-point} to equation \eqref{eta3 eqn} with
$\YY_1=\XX_1$, $\YY_2=\XX_2$, equipping
$\XX_1$ with the scaled norm
$$
\nn \eta \nn:=\left( \int_{\R^2} (1+\varepsilon^{-2}((|k_1|-\omega)^2+k_2^2))|\hat \eta(k)|^2\dk_1 \dk_2\right)^{\!1/2}
$$
and $\XX_2$ with the usual norm for $H^3(\R^2)$, and taking $Y_1=X_1$, $Y_2=X_3$, where
$$
X_1=\{\eta_1\in \XX_1 \colon \nn \eta_1\nn \le R_1\}, \qquad
X_3=\{\eta_3\in \XX_2 \colon \| \eta_3\|_3 \le R_3\};
$$
the function $G$ is given by the right-hand side of \eqref{eta3 eqn}. The following proposition shows that
\begin{equation}
\|\hat{\eta}_1\|_{L^1(\R^2)} \lesssim \varepsilon^\theta \nn \eta_1 \nn, \qquad \eta_1 \in \XX_1,
\label{Triple norm large}
\end{equation}
for each fixed $\theta \in (0,1)$, so that we can guarantee that $\|\hat{\eta}_1\|_{L^1(\R^2)} < M/2$ for all
$\eta_1 \in X_1$ for an arbitrarily large value
of $R_1$; the value of $R_3$
is then constrained by the requirement that $\|F(\eta_1) + \eta_3\|_3 < M/2$ for all $\eta_1 \in X_1$ and $\eta_3 \in X_3$,
so that $\eta_1+F(\eta_1)+\eta_3 \in U=B_M(0)$
(Corollary \ref{cor:F estimates} below asserts that $\|F(\eta_1)\|_3 = O(\varepsilon^\theta)$ uniformly over $\eta_1 \in X_1$).

\begin{proposition}
\label{sup estimate}
The estimate 
\[
\|\hat \eta_1\|_{L^1(\R^2)} \lesssim
\varepsilon |\! \log \varepsilon |  \nn \eta_1\nn
\]
holds for each $\eta_1\in \XX_1$.
\end{proposition}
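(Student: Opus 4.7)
The plan is a direct application of the Cauchy--Schwarz inequality with a weight chosen to reproduce the scaled norm. Writing $w(k):= 1+\varepsilon^{-2}((|k_1|-\omega)^2+k_2^2)$ and noting that $\hat\eta_1$ vanishes outside $\supp\chi = B_\delta(\omega,0)\cup B_\delta(-\omega,0)$, I would estimate
\begin{equation*}
\|\hat\eta_1\|_{L^1(\R^2)} = \int_{\supp\chi} w(k)^{-1/2}\cdot w(k)^{1/2}|\hat\eta_1(k)|\dk_1\dk_2 \leq \left(\int_{\supp\chi} \frac{\dk_1\dk_2}{w(k)}\right)^{\!1/2}\!\nn\eta_1\nn.
\end{equation*}

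Next I would evaluate the weight integral. By symmetry of $\chi$ in $k_1$ it suffices to work on $B_\delta(\omega,0)$, and on this disc $|k_1|-\omega = k_1 - \omega$, so introducing polar coordinates $(r,\theta)$ centred at $(\omega,0)$ gives
\begin{equation*}
\int_{B_\delta(\omega,0)} \frac{\dk_1\dk_2}{1+\varepsilon^{-2}((k_1-\omega)^2+k_2^2)} = 2\pi\int_0^{\delta}\frac{\varepsilon^2 r}{\varepsilon^2+r^2}\,\mathrm{d}r = \pi\varepsilon^2 \log\!\left(1+\frac{\delta^2}{\varepsilon^2}\right),
\end{equation*}
which for small $\varepsilon$ is bounded by a constant multiple of $\varepsilon^2|\log\varepsilon|$. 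Combining this with the Cauchy--Schwarz bound above yields $\|\hat\eta_1\|_{L^1(\R^2)} \lesssim \varepsilon|\log\varepsilon|^{1/2}\nn\eta_1\nn$, which is stronger than the stated inequality (and so a fortiori implies it, since $|\log\varepsilon|^{1/2} \leq |\log\varepsilon|$ for $\varepsilon$ small).

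There is no genuine obstacle here; the content of the lemma is just the match between the $\varepsilon^{-2}$-scaling in $\nn\cdot\nn$ and the area of $\supp\chi$. The only mild point to verify is that the change of variables is legitimate on the full disc $B_\delta(\omega,0)$ (not merely where $k_1>0$), which is ensured by the choice $\delta<\omega/3$ made at the start of Section \ref{VF} so that $B_\delta(\omega,0)\subset\{k_1>0\}$ and hence $|k_1|=k_1$ there.
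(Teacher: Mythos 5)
Your proposal is correct and follows essentially the same route as the paper: Cauchy--Schwarz against the weight $(1+\varepsilon^{-2}((|k_1|-\omega)^2+k_2^2))^{1/2}$ followed by an explicit evaluation of the weight integral over $\supp\chi$, giving the bound $\varepsilon(\log(1+\delta^2\varepsilon^{-2}))^{1/2}\nn\eta_1\nn \lesssim \varepsilon|\log\varepsilon|^{1/2}\nn\eta_1\nn$, which the paper likewise obtains (and which indeed implies the stated estimate).
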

\begin{proof}
Observe that
\begin{align*}
\int_{\R^2} |\hat \eta_1(k)|\dk_1\dk_2
&= \int_{\R^2}\frac{(1+\varepsilon^{-2}((|k_1|-\omega)^2+k_2^2))^{1/2}}{(1+\varepsilon^{-2}((|k_1|-\omega)^2+k_2^2))^{1/2}} |\hat \eta_1(k)| \dk_1 \dk_2\\
&\le 2\nn \eta\nn \left( \int_{B_\delta(\omega,0)} \frac{1}{1+\varepsilon^{-2}((k_1-\omega)^2+k_2^2)} \dk_1 \dk_2 \right)^{1/2}, \\
&=2\varepsilon\nn \eta\nn  \left(\int_{|k|<\delta/\varepsilon} \frac{1}{1+|k|^2}  \dk_1 \dk_2\right)^{1/2} \\
& = 2\sqrt{\pi}\varepsilon (\log(1+\delta^2\varepsilon^{-2}))^{1/2} \nn \eta\nn. \qedhere
\end{align*}
\end{proof}

We proceed by systematically estimating each term
appearing in the equation for $G$, writing
$$
\Lambda(1-\varepsilon^2)\LL_3^\prime(\eta_1)+\NN^\prime(\eta) \\
= \KK_\mathrm{nl}^\prime(\eta_1+F(\eta_1)+\eta_3) - \Lambda(1-\varepsilon^2)\big( \LL_3^\prime(\eta) - \LL_3^\prime(\eta_1) + \LL_4^\prime(\eta)
+\LL_\mathrm{r}^\prime(\eta)\big),
$$
where $\eta=\eta_1+F(\eta_1)+\eta_3$, and using the inequalities \eqref{Triple norm large} and
$$\|\eta_1\|_3 \lesssim \nn \eta_1 \nn$$
to handle $\eta_1$; note in particular that
$$\|\eta\|_\ZZ \lesssim \varepsilon^\theta \nn \eta_1 \nn + \|\eta_3\|_3, \qquad
\|\eta\|_3 \lesssim \nn \eta_1 \nn + \|\eta_3\|_3$$
for each $\eta \in H^3(\R^2)$.

In order to estimate $F(\eta_1)$ we write $\LL_3^\prime(\eta)=m(\{\eta\}^2)$, where
\begin{align*}
m(u,v)&=\frac{1}{2}\left(u_x v_x - (K_0 u) (K_0 v) -(L_0 u) (L_0 v) \right)\\
&\qquad\mbox{}+\frac{1}{2}\left(
-(u_xv +u v_x)_x-K_0(u K_0 v+v K_0 u)-L_0(u L_0v +vL_0u)
\right)
\end{align*}
(see Lemma \ref{lem:L2 to L4}), and note that
$$\mathrm{d}\LL_3^\prime[\eta](v)=2m(\eta,v), \qquad \mathrm{d^2}\LL_3^\prime[\eta](v,w)=2m(v,w).$$

\begin{proposition}
\label{prop:m estimate}
The estimate
$$\|m(u,v)\|_1 \lesssim \|u\|_\ZZ \|v\|_3,$$
holds for each $u$, $v\in H^3(\R^2)$.
\end{proposition}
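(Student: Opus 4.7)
The plan is to use the bilinearity of $m$ together with the splitting $u=u_1+u_2$, $u_1=\chi(D)u$, $u_2=(1-\chi(D))u$ built into the $\ZZ$-norm, which reduces the claim to the two sub-estimates
$$
\|m(u_1,v)\|_1 \lesssim \|\hat u_1\|_{L^1(\R^2)}\|v\|_3, \qquad \|m(u_2,v)\|_1 \lesssim \|u_2\|_3\|v\|_3;
$$
these sum to $\|m(u,v)\|_1\lesssim \|u\|_\ZZ\|v\|_3$. Each of the eight structural terms in the definition of $m$ is either a pointwise product (the three terms $u_xv_x$, $(K_0u)(K_0v)$, $(L_0u)(L_0v)$) or a first-order operator ($\partial_x$, $K_0$ or $L_0$) applied to such a product. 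Since the symbols $\frac{k_1^2}{|k|^2}f(|k|)$ and $\frac{k_1k_2}{|k|^2}f(|k|)$ are smooth away from the origin and satisfy the pointwise bound $|\cdot|\lesssim 1+|k|$, the operators $K_0$ and $L_0$ define bounded maps $H^s(\R^2)\to H^{s-1}(\R^2)$ for integer $s\geq 1$; hence to control an operator-applied-to-product term in $H^1$ it suffices to bound the interior product in $H^2$.

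For the $u_2$-contribution I would invoke the Banach-algebra property $\|fg\|_s\lesssim\|f\|_s\|g\|_s$ of $H^s(\R^2)$ for integer $s\geq 2$, together with $K_0,L_0\colon H^3\to H^2$. The three plain-product terms of $m(u_2,v)$ already lie in $H^2\hookrightarrow H^1$ with norm $\lesssim\|u_2\|_3\|v\|_3$, while the five remaining terms are obtained by applying $\partial_x$, $K_0$ or $L_0$ to products whose $H^2$-norm is controlled by $\|u_2\|_3\|v\|_3$ in the same manner, and then losing exactly one derivative.

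For the $u_1$-contribution I would exploit the compact Fourier support of $u_1$ in $B_\delta(\omega,0)\cup B_\delta(-\omega,0)$, a set that stays away from the origin. For every multi-index $\alpha$ the function $\widehat{\partial^\alpha u_1}=(\ii k)^\alpha\chi(k)\hat u(k)$ is supported in this same compact set and the symbol $(\ii k)^\alpha\chi(k)$ is uniformly bounded there, whence $\|\partial^\alpha u_1\|_\infty\lesssim\|\hat u_1\|_{L^1(\R^2)}$ with a constant depending only on $\alpha$. The same bound holds for $K_0u_1$, $L_0u_1$ and all their derivatives, because the $K_0$- and $L_0$-symbols are smooth and bounded on $\supp\chi$ and therefore preserve the compact Fourier support as well as the $L^1$-norm of the Fourier transform (up to a constant). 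Applying the Leibniz estimate $\|fg\|_s\leq C_s\|f\|_{W^{s,\infty}}\|g\|_s$ for integer $s\leq 2$, and peeling off the outer $\partial_x$, $K_0$ or $L_0$ via the mapping $H^2\to H^1$ when present, each of the eight terms in $m(u_1,v)$ yields a bound $\lesssim\|\hat u_1\|_{L^1(\R^2)}\|v\|_3$.

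The main obstacle is essentially bookkeeping: systematically checking each of the eight structural terms of $m$ in both the $u_1$- and $u_2$-pieces and matching it with the appropriate estimate template (Banach algebra plus Sobolev loss for $u_2$; $L^\infty\times L^2$ Leibniz for $u_1$). The only mildly delicate step is the observation that $K_0u_1$ and $L_0u_1$ inherit both the spectral compactness and the $L^\infty$-control of $u_1$; this is precisely what makes the terms with $K_0u$ or $L_0u$ in the outer position (such as $K_0(vK_0u)$) as easy to treat as their counterparts with $u$ itself in that position, and it depends crucially on the fact that $\supp\chi$ avoids the origin, where the symbols of $K_0$ and $L_0$ are singular.
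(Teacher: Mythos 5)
Your proposal is correct and follows essentially the same route as the paper: split $u=u_1+u_2$ according to the $\ZZ$-norm, control $u_1$, $K_0u_1$, $L_0u_1$ (and their derivatives) in $L^\infty$ by $\|\hat u_1\|_{L^1(\R^2)}$ using the compact Fourier support away from the origin, and handle the $u_2$-part with standard Sobolev product estimates, peeling off the outer $\partial_x$, $K_0$ or $L_0$ at the cost of one derivative. The paper merely states these bounds more tersely (via $\|u_1\|_{3,\infty}+\|K_0u_1\|_{2,\infty}+\|L_0u_1\|_{2,\infty}+\|u_2\|_3$), so no substantive difference.
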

\begin{proof}
We estimate
\begin{align*}
\|m(u,v)\|_1 & \lesssim (\|u_1\|_{3,\infty}+\|K_0u_1\|_{2,\infty} + \|L_0 u_1\|_{2,\infty}+\|u_2\|_3)\|v\|_3 \\
& \lesssim (\|\hat{u}_1\|_{L^1(\R^2)}+\|u_2\|_3) \|v\|_3 \\
& = \|u\|_\ZZ \|v\|_3,
\end{align*}
where the second line follows from the fact that
$$\|u_1\|_{m,\infty},\ \|K_0 u_1\|_{m,\infty},\ \|L_0 u_1\|_{m,\infty}, \|H_0 u_1\|_{m,\infty} \lesssim \|\hat{u}_1\|_{L_1(\R^2)}$$
for each $m \in {\mathbb N}_0$ (since $\hat{u}_1$ has compact support).
\end{proof}

\begin{corollary}
\label{cor:F estimates}
The estimates
$$\|F(\eta_1)\|_3\lesssim \varepsilon^\theta \nn \eta_1\nn^2,\quad
\|\mathrm{d}F[\eta_1]\|_{\LL(\XX_1, \XX_2)}  \lesssim \varepsilon^\theta \nn \eta_1\nn,\quad
\|\mathrm{d}^2F[\eta_1]\|_{\LL^2(\XX_1,\XX_2)}  \lesssim \varepsilon^\theta$$
hold for each $\eta_1\in X_1$, where $\LL(\XX_1,\XX_2)$ and $\LL^2(\XX_1,\XX_2)$ denote the spaces
of bounded linear and bilinear operators $\XX_1 \rightarrow \XX_2$.
\end{corollary}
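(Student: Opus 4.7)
The plan is to reduce each of the three estimates to a bound on a symmetric bilinear form in $H^1(\R^2)$ and then invoke Propositions \ref{prop:bounded mapping}, \ref{sup estimate} and \ref{prop:m estimate}. Since the Fourier multiplier appearing in the definition of $F$ is precisely the one covered by Proposition \ref{prop:bounded mapping}, it defines a bounded operator $H^1(\R^2)\to H^3(\R^2)$; moreover its range lies in $\XX_2$ because $1-\chi$ vanishes on $B_\delta(\pm\omega,0)$. Consequently it suffices to estimate $\|\LL_3^\prime(\eta_1)\|_1$, $\|\mathrm{d}\LL_3^\prime[\eta_1](v)\|_1$ and $\|\mathrm{d}^2\LL_3^\prime[\eta_1](v,w)\|_1$ for $\eta_1,v,w\in\XX_1$.

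Next I would exploit the fact that $\LL_3^\prime$ is quadratic: from the identification $\LL_3^\prime(\eta)=m(\{\eta\}^2)$ recorded just before Proposition \ref{prop:m estimate}, one has $\mathrm{d}\LL_3^\prime[\eta_1](v)=2m(\eta_1,v)$ and $\mathrm{d}^2\LL_3^\prime[\eta_1](v,w)=2m(v,w)$. Proposition \ref{prop:m estimate} then yields
\[
\|m(\eta_1,\eta_1)\|_1\lesssim \|\eta_1\|_\ZZ\|\eta_1\|_3,\qquad \|m(\eta_1,v)\|_1\lesssim \|\eta_1\|_\ZZ\|v\|_3,\qquad \|m(v,w)\|_1\lesssim \|v\|_\ZZ\|w\|_3,
\]
the last two after picking whichever argument of the symmetric form $m$ one wishes to place in $\ZZ$.

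Finally I would convert these mixed bounds to the scaled triple norm. Since every function in $\XX_1$ has Fourier support in the fixed compact set $B_\delta(\omega,0)\cup B_\delta(-\omega,0)$, the weight $(1+|k|^2)^3$ is uniformly bounded there, so $\|\,\cdot\,\|_3\lesssim \nn\cdot\nn$ on $\XX_1$. Proposition \ref{sup estimate} gives
\[
\|\eta_1\|_\ZZ=\|\hat{\eta}_1\|_{L^1(\R^2)}\lesssim \varepsilon|\!\log\varepsilon|\,\nn\eta_1\nn\lesssim \varepsilon^\theta\nn\eta_1\nn
\]
for any fixed $\theta\in(0,1)$ (after possibly shrinking $\varepsilon_0$). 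Combining these ingredients yields $\|F(\eta_1)\|_3\lesssim \varepsilon^\theta\nn\eta_1\nn^2$; for the first derivative one places the $\ZZ$-norm on $\eta_1$ to gain the factor $\varepsilon^\theta\nn\eta_1\nn$, and for the second derivative one places it on $v$ (equivalently on $w$) to gain the bare $\varepsilon^\theta$.

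No step presents a real obstacle: the corollary is essentially a bookkeeping consequence of the three cited propositions, the symmetric-bilinear structure of $\LL_3^\prime$, and the compact Fourier support of elements of $\XX_1$. The only nuance is to exploit the symmetry of $m$ in the second-derivative bound so that the $\ZZ$-norm (and hence the $\varepsilon^\theta$ gain) falls on a variable in $\XX_1$ rather than on the reference point $\eta_1$, which matters only for the first derivative estimate.
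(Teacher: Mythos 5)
Your proposal is correct and follows essentially the same route the paper intends: the corollary is exactly the combination of Proposition \ref{prop:bounded mapping}, the bilinear identities $\mathrm{d}\LL_3^\prime[\eta](v)=2m(\eta,v)$, $\mathrm{d}^2\LL_3^\prime[\eta](v,w)=2m(v,w)$, Proposition \ref{prop:m estimate}, and the inequalities $\|\hat\eta_1\|_{L^1(\R^2)}\lesssim\varepsilon^\theta\nn\eta_1\nn$ and $\|\eta_1\|_3\lesssim\nn\eta_1\nn$ on $\XX_1$. Your closing remark about needing symmetry is a non-issue (in $m(\eta_1,v)$ and $m(v,w)$ all arguments already lie in $\XX_1$, so either placement of the $\ZZ$-norm gives the $\varepsilon^\theta$ gain), but this does not affect correctness.
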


\begin{remark} \label{rem:K0Feta1 remark}
Noting that
$$K_0 F(\eta_1) = \Lambda(1-\varepsilon^2)\FF^{-1}\left[\frac{1-\chi(k)}{\tilde g(k)} \frac{k_1^2}{|k|^2}f(|k|) \LL_3^\prime(\eta_1)\right]$$
and that $\LL_3^\prime(\eta_1)$ has compact support, one finds
that $K_0 F(\eta_1)$ satisfies the same estimates as $F(\eta_1)$.
\end{remark}

The quantity 
\begin{align*}
\AA(\eta_1,\eta_3):&=\LL_3^\prime(\eta_1+F(\eta_1)+\eta_3)-\LL_3^\prime(\eta_1)\\
&= 2m(\eta_1, F(\eta_1)+\eta_3)+m(F(\eta_1)+\eta_3,F(\eta_1)+\eta_3)
\end{align*}
is estimated by combining Proposition \ref{prop:m estimate} and Corollary \ref{cor:F estimates} using the chain rule.
\begin{lemma}
\label{lem:3 estimates}
The estimates
\begin{list}{(\roman{count})}{\usecounter{count}}
\item
$\|\AA(\eta_1,\eta_3)\|_1\lesssim \varepsilon^{2\theta} \nn \eta_1\nn^3+\varepsilon^\theta \nn \eta_1\nn^2\|\eta_3\|_3
+\varepsilon^\theta \nn \eta_1\nn\|\eta_3\|_3+\|\eta_3\|_3^2$,
\item
$\|\mathrm{d}_1\AA[\eta_1,\eta_3]\|_{\LL(\XX_1,H^1(\R^2))}\lesssim \varepsilon^{2\theta} \nn \eta_1\nn^2
+\varepsilon^\theta \nn \eta_1\nn\|\eta_3\|_3+\varepsilon^\theta \|\eta_3\|_3$,
\item
$\|\mathrm{d}_2\AA[\eta_1,\eta_3]\|_{\LL(\XX_2,H^1(\R^2))}\lesssim \varepsilon^\theta \nn \eta_1\nn+\|\eta_3\|_3$,
\item
$\|\mathrm{d}_1^2\AA[\eta_1,\eta_3]\|_{\LL^2(\XX_1,H^1(\R^2))}\lesssim \varepsilon^{2\theta} \nn \eta_1\nn+\varepsilon^\theta \|\eta_3\|_3$,
\item
$\|\mathrm{d}_1\mathrm{d}_2\AA[\eta_1,\eta_3]\|_{\LL^2(\XX_1 \times \XX_2,H^1(\R^2))}\lesssim\varepsilon^\theta$,
\item
$\|\mathrm{d}_2^2\AA[\eta_1,\eta_3]\|_{\LL^2(\XX_2,H^1(\R^2))}\lesssim 1$
\end{list}
hold for each $\eta_1\in X_1$ and $\eta_3\in X_3$, where $\LL(\XX_1,H^1(\R^2))$, $\LL(\XX_2,H^1(\R^2))$ and
$\LL^2(\XX_1,H^1(\R^2))$, $\LL^2(\XX_1 \times \XX_2,H^1(\R^2))$, $\LL^2(\XX_2,H^1(\R^2))$ denote the Banach spaces
of bounded linear and bilinear operators from the indicated spaces to $H^1(\R^2)$.
\end{lemma}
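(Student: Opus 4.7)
The plan is to exploit the bilinearity (and symmetry) of $m$ to write each derivative of $\AA$ as a finite sum of $m$-products of things we already control, then to apply Proposition \ref{prop:m estimate} in the convenient argument order combined with Corollary \ref{cor:F estimates} and the inequality $\|\hat{\eta}_1\|_{L^1(\R^2)}\lesssim\varepsilon^\theta\nn\eta_1\nn$ from Proposition \ref{sup estimate} (valid for any fixed $\theta\in(0,1)$). The starting identity is
\[
\AA(\eta_1,\eta_3)=2m(\eta_1,F(\eta_1)+\eta_3)+m(F(\eta_1)+\eta_3,F(\eta_1)+\eta_3),
\]
and the derivatives $\mathrm{d}_1\AA$, $\mathrm{d}_2\AA$, $\mathrm{d}_1^2\AA$, $\mathrm{d}_1\mathrm{d}_2\AA$, $\mathrm{d}_2^2\AA$ are obtained termwise via the product and chain rules, using that $\mathrm{d}_2 F = 0$ and that $F$, $\mathrm{d}F$, $\mathrm{d}^2 F$ obey the bounds of Corollary \ref{cor:F estimates}.

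First I would record two bookkeeping observations that make each estimate an immediate application of Proposition \ref{prop:m estimate}. (a)~For any $\eta_1\in X_1$ and any $v_1\in\XX_1$ one has $\|\eta_1\|_\ZZ,\|v_1\|_\ZZ\lesssim\varepsilon^\theta\nn\eta_1\nn,\varepsilon^\theta\nn v_1\nn$, while $\|\eta_1\|_3\lesssim\nn\eta_1\nn$ and similarly for $v_1$. (b)~Both $F(\eta_1)$ and $\mathrm{d}F[\eta_1](v_1)$ lie in $\XX_2$ (they are built from the multiplier $(1-\chi(k))/\tilde g(k)$), so for any such element $w$ we have $\|w\|_\ZZ=\|w\|_3$; in particular $\|F(\eta_1)\|_\ZZ\lesssim\varepsilon^\theta\nn\eta_1\nn^2$ and $\|\mathrm{d}F[\eta_1](v_1)\|_\ZZ\lesssim\varepsilon^\theta\nn\eta_1\nn\nn v_1\nn$. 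For $\eta_3\in X_3\subset\XX_2$ the same applies, so $\|\eta_3\|_\ZZ=\|\eta_3\|_3$.

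With (a) and (b) in hand, every single term in every derivative is of the form $m(\cdot,\cdot)$ with one or both slots occupied by an $\XX_1$- or $\XX_2$-element whose relevant norms are already controlled. Bounding $\|m(u,v)\|_1$ by $\|u\|_\ZZ\|v\|_3$ and choosing whichever of $u$, $v$ lives in $\XX_1$ as the first argument, I gain an extra factor $\varepsilon^\theta$ per $\XX_1$-slot. For example, in (ii) the three summands of $\mathrm{d}_1\AA[\eta_1,\eta_3](v_1)=2m(v_1,F(\eta_1)+\eta_3)+2m(\eta_1,\mathrm{d}F[\eta_1](v_1))+2m(F(\eta_1)+\eta_3,\mathrm{d}F[\eta_1](v_1))$ produce contributions of orders $\varepsilon^\theta(\varepsilon^\theta\nn\eta_1\nn^2+\|\eta_3\|_3)\nn v_1\nn$, $\varepsilon^{2\theta}\nn\eta_1\nn^2\nn v_1\nn$ and $(\varepsilon^\theta\nn\eta_1\nn^2+\|\eta_3\|_3)\varepsilon^\theta\nn\eta_1\nn\nn v_1\nn$ respectively; absorbing $\varepsilon^{2\theta}\nn\eta_1\nn^{k+1}$ into $\varepsilon^{2\theta}\nn\eta_1\nn^k$ via the $X_1$-bound $\nn\eta_1\nn\le R_1$ yields exactly the claimed estimate. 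The remaining inequalities (i), (iii)--(vi) are handled the same way, by listing the terms produced by differentiation, matching each $\XX_1$-factor with an $\varepsilon^\theta$, and collapsing higher-order powers of $\nn\eta_1\nn$ or $\|\eta_3\|_3$ against the lower-order ones using $\nn\eta_1\nn\le R_1$ and $\|\eta_3\|_3\le R_3$.

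The only real obstacle is notational: I must systematically keep track of which factor is differentiated at each step, especially in (iv)--(vi), where the chain rule applied to $F$ contributes additional $m$-terms involving $\mathrm{d}^2 F[\eta_1]$ whose operator bound is merely $O(\varepsilon^\theta)$ (with no extra $\nn\eta_1\nn$). This is precisely why estimates (iv)--(vi) lose one or two factors of $\nn\eta_1\nn$ compared with (i)--(iii) but are still of acceptable size. No new analytic input is needed beyond Propositions \ref{sup estimate}, \ref{prop:m estimate} and Corollary \ref{cor:F estimates}; the proof is a routine term-by-term verification.
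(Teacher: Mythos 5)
Your proposal is correct and follows exactly the paper's (very terse) argument: the paper likewise estimates $\AA$ by expanding it bilinearly in $m$ and combining Proposition \ref{prop:m estimate} with Corollary \ref{cor:F estimates} via the chain rule, with the factors $\varepsilon^\theta$ coming from $\|\eta_1\|_\ZZ \lesssim \varepsilon^\theta \nn\eta_1\nn$ and the observation that $F(\eta_1)$, $\mathrm{d}F[\eta_1](v_1)$ and $\eta_3$ lie in $\XX_2$ so their $\ZZ$-norms coincide with their $H^3$-norms. Your term-by-term bookkeeping (including absorbing higher powers of $\nn\eta_1\nn$ and $\|\eta_3\|_3$ using $\nn\eta_1\nn\le R_1$, $\|\eta_3\|_3\le R_3$) reproduces the stated bounds, so there is no gap.
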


The quantity $\LL_4^\prime(\eta_1+F(\eta_1)+\eta_3)$
is estimated by writing
$$\LL_4^\prime(\eta) = n_\mathrm{sym}(\{\eta\}^3) + H(\eta),$$
where
$$
n_\mathrm{sym}(u_1,u_2,u_3)=\frac{1}{6}\sum_{\sigma \in S_3} n(u_{\sigma(1)},u_{\sigma(2)}, u_{\sigma(3)}), \qquad H(\eta)=K_2(\eta)\eta
$$
and
\begin{align*}
n(u,v,w)&=K_0u\, K_0(v K_0w) +K_0u \,L_0 (v L_0 w)
+L_0u \,L_0(v K_0w)+L_0u \, H_0(v L_0 w)\\
&\qquad\mbox{}+u K_0 v\, w_{xx}+u L_0v\,w_{xz}
\end{align*}
(see Lemma \ref{lem:L2 to L4}).

\begin{lemma} \label{lem:4 estimates}
The estimates
\begin{align*}
\|\LL_4^\prime(\eta)\|_1 & \lesssim \|\eta\|_\ZZ^2 \|\eta\|_3, \\
\|\mathrm{d}\LL_4^\prime[\eta](v)\|_1 & \lesssim \|\eta\|_\ZZ^2 \|v\|_3 + \|\eta\|_\ZZ \|\eta\|_3 \|v\|_\ZZ, \\
\|\mathrm{d}^2\LL_4^\prime[\eta](v,w)\|_1 & \lesssim \|\eta\|_3 \|v\|_\ZZ\|w\|_\ZZ + \|\eta\|_\ZZ \|v\|_\ZZ \|w\|_3 + \|\eta\|_\ZZ\|v\|_3 \|w\|_\ZZ \end{align*}
hold for each $\eta \in U$ and $v,w \in H^3(\R^2)$.
\end{lemma}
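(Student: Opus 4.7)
The plan is to split
$$\LL_4^\prime(\eta) = n_\mathrm{sym}(\{\eta\}^3) + K_2(\eta)\eta$$
and handle the trilinear piece and the remainder separately, obtaining the derivative estimates by differentiating term by term using multilinearity of $n_\mathrm{sym}$ and $K_2$.

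For the trilinear form $n$, the key observation is that each summand factorises into three pieces, each of the form $P_i(\,\cdot\,)$ or $\partial^\alpha(\,\cdot\,)$ with $P_i\in\{I,K_0,L_0,H_0\}$. Since $K_0,L_0,H_0$ are Fourier multipliers of order one, and the identity $\|\eta w\|_2\lesssim\|\eta\|_\ZZ\|w\|_2$ (cited in the discussion preceding Lemma~\ref{lem:u analytic}) provides a tame multiplication by a $\ZZ$-controlled function, I would estimate, for instance, $\|K_0 a\cdot K_0(b\,K_0 c)\|_1$ by unwrapping inside-out: $\|K_0 c\|_2\lesssim\|c\|_3$, then $\|b\,K_0 c\|_2\lesssim\|b\|_\ZZ\|c\|_3$, then $\|K_0(bK_0c)\|_1\lesssim\|b\|_\ZZ\|c\|_3$, and finally use the two-dimensional Moser-type inequality $\|fg\|_1\lesssim\|f\|_2\|g\|_1$ (valid because $H^2(\R^2)\hookrightarrow L^\infty$) together with $\|K_0 a\|_2\lesssim\|a\|_\ZZ$ to arrive at $\|K_0 a\cdot K_0(bK_0c)\|_1\lesssim\|a\|_\ZZ\|b\|_\ZZ\|c\|_3$. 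The same pattern covers the remaining summands in $n$ (including $u K_0 v\cdot w_{xx}$, where one bounds $\|uK_0 v\|_2\lesssim\|u\|_\ZZ\|v\|_\ZZ$ and multiplies by $w_{xx}\in H^1$). Symmetrising over which slot carries the $H^3$-norm yields
$$\|n_\mathrm{sym}(a,b,c)\|_1\lesssim\|a\|_\ZZ\|b\|_\ZZ\|c\|_3+\|a\|_\ZZ\|b\|_3\|c\|_\ZZ+\|a\|_3\|b\|_\ZZ\|c\|_\ZZ.$$

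For the remainder $K_2(\eta)\eta$, I would invoke Corollary~\ref{cor:K analytic}: analyticity of $K:\ZZ\to\LL(H^{5/2},H^{3/2})$ at the origin makes its quadratic Taylor coefficient a bounded symmetric bilinear map $\ZZ\times\ZZ\to\LL(H^{5/2},H^{3/2})$ with $\|K_2(\eta_1,\eta_2)\xi\|_{3/2}\lesssim\|\eta_1\|_\ZZ\|\eta_2\|_\ZZ\|\xi\|_{5/2}$; polarising and using $H^{3/2}\hookrightarrow H^1$ gives $\|K_2(\eta)\eta\|_1\lesssim\|\eta\|_\ZZ^2\|\eta\|_3$. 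The first- and second-derivative identities
$$\mathrm{d}\LL_4^\prime[\eta](v)=3n_\mathrm{sym}(\eta,\eta,v)+2K_2(\eta,v)\eta+K_2(\eta,\eta)v,$$
$$\mathrm{d}^2\LL_4^\prime[\eta](v,w)=6n_\mathrm{sym}(\eta,v,w)+2K_2(v,w)\eta+2K_2(\eta,v)w+2K_2(\eta,w)v$$
then give the stated bounds upon applying the trilinear estimate on $n$ and the bilinear estimate on $K_2$ to each summand, using $H^3(\R^2)\hookrightarrow\ZZ$ wherever a slot needs to be re-expressed.

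The main obstacle is establishing the trilinear estimate on $n$: the multipliers $K_0,L_0,H_0$ have order one, so one cannot naively feed two $H^3$-functions through them and still land in $H^1$. The $\ZZ$-norm is engineered to sidestep this point—on the low-frequency piece $\eta_1$ every derivative is controlled by $\|\hat\eta_1\|_{L^1(\R^2)}$, while the high-frequency part lives in $H^3$, and the two combine through $\|\eta w\|_2\lesssim\|\eta\|_\ZZ\|w\|_2$ to realise $\ZZ$-multiplication into $H^2$, which is precisely the regularity needed to pass once more through an order-one multiplier without leaving $H^1$.
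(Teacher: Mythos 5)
Your decomposition $\LL_4^\prime(\eta)=n_\mathrm{sym}(\{\eta\}^3)+K_2(\eta)\eta$, the treatment of the $K_2$ part via Corollary \ref{cor:K analytic}, and the idea of combining $\|fg\|_1\lesssim\|f\|_2\|g\|_1$ with the multiplication estimate $\|\eta w\|_2\lesssim\|\eta\|_\ZZ\|w\|_2$ all coincide with the paper. The gap lies in the trilinear estimate itself: the inequality $\|K_0a\|_2\lesssim\|a\|_\ZZ$ (and likewise $\|uK_0v\|_2\lesssim\|u\|_\ZZ\|v\|_\ZZ$) on which your chain relies is false. The $\ZZ$-norm controls the spectral part $a_1$ of $a$ near $\pm(\omega,0)$ only through $\|\hat a_1\|_{L^1(\R^2)}$, which yields $W^{m,\infty}$-bounds but no $L^2$-type bound whatsoever: for instance $\hat a_1(k)=|k-(\omega,0)|^{-3/2}\chi_{B_\delta(\omega,0)}(k)$ lies in $L^1$ but not in $L^2$, so $K_0a_1$ need not even belong to $L^2$, let alone satisfy $\|K_0a_1\|_2\lesssim\|\hat a_1\|_{L^1}$. (Replacing the bound by $\|K_0a\|_2\lesssim\|a\|_3$ is no remedy, since the whole point of the lemma is to have two $\ZZ$-factors, which later carry the $\varepsilon^\theta$-smallness.) The correct mechanism, which the paper implements, is to split every $\ZZ$-slot as $u=u_1+u_2$ and use $\|fg\|_1\lesssim\|f\|_{1,\infty}\|g\|_1$ together with $\|K_0u_1\|_{m,\infty}\lesssim\|\hat u_1\|_{L^1}$ for the low-frequency factor, reserving $\|fg\|_1\lesssim\|f\|_2\|g\|_1$ for the high-frequency factor.

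A second, related gap is the assertion that ``the same pattern covers the remaining summands'' after symmetrisation. Because $n$ is not symmetric, you must estimate each permutation, and in the permutation where the $H^3$-slot sits in the outermost position -- e.g. $K_0w\,K_0(uK_0v)$ with $u,v$ the $\ZZ$-slots -- the inside-out scheme breaks down: $uK_0v$ cannot be placed in $H^2$ with $\|u\|_\ZZ\|v\|_\ZZ$ control, for the same reason as above. The paper handles precisely this case by a different arrangement: the outer factor is measured in $H^1$ (or $H^2$) using $\|w\|_3$, while the inner product of two low-frequency pieces is measured in $W^{1,\infty}$ via Young's inequality, $\|\FF[u_1K_0v_1]\|_{L^1}\lesssim\|\hat u_1\|_{L^1}\|\FF[K_0v_1]\|_{L^1}\lesssim\|\hat u_1\|_{L^1}\|\hat v_1\|_{L^1}$. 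Without this additional ingredient (and the case analysis it entails, also needed for the terms $uK_0v\,w_{xx}$ etc.), the claimed bound $\|n_\mathrm{sym}(u,v,w)\|_1\lesssim\|u\|_\ZZ\|v\|_\ZZ\|w\|_3$ is not established, and hence neither are the three estimates of the lemma.
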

\begin{proof}
Using the estimate
$$\|fg\|_1 \lesssim \|f\|_2 \|g\|_1$$
(see H\"{o}rmander \cite[Theorem 8.3.1]{Hoermander}), one finds that
\begin{align*}
\| K_0u K_0 (v K_0 w) \|_1 & \lesssim (\|K_0 u_1 \|_{1,\infty} + \|K_0u_2\|_2)\|K_0(v K_0 w)\|_1 \\
& \lesssim (\|K_0 u_1 \|_{1,\infty} + \|K_0u_2\|_2)\|v K_0 w\|_2 \\
& \lesssim (\|K_0 u_1 \|_{1,\infty} + \|K_0u_2\|_2)(\|v_1 \|_{2,\infty} + \|v_2\|_2)\|K_0 w\|_2 \\
& \lesssim (\|\hat{u}_1\|_{L^1(\R^2)} + \|u_2\|_3)(\|\hat{v}_1\|_{L^1(\R^2)} + \|v_2\|_3)\|w\|_3 \\
& = \|u\|_\ZZ\|v\|_\ZZ\|w\|_3, \\
\\
\| K_0v K_0 (w K_0 u) \|_1 & \lesssim (\|K_0 v_1 \|_{1,\infty} + \|K_0v_2\|_2)\|w K_0 u\|_2 \\
& \lesssim (\|K_0 v_1 \|_{1,\infty} + \|K_0v_2\|_2) \|w\|_2 (\| K_0 u_1\|_{2,\infty}+\|K_0 u_2 \|_2) \\
& \lesssim (\|K_0 v_1 \|_{1,\infty} + \|K_0v_2\|_2) \|w\|_3 (\| K_0 u_1\|_{2,\infty}+\|u_2 \|_3) \\
& \lesssim \|u\|_\ZZ\|v\|_\ZZ\|w\|_3, \\
\\
\| K_0w K_0 (u K_0 v) \|_1 & \lesssim \|K_0 w \|_1 \|K_0(u_1 K_0 v_1)\|_{1,\infty} + \|K_0w\|_2(\|u_1K_0v_2\|_2 + \|u_2 K_0v\|_2) \\
& \lesssim \|w \|_3 \big( \|\FF[u_1 K_0 v_1] \|_{L^1(\R^2)} +\|u_1\|_{2,\infty} \|v_2\|_3
+\|u_2\|_3 (\| K_0 v_1\|_{2,\infty}+\|v_2 \|_3 )\big)\\
& \lesssim \|w \|_3 ( \|\hat{u}_1\|_{L^1(\R^2)} \|\FF[K_0 v_1] \|_{L^1(\R^2)} + \|u\|_\ZZ \|v\|_\ZZ) \\
& \lesssim \|w \|_3 ( \|\hat{u}_1\|_{L^1(\R^2)} \|\hat{v}_1\|_{L^1(\R^2)} + \|u\|_\ZZ \|v\|_\ZZ) \\
& \lesssim \|u\|_\ZZ\|v\|_\ZZ\|w\|_3
\end{align*}
for each $u$, $v$, $w \in H^3(\R^2)$
and the same estimates hold when any occurrence of $K_0$ is replaced by $L_0$ or $H_0$;
similar calculations show that
$$
\|u K_0 v\, w_{xx}\|_1,\  \|v K_0 w\, u_{xx}\|_1,\ \|w K_0 u\, v_{xx}\|_1 \lesssim \|u\|_\ZZ\|v\|_\ZZ\|w\|_3,
$$
and the same estimates hold for $u L_0 v\, w_{xz}$, $v L_0 w\, u_{xz}$ and $w L_0 u\, v_{xz}$.

Altogether the above calculations show that
$$\|n_\mathrm{sym}(u,v,w)\|_1 \lesssim \|u\|_\ZZ \|v\|_\ZZ \|w\|_3$$
for each $u$, $v$, $w \in H^3(\R^2)$; the lemma follows from this estimate and the inequalities
\begin{align*}
\|H(\eta)\|_1 & \lesssim \|\eta\|_\ZZ^2 \|\eta\|_3, \\
\|\mathrm{d}H[\eta](v)\|_1 & \lesssim \|\eta\|_\ZZ^2 \|v\|_3 + \|\eta\|_\ZZ \|\eta\|_3 \|v\|_\ZZ, \\
\|\mathrm{d}^2H[\eta](v,w)\|_1 & \lesssim \|\eta\|_3 \|v\|_\ZZ\|w\|_\ZZ + \|\eta\|_\ZZ \|v\|_\ZZ \|w\|_3 + \|\eta\|_\ZZ\|v\|_3 \|w\|_\ZZ \end{align*}
for $\eta \in U$ and $v$, $w \in H^3(\R^2)$ (see Corollary \ref{cor:K analytic}).
\end{proof}

The quantity
$\LL_\mathrm{r}^\prime(\eta_1+F(\eta_1)+\eta_3)$
is handled using the next lemma, which follows from Lemmata \ref{lem:gradients} and \ref{lem:u analytic}.

\begin{lemma}
The estimates
\begin{align*}
\|\LL_\mathrm{r}^\prime(\eta)\|_1 & \lesssim \|\eta\|_{\ZZ}^2 \|\eta\|_3^2, \\
\|\mathrm{d}\LL_\mathrm{r}^\prime[\eta](v)\|_1 & \lesssim \|\eta\|_{\ZZ}^2 \|\eta\|_3 \|v\|_3 + \|\eta\|_{\ZZ}  \|\eta\|_3^2\|v\|_\ZZ, \\
\|\mathrm{d}^2\LL_\mathrm{r}^\prime[\eta](v,w)\|_1 & \lesssim \|\eta\|_{\ZZ}^2 \|v\|_3 \|w\|_3 + \|\eta\|_3^2 \|v\|_\ZZ \|w\|_\ZZ\\
& \qquad\mbox{}+\|\eta\|_{\ZZ} \|\eta\|_3 \|v\|_3 \|w\|_\ZZ + \|\eta\|_{\ZZ}\|\eta\|_3 \|v\|_\ZZ \|w\|_3
\end{align*}
hold for each $\eta \in U$ and $v$, $w \in H^3(\R^2)$.
\end{lemma}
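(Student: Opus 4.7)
The key observation is that $\LL_\mathrm{r}^\prime$ is the Taylor remainder at the origin of the analytic map $\LL_\mathrm{nl}^\prime : U \to H^1(\R^2)$ after its cubic term: since $\LL_3^\prime$ and $\LL_4^\prime$ are its $2$- and $3$-homogeneous Taylor components at $0$, the difference $\LL_\mathrm{r}^\prime = \LL_\mathrm{nl}^\prime - \LL_3^\prime - \LL_4^\prime$ is analytic on $U$ and vanishes at $0$ to order four. Consequently, for fixed $v, w \in H^3(\R^2)$, the analytic maps $\eta \mapsto \LL_\mathrm{r}^\prime(\eta)$, $\eta \mapsto \mathrm{d}\LL_\mathrm{r}^\prime[\eta](v)$ and $\eta \mapsto \mathrm{d}^2\LL_\mathrm{r}^\prime[\eta](v, w)$ vanish at $0$ to orders $4$, $3$ and $2$ respectively, and Taylor's integral remainder formula (together with the identity $\mathrm{d}^4\LL_\mathrm{r}^\prime = \mathrm{d}^4\LL_\mathrm{nl}^\prime$, valid because $\LL_3^\prime$, $\LL_4^\prime$ are polynomials of degrees $2$ and $3$ in $\eta$) yields
\begin{align*}
\LL_\mathrm{r}^\prime(\eta) &= \tfrac{1}{6}\int_0^1 (1-t)^3\, \mathrm{d}^4\LL_\mathrm{nl}^\prime[t\eta](\{\eta\}^4)\,\mathrm{d}t, \\
\mathrm{d}\LL_\mathrm{r}^\prime[\eta](v) &= \tfrac{1}{2}\int_0^1 (1-t)^2\, \mathrm{d}^4\LL_\mathrm{nl}^\prime[t\eta](\{\eta\}^3, v)\,\mathrm{d}t, \\
\mathrm{d}^2\LL_\mathrm{r}^\prime[\eta](v, w) &= \int_0^1 (1-t)\, \mathrm{d}^4\LL_\mathrm{nl}^\prime[t\eta](\{\eta\}^2, v, w)\,\mathrm{d}t.
\end{align*}

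The problem therefore reduces to the symmetric tetralinear bound
$$\|\mathrm{d}^4\LL_\mathrm{nl}^\prime[\eta](u_1, u_2, u_3, u_4)\|_1 \lesssim \sum \|u_i\|_\ZZ \|u_j\|_\ZZ \|u_k\|_3 \|u_l\|_3 \qquad (\eta \in U),$$
where the sum runs over all ordered $2$-$2$ splittings $\{i, j\} \sqcup \{k, l\} = \{1,2,3,4\}$; specialising to $(u_1,\ldots,u_4) = (\eta,\eta,\eta,\eta)$, $(\eta,\eta,\eta,v)$ and $(\eta,\eta,v,w)$ and collecting symmetric terms then produces exactly the three stated inequalities. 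To prove the tetralinear bound I would begin from the explicit formula for $\LL^\prime$ in Lemma \ref{lem:gradients}, differentiate it four times via the product rule, control the successive directional derivatives of the solution $u$ of \eqref{flatKproblem 1}--\eqref{flatKproblem 3} with respect to $\eta$ by Lemma \ref{lem:u analytic} and Corollary \ref{cor:K analytic}, and repeatedly apply the elementary multiplication inequality $\|\xi\zeta\|_s \lesssim \|\xi\|_\ZZ \|\zeta\|_s$ (the basis for the $\ZZ$-framework of the paper and of the proofs of Lemmata \ref{lem:3 estimates} and \ref{lem:4 estimates}) to distribute two of the four factors into $\|\cdot\|_\ZZ$-norms and the remaining two into $\|\cdot\|_3$-norms.

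The principal technical obstacle is the bookkeeping for the tetralinear bound: the product-rule expansion of $\mathrm{d}^4\LL^\prime$ yields many summands, and one must verify that each admits a $(2,2)$ distribution between the $\ZZ$- and $H^3$-norms. This reduces to checking that in every such term at least two of the four multiplicative factors can be absorbed into the $L^\infty$-type control provided by the Fourier-localised component of the $\ZZ$-norm; the algebraic structure of $\LL^\prime$ ensures that after four differentiations every term indeed carries at least four multiplicative slots, and since the multiplication inequality is symmetric in its two arguments the required split is always achievable, the proof being a careful enumeration in the spirit of (and extending) that of Lemma \ref{lem:4 estimates}.
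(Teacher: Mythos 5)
Your Taylor-remainder reduction is sound: $\LL_\mathrm{r}^\prime$ does vanish to fourth order at the origin, $\mathrm{d}^4\LL_\mathrm{r}^\prime=\mathrm{d}^4\LL_\mathrm{nl}^\prime$, the segment $\{t\eta: t\in[0,1]\}$ stays in $U$ (a ball in the $\ZZ$-norm), the integral-remainder formulae are correct, and specialising your tetralinear bound does reproduce the three stated estimates; the ingredients you name (the explicit formula of Lemma \ref{lem:gradients}, the $\ZZ$-analyticity of Lemma \ref{lem:u analytic} and Corollary \ref{cor:K analytic}, and the $\ZZ$-multiplication inequalities) are exactly the ones the paper indicates. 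The gap is in the justification of the one estimate that carries all the content, namely the uniform bound $\|\mathrm{d}^4\LL_\mathrm{nl}^\prime[\eta](u_1,\dots,u_4)\|_1\lesssim\sum\|u_i\|_\ZZ\|u_j\|_\ZZ\|u_k\|_3\|u_l\|_3$. Your argument for it --- that every term has ``at least four multiplicative slots'' and that $\|\xi\zeta\|_s\lesssim\|\xi\|_\ZZ\|\zeta\|_s$ is symmetric, so a $(2,2)$ split is ``always achievable'' --- is not a valid reason: it does not distinguish the true distribution from false ones (it would equally ``prove'' a $(3,1)$ split $\|u_i\|_\ZZ\|u_j\|_\ZZ\|u_k\|_\ZZ\|u_l\|_3$, which fails, e.g.\ for terms quadratic in the nonlinear pieces of $u_x|_{y=1}$), and which factors may be measured in $\ZZ$ is not a free choice dictated by symmetry of the product estimate.

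The structural fact that actually makes the $(2,2)$ distribution work --- and which your proposal never isolates --- is that the formula for $\LL^\prime(\eta)$ in Lemma \ref{lem:gradients} is at most \emph{quadratic} in the solution $u$ of \eqref{flatKproblem 1}--\eqref{flatKproblem 3}, and the only place where a direction is forced into an $L^2$-based norm is the boundary datum slot $\xi=\eta$, which Lemma \ref{lem:u analytic} measures in $H^{5/2}\subset H^3$; every other occurrence of $\eta$ (the coefficients $F_1,F_2,F_3$ of the flattened problem, the explicit factors $(1+\eta)^{-2}$, $\eta_x$, $\eta_z$) depends analytically on $\eta$ with respect to the $\ZZ$-norm and can be stripped off using $\|\eta w\|_s,\ \|\eta_x w\|_s,\ \|\eta_z w\|_s\lesssim\|\eta\|_\ZZ\|w\|_s$ together with sup-bounds on the low-frequency part. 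Hence at most two of the four directions ever occupy data slots, which is precisely the claimed split; verifying this term by term (in the spirit of Lemma \ref{lem:4 estimates}, using $\|fg\|_1\lesssim\|f\|_2\|g\|_1$ and, where helpful, the boundary condition $u_y|_{y=1}=F_3(\eta,u)+\xi_x$ for the traces of the nonlinear pieces of $u_y$) is the real work. This is not a cosmetic point: since $U$ is bounded only in the $\ZZ$-norm and unbounded in $H^3$, a single term requiring three $L^2$-based factors would already destroy the bound $\|\eta\|_\ZZ^2\|\eta\|_3^2$, so the count of forced $H^3$-slots must be derived from the quadratic-in-$u$ structure and the coefficient/data dichotomy rather than asserted.
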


Finally, we examine the quantity $\KK_\mathrm{nl}^\prime(\eta_1+F(\eta_1)+\eta_3)$.

\begin{lemma} \label{lem:KKnlprime estimates}
The estimates
\begin{align*}
\|\KK_\mathrm{nl}^\prime(\eta)\|_1 & \lesssim \|\eta\|_\ZZ^2 \|\eta\|_3, \\
\|\mathrm{d}\KK_\mathrm{nl}^\prime[\eta](v)\|_1 & \lesssim \|\eta\|_\ZZ^2 \|v\|_3 + \|\eta\|_\ZZ \|\eta\|_3 \|v\|_\ZZ, \\
\|\mathrm{d}^2\KK_\mathrm{nl}^\prime[\eta](v,w)\|_1 & \lesssim \|\eta\|_3 \|v\|_\ZZ\|w\|_\ZZ + \|\eta\|_\ZZ \|v\|_\ZZ \|w\|_3 + \|\eta\|_\ZZ\|v\|_3 \|w\|_\ZZ
\end{align*}
hold for each $\eta \in U$ and $v,w \in H^3(\R^2)$.
\end{lemma}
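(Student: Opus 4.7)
Starting from the explicit formula for $\KK^\prime$ in Lemma \ref{lem:gradients} and subtracting the linear part $\KK_2^\prime(\eta) = \eta - \beta\eta_{xx}-\beta\eta_{zz}$, I would write
\[
\KK_\mathrm{nl}^\prime(\eta) = -\beta\partial_x[\eta_x\,\Phi(\eta_x,\eta_z)] - \beta\partial_z[\eta_z\,\Phi(\eta_x,\eta_z)],
\]
where $\Phi(s,t) := (1+s^2+t^2)^{-1/2}-1$ is smooth and vanishes to order two at the origin. Since $\partial_x,\partial_z$ lose at most one derivative, the first estimate reduces to showing $\|\eta_j\,\Phi(\eta_x,\eta_z)\|_2 \lesssim \|\eta\|_\ZZ^2\|\eta\|_3$ for $j\in\{x,z\}$, with analogous reductions for the first and second derivatives of $\KK_\mathrm{nl}^\prime$.

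The key observation is that $\|\eta_x\|_\infty,\|\eta_z\|_\infty\lesssim\|\eta\|_\ZZ$, and hence are uniformly small for $\eta\in U$. This follows from the decomposition $\eta=\eta_1+\eta_2$: the low-frequency part $(\eta_x)_1=\partial_x\eta_1$ has Fourier transform supported in a fixed compact set, yielding $\|(\eta_x)_1\|_{m,\infty}\lesssim\|\hat\eta_1\|_{L^1(\R^2)}\leq\|\eta\|_\ZZ$ for every $m\in\N_0$, while the high-frequency part $(\eta_x)_2=\partial_x\eta_2$ satisfies $\|(\eta_x)_2\|_\infty\lesssim\|\eta_2\|_3\leq\|\eta\|_\ZZ$ by the two-dimensional Sobolev embedding $H^3(\R^2)\hookrightarrow W^{1,\infty}(\R^2)$. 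The analogous bounds $\|v_j\|_\infty\lesssim\|v\|_\ZZ$, $\|w_j\|_\infty\lesssim\|w\|_\ZZ$ then hold for $v,w\in H^3(\R^2)$.

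With these $L^\infty$ bounds in hand, Moser-type composition estimates exploiting the order-two vanishing of $\Phi$ yield
\[
\|\Phi(\eta_x,\eta_z)\|_\infty \lesssim \|\eta\|_\ZZ^2,\qquad \|\Phi(\eta_x,\eta_z)\|_{H^2} \lesssim \|\eta\|_\ZZ\|\eta\|_3,
\]
and applying the Moser product estimate $\|fg\|_{H^2}\lesssim\|f\|_{H^2}\|g\|_\infty+\|f\|_\infty\|g\|_{H^2}$ with $f=\eta_j$ and $g=\Phi(\eta_x,\eta_z)$ gives
\[
\|\eta_j\,\Phi(\eta_x,\eta_z)\|_{H^2} \lesssim \|\eta\|_3\cdot\|\eta\|_\ZZ^2 + \|\eta\|_\ZZ\cdot\|\eta\|_\ZZ\|\eta\|_3 \lesssim \|\eta\|_\ZZ^2\|\eta\|_3,
\]
establishing the first estimate. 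For the first and second derivatives in $\eta$, differentiating the formula produces further terms involving $\partial_s\Phi$ and $\partial_s^2\Phi$ (and corresponding $\partial_t$-derivatives) evaluated at $(\eta_x,\eta_z)$ and multiplied by first derivatives of $v$ and $w$. Since $\partial_s\Phi, \partial_t\Phi$ vanish to order one at the origin and $\partial_s^2\Phi, \partial_s\partial_t\Phi, \partial_t^2\Phi$ are bounded smooth functions, Moser composition gives the uniform bounds $\|\partial_s\Phi(\eta_x,\eta_z)\|_\infty\lesssim\|\eta\|_\ZZ$ and $\|\partial_s^2\Phi(\eta_x,\eta_z)\|_\infty\lesssim 1$, with matching $H^2$-bounds. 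Iterated application of the Moser product estimate to each resulting trilinear or quadrilinear expression---distributing the $H^2$-factor on $\eta$, $v$, or $w$---produces precisely the sums of three-factor terms on the right-hand sides of the lemma.

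The main obstacle is the combinatorial bookkeeping of all terms arising from the differentiation of $\Phi$ and from the multiplicative combinations of the $L^\infty$- and $H^2$-estimates. However, this is essentially the same scheme used in the proofs of Lemmata \ref{lem:3 estimates} and \ref{lem:4 estimates}, and the smallness of $\|\eta_x\|_\infty,\|\eta_z\|_\infty$ on $U$ ensures that all Moser constants remain uniformly bounded.
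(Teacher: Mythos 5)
Your proposal is correct and takes essentially the same route as the paper: there the explicit formula is used to write $\KK_\mathrm{nl}^\prime(\eta)=f_1(\eta_x,\eta_z)\eta_{xx}+f_2(\eta_x,\eta_z)\eta_{xz}+f_3(\eta_x,\eta_z)\eta_{zz}$ with $f_1,f_2,f_3$ smooth and vanishing to second order at the origin (the expanded form of your divergence expression), and the estimates are then obtained ``in the usual fashion'', i.e.\ by precisely the kind of $L^\infty$/$H^2$ product and composition arguments, resting on $\|\eta_x\|_\infty,\|\eta_z\|_\infty\lesssim\|\eta\|_\ZZ$, that you spell out. Keeping the divergence form and estimating the interior in $H^2$ before applying $\partial_x,\partial_z$ is only a cosmetic variation.
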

\begin{proof}
It follows from the formula
\[
\KK_\mathrm{nl}(\eta)=-\beta \int_{\R^2} \frac{(\eta_x^2+\eta_z^2)^2}{2(1+\sqrt{1+\eta_x^2+\eta_z^2})^2}\dx \dz
\]
that
$$\KK_\mathrm{nl}^\prime(\eta)=f_1(\eta_x, \eta_z)\eta_{xx}+f_2(\eta_x, \eta_z)\eta_{xz}+f_3(\eta_x,\eta_z)\eta_{zz},$$
where $f_1, f_2, f_3$ are smooth functions with zeros of order two at the origin, and formulae for the derivatives of $\KK_\mathrm{nl}^\prime$
are in turn derived from this expression. The stated estimates are obtained from these explicit formulae in the usual fashion.\end{proof}

\begin{corollary} \label{cor:tildeNN estimates}
The quantity
$$\BB(\eta_1,\eta_3)=\KK_\mathrm{nl}^\prime(\eta_1+F(\eta_1)+\eta_3)-\Lambda(1-\varepsilon^2)
\big(\LL_4^\prime(\eta_1+F(\eta_1)+\eta_3)+\LL_\mathrm{r}^\prime(\eta_1+F(\eta_1)+\eta_3)\big)$$
satisfies the estimates
\begin{list}{(\roman{count})}{\usecounter{count}}
\item
$\|\BB(\eta_1,\eta_3)\|_1\lesssim (\varepsilon^\theta \nn \eta_1\nn+\|\eta_3\|_3)^2(\nn \eta_1\nn+\|\eta_3\|_3)$,
\item
$\|\mathrm{d}_1\BB[\eta_1,\eta_3]\|_{\LL(\XX_1,H^1(\R^2))} \lesssim (\varepsilon^\theta \nn \eta_1\nn+\|\eta_3\|_3)^2$,
\item
$\|\mathrm{d}_2\BB[\eta_1,\eta_3]\|_{\LL(\XX_2,H^1(\R^2))}\lesssim (\varepsilon^{\theta}\nn \eta_1\nn+\|\eta_3\|_3)(\nn \eta_1\nn+\|\eta_3\|_3)$,
\item
$\|\mathrm{d}_1^2\BB[\eta_1,\eta_3]\|_{\LL^2(\XX_1,H^1(\R^2))}\lesssim \varepsilon^\theta(\varepsilon^\theta\nn \eta_1\nn+\|\eta_3\|_3)$,
\item
$\|\mathrm{d}_1\mathrm{d}_2\BB[\eta_1,\eta_3]\|_{\LL^2(\XX_1 \times \XX_2,H^1(\R^2))}\lesssim \varepsilon^{\theta}\nn \eta_1\nn+ \|\eta_3\|_3$,
\item
$\|\mathrm{d}_2^2\BB[\eta_1,\eta_3]\|_{\LL^2(\XX_2,H^1(\R^2))} \lesssim \nn \eta_1 \nn + \|\eta_3\|_3$
\end{list}
for each $\eta_1\in X_1$ and $\eta_3\in X_3$.
\end{corollary}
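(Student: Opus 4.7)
The plan is to write $\BB(\eta_1,\eta_3) = P^\prime(\eta)$ with $\eta := \eta_1 + F(\eta_1) + \eta_3$ and $P := \KK_\mathrm{nl} - \Lambda(1-\varepsilon^2)(\LL_4 + \LL_\mathrm{r})$, then combine the estimates for $P^\prime$ and its first two derivatives furnished by Lemma \ref{lem:4 estimates}, Lemma \ref{lem:KKnlprime estimates} and the intermediate lemma for $\LL_\mathrm{r}^\prime$. Because $\|\eta\|_3$ is bounded on $U$, the additional factor of $\|\eta\|_3$ present in the $\LL_\mathrm{r}^\prime$ bounds may be absorbed, and the three contributions collapse into a common schematic bound
\[
\|\mathrm{d}^k P^\prime[\eta](u_1,\dots,u_k)\|_1 \lesssim \sum_j \|\eta\|_\ZZ^{2-j}\|\eta\|_3^{\,j}\prod_{i\in I_j}\|u_i\|_\ZZ \prod_{i\notin I_j}\|u_i\|_3,
\]
homogeneous of total degree two in $\eta$ and degree $k$ in the test inputs.

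For (i) one inserts the triangle inequalities $\|\eta\|_\ZZ \lesssim \varepsilon^\theta \nn\eta_1\nn + \|\eta_3\|_3$ and $\|\eta\|_3 \lesssim \nn\eta_1\nn + \|\eta_3\|_3$ directly into the $k=0$ bound. For the derivative estimates I would apply the chain rule with $\mathrm{d}_1 \eta(v_1) = v_1 + \mathrm{d}F[\eta_1](v_1)$, $\mathrm{d}_2 \eta(v_2) = v_2$, $\mathrm{d}_1^2 \eta = \mathrm{d}^2 F[\eta_1]$ and $\mathrm{d}_1\mathrm{d}_2 \eta = \mathrm{d}_2^2 \eta = 0$, invoking Corollary \ref{cor:F estimates} to control the $F$-contributions. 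The decisive input is Proposition \ref{sup estimate}, which yields $\|v_1\|_\ZZ \lesssim \varepsilon^\theta \nn v_1\nn$ for $v_1 \in \XX_1$ (whereas $\|v_2\|_\ZZ = \|v_2\|_3$ for $v_2 \in \XX_2$); combined with $\|\mathrm{d}F[\eta_1](v_1)\|_\ZZ \leq \|\mathrm{d}F[\eta_1](v_1)\|_3 \lesssim \varepsilon^\theta \nn\eta_1\nn\, \nn v_1\nn$ this gives the key controls $\|v_1 + \mathrm{d}F[\eta_1](v_1)\|_\ZZ \lesssim \varepsilon^\theta \nn v_1\nn$ and $\|v_1 + \mathrm{d}F[\eta_1](v_1)\|_3 \lesssim \nn v_1\nn$, which drive the additional powers of $\varepsilon^\theta$ in (ii), (iv), (v).

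With these ingredients each of (i)--(vi) reduces to an algebraic comparison. For example, the leading contribution to $\|\mathrm{d}_2\BB[\eta_1,\eta_3](v_2)\|_1$ is $\|\eta\|_\ZZ\|\eta\|_3\|v_2\|_3 \lesssim (\varepsilon^\theta \nn\eta_1\nn + \|\eta_3\|_3)(\nn\eta_1\nn + \|\eta_3\|_3)\|v_2\|_3$, which matches (iii); the mismatched powers of $\varepsilon^\theta$ between (ii) and (iii), or between (v) and (vi), simply reflect whether a derivative acts in the $\XX_1$-direction (with an $\varepsilon^\theta$ gain from Proposition \ref{sup estimate}) or in the $\XX_2$-direction (without one). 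The a priori bounds $\nn\eta_1\nn \leq R_1$ and $\|\eta_3\|_3 \leq R_3$ are used throughout to absorb subleading terms arising when products $(\varepsilon^\theta\nn\eta_1\nn + \|\eta_3\|_3)^a(\nn\eta_1\nn + \|\eta_3\|_3)^b$ are expanded.

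The main obstacle is bookkeeping rather than analysis: in each summand of each derivative expression one must track how many slots sit in $\XX_1$ versus $\XX_2$ and whether each slot is occupied by $v_i$, by $\mathrm{d}F[\eta_1](v_i)$ or by $\mathrm{d}^2F[\eta_1](v_i,w_i)$, in order to read off the correct power of $\varepsilon^\theta$. Once this accounting is systematically laid out, each of the six estimates follows by routine expansion.
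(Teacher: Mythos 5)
Your proposal is correct and is essentially the paper's own proof: the paper likewise sets $\tilde{\NN}(\eta)=\KK_\mathrm{nl}^\prime(\eta)-\Lambda(1-\varepsilon^2)(\LL_4^\prime(\eta)+\LL_\mathrm{r}^\prime(\eta))$ (your $P^\prime$), writes out exactly the chain-rule expressions $\mathrm{d}_1\BB[\eta_1,\eta_3](v_1)=\mathrm{d}\tilde{\NN}[\eta](v_1+\mathrm{d}F[\eta_1](v_1))$, $\mathrm{d}_2\BB[\eta_1,\eta_3](v_3)=\mathrm{d}\tilde{\NN}[\eta](v_3)$, etc., and estimates them with Lemmata \ref{lem:4 estimates}--\ref{lem:KKnlprime estimates}, the $\LL_\mathrm{r}^\prime$ lemma and Corollary \ref{cor:F estimates}, together with $\|v_1\|_\ZZ\lesssim\varepsilon^\theta\nn v_1\nn$. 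The only blemish is your schematic display claiming homogeneity of degree two in $\eta$ for every $k$ (the lemmata give degree $3-k$ in $\eta$ for the leading terms, with higher-order contributions absorbed via the a priori bounds), but your worked cases (i)--(iii) use the correct bounds, so this does not affect the argument.
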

\begin{proof}
Writing
$$\tilde{\NN}(\eta) = \KK_\mathrm{nl}^\prime(\eta)-\Lambda(1-\varepsilon^2)\big(\LL_4^\prime(\eta)+\LL_\mathrm{r}^\prime(\eta)\big),$$
one finds that
\begin{align*}
\BB(\eta_1,\eta_3) & = \tilde{\NN}(\eta), \\
\mathrm{d}_1\BB[\eta_1,\eta_3](v_1) & = \mathrm{d}\tilde{\NN}[\eta](v_1 + \mathrm{d}F[\eta_1](v_1)), \\
\mathrm{d}_2\BB[\eta_1,\eta_3](v_3) & = \mathrm{d}\tilde{\NN}[\eta](v_3), \\
\mathrm{d}_1^2\BB[\eta_1,\eta_3](\{v_1\}^2) & = \mathrm{d}^2\tilde{\NN}[\eta](\{v_1 + \mathrm{d}F[\eta_1](v_1)\}^2)
+\mathrm{d}\tilde{\NN}[\eta](\mathrm{d}^2F[\eta_1](\{v_1\}^2)), \\
\mathrm{d}_1\mathrm{d}_2\BB[\eta_1,\eta_3](v_1,v_3) & = \mathrm{d}^2\tilde{\NN}[\eta](v_1 + \mathrm{d}F[\eta_1](v_1),v_3), \\
\mathrm{d}_2^2\BB[\eta_1,\eta_3](v_3) & = \mathrm{d}^2\tilde{\NN}[\eta](\{v_3\}^2),
\end{align*}
and the right-hand sides of these expressions are estimated using the linearity of the derivative,
Lemmata \ref{lem:4 estimates}--\ref{lem:KKnlprime estimates} and Corollary \ref{cor:F estimates}.
\end{proof}

Altogether we have established the following estimates for $G$ and its derivatives
(see Remark \ref{rem:K0Feta1 remark}, Lemma \ref{lem:3 estimates} and Corollary \ref{cor:tildeNN estimates}).

\begin{lemma} \label{lem:Complete G estimates}
The function $G: X_1 \times X_3 \rightarrow \XX_2$ satisfies the estimates
\begin{list}{(\roman{count})}{\usecounter{count}}
\item
$\|G(\eta_1,\eta_3)\|_3\lesssim (\varepsilon^\theta \nn \eta_1\nn+\|\eta_3\|_3)^2(1+\nn \eta_1\nn+\|\eta_3\|_3)+\varepsilon^2\|\eta_3\|_3$,
\item
$\|\mathrm{d}_1G[\eta_1,\eta_3]\|_{\LL(\XX_1,\XX_2)} \lesssim (\varepsilon^\theta \nn \eta_1\nn+\|\eta_3\|_3)
(\varepsilon^\theta+\varepsilon^\theta \nn \eta_1\nn+\|\eta_3\|_3)$,
\item
$\|\mathrm{d}_2G[\eta_1,\eta_3]\|_{\LL(\XX_2,\XX_2)}\lesssim (\varepsilon^{\theta}\nn \eta_1\nn+\|\eta_3\|_3)(1+\nn \eta_1\nn+\|\eta_3\|_3)+\varepsilon^2$,
\item
$\|\mathrm{d}_1^2G[\eta_1,\eta_3]\|_{\LL^2(\XX_1,\XX_2)}\lesssim \varepsilon^\theta(\varepsilon^\theta+\varepsilon^\theta\nn \eta_1\nn+\|\eta_3\|_3)$,
\item
$\|\mathrm{d}_1\mathrm{d}_2G[\eta_1,\eta_3]\|_{\LL^2(\XX_1 \times \XX_2,\XX_2)}\lesssim \varepsilon^\theta+\varepsilon^{\theta}\nn \eta_1\nn+ \|\eta_3\|_3$,
\item
$\|\mathrm{d}_2^2G[\eta_1,\eta_3]\|_{\LL^2(\XX_2,\XX_2)} \lesssim 1+\nn \eta_1 \nn + \|\eta_3\|_3$
\end{list}
for each $\eta_1\in X_1$ and $\eta_3\in X_3$, where $\LL(\XX_1,\XX_2)$, $\LL(\XX_2,\XX_2)$ and
$\LL^2(\XX_1,\XX_2)$, $\LL^2(\XX_1 \times \XX_2,\XX_2)$, $\LL^2(\XX_2,\XX_2)$ denote the Banach spaces
of bounded linear and bilinear operators from the indicated spaces to $\XX_2$.
\end{lemma}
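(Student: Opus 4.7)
The plan is to leverage the explicit decomposition of $G$ assembled in the paragraphs preceding Corollary \ref{cor:tildeNN estimates}. Combining the identity $\Lambda(1-\varepsilon^2)\LL_3^\prime(\eta_1)+\NN^\prime(\eta)=\KK_\mathrm{nl}^\prime(\eta)-\Lambda(1-\varepsilon^2)(\AA(\eta_1,\eta_3)+\LL_4^\prime(\eta)+\LL_\mathrm{r}^\prime(\eta))$ with the definition of $\BB$, the fixed-point equation \eqref{eta3 eqn} may be rewritten as
$$
G(\eta_1,\eta_3)=-M\bigl[\BB(\eta_1,\eta_3)-\Lambda(1-\varepsilon^2)\AA(\eta_1,\eta_3)+\Lambda\varepsilon^2 K_0(F(\eta_1)+\eta_3)\bigr],
$$
where $M$ denotes the Fourier multiplier $f\mapsto\FF^{-1}[(1-\chi(k))\tilde g(k)^{-1}\hat f]$, a bounded linear operator $H^1(\R^2)\to H^3(\R^2)$ by Proposition \ref{prop:bounded mapping}. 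My first step is to differentiate this closed-form expression termwise in $\eta_1$ and $\eta_3$, using the chain rule on $F$, $\AA$ and $\BB$, together with the linearity of $M$ and $K_0$.

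Because $M$ is linear and bounded from $H^1$ to $H^3$, every $H^3$-norm on the left-hand side of (i)--(vi) is controlled by the corresponding $H^1$-norm of the bracketed expression or its differential. The contributions from $\AA$ and its derivatives are estimated via Lemma \ref{lem:3 estimates}, those from $\BB$ and its derivatives via Corollary \ref{cor:tildeNN estimates}, and the $F$-derivatives via Corollary \ref{cor:F estimates}. For the remaining term I would split $K_0(F(\eta_1)+\eta_3)$ into $K_0 F(\eta_1)$, which by Remark \ref{rem:K0Feta1 remark} inherits the estimates of $F(\eta_1)$ from Corollary \ref{cor:F estimates}, and $K_0\eta_3$, for which $\|K_0\eta_3\|_3\lesssim\|\eta_3\|_3$, $\mathrm{d}_2(K_0\eta_3)$ equals $K_0$ (with norm bounded independently of $\varepsilon$, $\eta_1$, $\eta_3$), and $\mathrm{d}_2^2(K_0\eta_3)=0$.

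Assembling these inputs produces each of the six estimates up to elementary manipulations that absorb crossed terms into the target right-hand sides, using inequalities such as $\varepsilon^{2\theta}\nn\eta_1\nn^2=(\varepsilon^\theta\nn\eta_1\nn)^2\le(\varepsilon^\theta\nn\eta_1\nn+\|\eta_3\|_3)^2$, $\varepsilon^\theta\nn\eta_1\nn\|\eta_3\|_3\le(\varepsilon^\theta\nn\eta_1\nn+\|\eta_3\|_3)^2$ and $\varepsilon^{2+\theta}\nn\eta_1\nn\le\varepsilon^\theta(\varepsilon^\theta\nn\eta_1\nn+\|\eta_3\|_3)$. The isolated terms $\varepsilon^2\|\eta_3\|_3$ in (i) and $\varepsilon^2$ in (iii) arise solely from the $K_0\eta_3$ summand, which is the only source of $\varepsilon^2$-factors unaccompanied by a power of $\nn\eta_1\nn$ or $\|\eta_3\|_3$. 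I expect no genuine obstacle: all the hard analytic work has been packaged in the preceding lemmata, so what remains is careful but essentially mechanical bookkeeping to match each summand against the prescribed form of (i)--(vi).
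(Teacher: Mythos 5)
Your proposal is correct and follows essentially the same route as the paper, which likewise obtains the lemma by rewriting the right-hand side of \eqref{eta3 eqn} as the bounded multiplier of Proposition \ref{prop:bounded mapping} applied to $\BB(\eta_1,\eta_3)-\Lambda(1-\varepsilon^2)\AA(\eta_1,\eta_3)+\Lambda\varepsilon^2K_0(F(\eta_1)+\eta_3)$ and invoking Lemma \ref{lem:3 estimates}, Corollary \ref{cor:tildeNN estimates}, Corollary \ref{cor:F estimates} and Remark \ref{rem:K0Feta1 remark}. One small inaccuracy: since $f(|k|)\sim|k|$ at high frequency, $K_0$ loses a derivative, so $\|K_0\eta_3\|_3\lesssim\|\eta_3\|_3$ is not literally true; but this is harmless because, as you note, only the $H^1(\R^2)$ norm of the bracket is needed before applying the $H^1\to H^3$ multiplier, and $\|K_0\eta_3\|_1\lesssim\|\eta_3\|_3$ certainly holds.
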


\begin{theorem} \label{thm:estimate eta3}
Equation \eqref{eta3 eqn} has a unique solution $\eta_3 \in
X_3$ which depends smoothly upon $\eta_1 \in X_1$ and satisfies the estimates
$$
\|\eta_3(\eta_1)\|_3 \lesssim \varepsilon^{2\theta} \nn \eta_1\nn^2,  \quad
\|\mathrm{d}\eta_3[\eta_1]\|_{\LL(\XX_1,\XX_2)} \lesssim \varepsilon^{2\theta} \nn \eta_1\nn, \quad
\|\mathrm{d}^2 \eta_3[\eta_1]\|_{\LL^2(\XX_1,\XX_2)} \lesssim \varepsilon^{2\theta}.
$$
\end{theorem}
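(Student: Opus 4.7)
The plan is to apply the abstract fixed-point theorem (Theorem \ref{thm:fixed-point}) to equation \eqref{eta3 eqn} written in the abstract form $\eta_3 = G(\eta_1,\eta_3)$, with $\YY_1=\XX_1$, $\YY_2=\XX_2$, $Y_1=X_1$, $Y_2=X_3$, and with a radius function $r:X_1\to[0,\infty)$ to be chosen presently. Lemma \ref{lem:Complete G estimates} is designed precisely to feed into this theorem, so the argument should be nearly mechanical once the right $r$ is selected.

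First I would verify the two small-ness hypotheses. Setting $\eta_3=0$ in Lemma \ref{lem:Complete G estimates}(i) and using $\nn\eta_1\nn\le R_1$ yields
\[
\|G(\eta_1,0)\|_3\lesssim \varepsilon^{2\theta}\nn\eta_1\nn^{2}(1+\nn\eta_1\nn)\lesssim \varepsilon^{2\theta}\nn\eta_1\nn^{2},
\]
which suggests the choice $r(\eta_1):=C\varepsilon^{2\theta}\nn\eta_1\nn^{2}$ for a sufficiently large constant $C$; then $\|G(\eta_1,0)\|_3\le r/2$ holds, $r\le R_3$ once $\varepsilon$ is small enough (so that $B_r(0)\subset X_3$), and the combined bound $\|F(\eta_1)+\eta_3\|_3<M/2$ is preserved (via Corollary \ref{cor:F estimates}). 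For the contraction condition, Lemma \ref{lem:Complete G estimates}(iii) gives
\[
\|\mathrm{d}_2 G[\eta_1,\eta_3]\|\lesssim(\varepsilon^\theta\nn\eta_1\nn+\|\eta_3\|_3)(1+\nn\eta_1\nn+\|\eta_3\|_3)+\varepsilon^2\lesssim \varepsilon^\theta,
\]
uniformly in $\eta_1\in X_1$ and $\eta_3\in\bar B_r(0)$, which is $\le 1/3$ for $\varepsilon$ small.

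With the hypotheses verified, Theorem \ref{thm:fixed-point} produces a unique smooth solution $\eta_3(\eta_1)\in\bar B_{r(\eta_1)}(0)$, immediately yielding $\|\eta_3(\eta_1)\|_3\lesssim \varepsilon^{2\theta}\nn\eta_1\nn^{2}$. The first-derivative estimate follows from the bound $\|\mathrm{d}\eta_3[\eta_1]\|\le 2\|\mathrm{d}_1 G[\eta_1,\eta_3(\eta_1)]\|$ combined with Lemma \ref{lem:Complete G estimates}(ii): substituting $\|\eta_3\|_3\lesssim\varepsilon^{2\theta}\nn\eta_1\nn^2$ into
\[
\|\mathrm{d}_1 G[\eta_1,\eta_3]\|\lesssim(\varepsilon^\theta\nn\eta_1\nn+\|\eta_3\|_3)(\varepsilon^\theta+\varepsilon^\theta\nn\eta_1\nn+\|\eta_3\|_3)
\]
gives $\lesssim \varepsilon^{2\theta}\nn\eta_1\nn$, as claimed. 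For the second derivative I would insert Lemma \ref{lem:Complete G estimates}(iv)--(vi) into the second-derivative estimate of Theorem \ref{thm:fixed-point}: the three contributions are of order $\varepsilon^{2\theta}$, $\varepsilon^\theta\cdot\varepsilon^{2\theta}\nn\eta_1\nn$, and $1\cdot(\varepsilon^{2\theta}\nn\eta_1\nn)^{2}$ respectively, all dominated by $\varepsilon^{2\theta}$.

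The only mildly delicate point is keeping track of the powers of $\varepsilon$: the quadratic-in-$\eta_1$ source term $\KK_{\mathrm{nl}}'$ contributes a factor $\varepsilon^{2\theta}$ through the $\ZZ$-norm estimate \eqref{Triple norm large}, and this gain must survive composition with the second projector $1-\chi(D)$ in the fixed-point operator (handled by Proposition \ref{prop:bounded mapping}) and dominate the linear $\varepsilon^{2}K_0\eta_3$ term -- which is automatic because $\theta<1$. No further obstacle is anticipated; the rest is book-keeping.
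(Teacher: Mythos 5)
Your argument is essentially the paper's: both apply Theorem \ref{thm:fixed-point} to $\eta_3=G(\eta_1,\eta_3)$ with the radius function $r(\eta_1)\sim\varepsilon^{2\theta}\nn\eta_1\nn^2$, and your bookkeeping of Lemma \ref{lem:Complete G estimates}(i)--(vi) for the three estimates is correct. One caveat: as written, your single application of the fixed-point theorem (hypotheses verified only for $\eta_3\in\bar B_{r(\eta_1)}(0)$) yields uniqueness only in that small ball, whereas the theorem claims uniqueness in all of $X_3$ --- and this stronger uniqueness is actually invoked later (in the proof of Lemma \ref{lem:weak limit of ps sequence}, to identify the weak limit $\eta_{3,\infty}$ with $\eta_3(\eta_{1,\infty})$). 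The paper handles this by a two-stage application: first choose $R_3$ and $\varepsilon$ small enough that $\|G(\eta_1,0)\|_3\le R_3/2$ and $\|\mathrm{d}_2G[\eta_1,\eta_3]\|_{\LL(\XX_2,\XX_2)}\le 1/3$ hold for \emph{all} $\eta_3\in X_3$, giving a unique solution in $X_3$; then rerun the argument with $r(\eta_1)=2C\varepsilon^{2\theta}\nn\eta_1\nn^2$ to locate that solution in the small ball and extract the stated estimates. Your proof needs the same one-line supplement (your bound for $\mathrm{d}_2G$ extends to all of $X_3$ once $R_3$ is chosen small relative to $R_1$); otherwise it is complete.
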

\begin{proof}
Choosing $R_3$ and $\varepsilon$ sufficiently small, one finds $r>0$ such that
$$\|G(\eta_1,0)\|_3 \leq \frac{r}{2}, \qquad \|\mathrm{d}_2 G[\eta_1,\eta_3]\|_{\LL(\XX_2,\XX_2)} \leq \frac{1}{3}$$
for $\eta_1 \in X_1$, $\eta_3 \in X_3$ (see Lemma \ref{lem:Complete G estimates}(i), (iii)),
and Theorem \ref{thm:fixed-point} asserts that equation \eqref{eta3 eqn} has a unique solution $\eta_3$
in $X_3$ which depends smoothly
upon $\eta_1 \in X_1$. More precise estimates are obtained by choosing $C>0$ so that
$$\|G(\eta_1,0)\|_3 \leq C\varepsilon^{2\theta} \nn \eta_1\nn^2, \qquad \eta_1 \in X_1$$
and writing $r(\eta)=2C\varepsilon^{2\theta}\nn \eta_1\nn^2$, so that
$$\|\mathrm{d}_2 G[\eta_1,\eta_3]\|_{\LL(\XX_2,\XX_2)}\lesssim \varepsilon^\theta, \qquad \eta_1 \in X_1,\ \eta_3 \in \overline{B}_{r(\eta_1)}(0) \subset X_3$$
(Lemma \ref{lem:Complete G estimates}(i), (iii)), and the stated estimates for $\eta_3(\eta_1)$ follow
from Theorem \ref{thm:fixed-point} and Lemma \ref{lem:Complete G estimates}(ii), (iv)--(vi).
\end{proof}

The reduced functional $\widetilde{\JJ}_\varepsilon: X_1 \rightarrow \R$ is defined by
\begin{equation}
\label{tilde J}
\widetilde{\JJ}_\varepsilon(\eta_1):=\JJ_\varepsilon(\eta_1+\eta_2(\eta_1)),
\end{equation}
where $\eta_2(\eta_1)=F(\eta_1)+\eta_3(\eta_1)$ and $\mathrm{d}\JJ_\varepsilon[\eta_1+\eta_2(\eta_1)](\rho_2)=0$ for all $\rho_2 \in \XX_2$ by construction.
It follows that
\begin{align*}
\mathrm{d}\widetilde{\JJ}_\varepsilon[\eta_1](\rho_1)&=\mathrm{d}\JJ_\varepsilon[\eta_1+\eta_2(\eta_1)](\rho_1)+\mathrm{d}\JJ_\varepsilon[\eta_1+\eta_2(\eta_1)](\mathrm{d}\eta_2[\eta_1](\rho_1))\\
&=\mathrm{d}\JJ_\varepsilon[\eta_1+ \eta_2(\eta_1)](\rho_1)
\end{align*}
for all $\rho_1 \in \XX_1$, so that each critical point $\eta_1$ of $\widetilde{\JJ}_\varepsilon$ defines a critical point $\eta_1+\eta_2(\eta_1)$ of $\JJ_\varepsilon$.
Conversely, each critical point $\eta=\eta_1+\eta_2$ of $\JJ_\varepsilon$ with $\eta_2-F(\eta_1) \in X_3$ has the properties
that $\eta_2=\eta_2(\eta_1)$ and $\eta_1$ is a critical point of $\widetilde{\JJ}_\varepsilon$.

\bigskip

\subsection{The reduced functional}
\label{section: expansion}

In this section we compute leading-order terms in the reduced functional
\begin{align}
\widetilde{\JJ}_\varepsilon(\eta_1) &= \HH(\eta) + \Lambda\varepsilon^2\LL_2(\eta)+\NN(\eta) \nonumber \\
& = \HH(\eta) + \Lambda\varepsilon^2\LL_2(\eta) 
+\KK_4(\eta)+\KK_\mathrm{r}(\eta)
- \Lambda (1-\varepsilon^2) \big(\LL_3(\eta)+\LL_4(\eta)+\LL_\mathrm{r}(\eta)\big),
\label{Red J expansion}
\end{align}
where
$$
\HH(\eta):=\KK_2(\eta)-\Lambda\LL_2(\eta)=\frac{1}{2}\int_{\R^2} \tilde{g}(k) |\hat{\eta}|^2 \dk_1\dk_2
$$
and $\eta=\eta_1+F(\eta_1)+\eta_3(\eta_1)$.
Writing
$$\eta_1 = \eta_1^+ + \eta_1^-,$$
where
$$\eta_1^+ = \FF^{-1}[\chi^+\hat{\eta}_1], \qquad \eta_1^- = \FF^{-1}[\chi^-\hat{\eta}_1] = \overline{\eta_1^+}$$
and $\chi^+$, $\chi^-$ are the characteristic functions of respectively $B_\delta(\omega,0)$ and
$B_\delta(-\omega,0)$, we establish the following theorem.

\begin{theorem} \label{thm:red func}
The reduced functional is given by the formula
\begin{align*}
\widetilde{\JJ}_\varepsilon(\eta_1)&=\int_{\R^2} \tilde{g}(k) |\FF[\eta_1^+]|^2 \dk_1\dk_2 + \varepsilon^2\Lambda f(\omega) \int_{\R^2} |\eta_1^+|^2 \dx \dz \\
&\qquad\mbox{} -16C_1\int_{\R^2}\frac{k_1^2}{(1-\Lambda)k_1^2+k_2^2} |\FF[|\eta_1^+|^2]|^2\dk_1\dk_2-16C_2\int_{\R^2}|\eta_1^+|^4 \dx\dz\\
&\qquad\mbox{}+\underline{\OO}(\varepsilon^{3\theta}\nn \eta_1\nn^2),
\end{align*}
where
\begin{align*}
C_1&=\frac{\Lambda}{32} (\Lambda B(\omega)-2f(\omega))^2, \\
C_2&=\frac{g(2\omega)^{-1}\Lambda^2 A(\omega)^2}{16}+\frac{\Lambda^2 B(\omega)^2}{32}+\frac{3\beta \omega^4}{64}
+\frac{\Lambda f(\omega)}{16} (f(\omega) f(2\omega)-3 \omega^2),
\end{align*}
\[
A(\omega)=\frac{3\omega^2-f(\omega)^2-2f(\omega) f(2\omega)}{2}, \qquad
B(\omega)=\omega^2-f(\omega)^2
\]
and the symbol $\underline{\OO}(\varepsilon^\gamma \nn \eta_1\nn^r)$ (with $\gamma \geq 0$, $r \geq 1$)
denotes a smooth functional $\RR: X_1 \rightarrow \R$ which satisfies the estimates 
$$
|\RR(\eta_1)| \lesssim \varepsilon^\gamma  \nn \eta_1\nn^r, \quad
\|\mathrm{d}\RR[\eta_1]\|_{\LL(\XX_1, \R)}\lesssim \varepsilon^\gamma  \nn \eta_1 \nn^{r-1}, \quad
\|\mathrm{d}^2\RR[\eta_1]\|_{\LL^2(\XX_1, \R)}\lesssim \varepsilon^\gamma  \nn \eta_1 \nn^{\max(r-2,0)}
$$
for each $\eta \in X_1$.
\end{theorem}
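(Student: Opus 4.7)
My plan is to start from the expansion (3.22), substitute $\eta=\eta_1+F(\eta_1)+\eta_3(\eta_1)$, and identify the leading contributions order-by-order in $\eta_1$. Five contributions ultimately survive as leading order: $\HH(\eta_1)$, $\varepsilon^2\Lambda\LL_2(\eta_1)$, the combination $\HH(F(\eta_1))-\Lambda(1-\varepsilon^2)\langle\LL_3^\prime(\eta_1),F(\eta_1)\rangle_0$, $\KK_4(\eta_1)$, and $-\Lambda\LL_4(\eta_1)$; everything else is absorbed into the $\underline{\OO}(\varepsilon^{3\theta}\nn\eta_1\nn^2)$ remainder. The essential inputs are Proposition~\ref{sup estimate} ($\|\hat\eta_1\|_{L^1}\lesssim\varepsilon^\theta\nn\eta_1\nn$), Corollary~\ref{cor:F estimates} ($\|F(\eta_1)\|_3\lesssim\varepsilon^\theta\nn\eta_1\nn^2$), Theorem~\ref{thm:estimate eta3} ($\|\eta_3(\eta_1)\|_3\lesssim\varepsilon^{2\theta}\nn\eta_1\nn^2$), Lemmata~\ref{lem:3 estimates}--\ref{lem:KKnlprime estimates}, and the boundedness $\nn\eta_1\nn\le R_1$ in $X_1$ (which converts each $\nn\eta_1\nn^r$ with $r\ge 2$ into a multiple of $\nn\eta_1\nn^2$).

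The quadratic contributions split cleanly via Fourier orthogonality of $\eta_1$ and $\eta_2=F(\eta_1)+\eta_3(\eta_1)$ (since $\chi(D)\eta_2=0$): $\HH(\eta)=\HH(\eta_1)+\HH(\eta_2)$ and $\LL_2(\eta)=\LL_2(\eta_1)+\LL_2(\eta_2)$. Writing $\eta_1=\eta_1^++\eta_1^-$ with $\eta_1^-=\overline{\eta_1^+}$ and using the evenness of $\tilde g$ gives $\HH(\eta_1)=\int\tilde g(k)|\FF[\eta_1^+]|^2\dk_1\dk_2$; similarly $\varepsilon^2\Lambda\LL_2(\eta_1)=\varepsilon^2\Lambda f(\omega)\int|\eta_1^+|^2\,\dx\dz+\underline{\OO}(\varepsilon^3\nn\eta_1\nn^2)$, the error being controlled by the Taylor expansion of the symbol of $K_0$ at $(\omega,0)$ combined with the Cauchy--Schwarz bound $\int|k_1-\omega|\,|\hat\eta_1^+|^2\,\dk_1\dk_2\lesssim\varepsilon\nn\eta_1\nn^2$ obtained from the definition of $\nn\cdot\nn$. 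The piece $\HH(F(\eta_1))$ is quartic in $\eta_1$ and is addressed below, while the remaining quadratic contributions involving $\eta_3$ are bounded by $\varepsilon^{4\theta}\nn\eta_1\nn^4$ and $\varepsilon^2\Lambda\LL_2(\eta_2)$ by $\varepsilon^{2+2\theta}\nn\eta_1\nn^4$, both within the remainder.

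The quartic contributions need more care. Because $\LL_3(\eta_1)=0$ -- any triple product of $\eta_1^\pm$ has Fourier support in $B_{3\delta}(\pm\omega,0)\cup B_{3\delta}(\pm 3\omega,0)$ and thus misses $0$ -- the leading cubic resonance is $-\Lambda(1-\varepsilon^2)\langle\LL_3^\prime(\eta_1),F(\eta_1)\rangle_0$. Combining this with $\HH(F(\eta_1))$ and using $(1-\chi)^2=1-\chi$ yields
$$
-\tfrac{\Lambda^2(1-\varepsilon^2)^2}{2}\int_{\R^2}\tfrac{1-\chi(k)}{\tilde g(k)}|\FF[\LL_3^\prime(\eta_1)]|^2\dk_1\dk_2,
$$
whose integrand is supported in $B_{2\delta}(0)\cup B_{2\delta}(\pm(2\omega,0))$. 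On the near-$0$ component, substituting $\partial_x\eta_1^\pm\approx\pm \ii\omega\eta_1^\pm$, $K_0\eta_1^\pm\approx f(\omega)\eta_1^\pm$, $L_0\eta_1^\pm\approx 0$ (with $O(\varepsilon)$ corrections controlled through $\nn\cdot\nn$) and using $\tilde g(k)|k|^2\to(1-\Lambda)k_1^2+k_2^2$ as $k\to 0$ yields, after combining with the near-$0$ contribution of $-\Lambda\LL_4(\eta_1)$ (coming from the $K_0(\eta_1K_0\eta_1)\cdot\eta_1K_0\eta_1$ piece, whose symbol is $k_1^2/|k|^2$), the $C_1$ nonlocal quartic term via common-denominator manipulation. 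The near-$\pm(2\omega,0)$ components use $\tilde g\to g(2\omega)$ and contribute the $g(2\omega)^{-1}\Lambda^2A(\omega)^2/16$ piece of $C_2$. The remaining pieces of $C_2$ come from $\KK_4(\eta_1)$ -- which after expanding $(\eta_{1x}^2+\eta_{1z}^2)^2$ and retaining only the $(\eta_1^+)^2(\eta_1^-)^2$ combinations gives $-\tfrac{3\beta\omega^4}{4}\int|\eta_1^+|^4\,\dx\dz$ -- and from the zero-frequency parts of the remaining $\LL_4(\eta_1)$ terms, producing the $\Lambda^2B(\omega)^2/32$ and $\Lambda f(\omega)(f(\omega)f(2\omega)-3\omega^2)/16$ pieces.

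All other terms lie in the $\underline{\OO}(\varepsilon^{3\theta}\nn\eta_1\nn^2)$ class: $\langle\LL_3^\prime(\eta_1),\eta_3\rangle_0$, $\tfrac12\langle\LL_3^{\prime\prime}(\eta_1)\eta_2,\eta_2\rangle_0$, $\LL_3(\eta_2)$, the higher-order pieces $\KK_4(\eta)-\KK_4(\eta_1)$ and $\LL_4(\eta)-\LL_4(\eta_1)$, and $\KK_\mathrm{r}(\eta), \LL_\mathrm{r}(\eta)$, each bounded by combining Lemmata~\ref{lem:3 estimates}--\ref{lem:KKnlprime estimates} with Corollary~\ref{cor:F estimates}, Theorem~\ref{thm:estimate eta3} and Proposition~\ref{sup estimate}. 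The first- and second-derivative estimates implicit in the $\underline{\OO}$ notation follow by differentiating every step and invoking the differentiated forms of the same lemmata, together with the derivative bounds for $F$ and $\eta_3$ provided by Corollary~\ref{cor:F estimates} and Theorem~\ref{thm:estimate eta3}. The main obstacle will be the precise coefficient identification for $C_1$: three separate near-$0$ contributions carry different numerator symbols ($|k|^2$, $k_1^2$, $k_1^4/|k|^2$) over the common denominator $(1-\Lambda)k_1^2+k_2^2$, and they must be combined via algebraic manipulation -- systematically discarding terms that vanish at $k=0$ by homogeneity into the remainder -- to produce the clean form $(\Lambda B(\omega)-2f(\omega))^2\cdot k_1^2/((1-\Lambda)k_1^2+k_2^2)$ with the stated constant $\Lambda/32$; verifying the corresponding derivative estimates is the secondary source of technical difficulty but follows the same bookkeeping.
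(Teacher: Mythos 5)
Your proposal follows essentially the same route as the paper: the same expansion of $\widetilde{\JJ}_\varepsilon$, the observation that $\LL_3(\eta_1)=0$ so that the cubic resonance $\Lambda(1-\varepsilon^2)\langle\LL_3^\prime(\eta_1),F(\eta_1)\rangle_0$ combines with $\HH(F(\eta_1))$ to yield a net $-\HH(F(\eta_1))$, evaluation of the Fourier multipliers near $0$ and $\pm(2\omega,0)$ (the paper's Lemma \ref{lem:approximate identities}), and absorption of all remaining terms into the $\underline{\OO}(\varepsilon^{3\theta}\nn\eta_1\nn^2)$ class using Corollary \ref{cor:F estimates}, Theorem \ref{thm:estimate eta3} and the nonlinear estimates, so the argument is correct in structure and coefficients. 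The only (cosmetic) slip is attributional: the $\Lambda^2B(\omega)^2/32$ piece of $C_2$ comes from the constant part of the near-zero expansion of $\HH(F(\eta_1))$ rather than from $\LL_4(\eta_1)$, whose near-zero $k_1^2/|k|^2$ contribution instead cancels exactly against the corresponding piece of $\HH(F(\eta_1))$.
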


\begin{remark}
The coefficient $C_1$ is obviously positive, while the positivity of $C_2$ is established by
elementary arguments after substituting
\[
\beta=\frac{f^\prime(\omega)}{2\omega f(\omega)-\omega^2 f^\prime(\omega)}, \qquad
\Lambda=\frac{2\omega}{2\omega f(\omega)-\omega^2 f^\prime(\omega)}
\]
(see the comments to equation \eqref{Defn of g}).
\end{remark}

We begin the proof of Theorem \ref{thm:red func} with a result which shows how 
Fourier-multiplier operators acting upon the function $\eta_1$ may be approximated by constants.

\begin{lemma}  \label{lem:approximate identities}
The estimates
\begin{list}{(\roman{count})}{\usecounter{count}}
\item
$\partial_x \eta_1^\pm = \pm \mathrm{i}\omega \eta_1^\pm + \underline{O}(\varepsilon \nn \eta_1\nn)$,
\item
$\partial_x^2 \eta_1^\pm =-\omega^2\eta_1^\pm+  \underline{O}(\varepsilon \nn \eta_1\nn)$,
\item
$\partial_z \eta_1^\pm =  \underline{O}(\varepsilon \nn \eta_1\nn)$,
\item
$K_0 \eta_1^\pm =f(\omega)\eta_1^\pm + \underline{O}(\varepsilon\nn \eta_1\nn)$,
\item
$L_0 \eta_1^\pm = \underline{O}(\varepsilon \nn \eta_1\nn)$,
\item
$K_0((\eta_1^\pm)^2) = f(2\omega)(\eta_1^\pm)^2 +  \underline{O}(\varepsilon^{1+\theta}\nn \eta_1\nn^2)$,
 \item
$L_0((\eta_1^\pm)^2) =  \underline{O}(\varepsilon^{1+\theta}\nn \eta_1\nn^2)$,
\item
$K_0 (\eta_1^+\eta_1^-) =\FF^{-1}[k_1^2/|k|^2\FF[\eta_1^+\eta_1^-]] + \underline{O}(\varepsilon^{1+\theta}\nn \eta_1\nn^2)$,
\item
$\FF^{-1}[\tilde g(k)^{-1}\FF[ (\eta_1^\pm)^2]]=g(2\omega)^{-1} (\eta_1^\pm)^2+ \underline{O}(\varepsilon^{1+\theta}\nn \eta_1\nn^2)$,
\item
$\FF^{-1}[(\tilde g(k)^{-1}-(1-\Lambda k_1^2/|k|^2)^{-1})\FF[ \eta_1^+\eta_1^-]]=\underline{O}(\varepsilon^{1+\theta}\nn \eta_1\nn^2)$
\end{list}
hold for each $\eta_1 \in X_1$,
where the symbol $\underline{O}(\varepsilon^\gamma \nn \eta_1\nn^r)$ (with $\gamma \geq 0$, $r \geq 1$)
denotes a smooth function $R: X_1 \rightarrow H^1(\R^2)$ whose Fourier transform has support which lies in a compact set
whose size does not depend upon $\varepsilon$ and which satisfies the estimates 
$$
\|R(\eta_1)\|_1 \lesssim \varepsilon^\gamma  \nn \eta_1 \nn^r,$$
$$
\|\mathrm{d}R[\eta_1]\|_{\LL(\XX_1, H^1(\R^2))}\lesssim \varepsilon^\gamma  \nn \eta_1 \nn^{r-1}, \qquad
\|\mathrm{d}^2R[\eta_1]\|_{\LL^2(\XX_1, H^1(\R^2))}\lesssim \varepsilon^\gamma  \nn \eta_1\nn^{\max(r-2,0)}
$$
for each $\eta_1 \in X_1$. (One may replace $H^1(\R^2)$ with $H^s(\R^2)$ for any $s \geq 0$
in these estimates.)
\end{lemma}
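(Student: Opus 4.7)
The plan is to treat all ten estimates by a single template: in each case I rewrite the quantity as $m(D)$ applied either to $\eta_1^\pm$ or to a product $\eta_1^a\eta_1^b$, identify the centre $k_0$ of the Fourier support of the argument, verify that $m(k_0)=0$ and that $m$ is smooth in a neighbourhood of $k_0$, and then use Plancherel together with the Taylor estimate $|m(k)|\lesssim |k-k_0|$ on the relevant support. Compact support of the error in Fourier space is automatic, since $\mathrm{supp}\,\hat\eta_1^\pm\subset B_\delta(\pm\omega,0)$ and the convolution structure places products of two such functions in $B_{2\delta}(\pm 2\omega,0)$ or $B_{2\delta}(0,0)$, none of which depends on $\varepsilon$.

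For the linear estimates (i)--(v) I would apply this template with $k_0=(\pm\omega,0)$, the smoothness of each symbol following because $|k|$ is bounded away from $0$ on $\mathrm{supp}\,\hat\eta_1^\pm$ (recall $\delta<\omega/3$). Plancherel then gives
$$
\|m(D)\eta_1^\pm\|_{H^s(\R^2)}^2 \lesssim \int_{B_\delta(\pm\omega,0)} |k-(\pm\omega,0)|^2 |\hat\eta_1^\pm|^2 \dk_1\dk_2 \leq \varepsilon^2\nn\eta_1\nn^2,
$$
directly from the definition of the scaled norm, with compactness of the Fourier support making the $s$-dependence inert. These quantities are linear in $\eta_1$, so the derivative estimates are immediate.

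For the quadratic estimates (vi)--(x) the argument $\eta_1^a\eta_1^b$ has Fourier transform $\hat\eta_1^a*\hat\eta_1^b$, and the elementary inequality $|k-k_0|\le |p-a(\omega,0)|+|k-p-b(\omega,0)|$ for $k=p+(k-p)$ in the support, combined with Young's inequality $\|f*g\|_{L^2}\le\|f\|_{L^1}\|g\|_{L^2}$, yields
$$
\|m(D)(\eta_1^a\eta_1^b)\|_0 \lesssim \varepsilon\nn\eta_1\nn\cdot\varepsilon|\log\varepsilon|\,\nn\eta_1\nn \lesssim \varepsilon^{1+\theta}\nn\eta_1\nn^2,
$$
where the first factor uses the scaled norm and the second uses Proposition \ref{sup estimate}. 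I would then verify case-by-case that the relevant symbol vanishes at the appropriate wavenumber: for (vi)--(vii) at $(\pm 2\omega,0)$ (where $|k|$ is again bounded away from zero), and for (viii)--(x) at the origin. In (ix) one reads off $\tilde g(\pm 2\omega,0)=g(2\omega)$ because $k_2=0$ there; in (x) one would use the expansion $\tilde g(k)=1-\Lambda k_1^2/|k|^2 + O(|k|^2)$ near the origin, obtained from $f(s)=1+O(s^2)$ and $g(s)=(1-\Lambda)+O(s^2)$.

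The derivative bounds in the quadratic cases follow by polarisation: each error has the form $B(\eta_1,\eta_1)$ for a symmetric bilinear map $B$, so $\mathrm{d}R[\eta_1](v)=2B(\eta_1,v)$ and $\mathrm{d}^2R[\eta_1](v,w)=2B(v,w)$, and the same template applies once one observes that $\|\hat v\|_{L^1}\lesssim \varepsilon|\log\varepsilon|\,\nn v\nn$ uniformly for $v\in\XX_1$. The main obstacle I anticipate is the analysis of (x), where $\tilde g(k)^{-1}$ and $(1-\Lambda k_1^2/|k|^2)^{-1}$ are individually only bounded and not smooth at the origin because of the angular $k_1^2/|k|^2$ term; one cannot Taylor-expand either symbol separately and must instead work directly with their difference, in which the singular contributions cancel and leave a smooth factor of order $|k|^2$ to which the template above applies.
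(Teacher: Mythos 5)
Your proposal is correct and follows essentially the same route as the paper: Plancherel with the scaled norm for the linear estimates (i)--(v), and for (vi)--(x) the reduction to bilinear operators, the splitting $|k-k_0|\le|k-s-a(\omega,0)|+|s-b(\omega,0)|$ with Young's inequality and the $L^1$-bound of Proposition \ref{sup estimate}, including the treatment of (ix) via $\tilde g(2\omega,0)=g(2\omega)$ and of (x) via the difference of the two symbols being $O(|k|)$ near the origin. No gaps worth noting.
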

\begin{proof}
Note that
\begin{align*}
\|\partial_x \eta_1^\pm \mp \mathrm{i}\omega \eta_1^\pm\|_0^2
&\le \int_{\R^2} (|k_1|-\omega)^2 |\hat \eta_1(k)|^2 \dk_1 \dk_2\\
&\le \varepsilon^2\int_{\R^2} (1+\varepsilon^{-2}((|k_1|-\omega)^2+k_2^2)) |\hat \eta_1(k)|^2 \dk_1 \dk_2\\
&=\varepsilon^{2}\nn \eta_1\nn^2,
\end{align*}
and iterating this argument yields (ii); similarly
\begin{align*}
\|\partial_z \eta_1^\pm\|_0^2
&\le \int_{\R^2} k_2^2 |\hat \eta_1(k)|^2 \dk_1 \dk_2\\
&\le \varepsilon^2\int_{\R^2} (1+\varepsilon^{-2}((|k_1|-\omega)^2+k_2^2)) |\hat \eta_1(k)|^2 \dk_1 \dk_2\\
&=\varepsilon^{2}\nn \eta_1\nn^2.
\end{align*}
Moreover, the functions $\hat K_0(k)=(k_1^2/|k|^2) f(|k|)$ and $\hat L_0(k)=(k_1k_2/|k|^2) f(|k|)$ are smooth at
the points $(\pm \omega,0)$ with $\hat K_0(\pm \omega,0)=f(\omega)$ and $\hat L_0(\pm \omega,0)=0$, so that
\begin{align*}
\|K_0\eta_1^\pm-f(\omega)\eta_1^\pm\|_0^2
&\lesssim \int_{\R^2} ((|k_1|-\omega)^2 +k_2^2)|\hat \eta_1(k)|^2 \dk_1 \dk_2\\
&\le \varepsilon^{2}\nn \eta_1\nn^2, \\
\\
\|L_0\eta_1^\pm\|_0^2
&\lesssim \int_{\R^2} ((|k_1|-\omega)^2 +k_2^2)|\hat \eta_1(k)|^2 \dk_1 \dk_2\\
&\le \varepsilon^{2}\nn \eta_1\nn^2.
\end{align*}

Notice that the quantities to be estimated in (vi)--(x) are quadratic in $\eta_1$;
it therefore suffices to estimate the corresponding bilinear operators. To this end we take $v_1\in \XX_1$ and define $v_1^\pm$ in the same
way as $\eta_1^\pm$. The argument used for (iv) and (v) above yields
\begin{align*}
|\FF[K_0(\eta_1^+ v_1^+)-f(2\omega)\eta_1^+ v_1^+]| &\lesssim |k-(2\omega,0)|
\int_{\R^2} | \hat \eta_1^+(k-s)| | \hat v_1^+(s)| \ds_1 \ds_2\\
&\lesssim \int_{\R^2}  |k-s-(\omega,0)|  | \hat \eta_1^+(k-s)|  | \hat v_1^+(s)| \ds_1 \ds_2\\
&\qquad\mbox{} +\int_{\R^2} |s-(\omega,0)| | \hat \eta_1^+(k-s)| | \hat v_1^+(s)| \ds_1 \ds_2,
\end{align*}
and using Young's inequality, we find that
\begin{align*}
\|K_0(\eta_1^+ v_1^+)-f(2\omega)\eta_1^+ v_1^+\|_0& \lesssim \|\ |k-(\omega,0)|\hat \eta_1^+\ \|_0 \|\hat v_1^+\|_{L^1(\R^2)}+\|\hat \eta_1^+\|_{L^1(\R^2)}
\|\ |k -(\omega,0)|\hat v_1^+\ \|_0 \\
&\lesssim \varepsilon^{1+\theta}\nn \eta_1 \nn\,  \nn v_1\nn.
\end{align*}
The corresponding results for $K_0 (\eta_1^-v_1^-)$ and $L_0 (\eta_1^\pm v_1^\pm)$ are obtained in a similar fashion.

Turning to (viii), we note that 
\begin{align*}
\left|\FF[K_0 (\eta_1^+v_1^-)]-\frac{k_1^2}{|k|^2}\FF[\eta_1^+v_1^-]\right|
& \lesssim \frac{k_1^2}{|k|^2}|f(|k|)-f(0)| \left|\int_{\R^2} \hat \eta_1^+(k-s) \hat v_1^-(s)\ds_1 \ds_2 \right|\\
&\lesssim |k|
\int_{\R^2} | \hat \eta_1^+(k-s)| | \hat v_1^-(s)| \ds_1 \ds_2\\
&\lesssim \int_{\R^2}  |k-s-(\omega,0)|  | \hat \eta_1^+(k-s)|  | \hat v_1^-(s)| \ds_1 \ds_2\\
&\qquad \mbox{}+\int_{\R^2} |s+(\omega,0)| | \hat \eta_1^+(k-s)| | \hat v_1^-(s)| \ds_1 \ds_2,
\end{align*}
whence
$$\left\|K_0(\eta_1^+ v_1^-)-\FF^{-1}\left[\frac{k_1^2}{|k|^2}\FF[\eta_1^+v_1^-]\right]\right\|_0
\lesssim \varepsilon^{1+\theta}\nn \eta_1 \nn \,\nn v_1\nn$$
(by Young's inequality).
Estimate (ix) follows from the calculation
\begin{align*}
\|\FF^{-1}[\tilde g(k)^{-1}\FF[ \eta_1^+ v_1^+]]-\tilde g(2\omega,0)^{-1} \eta_1^+ v_1^+\|_0&= \|\FF^{-1}[(\tilde g(k)^{-1}-\tilde g(2\omega,0)^{-1})\FF[ \eta_1^+ v_1^+ ]]\|_0\\
&\lesssim \| | k- (2\omega,0)| \FF[ \eta_1^+ v_1^+]\|_0\\
&\lesssim \varepsilon^{1+\theta}\nn \eta_1 \nn \, \nn v_1\nn
\end{align*}
and the identity $\tilde g(2\omega,0)=g(2\omega)$
(with a similar argument for $\eta_1^-v_1^-$), while (x) is a consequence of the calculation
\begin{align*}
\left\|\tilde g(k)^{-1}\FF[ \eta_1^+ v_1^-]-\left(1-\Lambda \frac{k_1^2}{|k|^2}\right)^{-1} \FF[\eta_1^+ v_1^-]\right\|_0&\lesssim \| |k| \FF[ \eta_1^+ v_1^-]\|_0\\
&\lesssim \varepsilon^{1+\theta}\nn \eta_1 \nn \,\nn v_1\nn. \qedhere
\end{align*}
\end{proof}

The next step is to derive an approximate formula for $\HH(F(\eta_1))$.

\begin{proposition}
The estimate
$$
\LL_3^\prime(\eta_1)=A(\omega)((\eta_1^+)^2+(\eta_1^-)^2)+ B(\omega) \eta_1^+\eta^-_1-2 f(\omega)  \FF^{-1}\left[\frac{k_1^2}{|k|^2}\FF[\eta_1^+\eta_1^-]\right]  +\underline{O}(\varepsilon^{1+\theta}\nn \eta_1\nn^2)
$$
holds for each $\eta_1 \in X_1$.
\end{proposition}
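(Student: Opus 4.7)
The plan is to start from the explicit formula
\begin{equation*}
\LL_3^\prime(\eta_1)=\tfrac{1}{2}\bigl(\eta_{1,x}^2-(K_0\eta_1)^2-(L_0\eta_1)^2-2(\eta_{1,x}\eta_1)_x-2K_0(\eta_1 K_0\eta_1)-2L_0(\eta_1 L_0\eta_1)\bigr)
\end{equation*}
supplied by Lemma \ref{lem:L2 to L4}, substitute $\eta_1=\eta_1^++\eta_1^-$ and reduce every summand to one of the three types $(\eta_1^+)^2$, $(\eta_1^-)^2$, $\eta_1^+\eta_1^-$, using Lemma \ref{lem:approximate identities} as a bookkeeping device: each operator acting on an $\eta_1^\pm$ is replaced by its value at $(\pm\omega,0)$, while an operator acting on $(\eta_1^\pm)^2$ (resp.\ $\eta_1^+\eta_1^-$) is replaced by its value at $(\pm2\omega,0)$ (resp.\ $(0,0)$).

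For the ``local'' terms, items (i), (ii), (iv), (v) of Lemma \ref{lem:approximate identities} yield the replacements $\eta_{1,x}^\pm=\pm\ii\omega\eta_1^\pm+\underline{O}(\varepsilon\nn\eta_1\nn)$, $\partial_x^2\eta_1^\pm=-\omega^2\eta_1^\pm+\underline{O}(\varepsilon\nn\eta_1\nn)$, $K_0\eta_1^\pm=f(\omega)\eta_1^\pm+\underline{O}(\varepsilon\nn\eta_1\nn)$, $L_0\eta_1^\pm=\underline{O}(\varepsilon\nn\eta_1\nn)$. Multiplying pairwise and estimating one factor in $L^\infty$ by Proposition \ref{sup estimate} (so that $\|\eta_1\|_\infty\lesssim\varepsilon^\theta\nn\eta_1\nn$) converts each bilinear error into an $\underline{O}(\varepsilon^{1+\theta}\nn\eta_1\nn^2)$ quantity, giving
\begin{align*}
\eta_{1,x}^2 &= -\omega^2\bigl((\eta_1^+)^2+(\eta_1^-)^2\bigr)+2\omega^2\eta_1^+\eta_1^-+\underline{O}(\varepsilon^{1+\theta}\nn\eta_1\nn^2),\\
(K_0\eta_1)^2 &= f(\omega)^2\bigl((\eta_1^+)^2+2\eta_1^+\eta_1^-+(\eta_1^-)^2\bigr)+\underline{O}(\varepsilon^{1+\theta}\nn\eta_1\nn^2),\\
(L_0\eta_1)^2 &= \underline{O}(\varepsilon^{1+\theta}\nn\eta_1\nn^2),\\
(\eta_{1,x}\eta_1)_x &= \partial_x^2\eta_1\cdot\eta_1+\eta_{1,x}^2 = -2\omega^2\bigl((\eta_1^+)^2+(\eta_1^-)^2\bigr)+\underline{O}(\varepsilon^{1+\theta}\nn\eta_1\nn^2),
\end{align*}
the cross-term in the last line cancelling exactly between $\partial_x^2\eta_1\cdot\eta_1$ and $\eta_{1,x}^2$.

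For the two ``nonlocal'' terms I would first use the same local replacements to write $\eta_1K_0\eta_1=f(\omega)(\eta_1^++\eta_1^-)^2+\underline{O}(\varepsilon^{1+\theta}\nn\eta_1\nn^2)$ and $\eta_1L_0\eta_1=\underline{O}(\varepsilon^{1+\theta}\nn\eta_1\nn^2)$, and then apply items (vi) and (viii) of Lemma \ref{lem:approximate identities} to process the outer $K_0$. This gives
\begin{equation*}
-2K_0(\eta_1K_0\eta_1)=-2f(\omega)f(2\omega)\bigl((\eta_1^+)^2+(\eta_1^-)^2\bigr)-4f(\omega)\FF^{-1}\!\left[\frac{k_1^2}{|k|^2}\FF[\eta_1^+\eta_1^-]\right]+\underline{O}(\varepsilon^{1+\theta}\nn\eta_1\nn^2),
\end{equation*}
while $-2L_0(\eta_1L_0\eta_1)=\underline{O}(\varepsilon^{1+\theta}\nn\eta_1\nn^2)$ because $L_0$ has bounded symbol on the (fixed-size) compact Fourier support of $\eta_1L_0\eta_1$. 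Collecting all six contributions and dividing by two, the coefficient of $(\eta_1^\pm)^2$ becomes $(3\omega^2-f(\omega)^2-2f(\omega)f(2\omega))/2=A(\omega)$, the coefficient of $\eta_1^+\eta_1^-$ is $\omega^2-f(\omega)^2=B(\omega)$, and the nonlocal piece carries the required factor $-2f(\omega)$.

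The main technical obstacle is to verify that the accumulated errors qualify as $\underline{O}(\varepsilon^{1+\theta}\nn\eta_1\nn^2)$ in the full sense of Lemma \ref{lem:approximate identities}, which demands matching control on the first and second $\eta_1$-derivatives of the remainder. Since every expression above is at most quadratic in $\eta_1$ and every error has the form ``compactly Fourier-supported factor $\times$ remainder'', this reduces to polarising each step to the underlying bilinear version (which is exactly what (vi)--(x) already provide for a generic second argument in $\XX_1$) and applying the product rule; no ingredient beyond the estimates already in Lemma \ref{lem:approximate identities} and Proposition \ref{sup estimate} is needed.
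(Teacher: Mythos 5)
Your proposal is correct and follows essentially the same route as the paper: both start from the explicit formula for $\LL_3^\prime$ in Lemma \ref{lem:L2 to L4}, replace each Fourier multiplier acting on $\eta_1^\pm$, $(\eta_1^\pm)^2$ and $\eta_1^+\eta_1^-$ using Lemma \ref{lem:approximate identities}, and upgrade the linear-in-$\eta_1$ errors to $\underline{O}(\varepsilon^{1+\theta}\nn\eta_1\nn^2)$ by pairing them with an $L^\infty$ factor controlled via $\|\hat\eta_1\|_{L^1(\R^2)}\lesssim\varepsilon^\theta\nn\eta_1\nn$, before collecting the coefficients $A(\omega)$, $B(\omega)$ and $-2f(\omega)$. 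The only differences (expanding $(\eta_{1x}\eta_1)_x$ by the product rule and splitting into $\pm$ modes immediately rather than keeping $\eta_1^2$ until the end) are cosmetic, and your closing remark on reducing the derivative estimates to the bilinear versions matches the paper's treatment.
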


\begin{proof}
One obtains the stated estimate by combining the formula
$$
\LL_3^\prime(\eta)=-\frac{1}{2}\left(\eta_{x}^2+2\eta\eta_{xx}+  (K_0 \eta)^2 +(L_0 \eta)^2+2K_0(\eta K_0\eta)+2L_0(\eta L_0\eta)
\right)
$$
(see Lemma \ref{lem:L2 to L4}) with the calculations
\begin{align*}
\eta_{1x}^2 &= (\mathrm{i}\omega \eta_1^+-\mathrm{i}\omega \eta_1^-+\underline{O}(\varepsilon \nn \eta_1\nn))^2
=-\omega^2 (\eta_1^+- \eta_1^-)^2 +\underline{O}(\varepsilon^{1+\theta} \nn \eta_1\nn^2), \\[1mm]
(K_0 \eta_1)^2 &= (f(\omega)\eta_1+\underline{O}(\varepsilon \nn \eta_1\nn))^2
=f(\omega)^2 \eta_1^2+\underline{O}(\varepsilon^{1+\theta} \nn \eta_1\nn^2), \\[1mm]
(L_0 \eta_1)^2 &= \underline{O}(\varepsilon \nn \eta_1\nn)^2
=\underline{O}(\varepsilon^{2} \nn \eta_1\nn^2), \\[1mm]
\eta_1 \eta_{1xx} &=\eta_1(-\omega^2 \eta_1+ \underline{O}(\varepsilon \nn \eta_1\nn)
=-\omega^2\eta_1^2 +\underline{O}(\varepsilon^{1+\theta} \nn \eta_1\nn^2), \\[1mm]
\eta_1 K_0 \eta_1 &= \eta_1 (f(\omega)\eta_1+\underline{O}(\varepsilon \nn \eta_1\nn))
=f(\omega) \eta_1^2+\underline{O}(\varepsilon^{1+\theta} \nn \eta_1\nn^2), \\[1mm]
\eta_1 L_0 \eta_1 &= \eta_1 \underline{O}(\varepsilon \nn \eta_1\nn)
=\underline{O}(\varepsilon^{1+\theta} \nn \eta_1\nn^2)
\end{align*}
and
\begin{align*}
K_0(\eta_1 K_0 \eta_1) &= K_0(f(\omega) \eta_1^2+\underline{O}(\varepsilon^{1+\theta} \nn \eta_1\nn^2))\\
&=f(\omega) K_0 (\eta_1^2)+\underline{O}(\varepsilon^{1+\theta} \nn \eta_1\nn^2)\\
&=f(\omega) \left( K_0((\eta_1^+)^2)+2K_0(\eta_1^+\eta_1^-)+K_0((\eta_1^-)^2)\right)+\underline{O}(\varepsilon^{1+\theta} \nn \eta_1\nn^2)\\
&=f(\omega)f(2\omega)\left((\eta_1^+)^2+(\eta_1^-)^2\right)+2f(\omega) \FF^{-1}\left[\frac{k_1^2}{|k|^2}\FF[\eta_1^+\eta_1^-]\right]
+\underline{O}(\varepsilon^{1+\theta} \nn \eta_1\nn^2), \\[1mm]
L_0(\eta_1 L_0 \eta_1) &= L_0(\underline{O}(\varepsilon^{1+\theta} \nn \eta_1\nn^2))\\
&=\underline{O}(\varepsilon^{1+\theta} \nn \eta_1\nn^2)
\end{align*}
(see Lemma \ref{lem:approximate identities}).
\end{proof}

\begin{corollary}
The estimate
\begin{eqnarray*}
\lefteqn{\HH(F(\eta_1))}\\
&&=  \int_{\R^2}\!\!\left(\Lambda^2g(2\omega)^{-1} A(\omega)^2+\frac{\Lambda^2B(\omega)^2}{2}\right) |\eta_1^+|^4 \dx\dz
-2 \Lambda f(\omega)^2 \int_{\R^2} \frac{k_1^2}{|k|^2}|\FF[|\eta_1^+|^2]|^2\dk_1\dk_2
\\
&&\qquad\mbox{} +\frac{\Lambda (\Lambda B(\omega)-2f(\omega))^2}2 \int_{\R^2}\frac{k_1^2}{(1-\Lambda)k_1^2+k_2^2}|\FF[|\eta_1^+|^2]|^2\dk_1\dk_2+\underline{O}(\varepsilon^{1+2\theta}\nn \eta_1\nn^4)
\end{eqnarray*}
holds for each $\eta_1 \in X_1$.
\end{corollary}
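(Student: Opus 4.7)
The strategy is to substitute the preceding proposition's expansion of $\LL_3^\prime(\eta_1)$ into an explicit formula for $\HH(F(\eta_1))$ and then apply the approximate identities of Lemma \ref{lem:approximate identities} term by term. From the definition of $F(\eta_1)$ and the fact that $(1-\chi)^2 = 1 - \chi$ one has
\[
\HH(F(\eta_1)) = \frac{\Lambda^2(1-\varepsilon^2)^2}{2}\int_{\R^2}\frac{1-\chi(k)}{\tilde g(k)}\bigl|\FF[\LL_3^\prime(\eta_1)]\bigr|^2 \dk_1\dk_2.
\]
Writing $\LL_3^\prime(\eta_1) = \Phi_+ + \Phi_- + Q + R$, where $\Phi_\pm = A(\omega)(\eta_1^\pm)^2$,
\[
Q = B(\omega)\,\eta_1^+\eta_1^- - 2f(\omega)\,\FF^{-1}\!\left[\frac{k_1^2}{|k|^2}\FF[\eta_1^+\eta_1^-]\right],
\]
and $R$ satisfies the $\underline{O}(\varepsilon^{1+\theta}\nn\eta_1\nn^2)$-bounds of the preceding proposition, the Fourier supports of $\widehat{\Phi_+}$, $\widehat{\Phi_-}$, $\hat Q$ lie respectively near $(2\omega,0)$, $(-2\omega,0)$, and the origin, hence are pairwise disjoint and all contained in $\{\chi = 0\}$. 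The cross terms in $|\FF[\Phi_+ + \Phi_- + Q]|^2$ therefore vanish and one is left with $|\widehat{\Phi_+}|^2 + |\widehat{\Phi_-}|^2 + |\hat Q|^2$, which I treat in turn.

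For the $\Phi_\pm$-contributions, Plancherel's identity together with Lemma \ref{lem:approximate identities}(ix) yields $\int |\FF[(\eta_1^\pm)^2]|^2/\tilde g \,\dk = g(2\omega)^{-1}\int |\eta_1^+|^4 \dx\dz$ modulo an error of the required order, and summing the two symmetric contributions and multiplying by $\Lambda^2 A(\omega)^2/2$ produces the first term. For $|\hat Q|^2$ I set $\psi = \FF[|\eta_1^+|^2]$ and note that $\hat Q = (B(\omega) - 2f(\omega)k_1^2/|k|^2)\psi$. Using Lemma \ref{lem:approximate identities}(x) to replace $1/\tilde g(k)$ by $|k|^2/((1-\Lambda)k_1^2 + k_2^2)$ on the support of $\psi$ and expanding the numerator as a polynomial in $a := k_1^2/((1-\Lambda)k_1^2+k_2^2)$ and $b := k_1^2/|k|^2$, the identity $ab = (a-b)/\Lambda$, which follows from the decomposition $|k|^2 = (1-\Lambda)k_1^2 + k_2^2 + \Lambda k_1^2$, reduces the integrand to
\[
\left(B(\omega)^2 + \frac{(\Lambda B(\omega) - 2f(\omega))^2}{\Lambda}\,a - \frac{4f(\omega)^2}{\Lambda}\,b\right)|\psi|^2.
\]
Multiplication by $\Lambda^2/2$ and the identity $\int|\psi|^2\dk_1\dk_2 = \int |\eta_1^+|^4 \dx\dz$ recover the three remaining terms in the stated formula.

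The main technical obstacle is verifying that every error contribution is of the claimed order $\underline{\OO}(\varepsilon^{1+2\theta}\nn\eta_1\nn^4)$, not merely $\varepsilon^{1+\theta}\nn\eta_1\nn^4$. The $|\hat R|^2/\tilde g$-contribution is straightforward, as $(1-\chi)/\tilde g$ is bounded and $\|R\|_0^2 \lesssim \varepsilon^{2+2\theta}\nn\eta_1\nn^4$. The delicate term is the Cauchy--Schwarz cross pairing $\int\overline{\widehat{\Phi}}\hat R(1-\chi)/\tilde g\,\dk$, where one must observe that Proposition \ref{sup estimate} gives $\|\Phi\|_0 \lesssim \|\hat\eta_1\|_{L^1(\R^2)}\|\eta_1\|_0 \lesssim \varepsilon^\theta\nn\eta_1\nn^2$; combining this Fourier-concentration smallness with the $\varepsilon^{1+\theta}$-smallness of $R$ yields the required exponent. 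The errors generated by Lemma \ref{lem:approximate identities}(ix)--(x) and by $(1-\varepsilon^2)^2 - 1$ are of the same type and handled analogously. Finally, the first- and second-derivative estimates encoded in the $\underline{\OO}$-symbol are obtained by differentiating the identities above and invoking the corresponding derivative bounds built into Lemma \ref{lem:approximate identities} and the preceding proposition.
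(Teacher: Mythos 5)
Your proposal is correct and follows essentially the same route as the paper: rewrite $\HH(F(\eta_1))$ via Plancherel using the definition of $F$, substitute the expansion of $\LL_3^\prime(\eta_1)$, discard cross terms by disjointness of the Fourier supports near $(\pm 2\omega,0)$ and the origin, invoke Lemma \ref{lem:approximate identities}(ix)--(x) to replace $\tilde g(k)^{-1}$, and use a partial-fraction identity (yours, via $ab=(a-b)/\Lambda$, is the paper's displayed identity divided by $\Lambda^2$) to produce the three quartic terms. Your explicit Cauchy--Schwarz bookkeeping of the cross terms with the $\underline{O}(\varepsilon^{1+\theta}\nn\eta_1\nn^2)$ remainder, using $\|\hat\eta_1\|_{L^1(\R^2)}\lesssim\varepsilon^\theta\nn\eta_1\nn$, is exactly the mechanism the paper relies on implicitly to reach the exponent $1+2\theta$.
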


\begin{proof}
The result follows from the calculation
\begin{eqnarray*}
\lefteqn{\HH(F(\eta_1))} \\
&&= \frac12 \int_{\R^2} \tilde g(k) |\FF[F(\eta_1)]|^2\dk_1 \dk_2\\
&&= \frac12 \Lambda^2(1-\varepsilon)^2\int_{\R^2} \tilde g(k)^{-1} |\FF[\LL_3^\prime(\eta_1)]|^2 \dk_1 \dk_2\\
&&=\frac12 \Lambda^2(1-\varepsilon)^2\int_{\R^2} \tilde g(k)^{-1} \Big|\FF\Big[A(\omega)((\eta_1^+)^2+(\eta_1^-)^2)+ B(\omega) \eta^+\eta^-\\
& &\hspace{2in} -2 f(\omega)  \FF^{-1}\left[\frac{k_1^2}{|k|^2}\FF[\eta_1^+\eta_1^-]\right] +\underline{O}(\varepsilon^{1+\theta}\nn \eta_1\nn^2)\Big]\Big|^2 \dk_1 \dk_2\\
&&=\frac12 \Lambda^2\int_{\R^2} \tilde g(k)^{-1} A(\omega)^2(|\FF[(\eta_1^+)^2]|^2+|\FF[(\eta_1^-)^2]|^2)\dk_1\dk_2\\
& &\qquad\mbox{}+\frac12 \Lambda^2\int_{\R^2} \tilde g(k)^{-1} \left(B(\omega)-2 f(\omega) \frac{k_1^2}{|k|^2} \right)^2 |\FF[\eta_1^+\eta_1^-]|^2\dk_1\dk_2
+\underline{O}(\varepsilon^{1+2\theta}\nn \eta_1\nn^4)\\
&&=\frac12 \Lambda^2\int_{\R^2} g(2\omega)^{-1} A(\omega)^2(|\FF[(\eta_1^+)^2]|^2+|\FF[(\eta_1^-)^2]|^2)\dk_1\dk_2\\
& &\qquad\mbox{}+\frac12\Lambda^2 \int_{\R^2} \left(1-\Lambda \frac{k_1^2}{|k|^2}\right)^{-1} \left(B(\omega)-2 f(\omega) \frac{k_1^2}{|k|^2} \right)^2 |\FF[\eta_1^+\eta_1^-]|^2\dk_1\dk_2\\
& &\qquad\mbox{} + \underline{O}(\varepsilon^{1+2\theta}\nn \eta_1\nn^4)\\
&&=  \int_{\R^2} \Lambda^2g(2\omega)^{-1} A(\omega)^2 |\eta_1^+|^4 \dx\dz\\
& &\qquad\mbox{} +\frac12\Lambda^2 \int_{\R^2} \left(1-\Lambda \frac{k_1^2}{|k|^2}\right)^{-1} \left(B(\omega)-2 f(\omega) \frac{k_1^2}{|k|^2} \right)^2 |\FF[|\eta_1^+|^2]|^2\dk_1\dk_2\\
& &\qquad\mbox{} + \underline{O}(\varepsilon^{1+2\theta}\nn \eta_1\nn^4)
\end{eqnarray*}
and the identity
$$
\Lambda^2\left(1-\Lambda \frac{k_1^2}{|k|^2}\right)^{-1} \left(\!B(\omega)-2 f(\omega) \frac{k_1^2}{|k|^2} \right)^2
\!\!= \Lambda^2B(\omega)^2-\frac{4 \Lambda f(\omega)^2k_1^2}{|k|^2}+\frac{\Lambda (\Lambda B(\omega)-2f(\omega))^2k_1^2}{(1-\Lambda)k_1^2+k_2^2}.
$$
\end{proof}

We now examine systematically each term on the right-hand side of equation \eqref{Red J expansion}.

\begin{lemma} \label{lem:Expansion 1}
The estimate
$$
\HH(\eta_1+ F(\eta_1) + \eta_3(\eta_1)) = \frac{1}{2}\int_{\R^2} \tilde{g}(k) |\hat{\eta}_1|^2 \dk_1 \dk_2 + \HH(F(\eta_1))  + \underline{\OO}(\varepsilon^{3\theta}\nn \eta_1 \nn^4)
$$
holds for each $\eta_1 \in X_1$.
\end{lemma}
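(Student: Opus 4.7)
My plan is to exploit the bilinear structure of the quadratic form $\HH$ together with the Fourier-support disjointness of $\XX_1$ and $\XX_2$ to reduce the claim to two explicit remainder terms.

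First, I would polarise $\HH$. Writing $\eta=\eta_1+F(\eta_1)+\eta_3(\eta_1)$ and using $\HH(u+v)=\HH(u)+\HH(v)+\langle \tilde g(D)u,v\rangle_0$, one obtains six pieces: $\HH(\eta_1)$, $\HH(F(\eta_1))$, $\HH(\eta_3)$ and three cross terms. Since $\hat\eta_1$ is supported in $B_\delta(\pm\omega,0)$ while both $F(\eta_1)$ and $\eta_3$ lie in $\XX_2$ (so their Fourier transforms vanish on that set), and since $\tilde g(D)$ is a Fourier multiplier, the cross terms $\langle \tilde g(D)\eta_1,F(\eta_1)\rangle_0$ and $\langle \tilde g(D)\eta_1,\eta_3\rangle_0$ vanish identically. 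This leaves
$$
\HH(\eta)=\tfrac12\!\int_{\R^2}\!\tilde g(k)|\hat\eta_1|^2\dk_1\dk_2+\HH(F(\eta_1))+\underbrace{\HH(\eta_3(\eta_1))+\langle \tilde g(D)F(\eta_1),\eta_3(\eta_1)\rangle_0}_{=:\RR(\eta_1)},
$$
so it suffices to show $\RR\in\underline{\OO}(\varepsilon^{3\theta}\nn\eta_1\nn^4)$.

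Second, I would estimate $\RR$ pointwise. From $\tilde g(k)\lesssim 1+|k|^2$ one has $\HH(\eta_3)\lesssim\|\eta_3\|_1^2\le\|\eta_3\|_3^2$, and Theorem \ref{thm:estimate eta3} then yields $\HH(\eta_3(\eta_1))\lesssim \varepsilon^{4\theta}\nn\eta_1\nn^4$. For the surviving cross term I would use the defining identity for $F$ to rewrite
$$
\tilde g(D)F(\eta_1)=\Lambda(1-\varepsilon^2)(1-\chi(D))\LL_3^\prime(\eta_1),
$$
and, combined with $(1-\chi(D))\eta_3=\eta_3$, this gives
$$
\langle \tilde g(D)F(\eta_1),\eta_3(\eta_1)\rangle_0=\Lambda(1-\varepsilon^2)\langle \LL_3^\prime(\eta_1),\eta_3(\eta_1)\rangle_0.
$$
By Proposition \ref{prop:m estimate}, Proposition \ref{sup estimate} and $\|\eta_1\|_3\lesssim\nn\eta_1\nn$,
$$
\|\LL_3^\prime(\eta_1)\|_1=\|m(\eta_1,\eta_1)\|_1\lesssim\|\eta_1\|_{\ZZ}\|\eta_1\|_3\lesssim\varepsilon^\theta\nn\eta_1\nn^2,
$$
so Cauchy--Schwarz and Theorem \ref{thm:estimate eta3} yield the desired bound $\varepsilon^\theta\nn\eta_1\nn^2\cdot\varepsilon^{2\theta}\nn\eta_1\nn^2=\varepsilon^{3\theta}\nn\eta_1\nn^4$.

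Third, the derivative estimates reduce to a routine chain-rule calculation on the expression
$$
\RR(\eta_1)=\HH(\eta_3(\eta_1))+\Lambda(1-\varepsilon^2)\langle \LL_3^\prime(\eta_1),\eta_3(\eta_1)\rangle_0,
$$
using that $\mathrm{d}\LL_3^\prime[\eta_1](\cdot)=2m(\eta_1,\cdot)$ and $\mathrm{d}^2\LL_3^\prime[\eta_1](v,w)=2m(v,w)$, together with Proposition \ref{prop:m estimate} and the bounds on $\|\eta_3\|_3$, $\|\mathrm{d}\eta_3\|$, $\|\mathrm{d}^2\eta_3\|$ from Theorem \ref{thm:estimate eta3}. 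The main obstacle is bookkeeping: one must check that every term produced by differentiation contains at least one factor carrying $\varepsilon^{2\theta}$ (from $\eta_3$ or its derivatives) paired with a factor carrying $\varepsilon^\theta$ (from $\LL_3^\prime$ or its first derivative, via the $\ZZ$-norm control of Proposition \ref{sup estimate}), so that the total prefactor is always at least $\varepsilon^{3\theta}$ and the power of $\nn\eta_1\nn$ matches the definition of $\underline{\OO}(\varepsilon^{3\theta}\nn\eta_1\nn^r)$ for the appropriate $r\in\{2,3,4\}$.
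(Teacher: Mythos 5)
Your proposal is correct and follows essentially the same route as the paper: expand $\HH$ bilinearly, use the disjointness of the Fourier supports of $\hat\eta_1$ and of $\FF[F(\eta_1)]$, $\FF[\eta_3(\eta_1)]$ to remove the cross terms with $\eta_1$, and estimate $\HH(\eta_3(\eta_1))$ and the remaining cross term using Corollary \ref{cor:F estimates} and Theorem \ref{thm:estimate eta3}. The only cosmetic difference is that you bound the cross term by rewriting $\tilde g(D)F(\eta_1)=\Lambda(1-\varepsilon^2)(1-\chi(D))\LL_3^\prime(\eta_1)$ and applying Cauchy--Schwarz, whereas the paper bounds it directly via $\tilde g(k)\lesssim 1+|k|^2$ and $\|F(\eta_1)\|_3\lesssim\varepsilon^\theta\nn\eta_1\nn^2$; both give $\underline{\OO}(\varepsilon^{3\theta}\nn\eta_1\nn^4)$.
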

\begin{proof}
Observe that
$$
\HH(\eta)= \HH(\eta_1)+\HH(F(\eta_1)) + \underbrace{\HH((\eta_3(\eta_1))}_{\displaystyle =\underline{\OO}(\varepsilon^{4\theta}\nn \eta_1 \nn^4)}
 + \underbrace{\int_{\R^2} \tilde{g}(k) \FF[\eta_3(\eta_1)] \overline{\FF[F(\eta_1)]} \dk_1 \dk_2}_{\displaystyle =\underline{\OO}(\varepsilon^{3\theta}\nn \eta_1 \nn^4)},
$$
where we have used the facts that $\supp \hat{\eta}_1 \cap \supp \hat{\eta}_3(\eta_1) = \emptyset$,
$\supp \hat{\eta}_1 \cap \supp \FF[F(\eta_1)] = \emptyset$ to obtain the equation,
and Corollary \ref{cor:F estimates}, Theorem \ref{thm:estimate eta3} and the inequality $\tilde{g}(k) \lesssim 1+|k|^2$
to obtain the estimates.
\end{proof}

\begin{lemma}
The estimate
$$
\LL_2(\eta_1+ F(\eta_1) + \eta_3(\eta_1))=f(\omega)\int_{\R^2}|\eta_1^+|^2\dx\dz + \underline{\OO}(\varepsilon \nn \eta_1 \nn^2)
+ \underline{\OO}(\varepsilon^{2\theta} \nn \eta_1 \nn^4)
$$
holds for each $\eta_1 \in X_1$.
\end{lemma}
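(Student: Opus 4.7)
The plan is to exploit the bilinearity of $\LL_2(v) = \tfrac{1}{2}\langle v, K_0 v\rangle_0$. Writing $\eta = \eta_1 + F(\eta_1) + \eta_3(\eta_1)$ and expanding gives
$$\LL_2(\eta) = \LL_2(\eta_1) + \LL_2(F(\eta_1)+\eta_3(\eta_1)) + \langle \eta_1,\, K_0(F(\eta_1)+\eta_3(\eta_1))\rangle_0.$$
The cross term vanishes for Fourier-support reasons: $\hat\eta_1$ is supported in $B_\delta(\omega,0)\cup B_\delta(-\omega,0)$ while, by construction, $F(\eta_1)$ and $\eta_3(\eta_1)$ lie in $\XX_2 = (1-\chi(D))\XX$, and the Fourier multiplier $K_0$ preserves Fourier supports.

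For $\LL_2(\eta_1)$, I would use Plancherel to write
$$\LL_2(\eta_1) = \tfrac12\int_{\R^2}\tfrac{k_1^2}{|k|^2}f(|k|)|\hat\eta_1(k)|^2 \dk_1\dk_2,$$
split $|\hat\eta_1|^2 = |\FF[\eta_1^+]|^2 + |\FF[\eta_1^-]|^2$ (the supports of $\chi^\pm$ being disjoint), and combine the two contributions via the symmetry $|\FF[\eta_1^-](k)|^2 = |\FF[\eta_1^+](-k)|^2$ together with the evenness of the symbol. On $\supp\chi^+$ the symbol is smooth with value $f(\omega)$ at $(\omega,0)$, so $\bigl|\tfrac{k_1^2}{|k|^2}f(|k|) - f(\omega)\bigr| \lesssim |k-(\omega,0)|$; the resulting error is controlled by $\int |k-(\omega,0)|\,|\FF[\eta_1^+]|^2 \dk$, which is bounded by $\varepsilon \nn\eta_1\nn^2$ via Cauchy--Schwarz and the inequality $\int|k-(\omega,0)|^2 |\hat\eta_1^+|^2 \dk \leq \varepsilon^2 \nn\eta_1\nn^2$ built into the definition of the scaled norm. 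This yields the main term $f(\omega)\int_{\R^2}|\eta_1^+|^2\dx\dz$ plus a remainder whose derivative estimates in the $\underline{\OO}(\varepsilon\nn\eta_1\nn^2)$ sense are immediate from polarisation, the remainder being a quadratic form in $\eta_1$.

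The term $\LL_2(F(\eta_1)+\eta_3(\eta_1))$ is handled directly. The operator $K_0$ is bounded $H^1(\R^2)\to L^2(\R^2)$ (its symbol grows like $|k|$), so $|\LL_2(v)| \lesssim \|v\|_1^2$. Corollary \ref{cor:F estimates} and Theorem \ref{thm:estimate eta3} give $\|F(\eta_1)+\eta_3(\eta_1)\|_3 \lesssim \varepsilon^\theta \nn\eta_1\nn^2$, so this contribution is of size $\varepsilon^{2\theta}\nn\eta_1\nn^4$; the required derivative estimates follow by the chain rule from the derivative bounds on $F$ and $\eta_3$ supplied by the same results. The only real bookkeeping issue, and the closest thing to an obstacle, is checking that each of the three derivative estimates encoded in the $\underline{\OO}(\varepsilon\nn\eta_1\nn^2)$ and $\underline{\OO}(\varepsilon^{2\theta}\nn\eta_1\nn^4)$ symbols is delivered with the correct power of $\nn\eta_1\nn$; this is routine given the lemmata already in hand.
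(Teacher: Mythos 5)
Your argument is correct and follows essentially the same route as the paper: expand the quadratic form $\LL_2$, kill the cross terms with $\eta_1$ by disjointness of Fourier supports, approximate the symbol of $K_0$ by $f(\omega)$ on $\supp\chi^\pm$ to extract the main term with an $\underline{\OO}(\varepsilon\nn\eta_1\nn^2)$ error, and bound the remaining contribution using the estimates of Corollary \ref{cor:F estimates} and Theorem \ref{thm:estimate eta3}. The only (harmless) difference is that you keep $\LL_2(F(\eta_1)+\eta_3(\eta_1))$ together and bound it by $\varepsilon^{2\theta}\nn\eta_1\nn^4$, whereas the paper isolates $\LL_2(F(\eta_1))$ and absorbs the $\eta_3$ cross terms into an $\underline{\OO}(\varepsilon^{3\theta}\nn\eta_1\nn^4)$ remainder.
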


\begin{proof}
Repeating the argument used in the proof of the previous lemma, we find that
$$\LL_2(\eta)= \LL_2(\eta_1)+\LL_2(F(\eta_1)) + \underline{\OO}(\varepsilon^{3\theta}\nn \eta_1 \nn^4),$$
while
$$\LL_2(\eta_1) = \frac{1}{2}f(\omega)\int_{\R^2}|\eta_1|^2\dx\dz + \underline{\OO}(\varepsilon \nn \eta_1 \nn^2)$$
and
\[
\LL_2(F(\eta_1)) = \underline{\OO}(\varepsilon^{2\theta} \nn \eta_1 \nn^4).\qedhere
\]
\end{proof}

\begin{lemma}
The estimate
$$
\LL_3(\eta_1+ F(\eta_1) + \eta_3(\eta_1))=\frac{2}{\Lambda(1-\varepsilon^2)}\HH(F(\eta_1))+\underline{\OO}(\varepsilon^{3\theta} \nn \eta_1\nn^4)
$$
holds for each $\eta_1 \in X_1$.
\end{lemma}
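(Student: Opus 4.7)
My plan is to exploit the cubicity of $\LL_3$ together with the Fourier support of $\eta_1$: a Taylor expansion of $\LL_3$ around $\eta_1$ with increment $\rho=F(\eta_1)+\eta_3(\eta_1)$ has $\LL_3(\eta_1)=0$, and its leading cross-term $\langle \LL_3'(\eta_1),F(\eta_1)\rangle_0$ reproduces $\HH(F(\eta_1))$ via Parseval and the definition of $F$.

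First I would write out the exact Taylor identity (valid since $\LL_3$ is cubic and $\mathrm{d}\LL_3'[\eta](v)=2m(\eta,v)$)
\[
\LL_3(\eta_1+\rho)=\LL_3(\eta_1)+\langle \LL_3'(\eta_1),\rho\rangle_0+\langle m(\eta_1,\rho),\rho\rangle_0+\LL_3(\rho).
\]
The first term vanishes identically because in Fourier variables $\LL_3(\eta_1)$ is a triple convolution enforcing $k_1+k_2+k_3=0$ with each $k_i\in B_\delta(\pm(\omega,0))$, which is impossible when $\delta<\omega/3$.

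Next I would extract the principal contribution from $\langle \LL_3'(\eta_1),F(\eta_1)\rangle_0$. Since $\LL_3'(\eta_1)=m(\eta_1,\eta_1)$ is quadratic in $\eta_1$, its Fourier transform is supported in $B_{2\delta}(\pm(2\omega,0))\cup B_{2\delta}(0,0)$, which is disjoint from $\supp\chi$, so $(1-\chi)\equiv 1$ on this support. Parseval applied to the definition of $F$ then yields
\[
\langle \LL_3'(\eta_1),F(\eta_1)\rangle_0=\Lambda(1-\varepsilon^2)\int_{\R^2}\frac{|\FF[\LL_3'(\eta_1)]|^2}{\tilde g(k)}\dk_1\dk_2,
\]
and the same support argument identifies $\HH(F(\eta_1))$ with $\tfrac12\Lambda^2(1-\varepsilon^2)^2$ times this same integral, whence $\langle \LL_3'(\eta_1),F(\eta_1)\rangle_0=\tfrac{2}{\Lambda(1-\varepsilon^2)}\HH(F(\eta_1))$.

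It remains to bound the remaining three terms $\langle \LL_3'(\eta_1),\eta_3(\eta_1)\rangle_0$, $\langle m(\eta_1,\rho),\rho\rangle_0$ and $\LL_3(\rho)$ by $\underline{\OO}(\varepsilon^{3\theta}\nn\eta_1\nn^4)$. Combining Proposition \ref{prop:m estimate}, the bound $\|\eta_1\|_\ZZ\lesssim\varepsilon^\theta\nn\eta_1\nn$ from Proposition \ref{sup estimate}, Corollary \ref{cor:F estimates} for $F$, and Theorem \ref{thm:estimate eta3} for $\eta_3$, each error term has the claimed size at function-value level (excess powers of $\nn\eta_1\nn$ are absorbed using $\nn\eta_1\nn\le R_1$ on $X_1$). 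The main obstacle is not conceptual but the bookkeeping required to upgrade these to the first- and second-derivative bounds implicit in $\underline{\OO}$; this is handled mechanically via the chain rule through $F$ and $\eta_3$ using the derivative estimates in Corollary \ref{cor:F estimates} and Theorem \ref{thm:estimate eta3}, following the template of the $\AA$- and $\BB$-estimates in Lemma \ref{lem:3 estimates} and Corollary \ref{cor:tildeNN estimates}.
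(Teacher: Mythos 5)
Your proposal is correct and follows essentially the same route as the paper: the paper likewise expands the cubic functional (via its symmetric trilinear form $p_\mathrm{sym}$), kills $\LL_3(\eta_1)$ by the same Fourier-support argument (stated there as $\chi(D)\LL_3^\prime(\eta_1)=0$), identifies $\langle \LL_3^\prime(\eta_1),F(\eta_1)\rangle_0=\tfrac{2}{\Lambda(1-\varepsilon^2)}\HH(F(\eta_1))$ through the relation $\tilde g(D)F(\eta_1)=\Lambda(1-\varepsilon^2)\LL_3^\prime(\eta_1)$ on the relevant support (your Parseval computation), and absorbs the remaining terms using the same estimates from Proposition \ref{prop:m estimate}, Corollary \ref{cor:F estimates} and Theorem \ref{thm:estimate eta3}.
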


\begin{proof}
Observe that
$$\LL_3(\eta) = p_\mathrm{sym}(\{\eta\}^3),$$
where
$$p_\mathrm{sym}(u_1,u_2,u_3)=\frac{1}{6}\sum_{\sigma \in S_3} p(u_{\sigma(1)},u_{\sigma(2)}, u_{\sigma(3)})
$$
and
$$
p(u,v,w)=\frac{1}{2}\int_{\R^2} \left(u_x v_x w - u K_0v K_0w - u L_0v L_0w \right)\dx\dz
$$
(see Lemma \ref{lem:L2 to L4}).

Using the estimate
$$|p(u,v,w)| \lesssim \varepsilon^\theta \nn u \nn \|v\|_3 \|w\|_3$$
for $u \in \XX_1$ and $v,w \in H^3(\R^2)$, Corollary \ref{cor:F estimates} and Theorem \ref{thm:estimate eta3}, we find that
$$\LL_3(\eta_1+ F(\eta_1) + \eta_3(\eta_1)) = p(\{\eta_1\}^3) + 3 p(\{\eta_1\}^2, F(\eta_1)) + \underline{\OO}(\varepsilon^{3\theta} \nn \eta_1\nn^4).$$
On the other hand
$$p(\{\eta_1\}^3) = \LL_3(\eta_1) = \frac{1}{3}\langle \LL_3^\prime(\eta_1), \eta_1 \rangle_0 =0$$
and
\begin{align*}
3p(\{\eta_1\}^2, F(\eta_1)) &=\langle \LL_3^\prime(\eta_1), F(\eta_1)\rangle_0 \\
&=\frac{1}{\Lambda(1-\varepsilon^2)}\langle \tilde{g}(D) F(\eta_1), F(\eta_1) \rangle_0\\
&=\frac{2}{\Lambda(1-\varepsilon^2)}\HH(F(\eta_1)).\qedhere
\end{align*}
\end{proof}

\begin{lemma}
The estimates
\begin{align*}
\KK_4(\eta_1+ F(\eta_1)+\eta_3(\eta_1))
&=-\frac{3\beta \omega^4}{4}\int_{\R^2}|\eta_1^+|^4 \dx \dz+
 \underline{\OO}(\varepsilon^{3\theta} \nn \eta_1\nn^4)
\end{align*}
and
\begin{align*}
\LL_4(\eta_1+ F(\eta_1)+\eta_3(\eta_1))
&=f(\omega) (f(\omega) f(2\omega)-3 \omega^2) \int_{\R^2} |\eta_1^+|^4\dx \dz \\
&\qquad \mbox{}+
2f(\omega)^2 \int_{\R^2} \frac{k_1^2}{|k|^2} |\FF[|\eta_1^+|^2]|^2\dk_1 \dk_2
+ \underline{\OO}(\varepsilon^{3\theta} \nn \eta_1\nn^4).
\end{align*}
hold for each $\eta_1 \in X_1$.
\end{lemma}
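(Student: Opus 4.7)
Both estimates have the same architecture as the preceding lemmas for $\HH$, $\LL_2$ and $\LL_3$: polarise $\KK_4$ and $\LL_4$ into symmetric $4$-linear forms $\widetilde\KK_4$, $\widetilde\LL_4$ on $H^3(\R^2)$, expand
$$
\KK_4\bigl(\eta_1+F(\eta_1)+\eta_3(\eta_1)\bigr)=\KK_4(\eta_1)+\sum_{k=1}^{4}\binom{4}{k}\widetilde\KK_4\bigl(\{\eta_1\}^{4-k},\{F(\eta_1)+\eta_3(\eta_1)\}^k\bigr)
$$
(and analogously for $\LL_4$), and show that all cross-terms land in $\underline{\OO}(\varepsilon^{3\theta}\nn\eta_1\nn^4)$. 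Each $\widetilde\KK_4$ (respectively $\widetilde\LL_4$) term factorises into products of derivatives of $\eta_1$, $F(\eta_1)$, $\eta_3(\eta_1)$ or of $K_0$, $L_0$, $H_0$ applied to these; placing the two most-repeated $\eta_1$ factors in $L^\infty$ (through $\|\hat\eta_1\|_{L^1(\R^2)}\lesssim \varepsilon^\theta\nn\eta_1\nn$ from Proposition \ref{sup estimate}) and the remaining factors in $L^2$ via $\|\eta_1\|_3\lesssim\nn\eta_1\nn$, $\|F(\eta_1)\|_3\lesssim\varepsilon^\theta\nn\eta_1\nn^2$ (Corollary \ref{cor:F estimates}) and $\|\eta_3(\eta_1)\|_3\lesssim\varepsilon^{2\theta}\nn\eta_1\nn^2$ (Theorem \ref{thm:estimate eta3}) yields the desired bound, with excess $\nn\eta_1\nn$ factors absorbed via $\nn\eta_1\nn\le R_1$.

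\textbf{Computation of $\KK_4(\eta_1)$.} Using $\KK_4(\eta)=-\tfrac{\beta}{8}\int(\eta_x^2+\eta_z^2)^2\dx\dz$, Lemma \ref{lem:approximate identities}(iii) shows that every summand containing a factor of $\eta_{1z}$ contributes only $\underline{\OO}(\varepsilon^2\nn\eta_1\nn^4)$, leaving $-\tfrac{\beta}{8}\int\eta_{1x}^4\dx\dz$. Next, by Lemma \ref{lem:approximate identities}(i), $\eta_{1x}=\ii\omega(\eta_1^+-\eta_1^-)+R$ with $\|R\|_0\lesssim\varepsilon\nn\eta_1\nn$ and $\hat R$ compactly supported, so
$$
\int \eta_{1x}^4\dx\dz=\omega^4\int(\eta_1^+-\eta_1^-)^4\dx\dz+\underline{\OO}(\varepsilon^{1+2\theta}\nn\eta_1\nn^4).
$$
Expanding the fourth power and invoking the hypothesis $\delta<\omega/3$, the Fourier transform of $(\eta_1^+)^{a}(\eta_1^-)^{4-a}$ is supported in a ball of radius $4\delta$ around $\bigl((2a-4)\omega,0\bigr)$, which excludes the origin unless $a=2$. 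Thus only the middle term $6|\eta_1^+|^4$ has a nonzero integral, and $\KK_4(\eta_1)=-\tfrac{3\beta\omega^4}{4}\int|\eta_1^+|^4\dx\dz$ modulo the allowed error.

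\textbf{Computation of $\LL_4(\eta_1)$.} Starting from the formula of Lemma \ref{lem:L2 to L4}, the factors involving $L_0\eta_1$ or $\eta_{1xz}$ are $\underline{O}(\varepsilon\nn\eta_1\nn)$ in $L^2$ by items (iii) and (v) of Lemma \ref{lem:approximate identities}, so every summand in which they occur contributes at most $\underline{\OO}(\varepsilon^2\nn\eta_1\nn^4)$. Items (i), (ii) and (iv) then reduce the surviving part to
$$
\tfrac{f(\omega)^2}{2}\int_{\R^2}K_0(\eta_1^2)\,\eta_1^2\dx\dz-\tfrac{\omega^2 f(\omega)}{2}\int_{\R^2}\eta_1^4\dx\dz
$$
up to $\underline{\OO}(\varepsilon^{1+\theta}\nn\eta_1\nn^4)$. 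Writing $\eta_1^2=(\eta_1^+)^2+2|\eta_1^+|^2+(\eta_1^-)^2$, the Fourier-support argument from the $\KK_4$ calculation gives $\int\eta_1^4\dx\dz=6\int|\eta_1^+|^4\dx\dz$. For the other integral, items (vi)--(viii) replace $K_0((\eta_1^\pm)^2)$ by $f(2\omega)(\eta_1^\pm)^2$ and $K_0(\eta_1^+\eta_1^-)$ by $\FF^{-1}[(k_1^2/|k|^2)\FF[|\eta_1^+|^2]]$; the same disjoint-support principle isolates the surviving cross-pairings, yielding
$$
\int_{\R^2}K_0(\eta_1^2)\eta_1^2\dx\dz=2f(2\omega)\int_{\R^2}|\eta_1^+|^4\dx\dz+4\int_{\R^2}\frac{k_1^2}{|k|^2}|\FF[|\eta_1^+|^2]|^2\dk_1\dk_2+\underline{\OO}(\varepsilon^{1+\theta}\nn\eta_1\nn^4)
$$
by Plancherel (and the reality of $|\eta_1^+|^2$). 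Combining produces the stated formula for $\LL_4$.

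\textbf{Main obstacle.} The genuine work lies not in the arithmetic of the leading term but in propagating the $\underline{\OO}$ estimates through values, first derivatives and second derivatives simultaneously. Each substitution from Lemma \ref{lem:approximate identities} and each appeal to Corollary \ref{cor:F estimates} or Theorem \ref{thm:estimate eta3} must be combined with the chain rule so that differentiating with respect to $\eta_1$ (which activates $\mathrm{d}F[\eta_1]$ and $\mathrm{d}\eta_3[\eta_1]$) does not degrade the power of $\varepsilon$ below $\varepsilon^{3\theta}$. The one recurring tool that makes this possible is the compactness of the Fourier support of $\hat\eta_1$, $\hat R$ and of the relevant multipliers, which keeps the Fourier-$L^1$ bound $\|\hat\eta_1\|_{L^1(\R^2)}\lesssim\varepsilon^\theta\nn\eta_1\nn$ available after every product and every application of $K_0$, $L_0$, $H_0$.
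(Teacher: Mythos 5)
Your proposal follows the paper's own route: expand the quartic forms multilinearly about $\eta_1$, absorb the cross-terms involving $F(\eta_1)$ and $\eta_3(\eta_1)$ via Corollary \ref{cor:F estimates}, Theorem \ref{thm:estimate eta3} and the $\varepsilon^\theta$-gain on low-frequency factors, and evaluate $\KK_4(\eta_1)$ and $\LL_4(\eta_1)$ using Lemma \ref{lem:approximate identities} together with the disjoint-Fourier-support (zero-mean) argument; your $\LL_4$ computation, which the paper leaves as ``derived in a similar fashion'', reproduces the correct constants. One bookkeeping caveat: the intermediate errors you quote as $\underline{\OO}(\varepsilon^{2}\nn\eta_1\nn^4)$ and $\underline{\OO}(\varepsilon^{1+\theta}\nn\eta_1\nn^4)$ are, taken literally, too weak to give $\underline{\OO}(\varepsilon^{3\theta}\nn\eta_1\nn^4)$ once $\theta>2/3$ (e.g.\ the eventual choice $\theta=5/6$), but the scheme you describe --- one factor in $L^2$ carrying the $\varepsilon$-gain from Lemma \ref{lem:approximate identities} and the remaining low-frequency factors in $L^\infty$ at $\varepsilon^\theta$ each --- actually yields $\varepsilon^{2+2\theta}$, resp.\ $\varepsilon^{1+2\theta}$, which suffices since $\theta\le 1$.
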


\begin{proof}
Proceeding as in the proof of the previous lemma, one finds that 
$$
\KK_4(\eta_1+ F(\eta_1)+\eta_3(\eta_1))=
\KK_4(\eta_1)+\underline{\OO}(\varepsilon^{3\theta} \nn \eta_1\nn^5),
$$
and it follows from the rules given in Lemma \ref{lem:approximate identities} that
\begin{align*}
\KK_4(\eta_1)&=-\frac{\beta} 8\int_{\R^2}(\eta_{1x}^2+\eta_{1z}^2)^2\dx\dz\\
&=-\frac{\beta} 8\int_{\R^2}\eta_{1x}^4\dx\dz+
 \underline{\OO}(\varepsilon^{3\theta} \nn \eta_1\nn^4)\\
&=-\frac{\beta \omega^4} 8\int_{\R^2}(\eta_{1}^+-\eta_{1}^-)^4\dx\dz+
 \underline{\OO}(\varepsilon^{3\theta} \nn \eta_1\nn^4)\\&=-\frac{3\beta \omega^4}{4}\int_{\R^2}(\eta_1^+)^2(\eta_1^-)^2 \dx \dz+ \underline{\OO}(\varepsilon^{3\theta} \nn \eta_1\nn^4)\\
&=-\frac{3\beta \omega^4}{4}\int_{\R^2}|\eta_1^+|^4 \dx \dz+ \underline{\OO}(\varepsilon^{3\theta} \nn \eta_1\nn^4).
\end{align*}

The estimate for $\LL_4(\eta_1+ F(\eta_1)+\eta_3(\eta_1))$ is derived in a similar fashion.
\end{proof}

\begin{remark}
Note that $\KK_4(\eta_1)$, $\LL_4(\eta_1)=\underline{\OO}(\varepsilon^{2\theta} \nn \eta_1\nn^4)$
for each $\eta_1 \in X_1$.
\end{remark}

\begin{lemma}
The estimates
$$\KK_\mathrm{r}(\eta_1+F(\eta_1) + \eta_3(\eta_1)) = \underline{\OO}(\varepsilon^{4\theta}\nn \eta_1 \nn^6),
\qquad 
\LL_\mathrm{r}(\eta_1+F(\eta_1) + \eta_3(\eta_1)) = \underline{\OO}(\varepsilon^{3\theta}\nn \eta_1 \nn^5)
$$
hold for each $\eta_1 \in X_1$.
\end{lemma}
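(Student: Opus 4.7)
The plan is first to establish the intrinsic (parameter-free) estimates
\[
|\KK_\mathrm{r}(\eta)| \lesssim \|\eta\|_\ZZ^4 \|\eta\|_3^2, \qquad |\LL_\mathrm{r}(\eta)| \lesssim \|\eta\|_\ZZ^3 \|\eta\|_3^2
\]
for $\eta \in U$, with matching bounds for the first and second Fréchet derivatives. For $\KK_\mathrm{r}$ one exploits the explicit representation
\[
\KK_\mathrm{r}(\eta)=\int_{\R^2} (\eta_x^2+\eta_z^2)^2\, \psi(\eta_x^2+\eta_z^2)\dx\dz,
\]
where $\psi(s):=\beta[\,1/8 - 1/(2(1+\sqrt{1+s})^2)\,]$ is smooth with a simple zero at $s=0$. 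This yields $|\KK_\mathrm{r}(\eta)|\lesssim\int_{\R^2}(\eta_x^2+\eta_z^2)^3\dx\dz$, whereupon a Hölder-type split combined with the embedding $\|\eta_x\|_{L^\infty}, \|\eta_z\|_{L^\infty} \lesssim \|\hat{\eta}_1\|_{L^1(\R^2)}+\|\eta_2\|_3 = \|\eta\|_\ZZ$ (valid because $\hat\eta_1$ has compact support and $H^3(\R^2)\hookrightarrow W^{1,\infty}(\R^2)$) produces the stated bound. The derivative estimates follow from the analogous explicit formulae for $\KK_\mathrm{r}^\prime$ and $\KK_\mathrm{r}^{\prime\prime}$ in precisely the same fashion as in Lemma \ref{lem:KKnlprime estimates}.

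For $\LL_\mathrm{r}$ the already-established bound $\|\LL_\mathrm{r}^\prime(\eta)\|_1 \lesssim \|\eta\|_\ZZ^2\|\eta\|_3^2$, together with its analogues for $\mathrm{d}\LL_\mathrm{r}^\prime$ and $\mathrm{d}^2\LL_\mathrm{r}^\prime$, is combined with the Euler-type identity
\[
\LL_\mathrm{r}(\eta)=\int_0^1 \langle \LL_\mathrm{r}^\prime(t\eta),\eta\rangle_0\,\mathrm{d}t
\]
and the elementary inequality $\|\eta\|_0\lesssim \|\eta\|_\ZZ$ (which holds because $\hat{\eta}_1$ has compact support and $\eta_2\in H^3(\R^2)\hookrightarrow L^2(\R^2)$) to upgrade the scalar bound to $|\LL_\mathrm{r}(\eta)| \lesssim \|\eta\|_\ZZ^3\|\eta\|_3^2$; analogous identities handle the first and second derivatives of $\LL_\mathrm{r}$.

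With these intrinsic estimates in hand, the stated bounds follow by the chain rule applied to $\eta:=\eta_1+F(\eta_1)+\eta_3(\eta_1)$. Proposition \ref{sup estimate}, Corollary \ref{cor:F estimates} and Theorem \ref{thm:estimate eta3} together give
\[
\|\eta\|_\ZZ\lesssim \varepsilon^\theta \nn\eta_1\nn, \qquad \|\eta\|_3 \lesssim \nn\eta_1\nn
\]
uniformly for $\eta_1\in X_1$, so that $|\KK_\mathrm{r}(\eta)|\lesssim \varepsilon^{4\theta}\nn\eta_1\nn^6$ and $|\LL_\mathrm{r}(\eta)|\lesssim\varepsilon^{3\theta}\nn\eta_1\nn^5$. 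The first- and second-derivative estimates are produced by expanding the chain rule and inserting the bounds $\|\mathrm{d}F[\eta_1]\|, \|\mathrm{d}\eta_3[\eta_1]\|\lesssim \varepsilon^\theta\nn\eta_1\nn$ and $\|\mathrm{d}^2F[\eta_1]\|, \|\mathrm{d}^2\eta_3[\eta_1]\| \lesssim \varepsilon^\theta$ from Corollary \ref{cor:F estimates} and Theorem \ref{thm:estimate eta3}, in the same pattern already used for $\BB(\eta_1,\eta_3)$ in Corollary \ref{cor:tildeNN estimates}.

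The main obstacle lies in the combinatorial bookkeeping for the derivative estimates: a test direction $v_1\in\XX_1$ enters the chain rule both directly (where it can be measured in the $\ZZ$-norm via Proposition \ref{sup estimate}, producing a factor of $\varepsilon^\theta$) and through $\mathrm{d}F[\eta_1](v_1)$ and $\mathrm{d}\eta_3[\eta_1](v_1)$, which are only controlled in $H^3(\R^2)$. One must allocate the available intrinsic-derivative bounds so that sufficiently many factors are measured in $\ZZ$ to generate the required total power of $\varepsilon^\theta$. Apart from this bookkeeping, which is analogous to---but slightly heavier than---the calculation already performed for $\BB(\eta_1,\eta_3)$, no new ideas are required.
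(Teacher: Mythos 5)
Your treatment of $\KK_\mathrm{r}$ and the final chain-rule step follow the paper's own route (zero of order six, $\|\nabla\eta\|_{L^\infty}\lesssim\|\eta\|_\ZZ$, then Corollary \ref{cor:F estimates} and Theorem \ref{thm:estimate eta3}), and the bookkeeping for the composite derivatives does work out as you expect. The genuine gap is in your derivation of the intrinsic estimate for $\LL_\mathrm{r}$: the inequality $\|\eta\|_0\lesssim\|\eta\|_\ZZ$ is false. The $\ZZ$-norm controls $\hat\eta_1$ in $L^1$, and an $L^1$ bound on the Fourier transform does not dominate its $L^2$ norm (a tall, narrowly supported $\hat\eta_1$ has small $L^1$ but large $L^2$ norm); indeed this failure occurs exactly in the regime of interest, since for $\eta_1\in\XX_1$ in the natural scaling one has $\|\hat\eta_1\|_{L^1(\R^2)}\lesssim\varepsilon|\log\varepsilon|\,\nn\eta_1\nn$ (Proposition \ref{sup estimate}) while $\|\eta_1\|_0$ is generically comparable to $\nn\eta_1\nn$. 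Without this inequality your Euler-identity argument only yields $|\LL_\mathrm{r}(\eta)|\lesssim\|\LL_\mathrm{r}^\prime(t\eta)\|_0\|\eta\|_0\lesssim\|\eta\|_\ZZ^2\|\eta\|_3^3$, which after substituting $\eta=\eta_1+F(\eta_1)+\eta_3(\eta_1)$ gives only $\underline{\OO}(\varepsilon^{2\theta}\nn\eta_1\nn^5)$. That is not sufficient: with $\theta=5/6$ the reduction needs the remainder to beat $\varepsilon^2$, and $\varepsilon^{2\theta}=\varepsilon^{5/3}$ fails where $\varepsilon^{3\theta}=\varepsilon^{5/2}$ succeeds. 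The same defect propagates to your proposed treatment of the first and second derivatives of $\LL_\mathrm{r}$.

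The way to obtain the missing power of $\|\eta\|_\ZZ$ is not through the gradient $\LL_\mathrm{r}^\prime$ and $L^2$-duality (the gradient estimate has already ``spent'' one Sobolev pairing), but directly from the analyticity of $K(\cdot)\colon\ZZ\to\LL(H^{5/2}(\R^2),H^{3/2}(\R^2))$ at the origin (Lemma \ref{lem:u analytic}, Corollary \ref{cor:K analytic}): writing $\LL_n(\eta)=\tfrac12\int_{\R^2}\eta\,K_{n-2}(\eta)\eta\dx\dz$ with $\|K_{n-2}(\eta)\xi\|_{3/2}\lesssim C^{n-2}\|\eta\|_\ZZ^{n-2}\|\xi\|_{5/2}$, one measures all $n-2\ge3$ interior arguments in $\ZZ$ and only the two outer copies of $\eta$ in Sobolev norms, and summing the series for $\|\eta\|_\ZZ<M$ gives $|\LL_\mathrm{r}(\eta)|\lesssim\|\eta\|_\ZZ^3\|\eta\|_3^2$ together with the corresponding bounds for $\mathrm{d}\LL_\mathrm{r}$ and $\mathrm{d}^2\LL_\mathrm{r}$ (this is how the paper proceeds). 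With those intrinsic estimates in place, the rest of your argument goes through.
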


\begin{proof}
Observe that
\begin{align*}
|\LL_\mathrm{r}(\eta)| & \lesssim \|\eta\|_{\ZZ}^3 \|\eta\|_3^2, \\
|\mathrm{d}\LL_\mathrm{r}[\eta](v)| & \lesssim \|\eta\|_{\ZZ}^3 \|\eta\|_3 \|v\|_3 + \|\eta\|_{\ZZ}^2 \|\eta\|_3^2\|v\|_\ZZ, \\
|\mathrm{d}^2\LL_\mathrm{r}[\eta](v,w)| & \lesssim \|\eta\|_{\ZZ}^3 \|v\|_3 \|w\|_3 + \|\eta\|_\ZZ \|\eta\|_3^2 \|v\|_\ZZ \|w\|_\ZZ\\
& \qquad\mbox{}+\|\eta\|_{\ZZ}^2 \|\eta\|_3 \|v\|_3 \|w\|_\ZZ + \|\eta\|_{\ZZ}^2\|\eta\|_3 \|v\|_\ZZ \|w\|_3
\end{align*}
for $\eta \in U$ and $v,w \in H^3(\R^2)$  (see Lemmata \ref{lem:u analytic} and \ref{lem:gradients}) and since
$$\KK_\mathrm{r}(\eta) = \int_{\R^2} f(\eta_x,\eta_z)\dx\dz,$$
where $f$ is analytic at the origin where it has a zero of order six, it satisfies similar estimates (with the power of $\|\eta\|_\ZZ$
increased by one). The stated result follows from this observation, Corollary \ref{cor:F estimates} and Theorem \ref{thm:estimate eta3}.
\end{proof}

Theorem \ref{thm:red func} is proved by inserting the above estimates into the right-hand side
of \eqref{Red J expansion}. The next step is to convert $\widetilde{\JJ}_\varepsilon$ into a perturbation of the Davey-Stewartson
functional, the main issue being the replacement of $\tilde{g}(k)$ by its second-order Taylor polynomial at the point $(\omega,0)$, that is
$$
\tilde{g}_2(k) = \frac{1}{2}\partial_{k_1}^2\tilde{g}(\omega,0)(k_1-\omega)^2+\frac{1}{2}\partial_{k_2}^2\tilde{g}(\omega,0)k_2^2.
$$
Using the simple inequality $|\tilde{g}(k)-\tilde{g}_2(k)| \lesssim |k-(\omega,0)|^3$ for $k \in \supp \eta_1^+$ leads to the insufficient estimate
$$\int_{\R^2} |\tilde{g}(k)-\tilde{g}_2(k)||\eta_1^+|^2 \dk_1 \dk_1 = \underline{\OO}(\varepsilon^2 \nn \eta_1 \nn^2),$$
(at the next step the functional is scaled by $\varepsilon^{-2}$). The desired effect is however
achieved using the change of variable
$$
\eta_1 = \left(\frac{\tilde{g}_2(D)}{\tilde{g}(D)}\right)^{1/2}\tilde{\eta}_1
$$
(which defines an isomorphism $\chi(D)H^1(\R) \rightarrow \chi(D)H^1(\R)$).

\begin{lemma} \label{lem:cov to DS}
The reduced functional is given by the formula
\begin{align*}
\widetilde{\JJ}_\varepsilon(\eta_1(\tilde{\eta}_1))&=\int_{\R^2} \tilde{g}_2(k) |\FF[\tilde{\eta}_1^+]|^2 \dk_1\dk_2 + \varepsilon^2\Lambda f(\omega) \int_{\R^2} |\tilde{\eta}_1^+|^2 \dx \dz \\
&\qquad\mbox{}-16C_1\int_{\R^2}\frac{k_1^2}{(1-\Lambda)k_1^2+k_2^2} |\FF[|\tilde{\eta}_1^+|^2]|^2\dk_1\dk_2-16C_2\int_{\R^2}|\tilde{\eta}_1^+|^4 \dx\dz \\
& \qquad\mbox{}+\underline{\OO}(\varepsilon^{3\theta}\nn \tilde{\eta}_1\nn^2).
\end{align*}
\end{lemma}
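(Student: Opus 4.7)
The plan is to transfer the expansion of Theorem~\ref{thm:red func} term-by-term under the linear change of variable, verifying that it produces the stated Davey-Stewartson-type functional up to the claimed remainder.

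First I would verify that the symbol $a(k):=(\tilde{g}_2(k)/\tilde{g}(k))^{1/2}$ is smooth, real, and bounded above and below by positive constants on $\supp\chi$. Both $\tilde{g}$ and $\tilde{g}_2$ vanish to exactly second order at $\pm(\omega,0)$ with the same quadratic part (by definition of $\tilde{g}_2$) and are strictly positive elsewhere on $\supp\chi$ for $\delta$ small. Hence $a(D)$ is a linear isomorphism of $\chi(D)H^s(\R^2)$ for every $s\ge 0$, and an explicit Fourier computation shows $\nn\eta_1\nn \asymp \nn\tilde\eta_1\nn$ uniformly in $\varepsilon$, so existing remainder estimates expressed via $\nn\eta_1\nn$ transfer directly.

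The quadratic leading part is preserved exactly by construction:
\[
\int_{\R^2}\tilde{g}(k)|\FF[\eta_1^+]|^2\dk_1\dk_2 = \int_{\R^2}\tilde{g}_2(k)|\FF[\tilde\eta_1^+]|^2\dk_1\dk_2.
\]
The crucial ingredient for the remaining terms is the pointwise estimate $|a(k)-1|,\,|a(k)^2-1| \lesssim |k-(\pm\omega,0)|$ on $\supp\chi^\pm$, which follows from $\tilde{g}-\tilde{g}_2=O(|k-(\pm\omega,0)|^3)$ together with $\tilde{g}\sim|k-(\pm\omega,0)|^2$ near these points. Combined with the inequality $|k-(\pm\omega,0)|\le \varepsilon(1+\varepsilon^{-2}|k-(\pm\omega,0)|^2)^{1/2}$ on $\supp\chi^\pm$, this yields $\|\eta_1-\tilde\eta_1\|_0 \lesssim \varepsilon\nn\tilde\eta_1\nn$, so the secondary quadratic term satisfies
\[
\varepsilon^2\Lambda f(\omega)\int_{\R^2}\bigl(|\eta_1^+|^2-|\tilde\eta_1^+|^2\bigr)\dx\dz = O(\varepsilon^3\nn\tilde\eta_1\nn^2),
\]
which is absorbed into $\underline{\OO}(\varepsilon^{3\theta}\nn\tilde\eta_1\nn^2)$ since $\theta<1$. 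For the quartic terms I would use $|\eta_1^+|^4-|\tilde\eta_1^+|^4=(|\eta_1^+|^2-|\tilde\eta_1^+|^2)(|\eta_1^+|^2+|\tilde\eta_1^+|^2)$, controlling the first factor in $L^2$ by $\varepsilon^2|\!\log\varepsilon|\nn\tilde\eta_1\nn^2$ via the previous $L^2$-bound together with Proposition~\ref{sup estimate} (so that $\|\eta_1^+\|_\infty,\|\tilde\eta_1^+\|_\infty\lesssim\varepsilon|\!\log\varepsilon|\nn\tilde\eta_1\nn$), and the second factor in $L^2$ by $\varepsilon|\!\log\varepsilon|\nn\tilde\eta_1\nn^2$. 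This yields an error of size $\varepsilon^3|\!\log\varepsilon|^2\nn\tilde\eta_1\nn^4$, absorbed into $\underline{\OO}(\varepsilon^{3\theta}\nn\tilde\eta_1\nn^2)$ on the bounded set $X_1$. The nonlocal quartic term is handled identically, using boundedness of the multiplier $k_1^2/((1-\Lambda)k_1^2+k_2^2)$ (recall $0<\Lambda<1$).

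The main obstacle, in my view, is not these scalar bounds but verifying the first- and second-derivative estimates demanded by the $\underline{\OO}$-notation. Because $a(D)$ is linear, its first derivative is itself and its higher derivatives vanish, so the chain rule reduces the required derivative bounds on the new remainder to those already built into the input $\underline{\OO}(\varepsilon^{3\theta}\nn\eta_1\nn^2)$ of Theorem~\ref{thm:red func}, supplemented by the scalar $L^2$ and $L^\infty$ estimates developed above applied to generic directions $v_1,w_1\in\XX_1$. This final step is a routine bookkeeping exercise once the value-level estimates are in hand.
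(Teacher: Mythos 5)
Your proposal is correct and follows essentially the same route as the paper: the leading quadratic term converts exactly by construction, the key estimate is $|1-(\tilde g_2(k)/\tilde g(k))^{1/2}|\lesssim|k-(\omega,0)|$ on $\supp\chi^+$ giving $\|\eta_1^+-\tilde\eta_1^+\|_0\lesssim\varepsilon\nn\eta_1\nn$, the quartic (local and nonlocal, with bounded multipliers) and $\varepsilon^2$-quadratic differences are then handled by $L^\infty$--$L^2$ splitting, and derivative estimates follow by linearity of the change of variables. The only deviations are cosmetic bookkeeping (you use the sharper $\varepsilon|\!\log\varepsilon|$ bound for $\|\eta_1^+\|_\infty$ where the paper uses $\varepsilon^\theta\nn\eta_1\nn$, and you make the $\varepsilon^2$-term conversion explicit), and both absorb into $\underline{\OO}(\varepsilon^{3\theta}\nn\tilde\eta_1\nn^2)$ exactly as required.
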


\begin{proof}
The inequality
\[
\|\eta_1^+|^4-|\tilde{\eta}_1^+|^4|\lesssim |\eta_1^+-\tilde{\eta}_1^+|(|\eta_1^+|^3+|\tilde{\eta}_1^+|^3)
\]
implies that
\begin{align*}
\left|\int_{\R^2} |\eta_1^+|^4 \dx \dz -\int_{\R^2} |\tilde{\eta}_1^+|^4 \dx \dz \right| &\le 
(\|\eta_1^+\|_\infty^2+\|\tilde{\eta}_1^+\|_\infty^2)(\|\eta_1^+\|_0+\|\tilde{\eta}_1^+\|_0)
\|\eta_1^+-\tilde{\eta}_1^+\|_0\\
&\lesssim \varepsilon^{2\theta}\nn \eta_1\nn^3 \left\|\left(1-\left(\frac{\tilde{g}(D)}{\tilde{g}_2(D)}\right)^{1/2}\right)\hat \eta_1^+\right\|_0\\
&\lesssim \varepsilon^{2\theta}\nn \eta_1\nn^3 \|\ |k-(\omega,0)|\hat \eta_1^+\ \|_0\\
&\lesssim \varepsilon^{1+2\theta}\nn \eta_1\nn^4,
\end{align*}
and estimating derivatives in a similar way, we find that
$$\int_{\R^2} |\eta_1^+|^4 \dx \dz =\int_{\R^2} |\tilde{\eta}_1^+|^4 \dx \dz+
\underline{O}(\varepsilon^{1+2\theta}\nn \eta_1 \nn^4).$$
Similar arguments show that
\begin{align*}
\int_{\R^2} \frac{k_1^2}{|k|^2} |\FF[|\eta_1^+|^2]|^2 \dk_1 \dk_2 &=\int_{\R^2} \frac{k_1^2}{|k|^2} |\FF[|\tilde{\eta}_1^+|^2]|^2 \dk_1 \dk_2+
\underline{O}(\varepsilon^{1+2\theta}\nn \eta_1 \nn^4), \\
\int_{\R^2} \frac{k_1^2}{(1-\Lambda)k_1^2+k_2^2}  |\FF[|\eta_1^+|^2]|^2\dk_1\dk_2&=  \int_{\R^2}  \frac{k_1^2}{(1-\Lambda)k_1^2+k_2^2}  |\FF[|\tilde{\eta}_1^+|^2]|^2\dk_1\dk_2+\underline{O}(\varepsilon^{1+2\theta}\nn \eta_1 \nn^4)
\end{align*}
(note that the multipliers $k_1^2/|k|^2$ and $k_1^2/((1-\Lambda)k_1^2+k_2^2)$ are bounded).
\end{proof}

Finally, we write
$$\tilde{\eta}_1^+(x,z) = \frac{1}{2}\varepsilon \zeta(\varepsilon x,\varepsilon z) \ee^{\ii \omega x},$$
abbreviating the composite change of variable (an isomorphism $\chi(\varepsilon D)H^1(\R) \rightarrow \chi(D)H^1(\R)$)
and its inverse to $\eta_1(\zeta)$ and $\zeta(\eta_1)$,
and define
$$\TT_\varepsilon(\zeta):=\varepsilon^{-2}\widetilde{\JJ}_\varepsilon(\eta_1(\zeta)).$$
Using Lemma \ref{lem:cov to DS}, one finds that
\begin{equation}
\TT_\varepsilon(\zeta)=\QQ(\zeta)-\mathcal{S}(\zeta)+\varepsilon^{3\theta-2}\RR_\varepsilon(\zeta),
\label{Final red func}
\end{equation}
where
\begin{align*}
\QQ(\zeta)&=\int_{\R^2}\left(a_1|\zeta_x|^2+a_2|\zeta_z|^2+a_3|\zeta|^2\right)\dx\dz, \\
\mathcal{S}(\zeta)&= C_1\int_{\R^2}\frac{k_1^2}{(1-\Lambda)k_1^2+k_2^2}
|\FF[|\zeta|^2]|^2\dk_1\dk_2
+C_2\int_{\R^2}|\zeta|^4\dx\dz,
\end{align*}
$$a_1=\frac{1}{8}\partial_{k_1}^2\tilde{g}(\omega,0), \qquad a_2=\frac{1}{8}\partial_{k_2}^2\tilde{g}(\omega,0), \qquad a_3=\frac{1}{4}\Lambda f(\omega)$$
and
$\RR_\varepsilon(\zeta)=\underline{\OO}(\|\zeta\|_1^2)$ (note that
$\nn\tilde{\eta}_1\nn^2=\frac{1}{2} \|\zeta\|^2_1$). It is convenient to choose the concrete value $\theta =\frac{5}{6}$,
so that $\varepsilon^{3\theta-2}=\varepsilon^{1/2}$.
We study the functional $\TT_\varepsilon$ in
$$U_\varepsilon:=B_R(0)\subseteq H_\varepsilon^1(\R^2):=\chi(\varepsilon D)H^1(\R^2),$$
where $R$ is independent of $\varepsilon$ and satisfies $R^2 \leq 2R_1^2 \sup \tilde{g}/\tilde{g}_2$; we may therefore
take it arbitrarily large.

\begin{remark} \label{rem:crit pts relation}
By construction 
$$\mathrm{d}\JJ_\varepsilon[\eta_1+\eta_2(\eta_1)](\rho_1) = \varepsilon^2 \mathrm{d}\TT_\varepsilon [\zeta(\eta_1)](\zeta(\rho_1)), \qquad \rho_1 \in \XX_1,$$
for each $\eta_1 \in X_1$ and
$$
\mathrm{d}\TT_\varepsilon [\zeta](\xi) = \varepsilon^{-2}\mathrm{d}\JJ_\varepsilon[\eta_1(\zeta)+\eta_2(\eta_1(\zeta))](\eta_1(\xi)), \qquad \xi \in H_\varepsilon^1(\R^2),
$$
for each $\zeta \in U_\varepsilon$.
\end{remark}

\section{Existence theory} \label{sec:Existence theory}

\subsection{The natural constraint}
We find critical points of $\TT_\varepsilon$ by minimising it over its
\emph{natural constraint set}
$$N_\varepsilon=\{\zeta\in U_\varepsilon:\zeta\neq 0,\mathrm{d}\TT_\varepsilon[\zeta](\zeta)=0\},$$
noting the identity
\begin{equation}
\label{ relation Q-S}
0=\mathrm{d}\TT_\varepsilon[\zeta](\zeta)=2\QQ(\zeta)-4\mathcal{S}(\zeta)
+\varepsilon^{1/2}\mathrm{d}\RR_\varepsilon[\zeta](\zeta)
\end{equation}
and resulting estimate
\begin{align}
\mathrm{d}^2\TT_\varepsilon[\zeta](\zeta,\zeta)
& = 2\QQ(\zeta)-12\mathcal{S}(\zeta)+\varepsilon^{1/2}\mathrm{d}^2\RR_\varepsilon[\zeta](\zeta,\zeta) \nonumber \\
& = -4\QQ(\zeta) -3 \varepsilon^{1/2}\mathrm{d}\RR_\varepsilon[\zeta](\zeta) + \varepsilon^{1/2}\mathrm{d}^2\RR_\varepsilon[\zeta](\zeta,\zeta) \nonumber \\
& =
-4\QQ(\zeta)+O(\varepsilon^{1/2} \|\zeta\|^2_1) \label{ Second derivative}
\end{align}
for points $\zeta \in N_\varepsilon$.

\begin{remark} \label{rem:ground states}
Any `ground state', that is a minimiser $\zeta^\star$
of $\TT_\varepsilon$ over $N_\varepsilon$, is a (necessarily nonzero) critical point of $\TT_\varepsilon$.
Define $\GG_\varepsilon: U_\varepsilon\setminus \{0\} \rightarrow \R$ by $\GG_\varepsilon(\zeta)=\mathrm{d}\TT_\varepsilon[\zeta](\zeta)$, so that $N_\varepsilon = \GG_\varepsilon^{-1}(0)$ and $\mathrm{d}\GG_\varepsilon[\zeta]$
does not vanish on $N_\varepsilon$ (since $\mathrm{d}\GG_\varepsilon[\zeta](\zeta)=\mathrm{d}^2\TT_\varepsilon[\zeta](\zeta,\zeta)<0$ for $\zeta \in N_\varepsilon$).
There exists a Lagrange multiplier $\mu$ such that
$$\mathrm{d}\TT_\varepsilon[\zeta^\star]-\mu\mathrm{d}\GG_\varepsilon[\zeta^\star]=0,$$
and the calculation
$$\mu=\frac{(\mathrm{d}\TT_\varepsilon[\zeta^\star]-\mu\mathrm{d}\GG_\varepsilon[\zeta^\star])(\zeta^\star)}
{\mathrm{d}\GG_\varepsilon[\zeta^\star](\zeta^\star)}
=\frac{(\mathrm{d}\TT_\varepsilon[\zeta^\star]-\mu\mathrm{d}\GG_\varepsilon[\zeta^\star])(\zeta^\star)}
{\mathrm{d}^2\TT_\varepsilon[\zeta^\star](\zeta^\star,\zeta^\star)}$$
shows that $\mu=0$.
\end{remark}

We first present a geometrical interpretation of $N_\varepsilon$ (see Figure \ref{ncs geometry}).

\begin{proposition} \label{prop:nc interpretation}
Any ray in $B_R(0)\setminus \{0\} \subset H_\varepsilon^1(\R^2)$ intersects
$N_\varepsilon$ in at most one point and the value of $\TT_\varepsilon$ along such a ray attains a strict maximum at this point.
\end{proposition}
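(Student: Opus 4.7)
The plan is to carry out the standard Nehari-manifold analysis adapted to the small perturbation $\varepsilon^{1/2}\RR_\varepsilon$. For a ray in $B_R(0)\setminus\{0\}$ I would fix a direction $\zeta_0\in H_\varepsilon^1(\R^2)$, normalised so that $\|\zeta_0\|_1=1$, and parameterise the ray by $(0,R)\ni t\mapsto t\zeta_0$. The homogeneity of $\QQ$ (quadratic) and $\SS$ (quartic) together with \eqref{Final red func} gives
\[
h(t):=\TT_\varepsilon(t\zeta_0)=t^2\QQ(\zeta_0)-t^4\SS(\zeta_0)+\varepsilon^{1/2}\RR_\varepsilon(t\zeta_0),
\]
and $\mathrm{d}\TT_\varepsilon[t\zeta_0](t\zeta_0)=t\,h^\prime(t)$ translates the condition $t\zeta_0\in N_\varepsilon$ into $h^\prime(t)=0$. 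The proposition therefore reduces to showing that $h$ has exactly one critical point in $(0,R)$ and that this point is its strict global maximum.

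The first key step is to verify that every critical point of $h$ in $(0,R)$ is a strict local maximum. At such a $t_*$ we have $t_*\zeta_0\in N_\varepsilon$, and combining \eqref{ Second derivative} with the identity $\mathrm{d}^2\TT_\varepsilon[t_*\zeta_0](t_*\zeta_0,t_*\zeta_0)=t_*^2\,h^{\prime\prime}(t_*)$ yields
\[
t_*^2\,h^{\prime\prime}(t_*)=-4t_*^2\QQ(\zeta_0)+O(\varepsilon^{1/2}t_*^2).
\]
Since $a_1,a_2,a_3>0$ make $\QQ$ equivalent to $\|\cdot\|_1^2$, so that $\QQ(\zeta_0)\gtrsim 1$, one has $h^{\prime\prime}(t_*)<0$ for all sufficiently small $\varepsilon$. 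Uniqueness of the critical point follows at once from a Rolle-type argument: two distinct local maxima of $h$ would force an intermediate critical point which is necessarily a local minimum, contradicting the strict local maximality just proved.

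For existence I would apply the intermediate value theorem to $h^\prime$ on $(0,R)$. The estimate $\RR_\varepsilon(\zeta)=\underline{\OO}(\|\zeta\|_1^2)$ yields $|\mathrm{d}\RR_\varepsilon[t\zeta_0](\zeta_0)|\lesssim t\|\zeta_0\|_1^2=t$, and hence
\[
\frac{h^\prime(t)}{t}=2\QQ(\zeta_0)-4t^2\SS(\zeta_0)+O(\varepsilon^{1/2})
\]
uniformly in $t\in(0,R)$. The right-hand side is strictly positive for small $t>0$ (by $\QQ(\zeta_0)\gtrsim 1$ and smallness of $\varepsilon$), while the explicit formula for $\SS$ together with $C_2>0$ and $\zeta_0\not\equiv 0$ gives $\SS(\zeta_0)\geq C_2\int_{\R^2}|\zeta_0|^4\,\dx\dz>0$, so the quartic term dominates as $t$ grows. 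Taking $R$ sufficiently large forces $h^\prime$ to be negative near the endpoint, and the IVT produces a critical point $t_*\in(0,R)$, necessarily unique by the previous step. Combined with $h(0^+)=0$ and strict local maximality at $t_*$, this shows that $h$ attains its strict global maximum on $(0,R)$ precisely at $t_*$, giving the second half of the statement.

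The main obstacle is making the existence step uniform in the direction $\zeta_0$: the unperturbed Nehari point satisfies $t_*^2\approx\QQ(\zeta_0)/(2\SS(\zeta_0))$, and since $\SS(\zeta_0)$ has no positive lower bound over the unit sphere in $H_\varepsilon^1(\R^2)$, $t_*$ cannot be bounded independently of $\zeta_0$. The argument therefore depends essentially on the freedom to take $R$ arbitrarily large (built into the definition of $U_\varepsilon$ at the end of Section~\ref{section: expansion}) and on the fact that the $O(\varepsilon^{1/2})$ perturbation is dominated by the homogeneous leading-order terms in the sign analysis of $h^\prime$ at the two endpoints.
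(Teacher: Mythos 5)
Your uniqueness and strict-maximum argument is, up to notation, exactly the paper's proof: parametrise the ray, observe that $\tfrac{\mathrm{d}}{\mathrm{d}\lambda}\TT_\varepsilon(\lambda\zeta_0)=\lambda^{-1}\mathrm{d}\TT_\varepsilon[\lambda\zeta_0](\lambda\zeta_0)$ vanishes precisely when $\lambda\zeta_0\in N_\varepsilon$, and use \eqref{ relation Q-S}--\eqref{ Second derivative} to see that the second derivative along the ray equals $-4\QQ(\zeta_0)+O(\varepsilon^{1/2}\|\zeta_0\|_1^2)<0$ at every such point, whence at most one intersection point and a strict maximum there. That is the entire content of the paper's proof.

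The existence step you append is where a genuine problem lies, and it is not the removable technicality your last paragraph suggests: it cannot be made to work for every ray. The radius $R$ is fixed when $U_\varepsilon=B_R(0)$ is introduced, before any direction is chosen, and for each fixed $R$ there are unit directions $\zeta_0\in H_\varepsilon^1(\R^2)$ with $\SS(\zeta_0)$ arbitrarily small: since both multipliers in $\SS$ are bounded, $\SS(\zeta_0)\lesssim\|\zeta_0\|_{L^4}^4$, and broadly spread profiles (rescalings $\psi(\cdot/N)$, suitably frequency-shifted so as to respect the support condition defining $H_\varepsilon^1(\R^2)$, then normalised in $H^1$) have $\|\zeta_0\|_{L^4}\rightarrow 0$ as $N\rightarrow\infty$. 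For such a direction one has $4\lambda^4\SS(\zeta_0)<\lambda^2\QQ(\zeta_0)$ for all $0<\lambda<R$, so that by \eqref{Final red func} and $\RR_\varepsilon=\underline{\OO}(\|\cdot\|_1^2)$ the quantity $\mathrm{d}\TT_\varepsilon[\lambda\zeta_0](\lambda\zeta_0)\geq\lambda^2\big(\QQ(\zeta_0)-O(\varepsilon^{1/2})\big)$ stays positive on the whole ray: the ray does not meet $N_\varepsilon$ at all, and enlarging $R$ beforehand does not help because such directions exist for every $R$. Note that the paper's own proof makes no attempt to establish existence; it proves only that every intersection point is a nondegenerate maximum of the ray restriction, and this (uniqueness plus the maximum property) is the only part used later — in the lemma following Proposition \ref{prop:Bded from below} the membership $\lambda_\varepsilon\zeta_\varepsilon\in N_\varepsilon$ is produced by a separate argument tailored to the specific direction $\zeta_\varepsilon$, whose value of $\SS$ is bounded away from zero, and Proposition \ref{prop:nc interpretation} is invoked only for the uniqueness of $\lambda_\varepsilon$. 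So you should either drop the IVT step or state it only for directions with $\SS(\zeta_0)$ bounded below; as a claim about every ray in $B_R(0)\setminus\{0\}$ it fails.
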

\begin{proof}
Let $\zeta\in B_R(0)\setminus \{0\} \subset H_\varepsilon^1(\R^2)$ and consider the value of $\TT_\varepsilon$
along the ray in $B_R(0)\setminus \{0\}$ through $\zeta$, that is the set
$\{\lambda\zeta:0<\lambda<R/\|\zeta\|_1\} \subset H_\varepsilon^1(\R^2)$.
The calculation
$$\frac{\mathrm{d}}{\mathrm{d}\lambda} \TT_\varepsilon(\lambda \zeta)=\mathrm{d}\TT_\varepsilon[\lambda\zeta](\zeta)=\lambda^{-1}\mathrm{d}\TT_\varepsilon[\lambda\zeta](\lambda\zeta)$$
shows that $\frac{\mathrm{d}}{\mathrm{d}\lambda} \TT_\varepsilon(\lambda \zeta)=0$ if and only if
$\lambda \zeta \in N_\varepsilon$; furthermore
\begin{align*}
\frac{\mathrm{d}^2}{\mathrm{d}\lambda^2}\TT_\varepsilon(\lambda\zeta)
&=
2\QQ(\zeta)-12\lambda^{-2} S(\lambda \zeta) +
\varepsilon^{1/2}\mathrm{d}^2\RR_\varepsilon[\lambda\zeta](\zeta,\zeta)\\
&=
-4\QQ(\zeta)
-3\lambda^{-2}\varepsilon^{1/2}\mathrm{d}\RR_\varepsilon[\lambda\zeta](\lambda\zeta)
+\varepsilon^{1/2}\mathrm{d}^2\RR_\varepsilon[\lambda\zeta](\zeta,\zeta)
\\
&=
-4\QQ(\zeta)+O(\varepsilon^{1/2} \|\zeta\|^2_1)\\
&<0
\end{align*}
for each $\zeta$ with $\lambda \zeta \in N_\varepsilon$.
\end{proof} 

\begin{remark}
Proposition \ref{prop:nc interpretation} also holds for $\TT_0: H^1(\R^2) \rightarrow \R$
(with $R=\infty$); in this case every ray intersects $N_0$ in precisely one point.
\end{remark}

Using \eqref{ relation Q-S}, we can eliminate respectively $\mathcal{S}(\zeta)$ and $\QQ(\zeta)$
from \eqref{Final red func} to obtain formulae
\begin{equation}
\TT_\varepsilon(\zeta)
\label{ J=Q}
=\frac 1 2 \QQ(\zeta)+
\varepsilon^{1/2}\left(\RR_\varepsilon(\zeta)-\frac{1}{4}\mathrm{d}\RR_\varepsilon[\zeta](\zeta)\right)
\end{equation}
and
\begin{equation}
\label{ J=S}
\TT_\varepsilon(\zeta)=
\mathcal{S}(\zeta)
+\varepsilon^{1/2}\left(\RR_\varepsilon(\zeta)
-\frac{1}{2}\mathrm{d}\RR_\varepsilon[\zeta](\zeta)\right)
\end{equation}
for $\zeta \in N_\varepsilon$ which lead to
\emph{a priori} bounds for $\TT_\varepsilon|_{N_\varepsilon}$.

\begin{proposition} \label{prop:Bded from below}
There exist constants $D_1$, $D_2>0$ such that
$$\QQ(\zeta) \geq D_1\| \zeta \|_1^2$$
for all $\zeta \in H^1(\R^2)$ and
$$\TT_\varepsilon(\zeta) \geq \tfrac{1}{4} D_1 \|\zeta\|_1^2, \qquad \|\zeta\|_1 \geq D_2$$
for each $\zeta \in N_\varepsilon$.
Furthermore, each
$\zeta\in N_\varepsilon$ with
$\TT_\varepsilon(\zeta)<\frac{1}{4}D_1(R-1)^2$ satisfies
$\|\zeta\|_1<R-1$.
\end{proposition}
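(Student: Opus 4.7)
The plan is to prove the three assertions in order, exploiting the two reformulations of $\TT_\varepsilon|_{N_\varepsilon}$ given in the identities labelled ``J=Q'' and ``J=S'' and the critical-point identity ``relation Q-S'', together with the estimates for $\RR_\varepsilon$ and $\mathrm{d}\RR_\varepsilon$ encoded in the declaration $\RR_\varepsilon(\zeta)=\underline{\OO}(\|\zeta\|_1^2)$.

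\emph{Coercivity of $\QQ$.} Since $\tilde{g}$ has a non-degenerate local minimum at $(\omega,0)$ (see \eqref{Defn of g}), the second-order partial derivatives $\partial_{k_1}^2\tilde{g}(\omega,0)$ and $\partial_{k_2}^2\tilde{g}(\omega,0)$ are strictly positive, so $a_1, a_2 > 0$; also $a_3 = \tfrac{1}{4}\Lambda f(\omega) > 0$. It follows directly from the formula for $\QQ$ that $\QQ(\zeta) \geq D_1 \|\zeta\|_1^2$ for all $\zeta \in H^1(\R^2)$, where $D_1 := \min(a_1,a_2,a_3)$.

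\emph{Lower bound on $\TT_\varepsilon|_{N_\varepsilon}$.} For $\zeta \in N_\varepsilon$, the identity ``J=Q'' gives
$$\TT_\varepsilon(\zeta) = \tfrac{1}{2}\QQ(\zeta) + \varepsilon^{1/2}\bigl(\RR_\varepsilon(\zeta) - \tfrac{1}{4}\mathrm{d}\RR_\varepsilon[\zeta](\zeta)\bigr),$$
and $|\RR_\varepsilon(\zeta)| + |\mathrm{d}\RR_\varepsilon[\zeta](\zeta)| \lesssim \|\zeta\|_1^2$ yields $\TT_\varepsilon(\zeta) \geq (\tfrac{1}{2}D_1 - C\varepsilon^{1/2})\|\zeta\|_1^2 \geq \tfrac{1}{4}D_1\|\zeta\|_1^2$ once $\varepsilon$ is chosen small enough that $C\varepsilon^{1/2} \leq \tfrac{1}{4}D_1$.

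\emph{Lower bound $\|\zeta\|_1 \geq D_2$ on $N_\varepsilon$.} Here I would use the critical-point identity in the rearranged form
$$\QQ(\zeta) = 2\SS(\zeta) - \tfrac{1}{2}\varepsilon^{1/2}\mathrm{d}\RR_\varepsilon[\zeta](\zeta)$$
together with the Sobolev embedding $H^1(\R^2) \hookrightarrow L^4(\R^2)$ and the boundedness of the Fourier multipliers $k_1^2/((1-\Lambda)k_1^2+k_2^2)$ (recall $0<\Lambda<1$), which together yield $|\SS(\zeta)| \lesssim \|\zeta\|_1^4$. Combining with the coercivity above,
$$D_1\|\zeta\|_1^2 \leq \QQ(\zeta) \leq C_S\|\zeta\|_1^4 + C\varepsilon^{1/2}\|\zeta\|_1^2,$$
and dividing through by $\|\zeta\|_1^2$ (nonzero since $\zeta \in N_\varepsilon$) and absorbing the error for $\varepsilon$ small gives $\|\zeta\|_1^2 \geq (D_1/2)/C_S =: D_2^2$.

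\emph{The final clause.} This is a direct consequence of the bound $\TT_\varepsilon(\zeta) \geq \tfrac{1}{4}D_1\|\zeta\|_1^2$: if $\TT_\varepsilon(\zeta) < \tfrac{1}{4}D_1(R-1)^2$ then $\|\zeta\|_1 < R-1$. There is no serious obstacle; the only subtlety worth flagging is that the smallness conditions on $\varepsilon$ depend on $R$ (through the implicit constants in the estimate $\SS(\zeta) \lesssim \|\zeta\|_1^4$ restricted to $U_\varepsilon = B_R(0)$, and through $C$ above), which is consistent with the convention, stated at the end of Section 1.3, that $R$ is fixed first and then $\varepsilon_0$ shrunk depending on $R$.
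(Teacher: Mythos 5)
Your proof is correct and follows essentially the same route as the paper: coercivity of $\QQ$ with $D_1=\min(a_1,a_2,a_3)$, the identity \eqref{ J=Q} with the $\underline{\OO}(\|\zeta\|_1^2)$ bounds on $\RR_\varepsilon$ to get $\TT_\varepsilon(\zeta)\geq\tfrac14 D_1\|\zeta\|_1^2$ (hence the final clause), and \eqref{ relation Q-S} together with $\SS(\zeta)\lesssim\|\zeta\|_1^4$ to obtain the lower bound $\|\zeta\|_1\geq D_2$. The extra details you supply (positivity of $a_1,a_2,a_3$, boundedness of the multiplier, $\varepsilon$ small depending on $R$) are consistent with the paper's conventions.
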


\begin{proof}
The first estimate is obtained by choosing $D_1 = \min(a_1,a_2,a_3)$. Let $\zeta \in N_\varepsilon$. Using \eqref{ J=Q}, we find that
$$
\TT_\varepsilon(\zeta)
\geq \tfrac 1 4 \QQ(\zeta)\\
\geq \tfrac 1 4 D_1\|\zeta\|_1^2,
$$
so that in particular $\TT_\varepsilon(\zeta)<\frac{1}{4}D_1(R-1)^2$ implies that $\|\zeta\|_1<R-1$. The lower bound
for $\|\zeta_1\|_1$ follows from the estimate
$$\|\zeta\|_1^2 \lesssim 2 \QQ(\zeta) \leq 4\mathcal{S}(\zeta) + \varepsilon^{1/2}\|\mathrm{d}\RR_\varepsilon(\zeta)\|\|\zeta\|_1
\lesssim \|\zeta\|_1^4 + \varepsilon^{1/2}\|\zeta\|_1^2,$$
in which we have used \eqref{ relation Q-S}.
\end{proof}

\begin{remark} \label{rem:inf is positive}
It follows from Proposition \ref{prop:Bded from below} that the quantity $c_\varepsilon := \inf_{N_\varepsilon} \TT_\varepsilon$ satisfies\linebreak 
$\liminf_{\varepsilon \rightarrow 0} c_\varepsilon>0$.
\end{remark}

\begin{lemma}
For each sufficiently large value of $R$ (chosen independently of $\varepsilon$) there exists\linebreak
$\zeta^\star \in N_\varepsilon$ such that
$\TT_\varepsilon(\zeta^\star) < \frac{1}{4}D_1(R-1)^2$.
\end{lemma}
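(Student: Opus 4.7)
The plan is to construct $\zeta^\star$ as the unique intersection of a fixed ray in $H_\varepsilon^1(\R^2)$ with $N_\varepsilon$, and to bound $\TT_\varepsilon$ at this point by a constant that depends on neither $R$ nor $\varepsilon$; choosing $R$ large will then force $\TT_\varepsilon(\zeta^\star) < \tfrac14 D_1(R-1)^2$.

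First I would fix once and for all a nonzero $\zeta_0 \in H^1(\R^2)$ whose Fourier transform has compact support, chosen so that $\zeta_0 \in H_\varepsilon^1(\R^2)$ for every sufficiently small $\varepsilon$ (possible because the Fourier cutoff defining $H_\varepsilon^1$ admits progressively larger frequency ranges as $\varepsilon$ decreases). Nonzero-ness alone guarantees $\SS(\zeta_0) > 0$, since the quartic $C_2\|\zeta\|_{L^4(\R^2)}^4$ in $\SS$ contributes strictly positively.

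Next, once $R > \|\zeta_0\|_1$ so that $\zeta_0 \in U_\varepsilon$, Proposition \ref{prop:nc interpretation} supplies a unique $t_\varepsilon \in (0,R/\|\zeta_0\|_1)$ with $\zeta^\star := t_\varepsilon \zeta_0 \in N_\varepsilon$, and $t_\varepsilon$ realises
$$
\TT_\varepsilon(\zeta^\star) = \max_{0 < t < R/\|\zeta_0\|_1} \TT_\varepsilon(t\zeta_0).
$$
Using \eqref{Final red func} together with the homogeneities $\QQ(t\zeta_0) = t^2\QQ(\zeta_0)$, $\SS(t\zeta_0) = t^4\SS(\zeta_0)$ and the bound $|\RR_\varepsilon(t\zeta_0)| \lesssim t^2\|\zeta_0\|_1^2$ (which comes from $\RR_\varepsilon(\zeta) = \underline{\OO}(\|\zeta\|_1^2)$), I obtain
$$
\TT_\varepsilon(t\zeta_0) \leq t^2\bigl(\QQ(\zeta_0) + C\varepsilon^{1/2}\|\zeta_0\|_1^2\bigr) - t^4\SS(\zeta_0).
$$
The elementary identity $\max_{t > 0}(At^2 - Bt^4) = A^2/(4B)$ valid for $A,B > 0$ then gives
$$
\TT_\varepsilon(\zeta^\star) \leq \frac{\bigl(\QQ(\zeta_0) + C\varepsilon^{1/2}\|\zeta_0\|_1^2\bigr)^2}{4\SS(\zeta_0)} \leq M := \frac{\QQ(\zeta_0)^2}{4\SS(\zeta_0)} + 1
$$
for all sufficiently small $\varepsilon$; importantly $M$ depends only on $\zeta_0$, hence on neither $R$ nor $\varepsilon$.

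Finally, choosing $R$ so large that simultaneously $\|\zeta_0\|_1 < R$ and $\tfrac14 D_1(R-1)^2 > M$ (both conditions involve only $\zeta_0$ and $D_1$) closes the argument. The main technical point to verify along the way is that the fixed test function $\zeta_0$ indeed lies in $H_\varepsilon^1(\R^2)$ uniformly for all small $\varepsilon$, so that the ray $\{t\zeta_0\}$ sits in the correct space; once this is in place the argument reduces to a one-variable maximisation of the quadratic--quartic profile together with control of the remainder $\varepsilon^{1/2}\RR_\varepsilon$.
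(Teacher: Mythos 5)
Your quantitative core is fine and is in fact a neat streamlining of the paper's argument: bounding $\TT_\varepsilon$ along the whole ray by the majorant $t^2\bigl(\QQ(\zeta_0)+C\varepsilon^{1/2}\|\zeta_0\|_1^2\bigr)-t^4\SS(\zeta_0)$ and maximising over all $t>0$ gives an $R$- and $\varepsilon$-independent bound $M$, after which choosing $R$ with $\tfrac14 D_1(R-1)^2>M$ closes the argument; likewise your device of taking $\zeta_0$ band-limited so that $\zeta_0\in H^1_\varepsilon(\R^2)$ for all small $\varepsilon$ is a legitimate substitute for the paper's truncation $\zeta_\varepsilon=\chi(\varepsilon D)\zeta_0$, $\|\zeta_\varepsilon-\zeta_0\|_1\to 0$. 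The paper instead evaluates the unperturbed functional exactly at the Nehari point of $\TT_0$, $\TT_0(\lambda_0\zeta_0)=\QQ(\zeta_0)^2/(4\SS(\zeta_0))$, and passes to the limit $\TT_\varepsilon(\lambda_\varepsilon\zeta_\varepsilon)\to\TT_0(\lambda_0\zeta_0)$; your upper-bound route avoids that limiting argument.

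The genuine weak point is the very first step: you obtain the point $\zeta^\star=t_\varepsilon\zeta_0\in N_\varepsilon$ by citing the \emph{existence} half of Proposition \ref{prop:nc interpretation}. That proposition's proof only shows that critical points of $t\mapsto\TT_\varepsilon(t\zeta_0)$ on $(0,R/\|\zeta_0\|_1)$ coincide with intersections with $N_\varepsilon$ and that every such point is a strict local maximum (whence uniqueness); it does not show that an intersection exists, and indeed it cannot in general: if $\SS(\zeta)$ is small relative to $\QQ(\zeta)$ for the given ball radius (e.g.\ $4R^2\SS(\zeta)<\QQ(\zeta)\|\zeta\|_1^2$, up to the $O(\varepsilon^{1/2})$ correction), then $\frac{\mathrm{d}}{\mathrm{d}t}\TT_\varepsilon(t\zeta)>0$ on the whole ray and it never meets $N_\varepsilon$. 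This is precisely why the paper's own proof does not invoke that proposition for existence but constructs the intersection by hand, locating a sign change of $\frac{\mathrm{d}}{\mathrm{d}\lambda}\TT_\varepsilon(\lambda\zeta_\varepsilon)$ near $\lambda_0=(\QQ(\zeta_0)/2\SS(\zeta_0))^{1/2}$ using the $C^1$-closeness of $\TT_\varepsilon$ to $\TT_0$ along the ray. In your setting the gap is easy to fill and you should do so explicitly: since $\SS(\zeta_0)>0$ and, by \eqref{Final red func}, $\TT_\varepsilon(t\zeta_0)\geq t^2\bigl(\QQ(\zeta_0)-C\varepsilon^{1/2}\|\zeta_0\|_1^2\bigr)-t^4\SS(\zeta_0)>0$ for small $t$, while for $R$ large enough (depending only on $\zeta_0$) the quartic term makes $\TT_\varepsilon(t\zeta_0)<0$ for $t$ close to $R/\|\zeta_0\|_1$, the profile attains an interior maximum at some $t_\varepsilon$, which is a critical point of the ray map and hence lies in $N_\varepsilon$ by the computation in Proposition \ref{prop:nc interpretation}. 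With that supplement (and noting that the threshold on $\varepsilon$ may depend on $R$, which the lemma permits), your proof is complete.
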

\begin{proof}
Choose $\zeta_0\in H^1(\R^2)\setminus\{0\}$ and $R>1$ such that
$$\frac{\QQ(\zeta_0)^2}{\mathcal{S}(\zeta_0)}<D_1(R-1)^2.$$
The calculation
$$\mathrm{d}\TT_0[\lambda_0 \zeta_0](\lambda_0\zeta_0)
= 2\lambda_0^2 \QQ(\zeta_0) - 4\lambda_0^4 \mathcal{S}(\zeta_0)
$$
shows that $\lambda_0\zeta_0 \in N_0$, where
$$\lambda_0=\left(\frac{\QQ(\zeta_0)}{2\mathcal{S}(\zeta_0)}\right)^{1/2}.$$
It follows that $\lambda_0\zeta_0$ is the unique point on its ray which lies on $N_0$, and
\begin{equation}
\frac{\mathrm{d}}{\mathrm{d} \lambda} \TT_0(\lambda \zeta_0) \Big|_{\lambda=\lambda_0} = 0,
\qquad
\frac{\mathrm{d}^2}{\mathrm{d} \lambda^2} \TT_0(\lambda \zeta_0) \Big|_{\lambda=\lambda_0} < 0.
\label{ zero ray}
\end{equation}
Furthermore
\begin{equation}
\TT_0(\lambda_0\zeta_0)=\tfrac{1}{2}\QQ(\lambda_0\zeta_0)=\frac{\QQ(\zeta_0)^2}{4\mathcal{S}(\zeta_0)}<\tfrac{1}{4}D_1(R-1)^2, \label{ Value of J0}
\end{equation}
so that
$$\|\lambda_0 \zeta_0\|_1 < R-1.$$

Let $\zeta_\varepsilon = \chi(\varepsilon D)\zeta_0$, so that
$\zeta_\varepsilon\in H^1_\varepsilon(\R^2)\subset H^1(\R^2)$ with
$\lim_{\varepsilon\rightarrow 0}\|\zeta_\varepsilon-\zeta_0\|_1=0$, and in particular
$$\|\lambda_0\zeta_\varepsilon\|_1 < R-1.$$
According to \eqref{ zero ray} we can find $\widetilde \gamma>1$ such that $\tilde{\gamma} \|\lambda_0 \zeta_\varepsilon \|_1 < R$
(so that $\tilde{\gamma}\lambda_0\zeta_\varepsilon \in U_\varepsilon$) and
$$
\frac{\mathrm{d}}{\mathrm{d} \lambda} \TT_0(\lambda \zeta_0) \Big|_{\lambda=\tilde{\gamma}^{-1}\lambda_0}>0,
\qquad
\frac{\mathrm{d}}{\mathrm{d} \lambda} \TT_0(\lambda \zeta_0) \Big|_{\lambda=\tilde{\gamma}\lambda_0}<0,
$$
and therefore
$$
\frac{\mathrm{d}}{\mathrm{d} \lambda} \TT_\varepsilon(\lambda \zeta_\varepsilon) \Big|_{\lambda=\tilde{\gamma}^{-1}\lambda_0}>0,
\qquad
\frac{\mathrm{d}}{\mathrm{d} \lambda} \TT_\varepsilon(\lambda \zeta_\varepsilon) \Big|_{\lambda=\tilde{\gamma}\lambda_0}<0
$$
(the quantities on the left-hand sides of the inequalities on the
second line converge to those on the first as $\varepsilon \rightarrow 0$).
It follows that there exists $\lambda_\varepsilon\in (\widetilde \gamma^{-1}\lambda_0,\widetilde \gamma \lambda_0)$
with
$$\frac{\mathrm{d}}{\mathrm{d} \lambda} \TT_\varepsilon(\lambda \zeta_\varepsilon) \Big|_{\lambda=\lambda_\varepsilon}=0,$$
that is $\lambda_\varepsilon \zeta_\varepsilon \in N_\varepsilon$, and we conclude that
this value of $\lambda_\varepsilon$ is unique (see Proposition \ref{prop:nc interpretation})
and that $\lim_{\varepsilon \rightarrow 0} \lambda_\varepsilon = \lambda_0$.
Using the limit
$$\lim_{\varepsilon\rightarrow 0}
\TT_\varepsilon(\lambda_\varepsilon\zeta_\varepsilon)
=\TT_0(\lambda_0\zeta_0)$$
and \eqref{ Value of J0}, we find that
\[\TT_\varepsilon(\lambda_\varepsilon\zeta_\varepsilon)<\tfrac{1}{4}D_1(R-1)^2. \qedhere\]
\end{proof}

\begin{corollary}
Any minimising sequence $\{\zeta_n\}$ of $\TT_\varepsilon|_{N_{\varepsilon}}$ satisfies
$$
\limsup_{n\rightarrow\infty} \|\zeta_n\|_1 <R-1.
$$
\end{corollary}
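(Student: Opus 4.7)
The plan is to combine the lemma immediately preceding the corollary with the final assertion of Proposition \ref{prop:Bded from below}. The lemma supplies a specific point $\zeta^\star \in N_\varepsilon$ whose energy satisfies $\TT_\varepsilon(\zeta^\star) < \tfrac{1}{4} D_1 (R-1)^2$. Consequently the infimum $c_\varepsilon := \inf_{N_\varepsilon} \TT_\varepsilon$ obeys the strict inequality
\[
c_\varepsilon \le \TT_\varepsilon(\zeta^\star) < \tfrac{1}{4} D_1 (R-1)^2.
\]

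Now let $\{\zeta_n\} \subset N_\varepsilon$ be any minimising sequence, so that $\TT_\varepsilon(\zeta_n) \to c_\varepsilon$ as $n \to \infty$. Since $c_\varepsilon < \tfrac{1}{4} D_1 (R-1)^2$, one can fix $\delta>0$ with $c_\varepsilon + \delta < \tfrac{1}{4} D_1 (R-1)^2$; there exists $n_0$ such that $\TT_\varepsilon(\zeta_n) < \tfrac{1}{4} D_1 (R-1)^2$ for all $n \ge n_0$. Proposition \ref{prop:Bded from below} then yields $\|\zeta_n\|_1 < R-1$ for all such $n$, whence $\limsup_{n \to \infty} \|\zeta_n\|_1 \le R-1$, and in fact the strict bound $\limsup_{n \to \infty} \|\zeta_n\|_1 < R-1$ follows from the strict inequality $c_\varepsilon < \tfrac{1}{4} D_1 (R-1)^2$ (pick any $R^\prime \in (0,R)$ for which $c_\varepsilon < \tfrac14 D_1(R^\prime-1)^2$ still holds and repeat the argument).

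There is no genuine obstacle: the only subtlety is verifying the \emph{strict} character of the limsup bound, which is why one should note that the inequality provided by the preceding lemma is strict and can therefore absorb a small enlargement of the energy level. All other ingredients — the \emph{a priori} coercivity of $\TT_\varepsilon$ on $N_\varepsilon$ and the existence of a test point realising a sufficiently small value of $\TT_\varepsilon$ — have already been established.
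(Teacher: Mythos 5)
Your argument is correct and is essentially the paper's: the corollary is stated without proof, but the intended justification is exactly this combination of the preceding lemma (which forces $c_\varepsilon<\tfrac14 D_1(R-1)^2$) with the coercivity in Proposition \ref{prop:Bded from below}. The strict inequality can be obtained slightly more directly from the bound $\TT_\varepsilon(\zeta)\ge\tfrac14 D_1\|\zeta\|_1^2$ on $N_\varepsilon$, which gives $\limsup_{n\rightarrow\infty}\|\zeta_n\|_1^2\le 4c_\varepsilon/D_1<(R-1)^2$; your detour through an intermediate $R^\prime$ (which should be taken in $(1,R)$) amounts to the same thing.
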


Our final result shows that there is a minimising sequence for $\TT_\varepsilon|_{N_\varepsilon}$
which is also a Palais-Smale sequence.

\begin{theorem}
\label{thm:Existence of a good minimising sequence}
There exists 
a minimising sequence $\{\zeta_n\} \subset B_{R-1}(0)$
for $\TT_\varepsilon|_{N_\varepsilon}$
with
$\mathrm{d}\TT_\varepsilon[\zeta_n]\rightarrow 0$ as $n \rightarrow \infty$.
\end{theorem}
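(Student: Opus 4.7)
The plan is a standard Ekeland-plus-Lagrange-multiplier argument adapted to the natural constraint set. I will first produce a sequence along which the derivative of $\TT_\varepsilon$ \emph{restricted to the tangent space of $N_\varepsilon$} tends to zero, then use the non-degeneracy of the constraint functional $\GG_\varepsilon(\zeta):=\mathrm{d}\TT_\varepsilon[\zeta](\zeta)$ (whose zero set cuts out $N_\varepsilon$) to upgrade this to $\mathrm{d}\TT_\varepsilon[\zeta_n]\to 0$ in $(H_\varepsilon^1(\R^2))^\prime$.

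To apply Ekeland's variational principle on a complete metric space I would work with
$$M:=\{\zeta\in H_\varepsilon^1(\R^2):\tfrac{1}{2}D_2\le \|\zeta\|_1\le R-1,\ \GG_\varepsilon(\zeta)=0\},$$
which is closed in $H_\varepsilon^1(\R^2)$ since $\GG_\varepsilon$ is continuous, and on which $\TT_\varepsilon$ is continuous and bounded below (Proposition~\ref{prop:Bded from below}). By the preceding corollary together with the lower bound $\|\zeta\|_1\ge D_2$ from Proposition~\ref{prop:Bded from below}, every sufficiently near-minimising point of $\TT_\varepsilon|_{N_\varepsilon}$ lies in $M$, so $\inf_M\TT_\varepsilon=c_\varepsilon$. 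Ekeland's principle then produces a sequence $\{\zeta_n\}\subset M$ with $\TT_\varepsilon(\zeta_n)\to c_\varepsilon$ and
$$\TT_\varepsilon(\zeta)\ge \TT_\varepsilon(\zeta_n)-\epsilon_n\|\zeta-\zeta_n\|_1,\qquad \zeta\in M,$$
for some sequence $\epsilon_n\to 0^+$. The same corollary gives $\|\zeta_n\|_1<R-1$ and $\|\zeta_n\|_1>D_2/2$ for all sufficiently large $n$, so each such $\zeta_n$ lies in the relative interior of the size constraints defining $M$.

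The conversion to an unconstrained smallness statement uses the uniform lower bound
$$|\mathrm{d}\GG_\varepsilon[\zeta_n](\zeta_n)|=|\mathrm{d}^2\TT_\varepsilon[\zeta_n](\zeta_n,\zeta_n)|=4\QQ(\zeta_n)+O(\varepsilon^{1/2})\gtrsim 1,$$
a consequence of \eqref{ Second derivative} together with $\QQ(\zeta_n)\ge D_1D_2^2$. This implies $H_\varepsilon^1(\R^2)=\ker\mathrm{d}\GG_\varepsilon[\zeta_n]\oplus\mathrm{span}(\zeta_n)$ with uniformly bounded projections, and for each $v_T\in\ker\mathrm{d}\GG_\varepsilon[\zeta_n]$ the implicit function theorem applied to $\GG_\varepsilon$ near $\zeta_n$ yields a $C^1$ curve $t\mapsto\gamma(t)\in N_\varepsilon$ with $\gamma(0)=\zeta_n$, $\gamma^\prime(0)=v_T$ and $\gamma(t)\in M$ for $|t|$ small. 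Substituting $\gamma$ into the Ekeland inequality, dividing by $|t|$ and letting $t\to 0$ gives $|\mathrm{d}\TT_\varepsilon[\zeta_n](v_T)|\le\epsilon_n\|v_T\|_1$. Since $\mathrm{d}\TT_\varepsilon[\zeta_n](\zeta_n)=\GG_\varepsilon(\zeta_n)=0$, decomposing an arbitrary $v\in H_\varepsilon^1(\R^2)$ as $v_T+\lambda_v\zeta_n$ yields $\mathrm{d}\TT_\varepsilon[\zeta_n](v)=\mathrm{d}\TT_\varepsilon[\zeta_n](v_T)$, whence $|\mathrm{d}\TT_\varepsilon[\zeta_n](v)|\lesssim\epsilon_n\|v\|_1$ uniformly in $n$.

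The main obstacle I anticipate is ensuring that the implicit-function construction is uniform in $n$, so that the curve $\gamma$ and the resulting tangential inequality are controlled by constants independent of $n$. This is precisely what the quantitative lower bound on $|\mathrm{d}\GG_\varepsilon[\zeta_n](\zeta_n)|$ together with the uniform upper bound $\|\zeta_n\|_1\le R-1$ provide; combined with the smoothness of $\TT_\varepsilon$ on $B_R(0)$ (and hence of $\GG_\varepsilon$), they make the construction uniform in $n$ for all sufficiently small $\varepsilon$.
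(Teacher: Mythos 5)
Your argument is correct, and it rests on the same two pillars as the paper's proof: Ekeland's variational principle and the quantitative nondegeneracy of the constraint functional in the ray direction, namely $\mathrm{d}\GG_\varepsilon[\zeta_n](\zeta_n)=\mathrm{d}^2\TT_\varepsilon[\zeta_n](\zeta_n,\zeta_n)=-4\QQ(\zeta_n)+O(\varepsilon^{1/2})$, which is bounded away from zero because $\QQ(\zeta_n)\geq D_1\|\zeta_n\|_1^2$ and $\|\zeta_n\|_1\geq D_2$ (see \eqref{ Second derivative} and Proposition \ref{prop:Bded from below}). The difference is one of packaging rather than of substance. The paper invokes the constrained form of Ekeland's principle (Ekeland, Theorem 3.1), which directly supplies a minimising sequence together with approximate Lagrange multipliers $\mu_n$ such that $\|\mathrm{d}\TT_\varepsilon[\zeta_n]-\mu_n\mathrm{d}\GG_\varepsilon[\zeta_n]\|\rightarrow 0$, and then shows $\mu_n\rightarrow 0$ by evaluating at $\zeta_n$ (where $\mathrm{d}\TT_\varepsilon[\zeta_n](\zeta_n)=0$) and dividing by the nondegenerate quantity above. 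You instead apply the elementary metric-space version of Ekeland's principle on the truncated closed set $M$ and re-derive the multiplier step by hand: the implicit-function-theorem curves in $N_\varepsilon$ give smallness of the tangential derivative, and the splitting $H_\varepsilon^1(\R^2)=\ker\mathrm{d}\GG_\varepsilon[\zeta_n]\oplus\mathrm{span}(\zeta_n)$ together with $\mathrm{d}\TT_\varepsilon[\zeta_n](\zeta_n)=\GG_\varepsilon(\zeta_n)=0$ disposes of the radial component. Your route is more self-contained (only the basic Ekeland statement plus the implicit function theorem), at the price of the explicit truncation $\tfrac{1}{2}D_2\leq\|\zeta\|_1\leq R-1$ (needed for completeness of $M$ and to keep the deformation curves admissible, and justified by Proposition \ref{prop:Bded from below} and the corollary on minimising sequences) and some bookkeeping; the paper's version is shorter because that machinery is quoted. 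The uniformity concern you raise is indeed harmless: the curves are only used in the limit $t\rightarrow 0$ for each fixed $n$, so only the Ekeland constants $\epsilon_n$, the uniform bound $\|\zeta_n\|_1\leq R-1$ and the uniformly bounded projection enter the final estimate $\|\mathrm{d}\TT_\varepsilon[\zeta_n]\|\lesssim\epsilon_n$.
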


\begin{proof} Ekeland's variational principle (Ekeland \cite[Theorem 3.1]{Ekeland74}) asserts the
existence of a minimising sequence $\{\zeta_n\}$ for $\TT_\varepsilon|_{N_\varepsilon}$ and a sequence
$\{\mu_n\}$ of real numbers such that
$$\|\mathrm{d}\TT_\varepsilon[\zeta_n]-\mu_n\mathrm{d}\GG_\varepsilon[\zeta_n]\| \rightarrow 0$$
as $n \rightarrow \infty$, and the calculation
$$\mu_n=\frac{-(\mathrm{d}\TT_\varepsilon[\zeta_n]-\mu_n \mathrm{d}\GG_\varepsilon[\zeta_n])(\zeta_n)}
{\mathrm{d}\GG_\varepsilon[\zeta_n](\zeta_n)}
=\frac{o(\|\zeta_n\|_1)}
{\mathrm{d}^2\TT_\varepsilon[\zeta_n](\zeta_n,\zeta_n)}$$
shows that $\lim_{n \rightarrow \infty} \mu_n = 0$ because $-\mathrm{d}^2\TT_\varepsilon[\zeta_n](\zeta_n,\zeta_n) \gtrsim \|\zeta_n\|_1^2$
and $\|\zeta_n\|_1 \geq D_2$ (see \eqref{ Second derivative} and Proposition \ref{prop:Bded from below}).
\end{proof}

We conclude this section with a remark which applies in particular to the sequence constructed in
Theorem \ref{thm:Existence of a good minimising sequence} (extracting a subsequence if necessary, we
always assume that such sequences are weakly convergent).

\begin{remark} \label{rem:defn of wc}
Suppose that $\varepsilon>0$, so that $H_\varepsilon^1(\R^2)$ is topologically identical to\linebreak $H_\varepsilon^s(\R^2):=\chi(\varepsilon D)H^s(\R^2)$ for all $s \geq 0$.
Any sequence which is weakly convergent in $H_\varepsilon^1(\R^2)$ is therefore in particular also weakly convergent in
$H^3(\R^2)$ and strongly convergent in $H_\mathrm{loc}^3(\R^2)$. We thus henceforth use the phrase `weakly convergent'
synonymously with `weakly convergent in $H^3(\R^2)$ and strongly convergent in $H_\mathrm{loc}^3(\R^2)$' when
discussing sequences in $H_\varepsilon^1(\R^2)$ for $\varepsilon>0$; all other convergence properties of such sequences are deduced
using standard embedding theorems.
\end{remark}

\subsection{Existence of a critical point} \label{Existence 1}

In this section we fix $\varepsilon>0$ and show that the minimising sequence for $\TT_\varepsilon|_{N_\varepsilon}$
constructed in Theorem \ref{thm:Existence of a good minimising
sequence} converges weakly (up to translations) to a nontrivial critical point of $\TT_\varepsilon$.
The result is stated in Theorem \ref{thm:Existence of a critical point} below; the following lemmata, which
show respectively that Palais-Smale sequences converge weakly to critical points, and that `vanishing'
does not occur, are used in its proof.

\begin{lemma} \label{lem:weak limit of ps sequence}
Suppose that $\{\zeta_n\}$ is a sequence in $B_{R-1}(0) \subset H_\varepsilon^1(\R^2)$ with the property that
$\mathrm{d}\TT_\varepsilon[\zeta_n]\rightarrow 0$ as $n \rightarrow \infty$. Its weak limit
$\zeta_\infty$ is a critical point
of $\TT_\varepsilon$ and $\eta_n = \eta_1(\zeta_n)+\eta_2(\eta_1(\zeta_n))$ converges weakly
in $H^3(\R^2)$ to $\eta_\infty = \eta_1(\zeta_\infty) + \eta_2(\eta_1(\zeta_\infty))$ (which is a critical point of
$\JJ_\varepsilon$).
\end{lemma}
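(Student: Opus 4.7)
The plan is to show that $\eta_n := \eta_1(\zeta_n) + \eta_2(\eta_1(\zeta_n))$ converges weakly in $H^3(\R^2)$ to $\eta_\infty := \eta_1(\zeta_\infty) + \eta_2(\eta_1(\zeta_\infty))$ and that $\eta_\infty$ is a critical point of $\JJ_\varepsilon$; the corresponding claim for $\TT_\varepsilon$ at $\zeta_\infty$ will then follow directly from Remark \ref{rem:crit pts relation}. First, since $\{\zeta_n\} \subset B_{R-1}(0)$ is bounded in $H^1_\varepsilon(\R^2)$, after extracting a subsequence we have $\zeta_n \rightharpoonup \zeta_\infty$. The map $\zeta \mapsto \eta_1(\zeta)$ is a bounded isomorphism $H^1_\varepsilon(\R^2) \to (\XX_1, \nn\cdot\nn)$, and on $\XX_1$ the Fourier transform is supported in the fixed compact set $B_\delta(\omega,0) \cup B_\delta(-\omega,0)$, so the $H^3$-norm is dominated by $\nn \cdot \nn$. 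Thus $\eta_{1,n}:=\eta_1(\zeta_n)$ is bounded in $H^3(\R^2)$ and $\eta_{1,n} \rightharpoonup \eta_{1,\infty} := \eta_1(\zeta_\infty)$ in $H^3(\R^2)$.

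Next I would prove that $F(\eta_{1,n}) \rightharpoonup F(\eta_{1,\infty})$ in $H^3(\R^2)$: this is a composition of the weakly continuous map $\LL_3^\prime$ (Remark \ref{rem:Gradients wc}) with the bounded linear multiplier $f \mapsto \FF^{-1}[(1-\chi)/\tilde g \cdot \hat f]$ from $H^1(\R^2)$ to $H^3(\R^2)$ provided by Proposition \ref{prop:bounded mapping}, and bounded linear operators preserve weak convergence. Then, for the fixed-point component $\eta_{3,n} := \eta_3(\eta_{1,n})$, Theorem \ref{thm:estimate eta3} bounds $\|\eta_{3,n}\|_3$ uniformly, so a subsequence converges weakly in $H^3(\R^2)$ to some $\tilde \eta_3 \in X_3$. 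Passing to the weak limit in \eqref{eta3 eqn} using Corollary \ref{cor:J' is wc} (weak continuity of $\NN^\prime = \KK_{\mathrm{nl}}^\prime - \Lambda(1-\varepsilon^2)\LL_{\mathrm{nl}}^\prime$), Remark \ref{rem:Gradients wc}, the bounded linear operators $K_0$ and $\FF^{-1}[(1-\chi)/\tilde g\,\FF[\cdot]]$, shows that $\tilde \eta_3$ solves \eqref{eta3 eqn} for $\eta_1 = \eta_{1,\infty}$. The uniqueness clause in Theorem \ref{thm:estimate eta3} forces $\tilde \eta_3 = \eta_3(\eta_{1,\infty})$, so (the original sequence having a unique weak subsequential limit) $\eta_{3,n} \rightharpoonup \eta_3(\eta_{1,\infty})$.

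Combining, $\eta_n \rightharpoonup \eta_\infty$ in $H^3(\R^2)$. By Corollary \ref{cor:J' is wc}, $\JJ_\varepsilon^\prime(\eta_n) \rightharpoonup \JJ_\varepsilon^\prime(\eta_\infty)$ in $H^1(\R^2)$, hence for each $\xi \in H^1_\varepsilon(\R^2)$ Remark \ref{rem:crit pts relation} gives
\[
\mathrm{d}\TT_\varepsilon[\zeta_n](\xi) \;=\; \varepsilon^{-2}\langle \JJ_\varepsilon^\prime(\eta_n), \eta_1(\xi)\rangle_0 \;\longrightarrow\; \varepsilon^{-2}\langle \JJ_\varepsilon^\prime(\eta_\infty), \eta_1(\xi)\rangle_0 \;=\; \mathrm{d}\TT_\varepsilon[\zeta_\infty](\xi).
\]
Combined with $\mathrm{d}\TT_\varepsilon[\zeta_n] \to 0$ this yields $\mathrm{d}\TT_\varepsilon[\zeta_\infty] = 0$, so $\zeta_\infty$ is a critical point. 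Since $\eta_\infty$ has the form $\eta_{1,\infty} + \eta_2(\eta_{1,\infty})$, $\mathrm{d}\JJ_\varepsilon[\eta_\infty]$ automatically vanishes on $\XX_2$ by construction of $\eta_2$, and its vanishing on $\XX_1$ is the statement $\mathrm{d}\TT_\varepsilon[\zeta_\infty] = 0$ translated via Remark \ref{rem:crit pts relation}; hence $\eta_\infty$ is a critical point of $\JJ_\varepsilon$.

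The main obstacle is the third step: one must transfer weak convergence through the implicit definition of $\eta_3$, which requires that every nonlinear term on the right-hand side of \eqref{eta3 eqn}---in particular the cubic and higher corrections inside $\NN^\prime$ built from the Dirichlet--Neumann operator---be weakly continuous. This relies crucially on Proposition \ref{prop:Properties of K} (weak continuity of $K(\eta)$ and its derivative acting on weakly convergent data) packaged into Corollary \ref{cor:J' is wc}, together with the smoothing effect of the $(1-\chi)/\tilde g$ multiplier that compensates any residual loss of derivatives.
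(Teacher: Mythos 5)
Your proposal is correct and follows essentially the same route as the paper: weak convergence of $\eta_{1,n}$, weak continuity of $\LL_3^\prime$ combined with the smoothing multiplier to pass to the limit in $F(\eta_{1,n})$, weak passage to the limit in the fixed-point equation \eqref{eta3 eqn} plus uniqueness of its solution in $X_3$ to identify $\eta_3(\eta_{1,\infty})$, and finally Corollary \ref{cor:J' is wc} together with Remark \ref{rem:crit pts relation} to conclude criticality. The only (immaterial) difference is the order of the last step—the paper first deduces $\mathrm{d}\JJ_\varepsilon[\eta_n]\to 0$ (using that the $\XX_2$-component vanishes identically) and then passes to the limit to get $\JJ_\varepsilon^\prime(\eta_\infty)=0$, whereas you pass to the limit in $\mathrm{d}\TT_\varepsilon[\zeta_n]$ first and then transfer back to $\JJ_\varepsilon$.
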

\begin{proof}
Observe that
$$| \mathrm{d}\JJ_\varepsilon[\eta_{1,n}+\eta_2(\eta_{1,n})](\rho_1) | \lesssim \varepsilon^2 \|\mathrm{d}\TT_\varepsilon [\zeta_n]\| \nn \rho_1 \nn$$
for each $\rho_1 \in \XX_1$, where we have abbreviated $\{\eta_1(\zeta_n)\}$ to $\{\eta_{1,n}\}$ (see Remark
\ref{rem:crit pts relation}), so that
$$
\mathrm{d}\JJ_\varepsilon[\eta_{1,n}+\eta_2(\eta_{1,n})]\rightarrow 0
$$
and hence
\begin{equation}
\langle \JJ_\varepsilon^\prime(\eta_{1,n}+\eta_2(\eta_{1,n})), \rho \rangle_0 \rightarrow 0
\label{ J' goes to zero}
\end{equation}
for each $\rho \in H^3(\R^2)$
as $n \rightarrow \infty$.

The sequence $\{\eta_{1,n}\} \subset X_1$ converges weakly in $\XX_1$ to
$\eta_{1,\infty}=\eta_1(\zeta_\infty) \in X_1$, and it follows from Remark \ref{rem:Gradients wc}
that $\{F(\eta_{1,n})\}$ converges weakly in $H^3(\R^2)$ to $F(\eta_{1,\infty})$. Let $\eta_{3,n}$ be the
unique solution in $X_3$ of equation \eqref{eta3 eqn} with $\eta_1=\eta_{1,n}$, so that
$$\eta_{3,n}=G(\eta_{1,n},\eta_{3,n}).$$
Observing that $G: X_1 \times X_3 \rightarrow H^3(\R^2)$ is weakly continuous, we find that
the weak limit $\eta_{3,\infty} \in X_3$ of $\{\eta_{3,n}\}$ in $H^3(\R^2)$ satisfies
$$\eta_{3,\infty}=G(\eta_{1,\infty},\eta_{3,\infty}),$$
so that $\eta_{3,\infty} = \eta_3(\eta_{1,\infty})$ (because the fixed-point equation
$\eta_3 = G(\eta_{1,\infty}, \eta_3)$ has a unique solution in $X_3$).

Altogether this argument shows that
$\{\eta_2(\eta_{1,n})\}$ converges weakly in $\XX_2$ to $\eta_{2,\infty}=\eta_2(\eta_{1,\infty})$.
Writing
$\eta_\infty=\eta_{1,\infty}+\eta_{2,\infty}$
and using \eqref{ J' goes to zero} and Corollary \ref{cor:J' is wc}, we find that
$$\langle \JJ_\varepsilon^\prime(\eta_\infty),\rho \rangle_0=0$$
for all $\rho \in H^3(\R^2)$. Consequently $\eta_{1,\infty}$ is a critical point of $\widetilde{\JJ}_\varepsilon$
(see the remarks at the end of Section \ref{section: Reduction}) and $\zeta_\infty$ is a 
critical point of $\TT_\varepsilon$.
\end{proof}
\begin{lemma} \label{lem:no vanishing}
Every sequence $\{\zeta_n\}\subset N_\varepsilon$ has the property that
$$
\lim_{n\rightarrow \infty}\sup_{j\in\Z^2}\|
\zeta_n
\|_{H^{3}(\{w:|w-j|_\infty<1/2\})} \neq 0.
$$
\end{lemma}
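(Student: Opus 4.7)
My plan is to argue by contradiction: suppose there exists a sequence $\{\zeta_n\}\subset N_\varepsilon$ along which
$$\sup_{j\in\Z^2}\|\zeta_n\|_{H^3(\{w:|w-j|_\infty<1/2\})}\to 0,$$
and derive a contradiction with the lower bound $\|\zeta_n\|_1\ge D_2$ furnished by Proposition \ref{prop:Bded from below}.

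First I would observe that, because $\zeta_n=\chi(\varepsilon D)\zeta_n$ has Fourier support in a compact set (of size $O(\varepsilon^{-1})$) and $\{\zeta_n\}$ is bounded in $H^1(\R^2)$, the sequence is in fact bounded in $H^s(\R^2)$ for every $s\ge 0$; the bounds depend on $\varepsilon$ but $\varepsilon$ is fixed here. Consequently I may apply a standard Lions-type concentration-compactness lemma: an $H^1$-bounded sequence whose local $L^2$-norms on unit cubes tend to zero uniformly must satisfy $\|\zeta_n\|_{L^q(\R^2)}\to 0$ for every $q\in(2,\infty)$. The vanishing hypothesis implies the local $L^2$ hypothesis a fortiori (since $L^2(Q_j)\hookleftarrow H^3(Q_j)$), so in particular $\|\zeta_n\|_{L^4(\R^2)}\to 0$.

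Next I would show that $\SS(\zeta_n)\to 0$. The $L^4$ term $C_2\int|\zeta_n|^4\dx\dz$ vanishes directly. For the nonlocal term, the Fourier multiplier $k_1^2/((1-\Lambda)k_1^2+k_2^2)$ is bounded (indeed $0<\Lambda<1$), so
$$\int_{\R^2}\frac{k_1^2}{(1-\Lambda)k_1^2+k_2^2}|\FF[|\zeta_n|^2]|^2\dk_1\dk_2\lesssim \bigl\||\zeta_n|^2\bigr\|_0^2=\|\zeta_n\|_{L^4}^4\to 0.$$

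The final step is to insert this information into the natural-constraint identity \eqref{ relation Q-S}, namely
$$2\QQ(\zeta_n)=4\SS(\zeta_n)-\varepsilon^{1/2}\mathrm{d}\RR_\varepsilon[\zeta_n](\zeta_n).$$
Combining $\QQ(\zeta_n)\ge D_1\|\zeta_n\|_1^2$ with the estimate $|\mathrm{d}\RR_\varepsilon[\zeta_n](\zeta_n)|\lesssim\|\zeta_n\|_1^2$ (which follows from $\RR_\varepsilon=\underline{\OO}(\|\zeta\|_1^2)$) and choosing $\varepsilon_0$ so small that the $\varepsilon^{1/2}$ contribution is dominated by $D_1\|\zeta_n\|_1^2$, I obtain $\|\zeta_n\|_1^2\lesssim \SS(\zeta_n)\to 0$, contradicting $\|\zeta_n\|_1\ge D_2>0$. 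The main (minor) obstacle is merely citing or recording the Lions-type lemma in the precise form needed here; the rest is a direct assembly of already-established estimates.
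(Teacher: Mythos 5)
Your argument is correct and follows the same overall strategy as the paper: assume vanishing, deduce $\SS(\zeta_n)\to 0$, and contradict the \emph{a priori} bounds of Proposition \ref{prop:Bded from below} via a natural-constraint identity. The only real difference is the middle step: you invoke a Lions-type vanishing lemma (bounded in $H^1$, uniformly small local $L^2$ norms $\Rightarrow$ $L^q\to 0$ for $q\in(2,\infty)$), whereas the paper avoids any such lemma by exploiting the fact that the hypothesis gives uniform smallness of the \emph{local $H^3$} norms: by Sobolev embedding on unit cubes this yields $\|\zeta_n\|_\infty\to 0$ directly, and then $\int_{\R^2}|\zeta_n|^4\dx\dz\le\|\zeta_n\|_\infty^2\|\zeta_n\|_0^2\le\|\zeta_n\|_\infty^2R^2\to 0$, with the nonlocal part of $\SS$ controlled exactly as you do via the bounded multiplier. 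So the paper's step is more elementary and self-contained, while yours imports a standard but heavier tool (and, as you note, would require recording that lemma precisely); your remark about $H^s$-boundedness for all $s$ is in fact superfluous, since the $H^1$ bound $\|\zeta_n\|_1\le R$ already suffices for the Lions lemma. At the endgame you use \eqref{ relation Q-S} together with $\QQ(\zeta)\ge D_1\|\zeta\|_1^2$ to force $\|\zeta_n\|_1\to 0$ and contradict $\|\zeta_n\|_1\ge D_2$, while the paper uses \eqref{ J=S} to get $\TT_\varepsilon(\zeta_n)=O(\varepsilon^{1/2})$ and contradicts the lower bound $\TT_\varepsilon\ge\tfrac14 D_1 D_2^2$ on $N_\varepsilon$; these are interchangeable, both resting on Proposition \ref{prop:Bded from below} and on taking $\varepsilon_0$ small with constants independent of $\varepsilon$, and your handling of the remainder term $\mathrm{d}\RR_\varepsilon[\zeta_n](\zeta_n)$ is consistent with the definition of $\underline{\OO}(\|\zeta\|_1^2)$.
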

\begin{proof}
The contrary assumption
$$
\lim_{n\rightarrow \infty}\sup_{j\in\Z^2}\|
\zeta_n
\|_{H^{3}(\{w:|w-j|_\infty<1/2\})}=0
$$
implies that
$$\lim_{n\rightarrow \infty}\|\zeta_n\|_\infty=0$$
and hence that
$$
\mathcal{S}(\zeta_n) \lesssim \int_{\R^2}|\zeta_n|^4 \dx\dz
\leq \|\zeta_n\|_\infty^2 R^2
\rightarrow 0
$$
as $n \rightarrow \infty$. It follows from \eqref{ J=S} that
$$\lim_{n\rightarrow \infty}\TT_\varepsilon(\zeta_n) = O(\varepsilon^{1/2}),$$
which contradicts Proposition \ref{prop:Bded from below}.
\end{proof}
\begin{theorem}
\label{thm:Existence of a critical point}
Let $\{\zeta_n\} \subset B_{R-1}(0)$ be a minimising sequence for $\TT_\varepsilon|_{N_\varepsilon}$
with
$\mathrm{d}\TT_\varepsilon[\zeta_n]\rightarrow 0$ as $n \rightarrow \infty$.
There exists a sequence $\{w_n\} \subset \Z^2$ with the property that
$\{\zeta_n(\cdot+w_n)\}$
converges weakly to a nontrivial critical point $\zeta_\infty$ of
$\TT_\varepsilon$. The function $\eta_\infty = \eta_1(\zeta_\infty) + \eta_2(\eta_1(\zeta_\infty))$
is a nonzero critical point of $\JJ_\varepsilon$.
\end{theorem}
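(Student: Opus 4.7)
The plan is to use Lemma~\ref{lem:no vanishing} to rule out vanishing by translating the sequence, and then to apply Lemma~\ref{lem:weak limit of ps sequence} to the translated sequence. The decisive structural fact is that $\TT_\varepsilon$ is invariant under translations $\zeta\mapsto\zeta(\cdot+w)$, $w\in\R^2$, since $\QQ$ and the $|\zeta|^4$ term of $\SS$ are integrals of pointwise polynomials in $\zeta$, while the remaining term of $\SS$ and the remainder $\RR_\varepsilon$ are built from Fourier multipliers and integrals of pointwise polynomial expressions in $\zeta$ and $|\zeta|^2$. In particular the natural constraint $N_\varepsilon$, the Palais-Smale condition $\mathrm d\TT_\varepsilon[\zeta_n]\to 0$ and the ball $B_{R-1}(0)$ are all preserved under translation.

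First I would apply Lemma~\ref{lem:no vanishing}: after extracting a subsequence, there exist $c_0>0$ and $w_n\in\Z^2$ with
$$
\|\zeta_n\|_{H^{3}(\{w:|w-w_n|_\infty<1/2\})}\ge c_0 \quad\text{for all } n.
$$
Setting $\tilde\zeta_n:=\zeta_n(\cdot+w_n)$, the translation invariance above ensures that $\{\tilde\zeta_n\}\subset B_{R-1}(0)\cap N_\varepsilon$ is still a minimising Palais-Smale sequence and now satisfies the centred lower bound $\|\tilde\zeta_n\|_{H^{3}(\{|w|_\infty<1/2\})}\ge c_0$.

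Next I would extract a further subsequence, weakly convergent in $H^1_\varepsilon(\R^2)$, with weak limit $\zeta_\infty$. Remark~\ref{rem:defn of wc} guarantees strong convergence in $H^3_\mathrm{loc}(\R^2)$, so the centred lower bound passes to the limit and gives $\|\zeta_\infty\|_{H^3(\{|w|_\infty<1/2\})}\ge c_0>0$; in particular $\zeta_\infty\neq 0$. Lemma~\ref{lem:weak limit of ps sequence} then asserts that $\zeta_\infty$ is a critical point of $\TT_\varepsilon$ and that $\eta_\infty=\eta_1(\zeta_\infty)+\eta_2(\eta_1(\zeta_\infty))$ is a critical point of $\JJ_\varepsilon$. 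Non-triviality of $\eta_\infty$ follows because $\zeta\mapsto\eta_1(\zeta)$ is an isomorphism onto $\chi(D)H^1(\R^2)$ (so $\eta_1(\zeta_\infty)\neq 0$) while the splitting $\XX=\XX_1\oplus\XX_2$ is direct, so that $\eta_1(\zeta_\infty)+\eta_2(\eta_1(\zeta_\infty))\ne 0$.

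The main obstacle is precisely the potential vanishing of the weak limit: on $\R^2$ translation invariance of the problem means that a bounded minimising Palais-Smale sequence could easily converge weakly to zero, in which case Lemma~\ref{lem:weak limit of ps sequence} would only yield the trivial critical point. Lemma~\ref{lem:no vanishing} is the precise tool designed to rule this out --- its proof uses \eqref{ J=S} together with the lower bound of Proposition~\ref{prop:Bded from below} to force the $L^4$ (hence local $H^3$) mass of any sequence in $N_\varepsilon$ to concentrate in at least one unit cell. The translation by $w_n$ then moves that cell to the origin, where the strong local compactness packaged in Remark~\ref{rem:defn of wc} transfers the lower bound to the weak limit and makes the non-triviality step essentially automatic.
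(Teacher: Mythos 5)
Your argument is essentially the paper's proof: the paper likewise combines Lemma \ref{lem:no vanishing} with Remark \ref{rem:defn of wc} to select translations $\{w_n\}$ for which the weak limit is nonzero (it phrases this step as a contradiction rather than choosing $w_n$ where the supremum in Lemma \ref{lem:no vanishing} is essentially attained) and then applies Lemma \ref{lem:weak limit of ps sequence} to the translated sequence, exactly as you do. The only caveat is that your ``decisive structural fact'' --- exact invariance of $\TT_\varepsilon$, and hence of the Palais--Smale property, under $\zeta\mapsto\zeta(\cdot+w)$ --- is asserted rather than proved (the symmetry that translations of the wave profile actually induce on the envelope is $\zeta\mapsto \ee^{\ii\omega w_1/\varepsilon}\zeta(\cdot+w)$, so invariance of the remainder $\RR_\varepsilon$ under pure translations does not follow merely from its being built from Fourier multipliers and pointwise expressions); since the paper needs the same property when it applies Lemma \ref{lem:weak limit of ps sequence} to $\{\zeta_n(\cdot+w_n)\}$ and is equally silent about it, this does not distinguish your proof from the paper's.
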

\begin{proof}
In view of Lemma \ref{lem:weak limit of ps sequence} it remains
only to demonstrate that we can
select $\{w_n\} \subset \Z^2$ so that the weak limit of $\{\zeta_n(\cdot+w_n)\}$ is not zero. Supposing the
contrary, we find that
$$\lim_{n \rightarrow \infty}
\|\zeta_n(\cdot+w_n) \|_{H^3(\{w:|w|_\infty<1/2\})}=0
$$
for all sequences $\{w_n\} \subset \Z^2$ (see Remark \ref{rem:defn of wc})
and hence
$$
\lim_{n\rightarrow \infty}\sup_{j\in\Z^2}\|
\zeta_n
\|_{H^{3}(\{w:|w-j|_\infty<1/2\})} = 0,
$$
which
contradicts Lemma \ref{lem:no vanishing}.
\end{proof}

\subsection{Existence of a ground state} \label{Existence 2}

In this section we improve the result of Theorem \ref{thm:Existence of a critical point} (again fixing $\varepsilon>0$)
by showing that we
can choose the sequence $\{w_n\}$ to ensure convergence to a ground state.
\begin{theorem} \label{thm:Ground state}
Let $\{\zeta_n\} \subset B_{R-1}(0)$ be a minimising sequence for $\TT_\varepsilon|_{N_\varepsilon}$
with
$\mathrm{d}\TT_\varepsilon[\zeta_n]\rightarrow 0$ as $n \rightarrow \infty$.
There exists a sequence $\{w_n\} \subset \Z^2$ with the property that a subsequence of
$\{\zeta_n(\cdot+w_n)\}$
converges weakly to a ground state $\zeta_\infty$ (so that $\TT_\varepsilon(\zeta_\infty)=c_\varepsilon$).

Moreover, the sequence $\{\eta_n(\cdot+w_n)\} \subset U$, where $\eta_n = \eta_1(\zeta_n)+\eta_2(\eta_1(\zeta_n))$, $n \in \N$,
converges strongly in $H^s({\mathbb R}^2)$ for $s \in [0,3)$ to
$\eta_\infty = \eta_1(\zeta_\infty) + \eta_2(\eta_1(\zeta_\infty))$, and this function
is a nonzero critical point of $\JJ_\varepsilon$.
\end{theorem}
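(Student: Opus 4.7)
The plan is to apply the abstract concentration-compactness principle from the Appendix to the bounded sequence $\{\zeta_n\} \subset B_{R-1}(0) \subset H_\varepsilon^1(\R^2)$. After passing to a subsequence and translating by a suitable $\{w_n\} \subset \Z^2$, the trichotomy reduces to vanishing, dichotomy, or concentration. Vanishing is excluded immediately by Lemma \ref{lem:no vanishing}: a diagonal extraction supplies $\{w_n\}$ along which the local $H^3$-mass of $\zeta_n(\cdot+w_n)$ on the unit cube around the origin is bounded below, ensuring that the (subsequential) weak limit $\zeta_\infty$ in $H^3(\R^2)$ is nonzero (Remark \ref{rem:defn of wc}).

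The essential obstacle is dichotomy. In this scenario one would write $\zeta_n(\cdot+w_n)=\zeta_n^{(1)}+\zeta_n^{(2)}+r_n$ with $\|r_n\|_1\to 0$, the supports of $\zeta_n^{(1)},\zeta_n^{(2)}$ separating in physical space, and both pieces bounded away from zero in $H^1$. The quadratic form $\QQ$ and the local quartic term $\int|\zeta|^4\dx\dz$ split automatically across disjoint supports; the nonlocal quartic term, written as $\langle M(|\zeta|^2),|\zeta|^2\rangle_0$ with $M$ the bounded Fourier multiplier $k_1^2/((1-\Lambda)k_1^2+k_2^2)$, produces a cross-contribution $\langle M(|\zeta_n^{(1)}|^2),|\zeta_n^{(2)}|^2\rangle_0$ that tends to zero as the supports separate (the distributional kernel of $M$ has integrable decay away from the diagonal). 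The same reasoning controls the quartic cross-terms in $\RR_\varepsilon$. Combined with $\mathrm{d}\TT_\varepsilon[\zeta_n]\to 0$, each $\zeta_n^{(i)}$ is asymptotically an element of $N_\varepsilon$, and subadditivity forces $c_\varepsilon=\lim\TT_\varepsilon(\zeta_n)\ge 2c_\varepsilon$, contradicting $c_\varepsilon>0$ (Remark \ref{rem:inf is positive}).

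Only concentration remains, so $\zeta_n(\cdot+w_n)\rightharpoonup\zeta_\infty\ne 0$ weakly. Lemma \ref{lem:weak limit of ps sequence} then shows that $\zeta_\infty$ is a critical point of $\TT_\varepsilon$ and hence lies in $N_\varepsilon$, giving $\TT_\varepsilon(\zeta_\infty)\ge c_\varepsilon$. For the reverse inequality I apply identity \eqref{ J=Q} valid on $N_\varepsilon$: the leading quadratic $\tfrac{1}{2}\QQ$ is weakly lower semi-continuous, while the remainder $\RR_\varepsilon-\tfrac{1}{4}\mathrm{d}\RR_\varepsilon[\cdot](\cdot)$ is weakly continuous on $B_R(0)$ (by the chain rule applied to the smooth map $\zeta\mapsto\eta$ together with Corollary \ref{cor:J' is wc} and Remark \ref{rem:Gradients wc}). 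Therefore $\TT_\varepsilon(\zeta_\infty)\le \liminf \TT_\varepsilon(\zeta_n(\cdot+w_n))=c_\varepsilon$, and $\zeta_\infty$ is a ground state.

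Equality in this lower semi-continuity argument forces $\QQ(\zeta_n(\cdot+w_n))\to\QQ(\zeta_\infty)$; since $\QQ$ is equivalent to $\|\cdot\|_1^2$ and the weak convergence already holds, this upgrades to strong $H^1$ convergence. Transporting through the isomorphism $\zeta\mapsto\eta_1$ yields strong convergence of the corresponding translates of $\eta_{1,n}$ in the scaled norm $\nn\cdot\nn$, and hence in every $H^s(\R^2)$ since $\hat\eta_{1,n}$ has support in the fixed compact set $B_\delta(\pm\omega,0)$. The smoothness of $\eta_1\mapsto \eta_2(\eta_1)=F(\eta_1)+\eta_3(\eta_1)$ from $(X_1,\nn\cdot\nn)$ into $H^3(\R^2)$ (Corollary \ref{cor:F estimates} and Theorem \ref{thm:estimate eta3}) then gives convergence of $\eta_n=\eta_{1,n}+\eta_{2,n}$ in $H^3(\R^2)$; interpolating the uniform $H^3$-bound against strong $L^2$-convergence yields the claimed strong convergence in $H^s$ for $s\in[0,3)$. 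Lemma \ref{lem:weak limit of ps sequence} finally confirms that $\eta_\infty$ is a critical point of $\JJ_\varepsilon$, nonzero because $\zeta_\infty\ne 0$.
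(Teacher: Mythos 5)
Your overall skeleton (concentration-compactness, excluding vanishing, weak limit is a critical point, then upgrading to a ground state and to strong convergence) matches the paper, but the decisive step is not justified. You claim that the remainder $\RR_\varepsilon-\tfrac14\mathrm{d}\RR_\varepsilon[\cdot](\cdot)$ in \eqref{ J=Q} is \emph{weakly continuous} on $B_R(0)$, citing Corollary \ref{cor:J' is wc} and Remark \ref{rem:Gradients wc}. Those results assert weak continuity of the \emph{gradient maps} (i.e.\ $\langle \JJ_\varepsilon^\prime(\eta_n),\rho\rangle_0\to\langle \JJ_\varepsilon^\prime(\eta_\infty),\rho\rangle_0$ for fixed $\rho$); they do not give convergence of functional values along weakly convergent sequences. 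In fact weak continuity of the remainder is false: it contains translation-invariant quartic-type contributions, and for a fixed nonzero profile translated to spatial infinity one has $\zeta_n\rightharpoonup 0$ in $H^1_\varepsilon(\R^2)$ while the functional values do not tend to $0$. So your chain $\TT_\varepsilon(\zeta_\infty)\le\liminf\TT_\varepsilon(\zeta_n)=c_\varepsilon$, and with it the identification of $\zeta_\infty$ as a ground state and the subsequent claim $\QQ(\zeta_n)\to\QQ(\zeta_\infty)$ (your route to strong convergence), collapses. What the paper uses instead is precisely the extra output of the concentration-compactness argument, namely the uniform-in-translates local convergence \eqref{ concentrate} of Lemma \ref{lem:Step 1}; via Remark \ref{rem:sup remark} this yields $\|\eta_n-\eta_\infty\|_{1,\infty}\to 0$ (Proposition \ref{prop:Step 2}), and convergence of the functional values is then obtained in Lemma \ref{lem:Step 3} by writing $2\JJ_\varepsilon(\eta)-\mathrm{d}\JJ_\varepsilon[\eta](\eta)=\GG(\eta)+\tfrac{(1-\varepsilon^2)\Lambda}{2}\int_{\R^2}\eta(\mathrm{d}K[\eta]\eta)\eta\dx\dz$ and passing to the limit using this $W^{1,\infty}$ convergence and Proposition \ref{prop:Properties of K}; strong $H^1$ (hence $H^s$, $s<3$) convergence of $\eta_n$ then requires the separate norm-convergence argument of Lemma \ref{lem:Step 4}, not an abstract lower-semicontinuity/equality trick.

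A second, smaller but genuine, problem is your treatment of dichotomy. Splitting $\zeta_n$ by spatial cutoffs into pieces with (essentially) disjoint supports is incompatible with the space in which $\TT_\varepsilon$ and $N_\varepsilon$ live: elements of $H^1_\varepsilon(\R^2)=\chi(\varepsilon D)H^1(\R^2)$ have compactly supported Fourier transforms, so they cannot have disjoint supports, and multiplying by a cutoff destroys the Fourier-support constraint, so the pieces are not even candidates for (approximate) membership of $N_\varepsilon$; moreover the splitting of $\RR_\varepsilon$ across separated pieces and the normalisation of each piece onto $N_\varepsilon$ are asserted rather than proved. The paper avoids all of this with the discrete Benci--Cerami-type decomposition of the Appendix (translations only, profiles arising as weak limits of translates, which do stay in $H^1_\varepsilon(\R^2)$), the observation via Lemma \ref{lem:weak limit of ps sequence} that each profile is a critical point and hence lies in $N_\varepsilon$, and the identity \eqref{ J=S} together with the splitting of the quartic functional $\SS$ (Riemann--Lebesgue for the nonlocal part), which gives $c_\varepsilon\ge mc_\varepsilon-O(\varepsilon^{1/2})$ and forces $m=1$. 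If you reroute your dichotomy argument and your ground-state step through these ingredients, the proof closes; as written it has a gap at both places.
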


The proof of Theorem \ref{thm:Ground state} consists of Lemma \ref{lem:Step 1},
Proposition \ref{prop:Step 2} and Lemmata \ref{lem:Step 3}, \ref{lem:Step 4} below; here $\{\zeta_n\} \subset B_{R-1}(0)$
is a minimising sequence for $\TT_\varepsilon|_{N_\varepsilon}$ with
$\mathrm{d}\TT_\varepsilon[\zeta_n]\rightarrow 0$ as $n \rightarrow \infty$.

\begin{lemma} \label{lem:Step 1}
There exists $\{w_n\} \subset \Z^2$ and $\zeta_\infty \in N_\varepsilon$ such that
$\zeta_n(\cdot+w_n) \rightharpoonup \zeta_\infty$ and
$$
\lim_{n\rightarrow \infty}\sup_{j\in\Z^2}\left\|\zeta_n(\cdot+w_n)-\zeta_\infty
\right\|_{H^{3}(\{w:|w-j|_\infty<1/2\})}
=0.
$$
\end{lemma}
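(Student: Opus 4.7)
The plan is to combine Theorem \ref{thm:Existence of a critical point} with a second-bubble exclusion argument. Theorem \ref{thm:Existence of a critical point} already supplies a translation sequence $\{w_n\}\subset\Z^2$ such that, along a subsequence, $\zeta_n(\cdot+w_n)\rightharpoonup\zeta_\infty$ in the sense of Remark \ref{rem:defn of wc}, with $\zeta_\infty$ a nontrivial critical point of $\TT_\varepsilon$; since $\zeta_\infty\neq 0$ and $\mathrm{d}\TT_\varepsilon[\zeta_\infty]=0$, it lies in $N_\varepsilon$, and weak lower semicontinuity of $\|\cdot\|_1$ yields $\|\zeta_\infty\|_1\le R-1$.

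Set $v_n:=\zeta_n(\cdot+w_n)-\zeta_\infty$. The remaining task is to prove $\sup_{j\in\Z^2}\|v_n\|_{H^3(B_{1/2}(j))}\to 0$. Suppose not: there exist $\delta>0$, a subsequence and $j_n\in\Z^2$ with $\|v_n\|_{H^3(B_{1/2}(j_n))}\ge\delta$. Remark \ref{rem:defn of wc} gives $v_n\to 0$ strongly in $H^3_\mathrm{loc}$, which forces $|j_n|\to\infty$. Passing to a further subsequence, $\zeta_n(\cdot+w_n+j_n)\rightharpoonup\tilde\zeta$; since $\zeta_\infty\in H^3(\R^2)$ the translates $\zeta_\infty(\cdot+j_n)$ tend to $0$ weakly in $H^3(\R^2)$ and strongly in $H^3_\mathrm{loc}$, so $v_n(\cdot+j_n)\rightharpoonup\tilde\zeta$ and, using Remark \ref{rem:defn of wc} again, $v_n(\cdot+j_n)\to\tilde\zeta$ strongly in $H^3_\mathrm{loc}$. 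The lower bound $\|v_n(\cdot+j_n)\|_{H^3(B_{1/2}(0))}\ge\delta$ then forces $\tilde\zeta\ne 0$. Translation-invariance of $\TT_\varepsilon$ means $\{\zeta_n(\cdot+w_n+j_n)\}$ is still a Palais-Smale sequence in $B_{R-1}(0)$, so Lemma \ref{lem:weak limit of ps sequence} identifies $\tilde\zeta$ as a second nontrivial critical point of $\TT_\varepsilon$, whence $\tilde\zeta\in N_\varepsilon$.

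The contradiction arises by splitting the quadratic energy. Formula \eqref{ J=Q}, combined with the bound $|\RR_\varepsilon(\zeta)|+|\mathrm{d}\RR_\varepsilon[\zeta](\zeta)|\lesssim\|\zeta\|_1^2$, gives $\TT_\varepsilon(\zeta)=\tfrac12\QQ(\zeta)+O(\varepsilon^{1/2}\|\zeta\|_1^2)$ uniformly on $N_\varepsilon\cap B_R(0)$. The Br\'{e}zis-Lieb identities
\begin{align*}
\QQ(\zeta_n(\cdot+w_n))&=\QQ(\zeta_\infty)+\QQ(v_n)+o(1),\\
\QQ(v_n)=\QQ(v_n(\cdot+j_n))&=\QQ(\tilde\zeta)+\QQ(v_n(\cdot+j_n)-\tilde\zeta)+o(1),
\end{align*}
whose cross terms vanish because $v_n\rightharpoonup 0$ and $v_n(\cdot+j_n)-\tilde\zeta\rightharpoonup 0$ in $H^1(\R^2)$, together with the positivity of $\QQ$ therefore yield
$$
c_\varepsilon+o(1)=\TT_\varepsilon(\zeta_n)\ge\TT_\varepsilon(\zeta_\infty)+\TT_\varepsilon(\tilde\zeta)-O(\varepsilon^{1/2})\ge 2c_\varepsilon-O(\varepsilon^{1/2}).
$$
Remark \ref{rem:inf is positive} asserts $\liminf_{\varepsilon\to 0}c_\varepsilon>0$, so this inequality fails for all sufficiently small $\varepsilon$, giving the desired contradiction.

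The main obstacle I expect is keeping the energy splitting quantitatively correct at each step: one has to ensure that the $O(\varepsilon^{1/2})$ loss in the formula for $\TT_\varepsilon$ is genuinely uniform across the three pieces $\zeta_\infty$, $\tilde\zeta$ and the residual, each of which is bounded in $H^1_\varepsilon(\R^2)$ by $R$ thanks to the inclusion $\{\zeta_n\}\subset B_{R-1}(0)$ and weak lower semicontinuity. This uniformity is precisely what the $\underline{\OO}$-estimates of Theorem \ref{thm:red func} and the \emph{a priori} bounds of Proposition \ref{prop:Bded from below} are designed to provide.
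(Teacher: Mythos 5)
Your proof is correct, but it takes a genuinely different route from the paper's. The paper proves this lemma by applying the abstract concentration-compactness machinery of the Appendix (Lemmata \ref{lem:cc1} and \ref{lem:cc2}) to $x_{n,j}=\zeta_n(\cdot+j)|_{(-1/2,1/2)^2}$, and then rules out more than one profile by splitting the \emph{quartic} part: it shows $\SS(\zeta_n)\to\sum_{\ell}\SS(\zeta^\ell)+O(\varepsilon)$ using the inner-product structure $[\cdot,\cdot]$, the $\ell^\infty$-closeness furnished by the abstract lemma and the Riemann--Lebesgue lemma, and then combines \eqref{ J=S} with $\TT_\varepsilon(\zeta^\ell)\ge c_\varepsilon$ to obtain $c_\varepsilon\ge m c_\varepsilon-O(\varepsilon^{1/2})$, whence $m=1$; the uniform local convergence is then precisely the $m=1$ conclusion of Lemma \ref{lem:cc1}. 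You instead start from Theorem \ref{thm:Existence of a critical point}, manufacture a second profile $\tilde\zeta\neq0$ directly from the assumed failure of uniform local convergence, identify it as a nontrivial critical point (hence an element of $N_\varepsilon$) via Lemma \ref{lem:weak limit of ps sequence}, and exclude it by splitting the \emph{quadratic} part via \eqref{ J=Q}: since $\QQ$ is a positive, translation-invariant quadratic form, its cross terms vanish under weak convergence alone, giving $\QQ(\zeta_n)\ge\QQ(\zeta_\infty)+\QQ(\tilde\zeta)+o(1)$ and hence $c_\varepsilon\ge 2c_\varepsilon-O(\varepsilon^{1/2})$, contradicting Remark \ref{rem:inf is positive} for small $\varepsilon$. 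This is the same energetic mechanism as the paper's ($m$ bubbles cost $mc_\varepsilon$ up to $O(\varepsilon^{1/2})$), but technically lighter: you bypass the Appendix and the delicate $\SS$-splitting estimates, at the price of obtaining no profile-decomposition bookkeeping (which the paper does not use later anyway). Two shared caveats, so you are on the same footing as the paper rather than introducing a gap: both arguments apply Lemma \ref{lem:weak limit of ps sequence} to integer translates of the Palais--Smale sequence, i.e.\ both tacitly use that $\mathrm{d}\TT_\varepsilon[\zeta_n(\cdot+w)]\to 0$, which rests on the translation equivariance of $\TT_\varepsilon$ invoked implicitly throughout Section \ref{sec:Existence theory}; and, like the paper's, your conclusion is obtained only for all sufficiently small $\varepsilon$ and along a subsequence, which is consistent with how Lemma \ref{lem:Step 1} is used in Theorem \ref{thm:Ground state}.
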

\begin{proof}
This lemma is established by applying the abstract concentration-compactness theory given in
the Appendix  and showing that `concentration' occurs.
We set $H=H^3((-\frac{1}{2},\frac{1}{2})^2)$, 
define $x_n\in \ell^2(\Z^2,H)$ for $n \in \N$ by
$$x_{n,j}=\zeta_n(\cdot+j)|_{(-\frac{1}{2},\frac{1}{2})^2}
\in H^3((-\tfrac{1}{2},\tfrac{1}{2})^2), \qquad j\in \Z^2,$$
and apply Lemmata \ref{lem:cc1} and \ref{lem:cc2} to the sequence $\{x_n\}\subset \ell^2(\Z^2,H)$,
noting that
$$\|x_n\|_{\ell^2(\Z^2,H)}=\|\zeta_n\|_3, \qquad
\|x_n\|_{\ell^\infty(\Z^2,H)}=\sup_{k\in\Z^2}\|
\zeta_n
\|_{H^{3}(\{w:|w-k|_\infty<1/2\})}$$
for $n \in \N$. Assumptions (i) and (ii) follow from the fact that 
$\{\zeta_n(\cdot+j)|_{(-\frac{1}{2},\frac{1}{2})^2}:n\geq 1,j\in \Z^2\}$
is bounded in $H^s((-\frac{1}{2},\frac{1}{2})^2)$ for all $s \geq 0$, while
assumption (iii) is verified by Lemma \ref{lem:no vanishing}. Given $\varepsilon>0$, the theory
asserts the existence of a natural number $m$, sequences
$\{w_n^1\}, \ldots, \{w_n^m\} \subset \Z^2$ with
\begin{equation}
\lim_{n \rightarrow \infty} |w_n^{m^{\prime\prime}}-w_n^{m^\prime}| \rightarrow \infty, \qquad 1 \leq m^{\prime\prime} < m^\prime \leq m,
\label{ Split}
\end{equation}
and functions
$\zeta^1,\ldots,\zeta^m\in  H_\varepsilon^1(\R^2)\setminus\{0\}$ such that
$\zeta_n(\cdot+w^{m^\prime}_n) \rightharpoonup \zeta^{m^\prime}$
(see Remark \ref{rem:defn of wc}),
\begin{equation}
\label{ approach}
\limsup_{n\rightarrow \infty}\sup_{j\in\Z^2}\left\|
\zeta_n-\sum_{\ell=1}^m\zeta^\ell(\cdot-w^\ell_n)
\right\|_{H^{3}(\{w:|w-j|_\infty<1/2\})}
\leq \varepsilon,
\end{equation}
\begin{equation}\label{ sum phil}
\sum_{\ell=1}^m\|\zeta^\ell\|_3^2\leq
\limsup_{n\rightarrow \infty}\|\zeta_n\|_3^2
\end{equation}
and
\begin{equation}
\label{ concentrate}
\lim_{n\rightarrow \infty}\sup_{j\in\Z^2}\left\|\zeta_n-\zeta^1(\cdot-w^1_n)
\right\|_{H^{3}(\{w:|w-j|_\infty<1/2\})}
=0
\end{equation}
if $m=1$. It follows from Lemma \ref{lem:weak limit of ps sequence}
that $\mathrm{d}\TT_\varepsilon[\zeta^\ell]=0$, so that
$\zeta^\ell \in N_\varepsilon$ and $\TT_\varepsilon(\zeta^\ell) \geq c_\varepsilon$.

Writing
$$\tilde{\zeta}_n=\sum_{\ell=1}^m\zeta^\ell(\cdot-w^\ell_n)$$
and
$\mathcal{S}(f)=[|f|^2,|f|^2]$, where
$$[f_1,f_2]:=
C_1\int_{\R^2}\frac{k_1^2}{(1-\Lambda)k_1^2+k_2^2}
\FF[f_1]\overline{\FF[f_2]}\dk_1\dk_2
+C_2\int_{\R^2}f_1 f_2\dx\dz
$$
defines a continuous inner product for $L^2(\R^2)$, we find that
\begin{align}
\limsup_{n\rightarrow \infty}\hspace{-5mm}&\hspace{5mm}\left|\mathcal{S}(\zeta_n)-\mathcal{S}(\tilde{\zeta}_n)\right|
\nonumber \\
&=
\limsup_{n\rightarrow \infty}\left[\,|\zeta_n|^2-|\tilde{\zeta}_n|^2,|\zeta_n|^2+|\tilde{\zeta}_n|^2\,\right]
\nonumber \\
& \lesssim \limsup_{n\rightarrow \infty}\left\|\, |\zeta_n|^2-|\tilde{\zeta}_n|^2\right\|_0
\left\|\, |\zeta_n|^2+|\tilde{\zeta}_n|^2\right\|_0
\nonumber \\
& = \limsup_{n\rightarrow \infty}\left\{\sum_{j\in \Z^2}\left\|\mathrm{Re}\left( (\zeta_n-\tilde{\zeta}_n)
\overline{(\zeta_n+\tilde{\zeta}_n)}\right)\right\|_{L^2_j}^2\right\}^{1/2}\hspace{-3mm}\left\|\, |\zeta_n|^2+|\tilde{\zeta}_n|^2\right\|_0
\nonumber \\
& \leq\limsup_{n\rightarrow \infty}\left\{\sum_{j\in \Z^2}
\left\| \zeta_n-\tilde{\zeta}_n\right\|_{L^4_j}^2
\left\| \zeta_n+\tilde{\zeta}_n \right\|_{L^4_j}^2
\right\}^{1/2}\hspace{-3mm} \left\|\, |\zeta_n|^2+|\tilde{\zeta}_n|^2\right\|_0\nonumber \\
& \lesssim \limsup_{n\rightarrow \infty}\left\{\sum_{j\in \Z^2}
\left\| \zeta_n-\tilde{\zeta}_n\right\|_{H^3_j}^2
\left\| \zeta_n+\tilde{\zeta}_n \right\|_{H^3_j}^2
\right\}^{1/2}\hspace{-3mm} \left\|\, |\zeta_n|^2+|\tilde{\zeta}_n|^2\right\|_0
\nonumber \\
& \lesssim \varepsilon\,  \limsup_{n\rightarrow \infty}\left\{\sum_{j\in \Z^2}
\left\| \zeta_n+\tilde{\zeta}_n \right\|_{H^3_j}^2
\right\}^{1/2}\hspace{-3mm}
\left\|\, |\zeta_n|^2+|\tilde{\zeta}_n|^2\right\|_0
\nonumber \\
& \lesssim \varepsilon \limsup_{n\rightarrow \infty}
\left\| \zeta_n+\tilde{\zeta}_n \right\|_3
\left\|\, |\zeta_n|^2+|\tilde{\zeta}_n|^2\right\|_0
\nonumber \\
& \lesssim \varepsilon \limsup_{n\rightarrow \infty}
\left(\left\| \zeta_n \right\|_3+
\| \tilde{\zeta}_n \|_3\right)
\left(\left\|\zeta_n\right\|_3^2
+\|\tilde{\zeta}_n\|_3^2\right)
\nonumber \\
&\lesssim \varepsilon \limsup_{n\rightarrow \infty}
\| \zeta_n \|_3^3 \nonumber \\
& = O(\varepsilon), \label{ First S estimate}
\end{align}
in which we have used the notation
$$L^2_j=L^2(\{w:|w-j|_\infty<1/2\}),\qquad
L^4_j=L^4(\{w:|w-j|_\infty<1/2\}),$$
$$H^3_j=H^3(\{w:|w-j|_\infty<1/2\})$$
and \eqref{ approach}, \eqref{ sum phil}, and
\begin{align}
\lim_{n\rightarrow \infty } \mathcal{S}(\tilde{\zeta}_n)
&=\lim_{n\rightarrow \infty }\sum_{\ell_1,\ell_2,\ell_3,\ell_4} \left[\zeta^{\ell_1}(\cdot-w_n^{\ell_1})
\overline{\zeta^{\ell_2}(\cdot-w_n^{\ell_2})},
\zeta^{\ell_3}(\cdot-w_n^{\ell_3})\overline{\zeta^{\ell_4}(\cdot-w_n^{\ell_4})}\right]
\nonumber \\
&=\lim_{n\rightarrow \infty }\sum_{\ell_1, \ell_3} \left[\zeta^{\ell_1}(\cdot-w_n^{\ell_1})
\overline{\zeta^{\ell_1}(\cdot-w_n^{\ell_1})},
\zeta^{\ell_3}(\cdot-w_n^{\ell_3})\overline{\zeta^{\ell_3}(\cdot-w_n^{\ell_3})}\right]
\nonumber \\
&=\lim_{n\rightarrow \infty }\sum_{\ell=1}^m \left[\zeta^{\ell}
\overline{\zeta^{\ell}},
\zeta^{\ell}\overline{\zeta^{\ell}}\right]\nonumber \\
&=\sum_{\ell=1}^m \mathcal{S}(\zeta^\ell), \label{ Second S estimate}
\end{align}
where we have used the calculations
\begin{align*}
\lim_{n\rightarrow \infty }\|\zeta^{\ell_1}(\cdot-w_n^{\ell_1})
\overline{\zeta^{\ell_2}(\cdot-w_n^{\ell_2})}\|^2_0
&=\lim_{n\rightarrow \infty }\int_{\R^2}
|\zeta^{\ell_1}(\cdot-w_n^{\ell_1})|^2
\overline{|\zeta^{\ell_2}(\cdot-w_n^{\ell_2})|^2}\dx\dz
\\
&=\lim_{n\rightarrow \infty }\int_{\R^2}
\FF\left[|\zeta^{\ell_1}(\cdot-w_n^{\ell_1})|^2\right]
\overline{\FF\left[|\zeta^{\ell_2}(\cdot-w_n^{\ell_2})|^2\right]}\dk_1\dk_2
\\
&=\lim_{n\rightarrow \infty }\int_{\R^2}
\ee^{\ii(-w_n^{\ell_1}+w_n^{\ell_2})\cdot(k_1,k_2)}
\FF\left[|\zeta^{\ell_1}|^2\right]
\overline{\FF\left[|\zeta^{\ell_2}|^2\right]}\dk_1\dk_2 \\
&=0
\end{align*}
for $\ell_1\neq\ell_2$ and
\begin{eqnarray*}
\lefteqn{
\lim_{n\rightarrow \infty } \left[
\zeta^{\ell_1}(\cdot-w_n^{\ell_1})
\overline{\zeta^{\ell_1}(\cdot-w_n^{\ell_1})},
\zeta^{\ell_3}(\cdot-w_n^{\ell_3})
\overline{\zeta^{\ell_3}(\cdot-w_n^{\ell_3})}
\,\right]}\qquad
\\&=&\lim_{n\rightarrow \infty } \left[
|\zeta^{\ell_1}(\cdot-w_n^{\ell_1})|^2,
|\zeta^{\ell_3}(\cdot-w_n^{\ell_3})|^2
\,\right]
\\&=& C_1\lim_{n\rightarrow \infty } 
\int_{\R^2}\frac{k_1^2}{(1-\Lambda)k_1^2+k_2^2}
\FF\left[|\zeta^{\ell_1}(\cdot-w_n^{\ell_1})|^2\right]
\overline{\FF\left[|\zeta^{\ell_3}(\cdot-w_n^{\ell_3})|^2\right]}\dk_1\dk_2
\\
& & \qquad\mbox{}+C_2\lim_{n\rightarrow \infty } 
\int_{\R^2}|\zeta^{\ell_1}(\cdot-w_n^{\ell_1})|^2
\overline{|\zeta^{\ell_3}(\cdot-w_n^{\ell_3})|^2}\dx\dz
\\ &=& C_1\lim_{n\rightarrow \infty } 
\int_{\R^2}\frac{k_1^2}{(1-\Lambda)k_1^2+k_2^2}
\ee^{\ii(-w_n^{\ell_1}+w_n^{\ell_3})\cdot(k_1,k_2)}
\FF\left[|\zeta^{\ell_1}|^2|\right]
\overline{\FF\left[|\zeta^{\ell_3}|^2\right]}\dk_1\dk_2 \\
& = &0
\end{eqnarray*}
for $\ell_1\neq\ell_3$ (by \eqref{ Split} and the Riemann-Lebesgue lemma).

Using \eqref{ First S estimate}, \eqref{ Second S estimate}, the estimate
$$\mathcal{S}(\zeta^\ell) \geq c_\varepsilon-O(\varepsilon^{1/2})\|\zeta^{\ell}\|^2_{H^1(\R^2)}, \qquad \ell=1,\ldots,m$$
(see \eqref{ J=S}) and \eqref{ sum phil}, we find that
$$\liminf_{n\rightarrow \infty}\mathcal{S}(\zeta_n)
\geq m c_\varepsilon-O(\varepsilon^{1/2}),$$
in which the $O(\varepsilon^{1/2})$ term does not depend on $m$. It follows from \eqref{ J=S} that
$$c_\varepsilon\geq mc_\varepsilon-O(\varepsilon^{1/2}),$$
so that $m=1$ (recall that $\liminf_{\varepsilon\rightarrow 0}c_\varepsilon>0$ (Remark \ref{rem:inf is positive})). The advertised result now follows from
\eqref{ concentrate} with $\zeta_\infty=\zeta^1$ and $w_n=w_n^1$.
\end{proof}

With a slight abuse of notation we now abbreviate the subsequence of $\{\zeta_n(\cdot+w_n)\}$ identified in Lemma
\ref{lem:Step 1} to $\{\zeta_n\}$
and define $\{\eta_n\} \subset U$ by $\eta_n = \eta_1(\zeta_n)+\eta_2(\eta_1(\zeta_n))$, $n \in \N$. The convergence
properties of $\{\eta_n\}$ are examined in Proposition \ref{prop:Step 2}, whose proof makes use of the following remark.

\begin{remark} \label{rem:sup remark}
Suppose that $u_n \rightharpoonup u_\infty$ in $H^3(\R^2)$. The limit
$$
\lim_{n\rightarrow \infty} \|u_n-u_\infty\|_{1,\infty} = 0
$$
holds if and only if $u_n(\cdot - j_n) \rightharpoonup 0$ in $H^3(\R^2)$ for all
unbounded sequences $\{j_n\} \subset \Z^2$.
\end{remark}

\begin{proposition} \label{prop:Step 2}
The sequence $\{\eta_n\}$ converges weakly
in $H^3(\R^2)$ and strongly in $W^{1,\infty}(\R^2)$ (and hence in $W^{1,p}(\R^2)$ for any $p>2$)
to the nonzero critical point
$\eta_\infty = \eta_1(\zeta_\infty) + \eta_2(\eta_1(\zeta_\infty))$ of $\JJ_\varepsilon$.
\end{proposition}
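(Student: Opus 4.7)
Weak $H^3(\R^2)$ convergence is already contained in Lemma~\ref{lem:weak limit of ps sequence}, so the task is to prove strong $W^{1,\infty}(\R^2)$ convergence of the decomposition $\eta_n=\eta_{1,n}+F(\eta_{1,n})+\eta_{3,n}$ (with $\eta_{1,n}:=\eta_1(\zeta_n)$). The starting point is to transfer the uniform local estimate of Lemma~\ref{lem:Step 1} to $W^{1,\infty}$: the two-dimensional Sobolev embedding $H^3(B)\hookrightarrow W^{1,\infty}(B)$ on a unit cube $B$ gives
$$
\|\zeta_n-\zeta_\infty\|_{W^{1,\infty}(\R^2)} \lesssim \sup_{j\in\Z^2} \|\zeta_n-\zeta_\infty\|_{H^{3}(\{w:|w-j|_\infty<1/2\})} \to 0.
$$

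Strong $W^{1,\infty}$ convergence of $\eta_{1,n}$ then follows directly from the explicit formulae $\tilde\eta_{1,n}^+(x,z)=\tfrac12\varepsilon\,\zeta_n(\varepsilon x,\varepsilon z)\ee^{\ii\omega x}$ and $\eta_{1,n}=(\tilde g_2(D)/\tilde g(D))^{1/2}\tilde\eta_{1,n}$: each derivative only produces harmless $\varepsilon$ and $\omega$ factors multiplying the sup norms of $\zeta_n-\zeta_\infty$ and its first derivatives, and the multiplier $(\tilde g_2/\tilde g)^{1/2}$ is smooth and compactly supported on $\supp\chi$, hence, when restricted to functions with Fourier support in $\supp\chi$, acts as convolution with a Schwartz function and is bounded on $W^{1,\infty}(\R^2)$.

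For $F(\eta_{1,n})$ and $\eta_{3,n}$ the plan is to invoke Remark~\ref{rem:sup remark}: both sequences already converge weakly in $H^3$ to the expected limit, so strong $W^{1,\infty}$ convergence reduces to showing that translates along an arbitrary unbounded $\{j_n\}\subset\Z^2$ converge weakly to zero. For $F(\eta_{1,n})$, translation equivariance of the Fourier multipliers and of $\LL_3^\prime$ gives $F(\eta_{1,n})(\cdot-j_n)=F(\eta_{1,n}(\cdot-j_n))$; combining the $W^{1,\infty}$ convergence of $\eta_{1,n}$ with $\eta_{1,\infty}(\cdot-j_n)\rightharpoonup 0$ yields $\eta_{1,n}(\cdot-j_n)\rightharpoonup 0$ in $H^3$, and weak continuity of $F\colon U\to H^3$ (obtained by composing the weakly continuous $\LL_3^\prime\colon U\to H^1(\R^2)$ of Remark~\ref{rem:Gradients wc} with the bounded linear operator of Proposition~\ref{prop:bounded mapping}) then gives $F(\eta_{1,n}(\cdot-j_n))\rightharpoonup 0$. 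For $\eta_{3,n}$ I extract a subsequence such that $\eta_{3,n}(\cdot-j_n)\rightharpoonup \eta_3^\star$ in $H^3$ and pass to the limit in the translation-equivariant fixed-point equation $\eta_{3,n}=G(\eta_{1,n},\eta_{3,n})$ using weak continuity of $G$, which is a consequence of the weak continuity of $\LL_3^\prime$, $K_0$, and $\NN^\prime$ recorded at the end of Section~\ref{VF}; this produces $\eta_3^\star=G(0,\eta_3^\star)$. Since $\eta_3^\star\in X_3$ by weak lower semicontinuity of $\|\cdot\|_3$ and weak closedness of $\XX_2$, and $0$ is trivially a fixed point at $\eta_1=0$, the uniqueness assertion of Theorem~\ref{thm:estimate eta3} forces $\eta_3^\star=0$.

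Summing the three strongly $W^{1,\infty}$-convergent pieces yields $\eta_n\to\eta_\infty$ in $W^{1,\infty}(\R^2)$, and convergence in $W^{1,p}(\R^2)$ for $p>2$ follows by interpolating between the uniform $H^3$ (hence $W^{1,2}$) bound and the $W^{1,\infty}$ convergence. The main obstacle is Step~4: the passage to the weak limit in the translated fixed-point equation requires checking translation invariance of both $X_1$ and $X_3$, weak continuity of all the nonlinear constituents of $G$ (in particular $\NN^\prime$) on $H^3$-bounded subsets of $U$, and the verification that $\eta_3^\star\in X_3$, so that the uniqueness part of Theorem~\ref{thm:estimate eta3} may legitimately be invoked.
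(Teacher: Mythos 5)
Your argument is sound and rests on the same two pillars as the paper's proof --- Lemma \ref{lem:Step 1} together with the translation criterion of Remark \ref{rem:sup remark} --- but it rebuilds by hand machinery that the paper simply reuses. The paper's proof is three lines: for an arbitrary unbounded sequence $\{j_n\}\subset\Z^2$, Lemma \ref{lem:Step 1} and Remark \ref{rem:sup remark} give $\zeta_n(\cdot-j_n)\rightharpoonup 0$; since $\TT_\varepsilon$ and the norm are translation invariant, $\{\zeta_n(\cdot-j_n)\}$ is again a Palais--Smale sequence in $B_{R-1}(0)$, so Lemma \ref{lem:weak limit of ps sequence}, applied with weak limit $0$, yields $\eta_n(\cdot-j_n)\rightharpoonup 0$ in $H^3(\R^2)$ at one stroke; Remark \ref{rem:sup remark} then gives $\|\eta_n-\eta_\infty\|_{1,\infty}\to 0$ for the whole sum, with no need to split $\eta_n$. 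Your separate treatment of $F(\eta_{1,n})$ and $\eta_{3,n}$ (translation equivariance, weak continuity of $F$ and of $G$, weak closedness of $X_3$, uniqueness of the fixed point, with $\eta_3(0)=0$) is exactly the content of the proof of Lemma \ref{lem:weak limit of ps sequence} applied to the translated sequence, so it is correct but redundant; the decomposition into three pieces buys nothing, and the explicit $W^{1,\infty}$ estimate for $\eta_{1,n}$ is likewise unnecessary.

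The one place where your shortcut is imprecise is the multiplier step for $\eta_{1,n}$: the symbol $(\tilde g_2/\tilde g)^{1/2}$ is \emph{not} smooth at $\pm(\omega,0)$ (the ratio of $\tilde g$ to its quadratic Taylor polynomial is in general only Lipschitz there, with higher derivatives blowing up), and cutting off with the characteristic function $\chi$ destroys smoothness at the boundary of the balls, so ``acts as convolution with a Schwartz function'' is not an accurate justification. The $W^{1,\infty}$-boundedness you need is nevertheless true: replace $\chi$ by a smooth cut-off equal to $1$ on $\supp\chi$ and verify by a dyadic decomposition about $\pm(\omega,0)$ (using $|\partial^\alpha((\tilde g_2/\tilde g)^{1/2}-1)|\lesssim |k\mp(\omega,0)|^{1-|\alpha|}$) that the kernel lies in $L^1(\R^2)$. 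This is a repairable technicality rather than a gap, but it is precisely the kind of detail the paper's shorter route avoids altogether; your interpolation remark for $W^{1,p}$, $p>2$, is fine and matches the paper's implicit argument.
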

\begin{proof} First note that $\{\eta_n\}$ converges weakly in $H^3(\R^2)$ to $\eta_\infty \neq 0$ and
$\mathrm{d}\JJ_\varepsilon[\eta_\infty]=0$ (see Lemma \ref{lem:weak limit of ps sequence}).

Let $\{j_n\}$ be a sequence in ${\mathbb Z}^2$ with $|j_n| \rightarrow \infty$ as $n \rightarrow \infty$.
It follows from Lemma \ref{lem:Step 1} and Remark \ref{rem:sup remark} that $\zeta_n(\cdot-j_n) \rightharpoonup 0$ in $H^3(\R^2)$
as $n \rightarrow \infty$, and Lemma \ref{lem:weak limit of ps sequence}
shows that $\eta_n(\cdot -j_n) \rightharpoonup 0$ in $H^3(\R^2)$ as $n \rightarrow \infty$, so that 
$$
\lim_{n\rightarrow \infty} \|\eta_n-\eta_\infty\|_{1,\infty} = 0
$$
(Remark \ref{rem:sup remark}).
\end{proof}

\begin{lemma} \label{lem:Step 3}
The sequence $\{\eta_n\}$ satisfies $\JJ_\varepsilon(\eta_n)\rightarrow
\JJ_\varepsilon(\eta_\infty)$ and in particular \linebreak
$
\TT_\varepsilon(\zeta_n)=
\varepsilon^{-2}\JJ_\varepsilon(\eta_n)\rightarrow
\varepsilon^{-2}\JJ_\varepsilon(\eta_\infty)=\TT_\varepsilon(\zeta_\infty)$
as $n \rightarrow \infty$.
\end{lemma}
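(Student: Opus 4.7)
The plan is to exploit the identity $\JJ_\varepsilon(\eta_n)=\varepsilon^{2}\TT_\varepsilon(\zeta_n)$, which follows immediately from the definitions $\TT_\varepsilon(\zeta):=\varepsilon^{-2}\widetilde{\JJ}_\varepsilon(\eta_1(\zeta))$, $\widetilde{\JJ}_\varepsilon(\eta_1):=\JJ_\varepsilon(\eta_1+\eta_2(\eta_1))$ and the construction $\eta_n=\eta_1(\zeta_n)+\eta_2(\eta_1(\zeta_n))$. It therefore suffices to prove that $\TT_\varepsilon(\zeta_n)\rightarrow \TT_\varepsilon(\zeta_\infty)$. The minimising property gives $\TT_\varepsilon(\zeta_n)\rightarrow c_\varepsilon$, and Lemma \ref{lem:Step 1} places $\zeta_\infty$ in $N_\varepsilon$, so $\TT_\varepsilon(\zeta_\infty)\geq c_\varepsilon$. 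What is left is the matching upper bound $\TT_\varepsilon(\zeta_\infty)\leq c_\varepsilon$, which will force equality throughout and deliver the convergence.

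I would obtain the upper bound from the identity \eqref{ J=Q},
\[
\TT_\varepsilon(\zeta)=\tfrac{1}{2}\QQ(\zeta)+\varepsilon^{1/2}\bigl(\RR_\varepsilon(\zeta)-\tfrac{1}{4}\mathrm{d}\RR_\varepsilon[\zeta](\zeta)\bigr),\qquad \zeta\in N_\varepsilon,
\]
by showing that the right-hand side is weakly lower semicontinuous along $\{\zeta_n\}$. The quadratic form $\QQ$ is non-negative and $H^1$-coercive, hence weakly lower semicontinuous on $H^1(\R^2)$. For the $\varepsilon^{1/2}$-remainder I would adapt the weak-limit argument used in the proof of Lemma \ref{lem:weak limit of ps sequence}: Proposition \ref{prop:Step 2} supplies weak $H^3$ and strong $W^{1,p}$ (any $p>2$) convergence of $\{\eta_n\}$ to $\eta_\infty$, and combining this with the weak-continuity statements for $\KK_\mathrm{nl}^\prime$, $\LL_3^\prime$, $\LL_4^\prime$, $\LL_\mathrm{r}^\prime$ and for the Dirichlet--Neumann operator $K(\cdot)$ collected in Corollary \ref{cor:J' is wc}, Remark \ref{rem:Gradients wc} and Proposition \ref{prop:Properties of K} yields the sequential continuities $\RR_\varepsilon(\zeta_n)\rightarrow \RR_\varepsilon(\zeta_\infty)$ and $\mathrm{d}\RR_\varepsilon[\zeta_n](\zeta_n)\rightarrow \mathrm{d}\RR_\varepsilon[\zeta_\infty](\zeta_\infty)$.

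Assembling these facts one obtains $\TT_\varepsilon(\zeta_\infty)\leq\liminf_{n\to\infty}\TT_\varepsilon(\zeta_n)=c_\varepsilon$, which with $\TT_\varepsilon(\zeta_\infty)\geq c_\varepsilon$ forces $\TT_\varepsilon(\zeta_\infty)=c_\varepsilon=\lim_{n\to\infty}\TT_\varepsilon(\zeta_n)$, and hence $\JJ_\varepsilon(\eta_n)\to\JJ_\varepsilon(\eta_\infty)$ after multiplying by $\varepsilon^2$. The main obstacle is the weak continuity of the remainder, and specifically the handling of the nonlocal pairing $\int\eta\,K(\eta)\eta\,dx\,dz$ implicit in $\LL$, which is not obviously preserved under weak $H^3$ convergence; the key manoeuvre is the decomposition $K(\eta_n)\eta_n=K(0)\eta_n+(K(\eta_n)-K(0))\eta_n$, whose first summand produces a weakly lower semicontinuous quadratic form and whose second summand can be treated by invoking Proposition \ref{prop:Properties of K}(ii) to upgrade the convergence from weak $H^{3/2}$ to strong $H^{1/2}$, which together with the local strong convergence of $\{\eta_n\}$ allows the pairing to pass to the limit.
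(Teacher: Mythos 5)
There is a genuine gap at the heart of your argument: the claimed sequential continuities $\RR_\varepsilon(\zeta_n)\rightarrow\RR_\varepsilon(\zeta_\infty)$ and $\mathrm{d}\RR_\varepsilon[\zeta_n](\zeta_n)\rightarrow\mathrm{d}\RR_\varepsilon[\zeta_\infty](\zeta_\infty)$ do not follow from the results you cite, and are false in general for the kind of convergence available at this stage. The remainder $\RR_\varepsilon$ is not superquadratic: it contains genuinely quadratic (nonlocal, Fourier-multiplier) pieces with small but nonzero coefficients, e.g.\ the corrections coming from replacing $\hat K_0(k)$ by $f(\omega)$ in $\LL_2$, from $\tilde g$ versus $\tilde g_2$, and from the $F(\eta_1)$, $\eta_3(\eta_1)$ contributions. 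Such quadratic functionals pass to the limit only under strong $L^2$/$H^1$ convergence, which at this point you do not have -- Proposition \ref{prop:Step 2} gives weak $H^3$ plus strong $W^{1,\infty}$ (and $W^{1,p}$, $p>2$) convergence, under which $\|\eta_n\|_0^2$ need not converge to $\|\eta_\infty\|_0^2$; indeed strong $H^1$ convergence is only obtained in Lemma \ref{lem:Step 4}, whose proof uses Lemma \ref{lem:Step 3}, so any attempt to invoke it here would be circular. Moreover, the weak-continuity statements you appeal to (Corollary \ref{cor:J' is wc}, Remark \ref{rem:Gradients wc}, Proposition \ref{prop:Properties of K}) assert convergence of gradients tested against a \emph{fixed} test function; they do not give convergence of pairings such as $\mathrm{d}\RR_\varepsilon[\zeta_n](\zeta_n)$ in which the test element varies with $n$. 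Finally, even the crude bound $|\RR_\varepsilon(\zeta)|\lesssim\|\zeta\|_1^2\leq R^2$ only yields $\TT_\varepsilon(\zeta_\infty)\leq c_\varepsilon+O(\varepsilon^{1/2})$, which is not the exact equality needed for the lemma at fixed $\varepsilon$.

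The paper's proof is designed precisely to avoid this difficulty: it works with the combination $2\JJ_\varepsilon(\eta)-\mathrm{d}\JJ_\varepsilon[\eta](\eta)$, in which \emph{all} quadratic terms cancel identically, leaving only the superquadratic expressions
$$\GG(\eta)=\beta\int_{\R^2}\frac{(\eta_x^2+\eta_z^2)^2}{(\sqrt{1+\eta_x^2+\eta_z^2}+1)^2\sqrt{1+\eta_x^2+\eta_z^2}}\dx\dz
\quad\text{and}\quad
\frac{(1-\varepsilon^2)\Lambda}{2}\int_{\R^2}\eta\,(\mathrm{d}K[\eta]\eta)\eta\dx\dz.$$
These do converge along $\{\eta_n\}$, using the strong $W^{1,\infty}$ and $W^{1,4}$ convergence of Proposition \ref{prop:Step 2}, the analyticity of $K(\cdot)$ on $W^{1,\infty}(\R^2)$ and Proposition \ref{prop:Properties of K}(iii) (for the term with $\mathrm{d}K[\eta_\infty]\eta_\infty$ fixed, strong $H^{1/2}$ convergence paired with weak convergence of $\eta_n$). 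Since $\mathrm{d}\TT_\varepsilon[\zeta_n]\rightarrow0$ gives $\mathrm{d}\JJ_\varepsilon[\eta_n](\eta_n)\rightarrow0$ and $\mathrm{d}\JJ_\varepsilon[\eta_\infty](\eta_\infty)=0$, the convergence of $2\JJ_\varepsilon-\mathrm{d}\JJ_\varepsilon[\cdot](\cdot)$ yields $\JJ_\varepsilon(\eta_n)\rightarrow\JJ_\varepsilon(\eta_\infty)$ directly, with no appeal to weak lower semicontinuity of $\TT_\varepsilon$ or to continuity of $\RR_\varepsilon$. If you wish to keep your structure, you would have to replace the step ``the remainder is weakly continuous'' by an argument of exactly this cancellation type; your final remark about splitting $K(\eta_n)\eta_n=K(0)\eta_n+(K(\eta_n)-K(0))\eta_n$ does not resolve the issue, since the quadratic form $\int\eta\,K(0)\eta$ enters $\JJ_\varepsilon$ with a negative sign, so its weak lower semicontinuity works against, not for, the inequality you need.
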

\begin{proof}
It follows from the relation
$$
\varepsilon^2 \mathrm{d}\TT_\varepsilon [\zeta](\zeta) = \mathrm{d}\JJ_\varepsilon[\eta_1(\zeta)+\eta_2(\eta_1(\zeta))](\eta_1(\zeta))
$$
(see Remark \ref{rem:crit pts relation}) that $\mathrm{d}\JJ_\varepsilon[\eta_n](\eta_n)\rightarrow 0$, and
we demonstrate that
$2\JJ_\varepsilon(\eta_n)-\mathrm{d}\JJ_\varepsilon[\eta_n](\eta_n)\rightarrow
2\JJ_\varepsilon(\eta_\infty)-\mathrm{d}\JJ_\varepsilon[\eta_\infty](\eta_\infty)$
as $n \rightarrow \infty$.

A straightforward calculation yields
$$
2\JJ_\varepsilon(\eta)-\mathrm{d}\JJ_\varepsilon[\eta](\eta)=\GG(\eta)+\frac{(1-\varepsilon^2)\Lambda}{2} \int_{\R^2}\eta (\mathrm{d}K[\eta]\eta)\eta \dx \dz,
$$
where
$$
\GG(\eta)=\beta\int_{\R^2}\frac {(\eta_x^2+\eta_z^2)^2}{(\sqrt{1+\eta_x^2+\eta_z^2}+1)^2
\sqrt{1+\eta_x^2+\eta_z^2}}
\dx \dz.
$$
Observe that $\GG(\eta_n) \rightarrow \GG(\eta_\infty)$ because
$\{\eta_n\}$ converges strongly in $W^{1,\infty}(\R^2)$
and $W^{1,4}(\R^2)$ to $\eta_\infty$. Furthermore
$$ \int_{\R^2}\eta_n (\mathrm{d}K[\eta_n]\eta_n-\mathrm{d}K[\eta_\infty]\eta_\infty)\eta_n \dx \dz\rightarrow 0$$
as $n \rightarrow \infty$ because the map $K(\cdot): W^{1,\infty}(\R^2)
\rightarrow \LL(H^{1/2}(\R^2),H^{-1/2}(\R^2))$ is analytic at the origin, and it follows from
Proposition \ref{prop:Properties of K}(iii) that
$$\int_{\R^2}\eta_n (\mathrm{d}K[\eta_\infty]\eta_\infty)\eta_n \dx \dz\rightarrow
 \int_{\R^2}\eta_\infty (\mathrm{d}K[\eta_\infty]\eta_\infty)\eta_\infty \dx \dz$$
as $n \rightarrow \infty$.
\end{proof}

Finally, we strengthen the convergence result given in Proposition \ref{prop:Step 2}.
\begin{lemma} \label{lem:Step 4}
The sequence $\{\eta_n\}$ converges strongly in $H^s(\R^2)$ for $s \in [0,3)$ to
$\eta_\infty$.
\end{lemma}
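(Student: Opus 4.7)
The plan is to upgrade the weak $H^3$- and strong $W^{1,\infty}$-convergences of $\{\eta_n\}$ to strong convergence in $H^1(\R^2)$, and then interpolate with the uniform $H^3$-bound to conclude. The interpolation step is immediate: for $1\le s<3$,
\[
\|\eta_n - \eta_\infty\|_s \le \|\eta_n - \eta_\infty\|_1^{(3-s)/2}\,\|\eta_n - \eta_\infty\|_3^{(s-1)/2},
\]
while $\|\cdot\|_s\le \|\cdot\|_1$ for $s\in[0,1]$, so strong $H^1$-convergence together with the boundedness in $H^3$ coming from the weak $H^3$-convergence of Proposition \ref{prop:Step 2} yields strong convergence in $H^s$ for every $s<3$.

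For the strong $H^1$-convergence I would decompose
\[
\JJ_\varepsilon(\eta) = \KK_2(\eta) - (1-\varepsilon^2)\Lambda\, \LL_2(\eta) + \NN(\eta) = \HH(\eta) + \varepsilon^2\Lambda\, \LL_2(\eta) + \NN(\eta).
\]
The quadratic part $\HH + \varepsilon^2\Lambda\LL_2$ has Fourier symbol $\tilde g(k) + \varepsilon^2\Lambda (k_1^2/|k|^2) f(|k|)$, which is nonnegative, equals the positive constant $\varepsilon^2\Lambda f(\omega)$ at the points $k=(\pm\omega,0)$ where $\tilde g$ vanishes, and behaves like $|k|^2$ for large $|k|$. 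For every fixed $\varepsilon>0$ it is therefore bounded below by a positive multiple of $1+|k|^2$ and hence defines a Hilbert inner product equivalent to $\langle\cdot,\cdot\rangle_1$. Convergence of this quadratic form along $\eta_n$, combined with the weak convergence in $H^1$, would yield strong convergence in $H^1$. Since $\JJ_\varepsilon(\eta_n) \to \JJ_\varepsilon(\eta_\infty)$ by Lemma \ref{lem:Step 3}, the task reduces to showing $\NN(\eta_n) \to \NN(\eta_\infty)$.

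The contribution from $\KK_\mathrm{nl}$ is easy. Since
$\KK_\mathrm{nl}(\eta) = -\beta\int (\eta_x^2+\eta_z^2)^2/(2(1+\sqrt{1+\eta_x^2+\eta_z^2})^2)\,\dx\dz$,
and since $\nabla\eta_n \to \nabla\eta_\infty$ strongly in $L^4(\R^2)$ by interpolating the $L^\infty$-convergence of Proposition \ref{prop:Step 2} with the $L^2$-boundedness, the continuity of $\KK_\mathrm{nl}$ in $W^{1,4}$ delivers $\KK_\mathrm{nl}(\eta_n)\to\KK_\mathrm{nl}(\eta_\infty)$.

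The main obstacle is the analogous statement $\LL_\mathrm{nl}(\eta_n) \to \LL_\mathrm{nl}(\eta_\infty)$ for the nonlocal part involving the Dirichlet--Neumann operator. Writing $\LL_\mathrm{nl}(\eta) = \frac{1}{2}\int \eta(K(\eta)-K_0)\eta\,\dx\dz$ and splitting
\[
\int \eta_n K(\eta_n)\eta_n - \int \eta_\infty K(\eta_\infty)\eta_\infty = \int (\eta_n - \eta_\infty)\, K(\eta_n)\eta_n + \int \eta_\infty\bigl(K(\eta_n)\eta_n - K(\eta_\infty)\eta_\infty\bigr),
\]
I would control the second term via the weak convergence $K(\eta_n)\eta_n \rightharpoonup K(\eta_\infty)\eta_\infty$ in $H^{3/2}$ (Proposition \ref{prop:Properties of K}(i)), tested against $\eta_\infty\in L^2$; for the first term, a further decomposition in terms of $K_0\eta_n$, $(K(\eta_\infty)-K_0)\eta_n$ and $(K(\eta_n)-K(\eta_\infty))\eta_n$ is controlled by the strong convergences $(K(\eta_\infty)-K(0))\xi_n \to (K(\eta_\infty)-K(0))\xi_\infty$ and $(\mathrm{d}K[\eta_n](\rho))\xi_n \to (\mathrm{d}K[\eta_\infty](\rho))\xi_\infty$ in $H^{1/2}$ provided by Proposition \ref{prop:Properties of K}(ii)--(iii), paired against $\eta_n-\eta_\infty \rightharpoonup 0$ in $L^2$. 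This delicate combination of weak and strong convergences, orchestrated exactly as in the proof of Lemma \ref{lem:Step 3}, is the heart of the argument.
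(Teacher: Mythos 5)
Your overall strategy is the paper's: reduce to $s=1$ by interpolation, observe that the quadratic part $\KK_2-(1-\varepsilon^2)\Lambda\LL_2=\HH+\varepsilon^2\Lambda\LL_2$ defines a norm equivalent to the $H^1$-norm for fixed $\varepsilon$, invoke $\JJ_\varepsilon(\eta_n)\to\JJ_\varepsilon(\eta_\infty)$ from Lemma \ref{lem:Step 3}, and thereby reduce everything to showing that the superquadratic part $\NN(\eta_n)$ converges to $\NN(\eta_\infty)$, the local piece being handled by the strong $W^{1,\infty}$/$W^{1,4}$-convergence of Proposition \ref{prop:Step 2}. Up to that point the argument is sound and coincides with the paper's proof.

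However, there is a genuine gap in your treatment of the nonlocal piece. What you must show is $\LL_\mathrm{nl}(\eta_n)\to\LL_\mathrm{nl}(\eta_\infty)$ with $\LL_\mathrm{nl}(\eta)=\tfrac12\int_{\R^2}\eta\,(K(\eta)-K_0)\eta\dx\dz$, but the splitting you actually estimate is that of $\int\eta_n K(\eta_n)\eta_n-\int\eta_\infty K(\eta_\infty)\eta_\infty$, i.e.\ of $2\LL(\eta_n)-2\LL(\eta_\infty)$, quadratic part included. Your further decomposition of the first term then produces the piece $\int_{\R^2}(\eta_n-\eta_\infty)\,K_0\eta_n\dx\dz$, for which none of the strong convergences in Proposition \ref{prop:Properties of K} is available: $K_0\eta_n$ converges only weakly in $L^2(\R^2)$, and a pairing of two weakly convergent sequences need not vanish. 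Indeed, since $\int\eta_\infty K_0(\eta_n-\eta_\infty)\dx\dz\to 0$, this piece equals $2\LL_2(\eta_n)-2\LL_2(\eta_\infty)+o(1)$, which is (part of) exactly the quadratic form whose convergence is the conclusion you are trying to reach — so the step is circular, not merely incomplete. The repair is to keep the subtraction of $K_0$ inside throughout: write $K(\eta_n)-K_0=\bigl(K(\eta_n)-K(\eta_\infty)\bigr)+\bigl(K(\eta_\infty)-K(0)\bigr)$, control $\int\eta_n\bigl(K(\eta_n)-K(\eta_\infty)\bigr)\eta_n\dx\dz\to 0$ via the analyticity of $K(\cdot)\colon W^{1,\infty}(\R^2)\to\LL(H^{1/2}(\R^2),H^{-1/2}(\R^2))$ at the origin together with $\eta_n\to\eta_\infty$ in $W^{1,\infty}(\R^2)$ (as in Lemma \ref{lem:Step 3}; Proposition \ref{prop:Properties of K}(iii) alone, being a statement about $\mathrm{d}K[\eta_n](\rho)$ for fixed $\rho$, does not directly give this), and control $\int\eta_n\bigl(K(\eta_\infty)-K(0)\bigr)\eta_n\dx\dz\to\int\eta_\infty\bigl(K(\eta_\infty)-K(0)\bigr)\eta_\infty\dx\dz$ by pairing the strong $H^{1/2}$-convergence of Proposition \ref{prop:Properties of K}(ii) with the weak convergence $\eta_n\rightharpoonup\eta_\infty$. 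With this modification the argument closes and is the paper's proof.
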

\begin{proof}
It suffices to establish this result for $s=1$. Arguing as in the proof of Lemma \ref{lem:Step 3}
and using Proposition \ref{prop:Properties of K}(ii), we find that
\begin{multline*}
2\JJ_\varepsilon(\eta_n)
-\int_{\R^2}(\eta_n^2+\beta\eta_{nx}^2+\beta\eta_{nz}^2) \dx \dz
+(1-\varepsilon^2)\Lambda \int_{\R^2}\eta_n K(0)\eta_n \dx \dz
\\=
-\beta \int_{\R^2}\frac {(\eta_{nx}^2+\eta_{nz}^2)^2}
{(\sqrt{1+\eta_{nx}^2+\eta_{nz}^2}+1)^2}
\, \dx \dz
-(1-\varepsilon^2)\Lambda \int_{\R^2}\eta_n (K(\eta_n)-K(0))
\eta_n \dx \dz
\end{multline*} 
converges to
$$2\JJ_\varepsilon(\eta_\infty)
-\int_{\R^2}(\eta_\infty^2+\beta\eta_{\infty x}^2+\beta\eta_{\infty z}^2) \dx \dz
+(1-\varepsilon^2)\Lambda \int_{\R^2}\eta_\infty K(0)\eta_\infty \dx \dz,$$
so that
\begin{align*}
& \int_{\R^2}(\eta_n^2+\beta\eta_{nx}^2+\beta\eta_{nz}^2) \dx \dz
-(1-\varepsilon^2)\Lambda \int_{\R^2}\eta_n K(0)\eta_n \dx \dz
\\ \rightarrow &
\int_{\R^2}(\eta_\infty^2+\beta\eta_{\infty x}^2+\beta\eta_{\infty z}^2) \dx \dz
-(1-\varepsilon^2)\Lambda \int_{\R^2}\eta_\infty K(0)\eta_\infty \dx \dz
\end{align*}
as $n \rightarrow \infty$, and this result in turn implies that
\begin{align*}
& \int_{\R^2}((\eta_n-\eta_\infty)^2+\beta(\eta_{nx}-\eta_{\infty x})^2
+\beta(\eta_{nz}-\eta_{\infty z})^2) \dx \dz \\
& \qquad\mbox{}
-(1-\varepsilon^2)\Lambda \int_{\R^2}(\eta_n-\eta_\infty) 
K(0)(\eta_n-\eta_\infty) \dx \dz
\\&= \int_{\R^2}(\eta_n^2+\beta\eta_{nx}^2
+\beta \eta_{nz}^2) \dx \dz
-(1-\varepsilon^2)\Lambda \int_{\R^2}\eta_n K(0)\eta_n \dx \dz
\\
& \qquad\mbox{}+\int_{\R^2}(\eta_\infty^2+\beta \eta_{\infty x}^2
+\beta \eta_{\infty z}^2) \dx \dz
-(1-\varepsilon^2)\Lambda \int_{\R^2}\eta_\infty K(0)\eta_\infty \dx \dz \\
& \qquad\mbox{}-2\int_{\R^2}(\eta_n\eta_\infty+\beta \eta_{nx}
\eta_{\infty x}
+\beta \eta_{nz}\eta_{\infty z}) \dx \dz
+2(1-\varepsilon^2)\Lambda \int_{\R^2}\eta_n K(0)\eta_\infty \dx \dz \\
& \rightarrow 0
\end{align*}
as $n \rightarrow \infty$ because $\eta_n$ converges to $\eta_\infty$ weakly in $H^1(\R^2)$.

Noting that
$$\eta \mapsto \left\{\int_{\R^2}(\eta^2+\beta\eta_{x}^2
+\beta\eta_{z}^2) \dx \dz
-(1-\varepsilon^2)\Lambda \int_{\R^2}\eta 
K(0)\eta \dx \dz\right\}^{1/2}$$
defines a norm equivalent to the usual norm for $H^1(\R^2)$, we conclude that $\{\eta_n\}$ converges strongly
in $H^1(\R^2)$ to $\eta_\infty$.
\end{proof}

\appendix

\section*{Appendix: Concentration-compactness} 

\setcounter{equation}{0}
\renewcommand{\theequation}{A.\arabic{equation}}

In this Appendix we establish an abstract result of concentration-compactness type,
following ideas due to Benci \& Cerami \cite{BenciCerami87}.
Consider a sequence $\{x_n\}$ in $\ell^2(\Z^s,H)$, where $H$ is a Hilbert space and $s \in {\mathbb N}$.
Writing $x_n=(x_{n,j})_{j\in \Z^s}$, where  $x_{n,j}\in H$, suppose that
\begin{itemize}
\item[(i)] $\{x_n\}$ is bounded in $\ell^2(\Z^s,H)$,
\item[(ii)] $S=\{x_{n,j}:n \in \N, j \in \Z^s\}$ is relatively compact in $H$,
\item[(iii)] $\limsup_{n\rightarrow \infty}\|x_n\|_{\ell^\infty(\Z^s,H)}>0$.
\end{itemize}

\begin{lemma} \label{lem:cc1}
For each $\varepsilon>0$ the sequence $\{x_n\}$ admits a subsequence
with the following properties.
There exist a finite number $m$ of non-zero vectors $x^1,\ldots,x^m\in \ell^2(\Z^s,H)$ and
 sequences $\{w^1_n\}$, \ldots, $\{w^m_n\}
 \subset \Z^s$ such that
\begin{align}
& T_{-w^{m^\prime}_n}\left(x_n-\sum_{\ell=1}^{m^\prime-1}
T_{w^\ell_n}x^\ell\right)\rightharpoonup x^{m^\prime}, \label{CC1}\\
&
\|x^{m^\prime}\|_{l^\infty(\Z^s,H)}=
\lim_{n\rightarrow \infty}
\left\|x_n-\sum_{\ell=1}^{m^\prime-1}T_{w^\ell_n}x^\ell\right\|_{l^\infty(\Z^s,H)}, \label{CC2} \\
&
\lim_{n\rightarrow \infty}\|x_n\|_{\ell^2(\Z^s,H)}^2=
\sum_{\ell=1}^{m^\prime} \|x^\ell\|_{\ell^2(\Z^s,H)}^2+
\lim_{n\rightarrow \infty}
\left\|x_n-\sum_{\ell=1}^{m^\prime}T_{w^\ell_n}x^\ell\right\|_{\ell^2(\Z^s,H)}^2 \label{CC3}
\end{align}
for $m^\prime = 1, \ldots, m$,
\begin{equation}
\limsup_{n\rightarrow \infty}\left\|
x_n-\sum_{\ell=1}^mT_{w^\ell_n}x^\ell
\right\|_{\ell^\infty(\Z^s,H)}\leq \varepsilon, \label{CC4}
\end{equation}
and
\begin{equation}
\lim_{n\rightarrow \infty}\left\|x_n-T_{w^1_n}x^1
\right\|_{\ell^\infty(\Z^s,H)}=0 \label{CC5}
\end{equation}
if $m=1$.
Here the weak convergence is understood in $\ell^2(\Z^s,H)$ and \linebreak 
$T_w:\ell^2(\Z^s,H)\rightarrow \ell^2(\Z^s,H)$, $w\in \Z^s$, denotes the translation operator $T_w (x_j)=(x_{j-w})$.
\end{lemma}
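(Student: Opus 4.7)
The plan is to iteratively peel off one ``bump'' at a time: at stage $m'$ I find a translate $w_n^{m'}$ at which the residual $r_n^{m'-1}:=x_n-\sum_{\ell=1}^{m'-1}T_{w_n^\ell}x^\ell$ has nearly maximal $\ell^\infty$-mass, extract a non-zero weak limit $x^{m'}$ from the translated residual using assumptions (ii) and (i), and stop once the $\ell^\infty$-norm of the residual drops below $\varepsilon$. A Pythagorean identity in $\ell^2(\Z^s,H)$ combined with the uniform bound (i) forces termination after at most $\varepsilon^{-2}\sup_n\|x_n\|_{\ell^2(\Z^s,H)}^2$ steps.

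For the base step $m'=1$, pass to a subsequence along which $\|x_n\|_{\ell^\infty(\Z^s,H)}\to a_1$ and choose $w_n^1\in\Z^s$ with $\|x_{n,w_n^1}\|_H\to a_1$. Assumption (ii) yields, after a further subsequence, a strong limit $\xi^1\in H$ of $x_{n,w_n^1}$ with $\|\xi^1\|_H=a_1$, while (i) and a diagonal argument in the separable Hilbert space $\ell^2(\Z^s,H)$ give $T_{-w_n^1}x_n\rightharpoonup x^1$ with $x^1_0=\xi^1$; the bound $\|x^1_j\|_H\le\liminf_n\|x_n\|_{\ell^\infty(\Z^s,H)}=a_1$ for every $j$ yields $\|x^1\|_{\ell^\infty(\Z^s,H)}=a_1$, i.e.\ (CC2) at $m'=1$. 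Property (CC3) then follows by expanding
\begin{equation*}
\|x_n\|_{\ell^2(\Z^s,H)}^2 = \|T_{w_n^1}x^1\|_{\ell^2(\Z^s,H)}^2 + \|r_n^1\|_{\ell^2(\Z^s,H)}^2 + 2\mathrm{Re}\langle r_n^1,\,T_{w_n^1}x^1\rangle_{\ell^2(\Z^s,H)}
\end{equation*}
and noting that the cross term vanishes in the limit because $T_{-w_n^1}x_n\rightharpoonup x^1$.

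Iteration rests on the key shift-separation claim: for every $\ell<m'$ one can arrange $|w_n^\ell-w_n^{m'}|\to\infty$ along a subsequence. By induction one has $T_{-w_n^\ell}r_n^{m'-1}\rightharpoonup 0$ for each $\ell<m'$ (the bump $T_{w_n^\ell}x^\ell$ has been subtracted, and every other bump $T_{w_n^k}x^k$ with $k\ne\ell$ drifts to infinity relative to $w_n^\ell$ and so tends weakly to zero). If $w_n^{m'}-w_n^\ell$ were bounded along a subsequence, assumption (ii) would upgrade this zero weak limit at the shifted coordinate into a zero strong limit, contradicting the choice of $w_n^{m'}$ as a location where the $H$-norm of $r_n^{m'-1}$ exceeds $\varepsilon$. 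With shift-separation in hand, the extraction of $x^{m'}$ proceeds exactly as in the base step and (CC3) extends by the same cross-term computation.

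The process stops at a stage $m$ as soon as $\limsup_n\|r_n^{m}\|_{\ell^\infty(\Z^s,H)}\le\varepsilon$, giving (CC4); since $\|x^\ell\|_{\ell^2(\Z^s,H)}^2\ge\|x^\ell\|_{\ell^\infty(\Z^s,H)}^2>\varepsilon^2$ for every extracted bump, (CC3) bounds $m$ by $\varepsilon^{-2}\sup_n\|x_n\|_{\ell^2(\Z^s,H)}^2$. The exceptional case $m=1$ corresponds to the first residual already satisfying $\lim_n\|r_n^1\|_{\ell^\infty(\Z^s,H)}=0$, which is exactly (CC5). The main obstacle is the simultaneous bookkeeping in the shift-separation step: enforcing $|w_n^{m'}-w_n^\ell|\to\infty$ against every $\ell<m'$ at once rather than merely against the most recently chosen bump, which requires repeated diagonal extractions together with careful use of (ii) to rule out every bounded relative shift.
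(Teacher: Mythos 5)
Your proposal follows the same iterative peeling strategy as the paper's own proof: choose $w_n^{m^\prime}$ at a (near-)maximiser of the residual's $\ell^\infty$-norm, use assumption (i) to extract a weak $\ell^2(\Z^s,H)$ limit of the translated residual and assumption (ii) to upgrade componentwise weak convergence to strong convergence in $H$, deduce \eqref{CC2}, obtain \eqref{CC3} from the vanishing cross term, and terminate via the $\ell^2$ budget. The one structural difference is that you make the shift-separation property $|w_n^\ell-w_n^{m^\prime}|\to\infty$ the ``keystone'' of the iteration. It is in fact not needed for this lemma: the extraction at stage $m^\prime$ is just the base step applied to the residual sequence, which inherits (i), inherits (ii) (the components of each $x^\ell$ tend to zero in $H$, so the set of residual components is again relatively compact --- a point you should state, since (ii) as given applies only to the components of $x_n$), and has positive $\ell^\infty$-limsup as long as you have not stopped; the cross term in \eqref{CC3} vanishes using only the weak convergence \eqref{CC1} at that stage. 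In the paper the separation statement is the separate Lemma \ref{lem:cc2}, proved afterwards, and your inductive argument for it is essentially that proof; the detour is therefore harmless but imports exactly the bookkeeping burden you flag.

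Two small slips in the stopping/counting bookkeeping should be repaired. First, you may declare $m=1$ only when the first residual tends to zero in $\ell^\infty(\Z^s,H)$: if its limsup lies in $(0,\varepsilon]$ your rule ``stop as soon as the residual drops below $\varepsilon$'' would end with $m=1$ but violate \eqref{CC5}, so in that case you must still extract a second (nonzero) bump and stop with $m\geq 2$, where only \eqref{CC4} is required. Second, ``$\|x^\ell\|_{\ell^\infty(\Z^s,H)}>\varepsilon$ for every extracted bump'' is not quite right: assumption (iii) gives only positivity for $x^1$ (and, with the corrected stopping rule, for $x^2$), so the counting argument should run over $\ell\geq 3$, giving $m\leq 2+\varepsilon^{-2}\sup_n\|x_n\|_{\ell^2(\Z^s,H)}^2$ --- precisely how the paper organises its contradiction. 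Neither point affects the validity of the approach.
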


\begin{proof}
Observe that $x_n \neq 0$ for each $n \in {\mathbb N}$ and
$$\lim_{n\rightarrow \infty}\|x_n\|_{\ell^2}\geq
\lim_{n\rightarrow \infty}\|x_n\|_{\ell^\infty}>0.$$
(In this proof we abbreviate
$\ell^2(\Z^s,H)$ and $\ell^\infty(\Z^s,H)$ to respectively $\ell^2$ and $\ell^\infty$
and extract subsequences where necessary for the validity of our arguments.)

Choose the sequence
$\{w_n^1\}\subset \Z^s$ such that  $\|x_{n,w_n^1}\|_H=\|x_n\|_{\ell^\infty}$
($\{x_n\} \in \ell^2(\Z^s,H)$ implies that $\|x_{n,j}\|_H \rightarrow 0$
as $|j| \rightarrow \infty)$.
Because $\{T_{-w^1_n}x_{n}\}$ is bounded there exists $x^1 \in \ell^2$ such that
$T_{-w^1_n}x_{n}\rightharpoonup x^1$, and the 
relative compactness of $S$ implies that $\lim_{n \rightarrow \infty}\|(T_{-w^1_n}x_{n})_j - x^1_j\|_H=0$
for each $j \in {\mathbb Z}^s$.
Since $\|(T_{-w^1_n}x_{n})_j\|_H \leq \|(T_{-w^1_n}x_{n})_0\|_H$ by construction it follows that
$\|x_j^1\|_H \leq \|x_0^1\|_H$ for each $j \in \Z^s$ and hence that $\|x_0^1\|_H = \|x^1\|_{\ell^\infty}$.
We conclude that $\|x^1\|_{\ell^\infty}=\lim_{n \rightarrow \infty} \|x_{n,w_n^1}\|_H=\lim_{n\rightarrow \infty}\|x_n\|_{\ell^\infty}>0$. 
Furthermore
\begin{align*}
\lim_{n\rightarrow \infty}\|x_n\|_{\ell^2}^2
& = \lim_{n \rightarrow \infty} \|T_{-w^1_n}x_n\|_{\ell^2}^2 \\
& =\lim_{n\rightarrow \infty}\|x^1+(T_{-w^1_n}x_n-x^1)\|_{\ell^2}^2 \\
&=\|x^1\|_{\ell^2}^2 + 2\underbrace{\lim_{n\rightarrow \infty} \langle x^1, T_{-w^1_n}x_{n}-x^1 \rangle_{\ell^2}^2}_{\displaystyle =0}
+\lim_{n\rightarrow \infty}\|T_{-w^1_n}x_{n}-x^1\|_{\ell^2}^2.
\end{align*}
If $\lim_{n\rightarrow \infty}\left\|x_n-T_{w^1_n}x^1\right\|_{\ell^\infty}=0$ we set $m=1$, concluding the proof. Otherwise
we apply the above argument to the sequence 
$\{x_n^{(2)}\}$, where $x_n^{(2)}=x_n-T_{w^1_n}x^1$ and proceed iteratively;
it remains to show that we can choose $m \geq 2$ such that \eqref{CC4} is satisfied.

Suppose that for each $m\geq 3$ there exist
vectors $x^1,\ldots,x^m\in \ell^2(\Z^s,H)$
and  sequences\linebreak
 $\{w^1_n\}$, \ldots, $\{w^m_n\} \subset \Z^s$ which satisfy \eqref{CC1}--\eqref{CC3} and
$$\lim_{n\rightarrow \infty}
\left\|x_n-\sum_{\ell=1}^{m^\prime}T_{w^\ell_n}x^\ell\right\|_{\ell^\infty}
>\varepsilon$$
for $m^\prime=2,\ldots,m$.
Choosing $m>1+\varepsilon^{-2}   \lim_{n\rightarrow \infty}\|x_n\|_{\ell^2}^2$, we obtain the contradiction
\[
\lim_{n\rightarrow \infty}\|x_n\|_{\ell^2}^2 \\
> \varepsilon^2+ \sum_{\ell=1}^m\|x^{\ell}\|_{\ell^2}^2
\geq  \varepsilon^2+ \sum_{\ell=3}^m\|x^{\ell}\|_{\ell^\infty}^2
> (m-1)\varepsilon^2>
\lim_{n\rightarrow \infty}\|x_n\|_{\ell^2}^2. \qedhere
\]
\end{proof}

\begin{lemma} \label{lem:cc2}
The sequences $\{w_n^1\}$, \ldots, $\{w_n^m\}$ satisfy
$$\lim_{n \rightarrow \infty} |w_n^{m^{\prime\prime}}-w_n^{m^\prime}| \rightarrow \infty, \qquad 1 \leq m^{\prime\prime} < m^\prime \leq m$$
so that in particular
$$
T_{-w^{m^\prime}_n}x_n\rightharpoonup x^{m^\prime}, \qquad m^\prime=1,\ldots,m.
$$
\end{lemma}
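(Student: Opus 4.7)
The plan is to prove by induction on $m^\prime \in \{1, \ldots, m\}$ that $|w_n^{m^\prime} - w_n^{m^{\prime\prime}}| \to \infty$ for every $m^{\prime\prime} < m^\prime$; the base $m^\prime = 1$ is vacuous, so suppose the claim holds for all smaller indices. Arguing by contradiction, I would assume that, after passing to a subsequence, $w_n^{m^\prime} - w_n^{m^{\prime\prime}}$ stays bounded for some $m^{\prime\prime} < m^\prime$ and, after a further extraction, that this integer-valued sequence is eventually equal to a constant $v \in \Z^s$. The inductive hypothesis then forces $m^{\prime\prime}$ to be uniquely determined among the indices less than $m^\prime$: for any other $\ell \in \{1,\ldots,m^\prime-1\} \setminus \{m^{\prime\prime}\}$ one has $|w_n^{m^{\prime\prime}} - w_n^\ell| \to \infty$ and hence $|w_n^{m^\prime} - w_n^\ell| \to \infty$ as well.

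The contradiction will come from computing the weak limit in $\ell^2(\Z^s, H)$ of $T_{-w_n^{m^\prime}} y_n^{m^{\prime\prime}}$, where $y_n^{m^{\prime\prime}} := x_n - \sum_{\ell=1}^{m^{\prime\prime}-1} T_{w_n^\ell} x^\ell$, in two different ways. On one hand, since $w_n^{m^\prime} - w_n^{m^{\prime\prime}} = v$ for all large $n$, one can factor $T_{-w_n^{m^\prime}} y_n^{m^{\prime\prime}} = T_{-v}\, T_{-w_n^{m^{\prime\prime}}} y_n^{m^{\prime\prime}}$; equation \eqref{CC1} at level $m^{\prime\prime}$ and the continuity of $T_{-v}$ then give $T_{-w_n^{m^\prime}} y_n^{m^{\prime\prime}} \rightharpoonup T_{-v} x^{m^{\prime\prime}}$. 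On the other hand, the identity $y_n^{m^\prime} = y_n^{m^{\prime\prime}} - \sum_{\ell = m^{\prime\prime}}^{m^\prime-1} T_{w_n^\ell} x^\ell$ rearranges to
$$T_{-w_n^{m^\prime}} y_n^{m^{\prime\prime}} = T_{-w_n^{m^\prime}} y_n^{m^\prime} + T_{-v} x^{m^{\prime\prime}} + \sum_{\ell=m^{\prime\prime}+1}^{m^\prime-1} T_{w_n^\ell - w_n^{m^\prime}} x^\ell;$$
the first term converges weakly to $x^{m^\prime}$ by \eqref{CC1} at level $m^\prime$, while the remaining summands involve translations by integer vectors of arbitrarily large norm and therefore converge weakly to zero, since translating an $\ell^2(\Z^s, H)$ vector to infinity produces weak convergence to $0$. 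Comparing the two expressions forces $x^{m^\prime} = 0$, contradicting the construction in Lemma \ref{lem:cc1}.

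Once divergence of every $\{w_n^{m^\prime} - w_n^{m^{\prime\prime}}\}$ is in hand, the final assertion $T_{-w_n^{m^\prime}} x_n \rightharpoonup x^{m^\prime}$ will follow immediately from
$$T_{-w_n^{m^\prime}} x_n = T_{-w_n^{m^\prime}} y_n^{m^\prime} + \sum_{\ell=1}^{m^\prime-1} T_{w_n^\ell - w_n^{m^\prime}} x^\ell,$$
since all summands on the right (except the first, which tends weakly to $x^{m^\prime}$) involve translations by sequences going to infinity. The main obstacle will be the two-way computation above: it rests on the elementary but crucial observation that bounded integer-valued sequences are eventually constant along a subsequence (turning a sequence of translations into a single fixed one), combined with a careful application of \eqref{CC1} at two different levels of the inductive construction in Lemma \ref{lem:cc1}.
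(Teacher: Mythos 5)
Your proposal is correct and follows essentially the same route as the paper: pass to a subsequence on which the offending difference is a constant, combine \eqref{CC1} at the levels $m^{\prime\prime}$ and $m^\prime$, use the divergence of the remaining differences (together with the fact that translations to infinity converge weakly to zero) to eliminate the cross terms, and conclude $x^{m^\prime}=0$, contradicting Lemma \ref{lem:cc1}. The only cosmetic difference is that you organise the argument as an induction on $m^\prime$, whereas the paper selects a smallest counterexample, and you compare two weak limits of $T_{-w_n^{m^\prime}}y_n^{m^{\prime\prime}}$ rather than of $T_{-w_n^{m^{\prime\prime}}}x_n$; these are equivalent bookkeeping choices.
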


\begin{proof}
Suppose the result does not hold and select the smallest $m^\prime \in \{2,\ldots,m\}$
such that\linebreak
$|w^{m^{\prime\prime}}_n-w^{m^\prime}_n|\not \rightarrow \infty$ 
for some $m^{\prime\prime}\in\{1,\ldots,m^\prime-1\}$;
by a judicious choice of
subsequences we can arrange that $w^{m^\prime}_n-w^{m^{\prime\prime}}_n$
is equal to a constant $j \in \Z^s$.

On the one hand 
$\lim_{n\rightarrow \infty}|w^{\ell}_n-w^{m^{\prime\prime}}_n|=\infty$
for $\ell=1,\ldots, m^{\prime\prime}-1$, so that
\begin{equation}
T_{-w^{m^{\prime\prime}}_n}x_n\rightharpoonup x^{m^{\prime\prime}} \label{Trans cont}
\end{equation}
(see \eqref{CC1}), while on the other hand
$$T_{-w^{m^\prime}_n}\left(x_n-\sum_{\ell=1}^{m^\prime-1}T_{w^\ell_n}x^\ell\right)
\rightharpoonup x^{m^\prime},$$
so that
$$T_{-w^{m^{\prime\prime}}_n-j}x_n-\sum_{\ell=1}^{m^\prime-1}T_{-w^{m^{\prime\prime}}_n-j+w^\ell_n}x^\ell
\rightharpoonup x^{m^\prime}$$
and hence
$$T_{-w^{m^{\prime\prime}}_n}x_n-x^{m^{\prime\prime}}
\rightharpoonup T_jx^{m^\prime},$$
which contradicts \eqref{Trans cont} because $x^{m^\prime} \neq 0$.
\end{proof}

\noindent\\
{\bf Acknowledgements.} M. D. Groves would like to thank the Knut and Alice Wallenberg Foundation for funding a visiting professorship
at Lund University during which this paper was prepared.  E. Wahl\'{e}n was supported by the Swedish Research Council (grant no. 621-2012-3753).

\end{document}